\let\origsetminus\setminus
\let\originfty\infty
\let\origpartial\partial
\let\origin\in
\let\origprod\prod
\let\origsum\sum
\let\origsubset\subset
\let\origsimeq\simeq
\let\origast\ast
\let\origsum\sum
\let\setminus\origsetminus
\let\infty\originfty
\let\partial\origpartial
\let\in\origin
\let\prod\origprod
\let\subset\origsubset
\let\simeq\origsimeq
\let\ast\origast
\let\sum\origsum
\newcommand{\Au}{\mathbb{A}^1}
\DeclareMathOperator{\Th}{Th}
\DeclareMathOperator{\SH}{SH}
\DeclareMathOperator{\Hop}{H_\bullet}
\DeclareMathOperator{\Hot}{H}
\DeclareMathOperator{\Spc}{Spc}
\DeclareMathOperator{\Spcp}{Spc_\bullet}
\DeclareMathOperator{\Spt}{Spt}
\DeclareMathOperator{\Ld}{\mathbf{L}}
\DeclareMathOperator{\MH}{MH}
\DeclareMathOperator{\T}{Th}
\DeclareMathOperator{\Sy}{\mathfrak{S}}
\DeclareMathOperator{\SL}{SL}
\DeclareMathOperator{\id}{id}
\DeclareMathOperator{\Hom}{Hom}
\DeclareMathOperator{\BGL}{BGL}
\DeclareMathOperator{\BSL}{BSL}
\DeclareMathOperator{\GL}{GL}
\DeclareMathOperator{\MGL}{MGL}
\DeclareMathOperator{\Sp}{Sp}
\DeclareMathOperator{\MSp}{MSp}
\DeclareMathOperator{\Oo}{O}
\DeclareMathOperator{\Gr}{Gr}
\DeclareMathOperator{\rank}{rank}
\DeclareMathOperator{\Sm}{Sm}
\DeclareMathOperator{\Ab}{\mathbb{A}}
\DeclareMathOperator{\colim}{colim}
\DeclareMathOperator{\sw}{sw}
\DeclareMathOperator{\thom}{th}
\DeclareMathOperator{\can}{can}
\newcommand{\vo}{\oplus}
\newcommand{\Uni}{{\mathcal{U}}}
\newcommand{\Quo}{{\mathcal{Q}}}
\newcommand{\Un}{\mathbf{1}}
\newcommand{\Aa}{A^{*,*}}
\newcommand{\Su}{\Sigma^{\infty}}
\newcommand{\Sup}{\Sigma^{\infty}_+}
\newcommand{\hypo}{\mathfrak{o}}
\newcommand{\hypt}{\mathfrak{t}}
\newcommand{\phyp}{\mathfrak{p}}
\newcommand{\Oc}{\mathcal{O}}
\newcommand{\Ec}{\mathcal{E}}
\newcommand{\Zz}{\mathbb{Z}}
\newcommand{\Pp}{\mathbb{P}}
\newcommand{\Nn}{\mathbb{N}}
\newcommand{\Gm}{\mathbb{G}_m}
\newcommand{\X}{\mathcal{X}}
\newcommand{\pf}[1]{\overline{#1}}
\newtheorem*{theorem*}{Theorem}
\newtheorem*{proposition*}{Proposition}
\newtheorem{thm}{Theorem}
\newtheorem{prop}[thm]{Proposition}
\newtheorem{cor}[thm]{Corollary}
\newtheorem{theorem}{Theorem}[section]
\newaliascnt{proposition}{theorem}
\newtheorem{proposition}[proposition]{Proposition}
\newaliascnt{lemma}{theorem}
\newtheorem{lemma}[lemma]{Lemma}
\newaliascnt{corollary}{theorem}
\newtheorem{corollary}[corollary]{Corollary}
\theoremstyle{definition}
\newaliascnt{remark}{theorem}
\newtheorem{remark}[theorem]{Remark}
\newaliascnt{example}{theorem}
\newtheorem{example}[example]{Example}
\newaliascnt{definition}{theorem}
\newtheorem{definition}[definition]{Definition}
\newaliascnt{notation}{theorem}
\newtheoremstyle{par}%                % Name
  {}%                                     % Space above
  {}%                                     % Space below
  {}%                                     % Body font
  {}%                                     % Indent amount
  {}%                 	                  % Theorem head font
  {.}%                                    % Punctuation after theorem head
  { }%                                    % Space after theorem head, ' ', or \newline
  {}%
\theoremstyle{par}
\newtheorem{para}[theorem]{}
\numberwithin{equation}{theorem}
\newcommand{\rref}[1]{(\ref{#1})}
\newcommand{\dref}[2]{(\ref{#1}.\ref{#2})}
\begin{document}
\begin{abstract}
We introduce the notion of hyperbolic orientation of a motivic ring spectrum, which generalises the various existing notions of orientation (by the groups $\GL$, $\SL^c$, $\SL$, $\Sp$). We show that hyperbolic orientations of $\eta$-periodic ring spectra correspond to theories of Pontryagin classes, much in the same way that $\GL$-orientations of arbitrary ring spectra correspond to theories of Chern classes. We prove that $\eta$-periodic hyperbolically oriented cohomology theories do not admit further characteristic classes for vector bundles, by computing the cohomology of the \'etale classifying space $\BGL_n$. Finally we construct the universal hyperbolically oriented $\eta$-periodic commutative motivic ring spectrum, an analog of Voevodsky's cobordism spectrum $\MGL$.
\end{abstract}

\author{Olivier Haution}
\title{Motivic Pontryagin classes and hyperbolic orientations}
\email{olivier.haution at gmail.com}
\address{Mathematisches Institut, Ludwig-Maximilians-Universit\"at M\"unchen, Theresienstr.\ 39, D-80333 M\"unchen, Germany}
\address{Dipartimento di Matematica e Applicazioni, Università degli Studi di Milano-Bicocca, via Roberto Cozzi 55, 20125 Milano, Italy}
\thanks{This work was supported by the DFG research grant HA 7702/5-1 and Heisenberg grant HA 7702/4-1.}

\subjclass[2010]{}

\keywords{}
\date{\today}

\maketitle

\numberwithin{theorem}{section}
\numberwithin{lemma}{section}
\numberwithin{proposition}{section}
\numberwithin{corollary}{section}
\numberwithin{example}{section}
\numberwithin{notation}{section}
\numberwithin{definition}{section}
\numberwithin{remark}{section}

\setcounter{tocdepth}{2}
\tableofcontents

\section*{Introduction}
Let us fix a base scheme $S$ (for instance the spectrum of a field), and consider a cohomology theory $\Aa$ on smooth $S$-schemes, represented by a motivic ring spectrum $A$ in Voevodsky's stable $\Ab^1$-homotopy category $\SH(S)$. When $E$ is a vector bundle of rank $r$ over a smooth $S$-scheme $X$, its Chern classes are elements
\[
c_i(E) \in A^{2i,i}(X) \quad \text{ for $i=0,\dots,r$}.
\]
This classes are commonly used to:
\begin{itemize}
\item[---] detect the presence of nowhere vanishing sections of $E$,
\item[---] distinguish non-isomorphic bundles,
\item[---] exhibit elements in $\Aa(X)$.
\end{itemize}
The existence of such classes having good properties is subject to certain conditions on the ring spectrum $A$, and moreover these classes are not determined by $A$ alone. In fact, Panin \cite{Panin-Oriented_I,Panin-Oriented_II} proved that the theories of Chern classes with values in the theory $\Aa$ are in correspondence with the so-called $\GL$-orientations of the motivic spectrum $A$. Panin and Walter \cite{PW-QGrass} later provided a parallel correspondence between Borel classes and $\Sp$-orientations.\\

The Pontryagin classes of the vector bundle $E \to X$ are expected to be elements
\[
p_i(E) \in A^{8i,4i}(X) \quad \text{ for $i=0,\dots,\lfloor r/2 \rfloor$}.
\]
As was the case with Chern classes, it is not expected that well-behaved Pontryagin classes exist for an arbitrary motivic ring spectrum $A$, nor that they should be determined by $A$ alone. Such classes have been defined using the Borel classes when $A$ is $\Sp$-oriented \cite[Definition~7]{Ananyevskiy-SL_PB}, but have been proved to have good properties only when $A$ is $\SL$-oriented and $\eta$-periodic (see \cite[\S7]{Ana-SL}). This provides an important example of Pontryagin classes, but we are not aware of any systematic study of the theories of such classes.

In this paper, we propose a notion of \emph{hyperbolic orientation} of a motivic ring spectrum, which fits into the following table:
\bgroup
\def\arraystretch{1.5}
\setlength\tabcolsep{15pt}
\medskip
\begin{center}
\begin{tabular}{ |c|c| } 
\hline
Chern classes & $\GL$-orientation\\
\hline
Borel classes & $\Sp$-orientation \\
\hline
Pontryagin classes & hyperbolic orientation \\
\hline
\end{tabular}
\end{center}
\medskip
An important caveat is that we limit ourselves to the consideration of $\eta$-periodic cohomology theories. We will discuss at the end of this introduction the reasons why in some sense this restriction is necessary. It turns out that Chern classes do not exist in the $\eta$-periodic context, and that Pontryagin classes are the natural characteristic classes of vector bundles (without extra structure) with values in hyperbolically oriented cohomology theories (this statement is made precise in Theorem~\ref{intro:BGL} below).\\

The basic idea motivating the notion of a hyperbolic orientation is that the data of Thom classes for all symplectic bundles (i.e.\ a symplectic orientation) is substantially more than what is needed in order to define Pontryagin classes: in fact it turns out that it suffices to have these data for hyperbolic symplectic bundles. Such bundles are determined by the vector bundle (of half rank) whose Pontryagin classes we want to take.

This is perhaps not surprising since after all, the existing construction of Pontryagin classes in $\Sp$-oriented theories only involves the Borel classes of hyperbolic symplectic bundles. But those Borel classes are constructed using the Thom classes of the universal rank two symplectic bundles on the corresponding symplectic grassmannians, and those universal bundles are far from being hyperbolic. So it was not a priori completely clear that this program would succeed.

A hyperbolic orientation will thus be the data of a Thom class for each vector bundle, but an important difference with all existing notions of orientation is that this class does not live in the cohomology of the Thom space of that vector bundle. Rather, the hyperbolic Thom class of a vector bundle $E$ lives in the cohomology of the Thom space of the hyperbolic bundle $E \oplus E^\vee$. That space happens to be isomorphic to the Thom space of $E \oplus E$, and in this paper it will be internally more logical to consider the latter instead. The definition of a hyperbolic orientation is given in \rref{def:hyp_orientation}.

Symplectically oriented theories (and thus also $\GL$-, $\SL^c$-, $\SL$-oriented ones) are naturally hyperbolically oriented, but for instance orthogonally oriented ones also provide examples of hyperbolically oriented theories (see \rref{ex:Sp_hyp} for details).\\

Grothendieck provided in \cite{Gro-58} a seminal construction of Chern classes in Chow theory. He singled out the so-called projective bundle theorem as the key property, which permits to extend the definition of the first Chern classes of line bundles (imposed by the natural relation between the Picard group and the Chow group) to higher Chern classes of arbitrary vector bundles. This method has been revisited over the years, and is central in Panin's study of $\GL$-orientations mentioned above. 

In this paper, we use the same strategy for Pontryagin classes. The main difference is that line bundles should be replaced with rank two bundles. Thus if $E$ is vector bundle, the projective bundle $\Pp(E)$ with its tautological bundle $\Oc(-1)$ is replaced by the Grassmann bundle $\Gr(2,E)$ of rank $2$ subbundles with its universal rank two subbundle $\Uni_2$. The first Pontryagin class $p_1(\Uni_2) \in A^{8,4}(\Gr(2,E))$ is constructed directly from the hyperbolic orientation, and we prove the following analog of the projective bundle theorem (see \rref{cor:splitting_twisted}):
\begin{thm}
\label{intro:PBT}
Let $A \in \SH(S)$ be an $\eta$-periodic hyperbolically oriented ring spectrum. For any vector bundle $E \to X$ of rank $2d$ or $2d+1$, the $\Aa(X)$-module $\Aa(\Gr(2,E))$ is freely generated by the elements $1,p_1(\Uni_2),\dots, p_1(\Uni_2)^{d-1}$.
\end{thm}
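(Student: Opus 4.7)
The plan is to follow Grothendieck's strategy for the projective bundle theorem, with $\Gr(2,E)$ replacing $\Pp(E)$ and $p_1(\Uni_2)$ replacing $c_1(\Oc(-1))$. First I would define the candidate $\Aa(X)$-linear homomorphism
\[
\phi \colon \bigoplus_{i=0}^{d-1}\Aa(X) \longrightarrow \Aa(\Gr(2,E)), \qquad (a_0,\dots,a_{d-1}) \longmapsto \sum_{i=0}^{d-1} \pi^*(a_i)\cdot p_1(\Uni_2)^i,
\]
where $\pi \colon \Gr(2,E)\to X$, and argue that $\phi$ is an isomorphism. Both sides satisfy Nisnevich descent in $X$, which permits a reduction to the case $E=\Oc_X^{r}$.

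I would then proceed by induction on $r=\rank E$. The base cases $r\in\{2,3\}$ both correspond to $d=1$: for $r=2$ the claim is trivial since $\Gr(2,E)=X$, while for $r=3$ we have $\Gr(2,E)=\Pp(E^\vee)$ and the required isomorphism $\Aa(\Pp(E^\vee))\cong\Aa(X)$ should follow from general properties of low-dimensional projective bundles in $\eta$-periodic hyperbolically oriented theories developed earlier in the paper. For the inductive step, decompose $E = E' \oplus \Oc^2$ and consider the closed embedding $j \colon \Gr(2,E') \hookrightarrow \Gr(2,E)$. A direct computation identifies its normal bundle as
\[
\nu_j \;\cong\; (\Uni_2')^{\vee}\otimes \Oc^2 \;\cong\; (\Uni_2')^{\vee}\oplus(\Uni_2')^{\vee},
\]
where $\Uni_2'$ is the tautological rank-$2$ subbundle on $\Gr(2,E')$. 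This has the form $F\oplus F$ for $F=(\Uni_2')^{\vee}$, so the hyperbolic orientation supplies a Thom class of bidegree $(8,4)$ --- precisely the bidegree of $p_1$ --- yielding a Thom isomorphism between $\tilde\Aa(\Th(\nu_j))$ and $\Aa(\Gr(2,E'))$, which by the inductive hypothesis is free of rank $d-1$ on $1,p_1(\Uni_2'),\dots,p_1(\Uni_2')^{d-2}$.

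The main obstacle is to analyse the long exact sequence in cohomology associated to the purity cofiber sequence $U_+ \to \Gr(2,E)_+ \to \Th(\nu_j)$, where $U = \Gr(2,E)\setminus\Gr(2,E')$, and to show it degenerates into a split short exact sequence
\[
0 \to \tilde\Aa(\Th(\nu_j)) \xrightarrow{j_!} \Aa(\Gr(2,E)) \to \Aa(U) \to 0
\]
with $\Aa(U)\cong\Aa(X)$. This is where $\eta$-periodicity is critical: it should force the non-generic Schubert-type stratum of $U$ (those $2$-planes meeting $E'$ in a line) to contribute no further cohomology, leaving $U$ cohomologically equivalent to its generic stratum --- an affine bundle over $X$ with fibre $E'\oplus E'$ parametrising splittings of $E\to\Oc^2$. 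Once the vanishing of the connecting map and the computation of $\Aa(U)$ are secured, the projection formula together with the identification of the Euler class $e(\nu_j)$ as $p_1((\Uni_2')^\vee) = j^*p_1(\Uni_2)$ exhibits $j_!$ as the "multiplication by $p_1(\Uni_2)$" map, so its image is the $\Aa(X)$-span of $p_1(\Uni_2),\dots,p_1(\Uni_2)^{d-1}$. Combined with the pullback $\pi^*\colon \Aa(X)\hookrightarrow \Aa(\Gr(2,E))$ for the $1$ generator, this yields the desired basis $1, p_1(\Uni_2),\dots,p_1(\Uni_2)^{d-1}$.
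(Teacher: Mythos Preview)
Your proposal is correct and follows essentially the same route as the paper: reduce to the trivial bundle by Mayer--Vietoris, then induct via the closed immersion $\Gr(2,E')\hookrightarrow\Gr(2,E'\oplus 1^{\oplus 2})$ whose normal bundle $\Uni_2^\vee\oplus\Uni_2^\vee$ carries a hyperbolic Thom class, and identify the Gysin pushforward with multiplication by $\pi(\Uni_2^\vee)=p_1(\Uni_2)$ via the projection formula.

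The one tactical difference worth noting is how the paper handles the step you flag as the ``main obstacle'' --- showing that the complement $U$ contributes only $\Aa(X)$. Rather than stratifying $U$ directly and proving the intermediate Schubert stratum vanishes, the paper factors the rank-two jump as two rank-one jumps $\Gr(2,E')\to\Gr(2,E'\oplus 1)\to\Gr(2,E'\oplus 1^{\oplus 2})$ using the maps $g$ and $h$ of \rref{p:g_h}. At each one-step jump the complementary stratum is (an affine bundle over) a projective bundle $\Pp(E')$ or $\Pp(E'\oplus 1)$, and the required vanishing reduces cleanly to the basic $\eta$-periodic projective-bundle facts $\Th_{\Pp(F)}(\Quo_F)=0$ for $F$ of even rank and $\Sup\Pp(F)\simeq\Sup X$ for $F$ of odd rank (see \rref{prop:PE} and \rref{lemm:PE_even_Q}). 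Your direct two-step stratification of $U$ would ultimately unwind to the same vanishing inputs, but the one-at-a-time factorisation isolates them more transparently. Either way the induction closes with the same short exact sequence and the same identification of the Gysin map.
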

We may then use Grothendieck's method to define the higher Pontryagin classes $p_i(E) \in A^{4i,2i}(X)$ for $i=1,\dots,d$. We prove that these classes satisfy the expected properties, the most notable being perhaps the Whitney sum formula (see \rref{prop:Pontryagin_sum}):
\begin{prop}
If $E,F \to X$ are vector bundles, their Pontryagin classes satisfy
\[
p_i(E \oplus F) = \sum_j p_{i-j}(E) p_j(F).
\]
\end{prop}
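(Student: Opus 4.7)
The plan is to proceed by the splitting principle furnished by Theorem~\ref{intro:PBT}. By iterating the Grassmann bundle construction, one builds a smooth morphism $\pi\colon Y \to X$ such that $\pi^*\colon A^{*,*}(X) \to A^{*,*}(Y)$ is split injective and, on $Y$, both $\pi^*E$ and $\pi^*F$ decompose as direct sums of rank two vector bundles (together with one line bundle summand when the rank is odd). By naturality of Pontryagin classes and injectivity of $\pi^*$, it suffices to establish the formula in this split case. Line bundle summands contribute only $p_0 = 1$ to the total class, and once we know that $p(G) = \prod_k (1 + p_1(U_k)\,t)$ for every sum $G = U_1 \oplus \cdots \oplus U_n$ of rank two bundles, the formula for $E \oplus F$ is immediate, since both sides then expand as the same product over all rank two summands of the two bundles. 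Hence the entire problem reduces to this product formula for a direct sum of rank two bundles.

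Equivalently, for $E' = U_1 \oplus \cdots \oplus U_n$ the monic degree $n$ polynomial $P_{E'}(T) = T^n - p_1(E')T^{n-1} + \cdots + (-1)^n p_n(E')$ characterised by the Grothendieck relation $P_{E'}(p_1(\Uni_2)) = 0$ in $A^{*,*}(\Gr(2, E'))$ should coincide with $\prod_k (T - p_1(U_k))$. Each inclusion $U_k \hookrightarrow E'$ yields a section $s_k\colon X \to \Gr(2, E')$ along which $\Uni_2$ restricts to $U_k$; applying $s_k^*$ to the Grothendieck relation gives $P_{E'}(p_1(U_k)) = 0$ in $A^{*,*}(X)$, so each $p_1(U_k)$ is a root of $P_{E'}$. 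To upgrade this to the full factorisation, the key step is the vanishing
\[
\prod_{k=1}^n \bigl(p_1(\Uni_2) - p_1(U_k)\bigr) = 0 \quad \text{in } A^{*,*}(\Gr(2, E')).
\]
Once this holds, the left hand side is a monic degree $n$ relation on $p_1(\Uni_2)$ with coefficients pulled back from $A^{*,*}(X)$, and uniqueness of such a relation, given by the basis property of Theorem~\ref{intro:PBT}, forces it to equal $P_{E'}(T)$.

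The principal obstacle is this last vanishing. My approach would be to stratify $\Gr(2, E')$ by the position of $\Uni_2$ relative to the partial sums $U_1 \oplus \cdots \oplus U_k$: on the closed locus where $\Uni_2 \subset U_1 \oplus \cdots \oplus U_k$ the factor $p_1(\Uni_2) - p_1(U_{k+1})$ is realised as an Euler-type class of a natural line bundle supported on a smaller closed stratum, so iterating, the full product is pushed onto the empty stratum and thus vanishes. A cleaner conceptual alternative is to first prove the identity universally on a product of classifying spaces $(\BB\SL_2)^n$, where the classes $p_1(U_k)$ can be shown to be algebraically independent in $A^{*,*}$, whence the factorisation is immediate by degree considerations, and then to transport back to arbitrary $X$ by functoriality.
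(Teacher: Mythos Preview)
Your overall strategy matches the paper's exactly: reduce via the splitting principle to a direct sum of rank two bundles (and harmless line bundle summands), then establish the product formula $P_{E'}(T)=\prod_k(T-\pi(U_k))$ by proving the vanishing $\prod_k(\pi(\Uni_2^\vee)-q^*\pi(U_k))=0$ in $\Aa(\Gr(2,E'^\vee))$ and invoking uniqueness from the projective bundle theorem. The reduction steps and the identification of the key vanishing are correct.

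The gap is in your proof of the vanishing itself. Your first sketch speaks of realising each factor $\pi(\Uni_2)-\pi(U_{k+1})$ as ``an Euler-type class of a natural line bundle supported on a smaller closed stratum''. This is not what happens: the closed immersion $i\colon \Gr(2,F^\vee)\hookrightarrow\Gr(2,E'^\vee)$ for $F=U_1\oplus\dots\oplus U_{d-1}$ has normal bundle $\Uni_2^\vee\otimes q^*U_d^\vee$, which has rank four, not one, and the relevant class is not an Euler class but rather an arbitrary element of the image of $i_*$. The paper's argument (Lemma~\ref{lemm:prod_vanish}) runs as follows. From the decomposition of $\Sup\Gr(2,E'^\vee)$ established in \rref{lemm:E+D} one extracts a short exact sequence
\[
0\to \Aa(\Gr(2,F^\vee);\Uni_2^\vee\otimes q^*U_d^\vee)\xrightarrow{i_*}\Aa(\Gr(2,E'^\vee))\xrightarrow{j^*}\Aa(X)\to 0,
\]
where $j\colon X=\Gr(2,U_d^\vee)\hookrightarrow\Gr(2,E'^\vee)$. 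Since $j^*\Uni_2^\vee=U_d$, the factor $\pi(\Uni_2^\vee)-q^*\pi(U_d)$ lies in $\ker j^*=\operatorname{im}i_*$, say equal to $i_*(x)$. The projection formula then gives
\[
\prod_{k=1}^d(\pi(\Uni_2^\vee)-q^*\pi(U_k))=i_*\Bigl(\prod_{k=1}^{d-1}(\pi(\Uni_2^\vee)-q_F^*\pi(U_k))\cup x\Bigr),
\]
and the inner product vanishes by induction on $d$. So the missing ingredient is precisely this exact sequence (a non-obvious input, proved in \S3.2 from the $\eta$-periodic computation of Thom spaces over projective bundles), not an Euler class mechanism.

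Your second approach, via $(\BB\SL_2)^n$, is circular in the paper's logical structure: the computation of $\Aa(\BGL_n)$ and of products of such classifying spaces (Theorem~\ref{th:A_BGL}, Proposition~\ref{prop:A_BGL_BGL}) is carried out in \S5 and explicitly relies on the Whitney sum formula already being available. Even granting the computation for $n=2$ (which does follow from \rref{prop:A_Gr_2} alone), you would still need a K\"unneth-type statement for the $n$-fold product, and the paper does not provide one independently of the sum formula.
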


We also derive the following splitting principle (see \rref{th:splitting}):
\begin{thm}
\label{intro:splitting}
Let $A \in \SH(S)$ be an $\eta$-periodic hyperbolically oriented ring spectrum. Let $E \to X$ be a vector bundle. Then there exists a morphism $f \colon Y \to X$ such that:
\begin{enumerate}[(i)]
\item $f^* \colon \Aa(X) \to \Aa(Y)$ is a split injection,

\item the vector bundle $f^*E$ splits as a direct sum of rank two vector bundles having trivial determinants, and possibly a trivial line bundle.
\end{enumerate}
\end{thm}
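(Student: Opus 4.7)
The strategy is to argue by induction on the rank $r$ of $E$, using the Grassmann-bundle result of Theorem~\ref{intro:PBT} to peel off a rank two piece at each step, and a separate trick to trivialise the relevant determinant line bundles.

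For the inductive step, assume $r \geq 2$ and consider the Grassmann bundle $\pi \colon \Gr(2,E) \to X$ carrying the universal rank two subbundle $\Uni_2$ and the rank $r-2$ quotient $\Quo$. By Theorem~\ref{intro:PBT}, $\Aa(\Gr(2,E))$ is free over $\Aa(X)$, so $\pi^*$ is a split injection. The tautological short exact sequence
\[
0 \to \Uni_2 \to \pi^*E \to \Quo \to 0
\]
may not split globally, but it does after pulling back along the affine bundle $\alpha \colon Y_1 \to \Gr(2,E)$ parametrising its splittings; by $\Au$-homotopy invariance $\alpha^*$ is an isomorphism on $\Aa$. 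Hence on $Y_1$ we have $(\pi\alpha)^* E \cong \alpha^* \Uni_2 \oplus \alpha^* \Quo$, with $\alpha^*\Uni_2$ of rank two (but possibly with nontrivial determinant) and $\alpha^*\Quo$ of rank $r-2$.

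To force $\det(\alpha^*\Uni_2) =: L$ to be trivial, I would pass to the $\Gm$-torsor $\beta \colon Y_2 = L^\times \to Y_1$: the tautological section of $\beta^*L$ gives $\beta^*L \cong \Oc_{Y_2}$, so $\beta^*\alpha^*\Uni_2$ is a rank two bundle with trivial determinant. One then applies the inductive hypothesis to the rank $r-2$ bundle $\beta^*\alpha^*\Quo$ on $Y_2$ and composes all the pullbacks to obtain the desired $f \colon Y \to X$. The base cases $r = 0$ (take $Y = X$) and $r = 1$ (take $Y = E^\times$, which trivialises $E$) are both instances of the $\Gm$-torsor construction, and the odd-rank case of the induction produces the lone trivial line bundle summand.

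The main obstacle, used at every stage of the induction, is to show that pulling back along the $\Gm$-torsor of a line bundle $L$ over a base $Z$ is a split injection on $\Aa$ in the $\eta$-periodic hyperbolically oriented setting. I would approach this via the localisation triangle for the open embedding $L^\times \hookrightarrow L$ with complement the zero section, noting that the total space of $L$ is $\Au$-equivalent to $Z$. The required vanishing then amounts to the assertion that $\Aa(\T(L)) = 0$, or at least that the connecting map $\Aa(Z) \to \Aa(\T(L))$ is zero. This is exactly the point at which both $\eta$-periodicity and the hyperbolic (rather than $\Sp$- or $\GL$-) orientation are essential: one has Thom classes only for $L \oplus L$, and once $\eta$ is inverted the Thom space of a single line bundle is forced to be cohomologically trivial. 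Establishing this vanishing cleanly is the only nontrivial input and is where I expect the real work to lie.
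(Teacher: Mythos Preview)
Your inductive skeleton---peel off a rank two subbundle via $\Gr(2,E)$, split the tautological sequence over the affine bundle of splittings, trivialise the determinant, recurse---is exactly the paper's argument. The only difference is in how you trivialise line bundles: the paper invokes an external result (\rref{prop:splitting_odd}, from \cite{eta}) which directly produces a morphism $f$ with a section in $\SH(S)[\eta^{-1}]$ such that $f^*L \simeq 1$, whereas you propose passing to the $\Gm$-torsor $L^\times$.

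Your justification for the $\Gm$-torsor step has a genuine gap. The claim that $\Aa(\Th(L)) = 0$ is \emph{false}: already for $L = 1$ one has $\Th_Z(1) \simeq \Sigma^{2,1} Z_+$, which is not cohomologically trivial. What is true, and what actually makes your approach work, is that the zero-section map $z_L \colon \Sup Z \to \Th_Z(L)$ vanishes in $\SH(S)[\eta^{-1}]$ (this is the paper's \rref{lemm:zero_pb} with $F = 0$ and $E = L$, a line bundle having odd rank). From the distinguished triangle $\Sup L^\times \to \Sup Z \to \Th_Z(L)$ and the vanishing of the second map, one obtains a section of $\Sup L^\times \to \Sup Z$ in $\SH(S)[\eta^{-1}]$, hence split injectivity of the pullback on $\Aa$. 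Note two things: first, this uses only $\eta$-periodicity, not the hyperbolic orientation (contrary to what you suggest); second, the proof of \rref{lemm:zero_pb} itself rests on \rref{prop:splitting_odd}, so your detour through $L^\times$ ultimately consumes the same external input the paper uses directly.
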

Both this splitting principle and the analog of the projective bundle theorem (Theorem~\ref{intro:PBT}) are reminiscent of Ananyevskiy's results in the $\SL$-oriented setting \cite{Ananyevskiy-SL_PB}. The difference is that our results apply to vector bundles without additional structure as opposed to ones with trivialised determinant, and that the assumptions on the ring spectrum $A$ are weaker. As an illustration of this added flexibility, we obtain the following statement, a priori unrelated to hyperbolic orientations:
\begin{cor}
Let $A \in \SH(S)$ be an $\eta$-periodic commutative ring spectrum. Then each $\Sp$-orientation of $A$ is induced by at most one normalised $\SL$-orientation.
\end{cor}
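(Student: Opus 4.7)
The plan is to use the splitting principle (Theorem~\ref{intro:splitting}) to reduce the comparison of two normalised $\SL$-orientations inducing a common $\Sp$-orientation to two base cases: rank two bundles with trivial determinant, and trivial line bundles. Since an $\Sp$-orientation on $A$ induces a hyperbolic orientation, and $A$ is $\eta$-periodic, the splitting principle is available.

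Let $\sigma_1, \sigma_2$ be two normalised $\SL$-orientations of $A$ both inducing the same $\Sp$-orientation $\tau$. It suffices to show that $\thom_{\sigma_1}(E,\lambda) = \thom_{\sigma_2}(E,\lambda)$ for every $\SL$-bundle $(E,\lambda)$ over a smooth $S$-scheme $X$. Applying Theorem~\ref{intro:splitting} to $E$, I obtain a morphism $f \colon Y \to X$ such that $f^*$ is a split injection on $\Aa$-cohomology and such that $f^*E \simeq V_1 \oplus \cdots \oplus V_k \oplus L$, where each $V_i$ has rank two with trivial determinant and $L$ is either zero or a trivial line bundle. By multiplicativity of Thom classes under direct sums, $\thom_{\sigma_j}(f^*E, f^*\lambda)$ is a product of the Thom classes of the summands, corrected by the factor encoding the discrepancy between $f^*\lambda$ and the trivialisation of $\det(f^*E)$ induced by the decomposition. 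Each rank two summand $V_i$ with trivialised determinant carries a canonical symplectic form via the identification $\SL_2 = \Sp_2$, so its Thom class under either $\sigma_j$ coincides with the Thom class given by $\tau$; in particular $\sigma_1$ and $\sigma_2$ agree on these. The trivial line bundle $L$ admits a common Thom class by the normalisation condition. Combining these observations and then invoking split injectivity of $f^*$ (propagated to the cohomology of Thom spaces via the Thom isomorphism for either orientation) yields the desired equality $\thom_{\sigma_1}(E,\lambda) = \thom_{\sigma_2}(E,\lambda)$.

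The main obstacle lies in the change-of-trivialisation step: replacing $\lambda$ by $u\lambda$ for $u \in \Gm$ modifies the Thom class of an $\SL$-oriented theory by a factor that is a priori part of the orientation data. The key point, where $\eta$-periodicity is essential, is that for a normalised $\SL$-orientation of an $\eta$-periodic ring spectrum this factor should be determined by $A$ alone (being controlled by the Witt-ring structure emerging after $\eta$-inversion), so that the $\Gm$-correction applied in $\thom_{\sigma_1}$ and $\thom_{\sigma_2}$ is the same. Verifying this rigidity is the principal technical point; once it is in place, the reduction via the splitting principle to rank two symplectic bundles and trivial line bundles is clean.
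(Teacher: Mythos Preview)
Your overall strategy matches the paper's, but the ``principal technical point'' you flag is a genuine gap, and the rigidity you hope for is not obviously available: the dependence of $\thom(E,\lambda)$ on the trivialisation $\lambda$ is part of the $\SL$-orientation data, and there is no a priori reason two normalised $\SL$-orientations inducing the same $\Sp$-orientation should transform identically under $\lambda \mapsto u\lambda$. Invoking ``Witt-ring structure after $\eta$-inversion'' is suggestive but not a proof.

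The paper avoids this issue rather than confronting it. Instead of applying Theorem~\ref{intro:splitting} and then repairing the mismatch of trivialisations, it first upgrades the splitting principle to an $\SL$-oriented version (Corollary~\ref{th:SL_splitting}): given an $\SL$-oriented bundle $(E,\lambda)$, one finds $f\colon Y\to X$ with $f^*$ split injective such that $(f^*E, f^*\lambda) \simeq \bigoplus_{i=1}^r (E_i,\lambda_i)$ as $\SL$-oriented bundles. The trick is elementary: take the splitting $f^*E\simeq E_1\oplus\cdots\oplus E_r$ from Theorem~\ref{intro:splitting}, choose arbitrary trivialisations $\lambda_1,\dots,\lambda_{r-1}$ of $\det E_1,\dots,\det E_{r-1}$, and then \emph{define}
\[
\lambda_r \colon 1 \xrightarrow{\; f^*\lambda \otimes (\lambda_1\otimes\cdots\otimes\lambda_{r-1})^{-1}\;} \det(f^*E) \otimes (\det E_1\otimes\cdots\otimes\det E_{r-1})^{-1} \simeq \det E_r.
\]
With this compatible $\SL$-splitting, multiplicativity gives $\thom_{\sigma_j}(f^*E, f^*\lambda) = \prod_i \thom_{\sigma_j}(E_i,\lambda_i)$ with no correction factor at all, and your rank-one (normalisation) and rank-two ($\SL_2=\Sp_2$) arguments then apply directly.
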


One may notice that the above construction of Pontryagin classes in fact only uses the hyperbolic Thom classes of rank two bundles. We prove in \S\ref{sect:o_wo} that such data, that we call a weak hyperbolic orientation (see \rref{def:hyp_thom} for a precise definition), is in fact equivalent to the data of a hyperbolic orientation. Part of this statement (the uniqueness) is of course a consequence of the splitting principle, but there is more to it. Indeed, the axioms of a hyperbolic orientation include a multiplicativity property (the Thom class of a direct sum is the product of the Thom classes of the summands), which has no counterpart for weak hyperbolic orientations (as they concern only rank two bundles). The verification of this multiplicative property is thus not a simple formality, and in fact relies quite a bit on the theory developed earlier in the paper.

We also prove in \S\ref{sect:Pontryagin_structures} the comparatively easier fact that it is equivalent to specify the Thom classes of the rank two bundles, or their first Pontryagin classes. In particular, a hyperbolic orientation is precisely what is needed in order to obtain a theory of Pontryagin classes.\\

Next, we compute in \rref{th:A_BGL} the cohomology of the \'etale classifying space $\BGL_n$, generalising a computation of Levine \cite[Theorem~4.1]{Levine-motivic_Euler} (by including for instance the case when $A$ is $\Sp$-oriented)
\begin{thm}
\label{intro:BGL}
Let $A \in \SH(S)$ be an $\eta$-periodic hyperbolically oriented ring spectrum. Let $r \in \Nn$, and $n \in \{2r,2r+1\}$. Then
\[
\Aa(\BGL_n) = \Aa(S)[[p_1,\dots,p_r]]_h.
\]
\end{thm}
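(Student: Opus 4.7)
The plan is to compare $\Aa(\BGL_n)$ with $\Aa(\BGL_2^r)$ via the direct-sum classifying map and identify the answer as the ring of symmetric functions, following the classical Grothendieck template for Chern classes.

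First, I will compute the base case $\Aa(\BGL_2)$. Writing $\BGL_2 = \colim_N \Gr(2, 2+N)$, Theorem~\ref{intro:PBT} applied to the trivial bundle $\Oc^{2+N}$ on a point exhibits $\Aa(\Gr(2, 2+N))$ as a free $\Aa(S)$-module on $1, p_1(\Uni_2), \ldots, p_1(\Uni_2)^{d-1}$ with $d = \lfloor (2+N)/2 \rfloor$. Passing to the inverse limit, where the Milnor $\lim^1$ term vanishes thanks to $\eta$-periodicity, yields $\Aa(\BGL_2) = \Aa(S)[[p_1]]_h$, and a Künneth-type argument then gives $\Aa(\BGL_2^r) = \Aa(S)[[y_1,\ldots,y_r]]_h$ with $y_i = p_1$ of the $i$-th factor.

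Next, consider the direct-sum classifying map $\oplus \colon \BGL_2^r \to \BGL_n$ (together with a trivial line-bundle summand when $n = 2r+1$). This map is $\Sy_r$-equivariant for the permutation action on the source and the trivial action on the target. By the Whitney sum formula (Proposition~\ref{prop:Pontryagin_sum}) and the vanishing of Pontryagin classes of trivial line bundles in the $\eta$-periodic hyperbolic setting, $\oplus^* p_k(\Uni_n) = e_k(y_1,\ldots,y_r)$, the $k$-th elementary symmetric polynomial. Hence the image of $\oplus^*\colon \Aa(\BGL_n) \to \Aa(\BGL_2^r)$ lies inside the invariants $\Aa(S)[[y_1,\ldots,y_r]]_h^{\Sy_r} = \Aa(S)[[e_1,\ldots,e_r]]_h$ and contains every $e_k$, so the image equals $\Aa(S)[[e_1,\ldots,e_r]]_h = \Aa(S)[[p_1,\ldots,p_r]]_h$.

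It remains to show $\oplus^*$ is injective. Applying the splitting principle (Theorem~\ref{intro:splitting}) to $\Uni_n$ on each $\Gr(n, n+N)$ yields $Y_N \to \Gr(n, n+N)$ with split-injective pullback on cohomology and over which $\Uni_n|_{Y_N}$ splits into rank-$2$ bundles of trivial determinant plus a trivial line bundle if $n$ is odd. This splitting is classified by a map $Y_N \to \BGL_2^r$ whose further composition with $\oplus$ agrees with $Y_N \to \Gr(n, n+N) \to \BGL_n$. Hence $\oplus^* \colon \Aa(\BGL_n) \to \Aa(\BGL_2^r)$ followed by $\Aa(\BGL_2^r) \to \Aa(Y_N)$ is split injective, which forces $\oplus^*$ itself to be injective. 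Combined with the previous paragraph, $\oplus^*$ identifies $\Aa(\BGL_n)$ with $\Aa(S)[[p_1,\ldots,p_r]]_h$. The main obstacles will be justifying the Künneth isomorphism and the passage to the inverse limit with its completion, both of which lean essentially on $\eta$-periodicity; a secondary technical point is that the scheme-level splitting principle must be extended coherently across the tower of finite Grassmannians to obtain injectivity at the level of $\BGL_n$.
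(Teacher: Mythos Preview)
Your approach is genuinely different from the paper's: the paper first computes $\Aa(\Gr(n,s))$ explicitly as $\Aa(S)[p_1,\dots,p_r]/J_{d-r+1}$ by an induction on $n+s$ (Proposition~\ref{prop:A_Gr_n_s}), then obtains Mittag--Leffler for the tower, and finally identifies the inverse limit using an algebraic lemma about the ideals $J_d$. Your route via the comparison map $\oplus^*\colon \Aa(\BGL_n) \to \Aa(\BGL_2^r)$ and the splitting principle is closer in spirit to the classical Borel/Grothendieck arguments and, if it worked, would be pleasantly conceptual.

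However, there is a genuine gap. Your injectivity argument only shows that any $x \in \ker\oplus^*$ restricts to zero in every $\Aa(\Gr(n,n+N))$; concluding $x=0$ requires that $\Aa(\BGL_n) \to \lim_N \Aa(\Gr(n,n+N))$ be injective, i.e.\ that $\lim^1_N \Aa(\Gr(n,n+N))=0$. You establish this for $\BGL_2$ (where the explicit computation of $\Aa(\Gr(2,s))$ is available), but never for $\BGL_n$ with $n>2$, and $\eta$-periodicity alone does not give it. The issue you label ``secondary technical'' is in fact the heart of the matter: without either a computation of $\Aa(\Gr(n,s))$ or a coherent tower of splitting schemes $Y_N$, you cannot control the $\lim^1$ term. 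The same gap undermines the surjectivity step: knowing only that the image of $\oplus^*$ contains the $e_k$ and lies in the invariants tells you $\Aa(S)[e_1,\dots,e_r] \subset \operatorname{im}\oplus^* \subset \Aa(S)[[e_1,\dots,e_r]]_h$, but pinning the image down as the full power series ring again needs the limit description of $\Aa(\BGL_n)$, hence the vanishing of $\lim^1$.

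The paper's approach sidesteps both problems at once: the explicit finite-level computation immediately yields surjectivity of the transition maps (hence Mittag--Leffler) and a concrete description of the limit. To salvage your approach you would need an independent argument for $\lim^1=0$ over $\BGL_n$---for instance by constructing the flag bundle $\BGL_2^r \to \BGL_n$ at the level of ind-schemes and proving it induces a split injection directly, rather than scheme-by-scheme.
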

Here the index $h$ refers to the homogeneous power series ring. This theorem asserts that there are no universal relations between the Pontryagin classes, and that there are no further invariants of vector bundles with values in $\eta$-periodic hyperbolically oriented cohomology theories. Unsurprisingly, the theorem is deduced from the computation of the cohomology of higher Grassmannians $\Gr(n,1^{\oplus s})$ for appropriate values of $n,s$.\\

In the last section, we first show that a weak hyperbolic orientation of an $\eta$-periodic cohomology can be defined by specifying the Thom class of the universal rank two vector bundle, in the form of an element in the cohomology of a certain motivic space $\MH_2$, subject to a normalisation condition.

We then construct a motivic ring spectrum $\MH$ similar to Voevodsky's cobordism spectrum $\MGL$, and show that it becomes the universal $\eta$-periodic hyperbolically oriented commutative ring spectrum after inverting $\eta$. The considerations of that section are parallel to those of Panin--Pimenov--R\"ondigs \cite{PPR-MGL} on $\MGL$, and of Panin--Walter \cite{PW-MSL-Msp} on $\MSp$.\\

To summarise, we describe in this paper five equivalent structures on an $\eta$-periodic commutative ring spectrum $A\in \SH(S)$:
\begin{enumerate}[(i)]
\item hyperbolic orientations \rref{def:hyp_orientation},

\item weak hyperbolic orientations \rref{def:hyp_thom},

\item Pontryagin structures \rref{def:Pontryagin_str},

\item ``normalised'' elements of $A^{8,4}(\MH_2)$ \rref{p:def_MHn},
\item morphisms of ring spectra $\MH \to A$ \rref{def:MH}.
\end{enumerate}
These equivalences are established in \rref{prop:Pontryagin_str-wo}, \rref{th:Thom_orientation}, \rref{prop:MH_2_weak}, \rref{th:MH_univ}.\\

Let us now comment on the standing assumption of $\eta$-periodicity. In order to construct Pontryagin classes using Grothendieck's method, we need the analog of the projective bundle theorem (Theorem~\ref{intro:PBT}). When the vector bundle $E$ is trivial of rank three over the base $S$, the Grassmannian $\Gr(2,E)$ is identified with the projective space $\Pp^2$, and the conclusion of theorem is that the pullback $\Aa(S) \to \Aa(\Pp^2)$ is an isomorphism. This condition is equivalent to the requirement that the pullback along the Hopf map $\eta \colon \Ab^2\smallsetminus \{0\} \to \Pp^1$ induces an isomorphism in $\Aa$, and so making this assumption is necessary if we are to obtain Pontryagin classes satisfying the analog of the projective bundle theorem. Looking at the situation in topology reinforces this idea: there it is known that the Pontryagin classes satisfy the Whitney sum formula only up to $2$-torsion, and the Hopf element $\eta \in \pi^{-1,-1}(\mathbb{R})$ maps to $2$ upon real realisation. It would of course still be interesting to systematically study Pontryagin classes beyond the $\eta$-periodic context, but one probably should not expect such a simple picture to emerge.\\

Apart from the construction of the motivic stable homotopy category (as well as certain abstract considerations on symmetric spectra in \S\ref{sect:MH}), the paper is fairly self-contained; the only external results used in a essential way are contained in the paper \cite{eta} (those are \rref{prop:splitting_odd}, \rref{prop:PE} and \rref{lemm:PE_even_Q}).

\noindent \textbf{Acknowledgments}.
I am grateful to the referee for the suggestions, and in particular for drawing my attention to a rather serious sign oversight in a previous version of this paper.

\section{Notation and basic facts}

\begin{para}
Throughout the paper, we work over a noetherian base scheme $S$ of finite dimension. The category of smooth separated $S$-schemes of finite type will be denoted by $\Sm_S$. All schemes will be implicitly assumed to belong to $\Sm_S$, and the notation $\Ab^n,\Pp^n,\Gm$ will refer to the corresponding $S$-schemes. We will denote by $1$ the trivial line bundle over a given scheme in $\Sm_S$.
\end{para}

\begin{para}
We will work with the $\Ab^1$-homotopy theory introduced by Morel--Voevodsky \cite{MV-A1}. We will denote by $\Spc(S)$ the category of motivic spaces (i.e.\ simplicial presheaves on $\Sm_S$), by $\Spcp(S)$ its pointed version, and by $\Spt(S)$ the category of $T$-spectra, where $T= \mathbb{A}^1/\Gm$. We endow these with the motivic equivalences, resp.\ stable motivic equivalences, and denote by $\Hot(S), \Hop(S), \SH(S)$ the respective homotopy categories. We refer to e.g.\ \cite[Appendix~A]{PMR} for more details.

We have an infinite suspension functor $\Su\colon \Spcp(S) \to \Spt(S)$. Composing with the functor $\Spc(S) \to \Spcp(S)$ adding an external base-point, we obtain a functor $\Sup \colon \Spc(S) \to \Spt(S)$.

The spheres are denoted as usual by $S^{p,q} \in \Spcp(S)$ for $p,q\in \Nn$ with $p \geq q$, (where $T\simeq S^{2,1}$). The motivic sphere spectrum will be denoted by $\Un_S =\Sup S \in \Spt(S)$. When $A$ is a motivic spectrum, we denote its $(p,q)$-th suspension by $\Sigma^{p,q}A = A \wedge \Su S^{p,q}$. This yields functors $\Sigma^{p,q} \colon \SH(S) \to \SH(S)$ for $p,q\in \Zz$.
\end{para}

\begin{para}
\label{p:f_sharp}
Let $X \in \Sm_S$, with structural morphism $f\colon X \to S$. Then viewing a smooth $X$-scheme as an $S$-scheme induces a functor $f_\sharp \colon \Hop(X) \to \Hop(S)$ (see e.g.\ \cite[p.104]{MV-A1} where it is denoted by $\Ld \! f_\sharp$). We will also denote by $f_\sharp \colon \SH(X) \to \SH(S)$ the induced functor.
\end{para}

\begin{para}
When $E\to X$ is a vector bundle with $X \in \Sm_S$, we denote by $E^{\circ}=E \smallsetminus X$ the complement of its zero-section. The Thom space of $E$ is the pointed motivic space $\Th_X(E) = E/E^\circ$. When $f \colon Y \to X$ is a morphism in $\Sm_S$, we will usually write $\Th_Y(E)$ instead of $\Th_Y(f^*E)$. Note that a vector bundle inclusion $E \subset F$ over $X$ induces a map $\Th_X(E) \to \Th_X(F)$.
\end{para}

\begin{para}
\label{p:sigma_V}
Let $V \to S$ be a vector bundle. We denote by $\Sigma^V \colon \Hop(S) \to \Hop(S)$ the derived functor induced by $A \mapsto A \wedge \Th_S(V)$. We will also denote by $\Sigma^V \colon \SH(S) \to \SH(S)$ the induced functor.
\end{para}

\begin{para}
In an attempt to lighten the notation, we will sometimes remove the symbols $\Su$ or $\Sup$ for the notation, when the context makes the meaning sufficiently clear. In particular, we will systematically write $\Th_X(E) \in \Spt(S)$ instead of $\Su \Th_X(E)$, and when $f$ is a morphism in $\Sm_S$ or $\Spc(S)$ (resp.\ in $\Spcp(S)$), we will often write $f$ instead of $\Sup f$ (resp.\ $\Su f$).
\end{para}

\begin{para}
\label{p:Thom_ses}
Assume that $0 \to E' \to E \to E'' \to 0$ is an exact sequence of vector bundles over $X \in \Sm_S$. Then by \cite[\S4.1]{Riou} or \cite[Remark~3.2.7]{Hoyois-equivariant} (see e.g.\ \cite[Proof of Lemma 4]{Ana-Pushforwards} for the explicit homotopies) we have a canonical isomorphism in $\Hop(S)$
\[
\Th_X(E) \simeq \Th_X(E' \oplus E''),
\]
which is induced by any splitting of the above exact sequence, if such exists.
\end{para}

\begin{para}
\label{p:Jouanolou}
(See \cite[p.243]{Riou}.) We will use the following alternative to Jouanolou's trick. Assume that $0 \to E' \to E \to E'' \to 0$ is an exact sequence of vector bundles over $X \in \Sm_S$. Consider the scheme $Y$ parametrising sections of $E \to E''$ (a closed subscheme of the vector bundle $\Hom(E'',E)$ over $X$). The morphism $f\colon Y \to X$ is a torsor under the vector bundle $\Hom(E'',E')$, and in particular $Y \in \Sm_S$. In addition there exists an isomorphism $f^*E \simeq f^*E' \oplus f^*E''$ of vector bundles over $Y$. 
\end{para}

\begin{para}
Let $A \in \Spt(S)$ be a motivic spectrum. For $C \in \Spt(S)$, we write
\[
A^{p,q}(C) = \Hom_{\SH(S)}(C,\Sigma^{p,q}A),
\]
and $\Aa(C) = \bigoplus_{p,q\in\Zz} A^{p,q}(C)$. When $\X$ is a pointed motivic space, resp.\ a motivic space, we will write $\Aa(\X)$ instead of $\Aa(\Su \X)$, resp.\ $\Aa(\Sup \X)$. If $E \to X$ is a vector bundle of constant rank $r$ with $X \in \Sm_S$, we write
\[
A^{p,q}(X;E) = A^{p+2r,q+r}(\Th_X(E)),
\]
and extend this notation to arbitrary vector bundles in an obvious way. Note that for any $d \in \Nn$ the auto-equivalence $\Sigma^{2d,d}$ of $\SH(S)$ induces isomorphisms
\begin{equation}
\label{eq:twist_shift}
\Sigma^{2d,d} \colon A^{p,q}(X;E) \xrightarrow{\sim} A^{p,q}(X;E\oplus 1^{\oplus d}),
\end{equation}

A morphism $f\colon B \to B'$ in $\SH(S)$ induces a pullback $f^* \colon \Aa(B') \to \Aa(B)$. A morphism $\psi \colon A \to A'$ in $\SH(S)$ induces a pushforward $\psi_* \colon \Aa(B) \to A'^{*,*}(B)$ for any $B \in \Spt(S)$.
\end{para}

\begin{para}
\label{p:eta}
We denote by $\eta \colon \Ab^2 \smallsetminus \{0\} \to \Pp^1$ in $\Spcp(S)$ the map $(x,y) \mapsto [x:y]$, where $\Ab^2\smallsetminus \{0\}$ is pointed by $(1,1)$ and $\Pp^1$ by $[1:1]$. 
\end{para}

\begin{para}
\label{p:eta_periodic}
A motivic spectrum $A \in \Spt(S)$ is called \emph{$\eta$-periodic} if the map
\[
A \wedge \Su (\Ab^2 \smallsetminus \{0\}) \xrightarrow{\id \wedge \Su \eta} A \wedge \Su \Pp^1
\]
is an isomorphism in $\SH(S)$. The full subcategory of such objects will be denoted $\Spt(S)[\eta^{-1}]$, and may be viewed as a left Bousfield localisation of $\Spt(S)$, as explained in \cite[\S6]{Bachmann-real}. The homotopy category of $\Spt(S)[\eta^{-1}]$ will be denoted by $\SH(S)[\eta^{-1}]$, and we will usually omit the mention of the inclusion and localisation functors. When $A \in \Spt(S)$ is $\eta$-periodic and $\mathcal{X} \in \Spcp(S)$, we thus have a natural identification
\[
A^{p,q}(\mathcal{X}) = \Hom_{\SH(S)[\eta^{-1}]}(\Su \mathcal{X},\Sigma^{p,q}A).
\]
The functor of \rref{p:f_sharp} descends to a functor $f_\sharp \colon \SH(X)[\eta^{-1}] \to \SH(S)[\eta^{-1}]$, and the functor of \rref{p:sigma_V} to a functor $\Sigma^V \colon \SH(S)[\eta^{-1}] \to \SH(S)[\eta^{-1}]$.
\end{para}

\begin{para}
\label{p:cup_product}
By a ring spectrum, resp.\ commutative ring spectrum, we will mean a monoid, resp.\ commutative monoid, in $(\SH(S),\wedge,\Un_S)$. Let $A \in \SH(S)$ be a ring spectrum, with multiplication map $\mu \colon A \wedge A \to A$. Let $X \in \Sm_S$, and $\Delta_X \colon X \to X \times_S X$ its diagonal. If $x,y \in \Aa(X)$, we denote by $x \cup y \in \Aa(X)$, or simply $xy$, the composite in $\SH(S)$
\[
\Sup X \xrightarrow{\Delta_X} \Sup(X \times_S X) = (\Sup X) \wedge (\Sup X) \xrightarrow{x \wedge y} A \wedge A \xrightarrow{\mu} A.
\]
More generally let $V,W \to X$ be vector bundles, and denote by $p_1,p_2 \colon X \times_S X \to X $ the projections. Then $\Delta_X$ induces a morphism in $\SH(S)$
\[
\Th_X(V \oplus W) = \Th_X(\Delta_X^*(p_1^*V \oplus p_2^*W)) \to \Th_{X \times_S X}(p_1^*V \oplus p_2^*W) = \Th_X(V) \wedge \Th_X(W)
\]
that we denote again by $\Delta_X$. As above, if $x\in \Aa(X;V)$ and $y\in \Aa(Y;W)$, we may define an element $x \cup y \in \Aa(X;V \oplus W)$ as $\mu \circ (x \wedge y) \circ \Delta_X$.

In particular, this endows $\Aa(X)$ with a ring structure, and $\Aa(X;V)$ with an $\Aa(X)$-bimodule structure.

Observe that the isomorphism of \eqref{eq:twist_shift} is given by the cup product with $\Sigma^{2d,d}1 \in A^{0,0}(X;1^{\oplus d})$, so that we have
\begin{equation}
\label{eq:Sigma_cup}
\Sigma^{2d,d}(x\cup y) = x \cup \Sigma^{2d,d}y, \quad \text{for any $x \in \Aa(X;V),y \in \Aa(X;W)$}.
\end{equation}
\end{para}

\begin{para}
\label{p:algebra}
All rings will be associative and unital. Let $R$ be a ring. In this paper, an $R$-algebra will mean a monoid in the category of $R$-bimodules. By $R[x_1,\dots,x_n]$ we will mean the polynomial ring where the variables $x_1,\dots,x_n$ are understood to be central.
\end{para}

\begin{para}
\label{p:homogeneous_power_series}
Let $M$ be an abelian group, and $R$ an $M$-graded ring. If $m_1,\dots,m_r\in M$, we may view $R[x_1,\dots,x_r]$ as an $M$-graded $R$-algebra, where each $x_i$ has degree $m_i$. We let $L \subset R[x_1,\dots,x_r]$ is the (two-sided) ideal generated by $x_1,\dots,x_r$. We will denote by $R[[x_1,\dots,x_r]]_h$ the $M$-graded ring of \emph{homogeneous power series}, defined as the limit in the category of $M$-graded $R$-algebras of $R[x_1,\dots,x_r]/L^n$ for $n\in \Nn$. (Each homogeneous component is the limit in the category of abelian groups of the corresponding homogeneous components of $R[x_1,\dots,x_r]/L^n$.)
\end{para}

\begin{para}
\label{p:Thom_dual_vb}
(See \cite[Lemma~2]{Ana-Pushforwards}.)
Let $X \in \Sm_S$, and $E,V \to X$ be vector bundles. We claim that there exists a canonical isomorphism in $\Hop(S)$
\begin{equation}
\label{eq:sigma_E}
\sigma_E \colon \Th_X(V \oplus E) \xrightarrow{\sim} \Th_X(V \oplus E^\vee).
\end{equation}
Indeed we reduce to the case $X=S$ using the functor $f_\sharp$ of \rref{p:f_sharp}, and then to the case $V=0$ using the functor $\Sigma_V$ of \rref{p:sigma_V}. Consider then the closed subscheme $Y \subset E \oplus E^\vee$ consisting of those pairs $(x,f)$ such that $f(x)=1$. Then the projection $Y \to E^\circ$ given by $(x,f) \mapsto x$ is an affine bundle, hence a weak equivalence of motivic spaces. Since the projection $E \oplus E^\vee \to E$ is a weak equivalence, we obtain a weak equivalence $(E \oplus E^\vee)/Y \to E/E^\circ = \Th_X(E)$. Similarly we have a weak equivalence  $(E \oplus E^\vee)/Y \to \Th_X(E^\vee)$. This yields the required isomorphism $\sigma_E \colon \Th_X(E) \xrightarrow{\sim} \Th_X(E^\vee)$ in $\Hop(S)$. Let us record that we have a commutative diagram in $\Hop(S)$, where the maps from $X_+$ are the zero-sections,
\begin{equation}
\label{eq:diag_Thom_dual}
\begin{gathered}
 \xymatrix{
&E_+\ar[r] &  \Th_X(E)\ar@/^4.0pc/[dd]^{\sigma_E} \\ 
X_+ \ar[r] \ar[ru] \ar[rd] & (E \oplus E^\vee)_+\ar[d] \ar[u] \ar[r]&(E \oplus E^\vee)/Y\ar[d] \ar[u]\\
&(E^\vee)_+ \ar[r] & \Th_X(E^\vee) 
}
\end{gathered}
\end{equation}
\end{para}

\begin{para}
\label{p:epsilon}
Let us denote by $\can \colon \Th_S(1^\vee) \to \Th_S(1)$ the morphism in $\SH(S)$ induced by the canonical isomorphism $1^\vee \xrightarrow{\sim} 1$ of vector bundles over $S$. We consider the isomorphism in $\SH(S)$
\[
\epsilon \colon \Un_S = \Sigma^{-2,-1}\Th_S(1) \xrightarrow{\Sigma^{-2,-1}(\sigma_1)} \Sigma^{-2,-1}\Th_S(1^\vee) \xrightarrow{\Sigma^{-2,-1}(\can)} \Sigma^{-2,-1}\Th_S(1) = \Un_S.
\]
For any motivic ring spectrum $A \in \SH(S)$, we will write $\epsilon \in A^{0,0}(S)$ instead of $\epsilon^*(1)$.

Let us mention that $\epsilon=-\langle -1 \rangle$, using a standard notation (see e.g.\ \cite[Lemma~6.3.5]{Morel-Intro_A1}). 

Since $\eta \epsilon = \eta$ by \cite[Lemma~6.2.3]{Morel-Intro_A1} (where the fact that base scheme is the spectrum of a perfect field plays no role), we have $\epsilon =1 \in  A^{0,0}(S)$ when $A$ is $\eta$-periodic.
\end{para}

\section{Hyperbolic structures}
\numberwithin{theorem}{subsection}
\numberwithin{lemma}{subsection}
\numberwithin{proposition}{subsection}
\numberwithin{corollary}{subsection}
\numberwithin{example}{subsection}
\numberwithin{notation}{subsection}
\numberwithin{definition}{subsection}
\numberwithin{remark}{subsection}

\subsection{Zero-sections}
\begin{para}
\label{p:z}
Let $E\to X$ be a vector bundle with $X \in \Sm_S$. We denote by
\begin{equation}
\label{eq:p:z:1}
z_E \colon X_+ \to \Th_X(E) \in \Hop(S)
\end{equation}
the composite of the zero-section $X_+ \to E_+$ followed by the canonical map $E_+ \to E/(E^\circ) = \Th_X(E)$. Equivalently, the map $z_E$ is the composite $X_+ = \Th_X(0) \to \Th_X(E)$ where the second map is induced by the inclusion $0 \subset E$. More generally, when $V \to X$ is a vector bundle, the zero-section of $E$ induces an inclusion $V \to E \oplus V$, and thus a morphism of pointed motivic spaces
\begin{equation}
\label{eq:p:z:2}
z_E \colon \Th_X(V) \to \Th_X(E\oplus V).
\end{equation}
When $X=S$, observe that \eqref{eq:p:z:2} is obtained from \eqref{eq:p:z:1} by applying the functor $\Sigma^V$ of \rref{p:sigma_V}.
\end{para}

\begin{para}
\label{p:z_1}
The element $1 \in H^0(S,\Gm)$ yields a section of the projection $\Gm \to S$, which implies that the map $z_1 \colon S_+ \to (\Au)_+/(\Gm)_+=\Th_S(1)$ is zero in $\Hop(S)$.
\end{para}

We will use the following splitting principle from \cite{eta}:
\begin{proposition}[{\cite[(4.2.5)]{eta}}]
\label{prop:splitting_odd}
Let $X \in \Sm_S$, and $V \to X$ be a vector bundle of constant odd rank. Then there exists a morphism $f\colon Y \to X$ in $\Sm_S$ whose image in $\SH(S)[\eta^{-1}]$ admits a section, and a vector bundle $W \to Y$ such that $f^*V \simeq W \oplus 1$.
\end{proposition}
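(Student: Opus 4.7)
The plan is to construct $Y$ as a vector-bundle torsor over the complement $V^\circ$ of the zero-section of $V$, so as to rigidify a tautological splitting that exists on $V^\circ$. More precisely, set $Y_0 = V^\circ$ with structural map $g\colon Y_0 \to X$. Over $Y_0$, the pullback $g^*V$ carries the canonical nowhere-vanishing section coming from the inclusion $V^\circ \hookrightarrow V$, yielding a short exact sequence $0 \to 1 \to g^*V \to Q \to 0$ of vector bundles on $Y_0$. By the variant of Jouanolou's trick recalled in \rref{p:Jouanolou}, there is a vector-bundle torsor $\pi\colon Y \to Y_0$, hence an $\Ab^1$-weak equivalence and a fortiori an isomorphism in $\SH(S)$, over which this sequence splits. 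Setting $f = g \circ \pi$ and $W = \pi^*Q$ produces an isomorphism $f^*V \simeq W \oplus 1$ as required.

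It remains to exhibit a section of $g\colon V^\circ \to X$ in $\SH(S)[\eta^{-1}]$. Since $V^\circ \subset V$ is the open complement of the zero-section and $V \to X$ is an $\Ab^1$-weak equivalence, we have a cofiber sequence
\[
\Sup V^\circ \to \Sup X \xrightarrow{z_V} \Th_X(V)
\]
in $\SH(S)$. Producing a section of the first arrow is equivalent to showing that $z_V$ vanishes in $\SH(S)[\eta^{-1}]$.

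This vanishing is the crux of the argument and the main obstacle. The strategy is to invoke an $\eta$-periodic Thom-space identification $\Th_X(V) \simeq \Th_X(\det V \oplus 1^{r-1})$ compatible with zero-sections, for $r = \rank V$; this is the general principle that ``$\eta$-periodic $\SH$ only sees determinants'' and should be available via the kind of hyperbolic constructions that motivate the present paper. Granting it, when $r = 2n+1 \geq 3$ the right-hand side has a trivial line summand, so the zero-section $z_{\det V \oplus 1^{r-1}}$ decomposes via the multiplicativity discussed in \rref{p:cup_product} as a cup product involving $z_1$, which is null by \rref{p:z_1}. Hence $z_V = 0$ in $\SH(S)[\eta^{-1}]$, producing the required section. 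The residual rank one case, where $V = L$ is a line bundle with no trivial summand to pull out, is genuinely more subtle and should be handled by a separate geometric argument—for instance by replacing $L$ by $L \oplus L$ (of even rank and square determinant, and therefore covered by the argument above) and transferring the resulting section back to a trivialisation of $L$ itself via an explicit nullhomotopy involving the Hopf map $\eta$.
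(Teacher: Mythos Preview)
The paper does not prove this proposition at all: it is quoted verbatim from \cite[(4.2.5)]{eta} as one of the external inputs (see the end of the introduction), and the present paper builds on it rather than establishing it. So there is no ``paper's own proof'' to compare against; the question is whether your argument stands on its own.

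Your reduction is sound: taking $Y_0 = V^\circ$ does produce the tautological trivial subbundle of $g^*V$, Jouanolou splits the resulting extension, and the cofibre sequence $\Sup V^\circ \to \Sup X \xrightarrow{z_V} \Th_X(V)$ shows that a section of $g$ in $\SH(S)[\eta^{-1}]$ exists as soon as $z_V = 0$ there. The problem is that you do not prove $z_V = 0$. You invoke an identification $\Th_X(V) \simeq \Th_X(\det V \oplus 1^{r-1})$ in $\SH(S)[\eta^{-1}]$, ``compatible with zero-sections'', and say it ``should be available via the kind of hyperbolic constructions that motivate the present paper''. But those constructions are developed \emph{after} Proposition~\ref{prop:splitting_odd} and repeatedly use it (see the proofs of \ref{lemm:zero_pb}, \ref{lemm:pi_dual}, \ref{th:splitting}, etc.); in particular Lemma~\ref{lemm:zero_pb}, which is exactly the vanishing of $z_V$ you want (take $F=0$), is deduced \emph{from} \ref{prop:splitting_odd}. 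So the appeal is circular. The Thom-space identification you cite is itself one of the main results of \cite{eta}, and its proof there is intertwined with the splitting principle; you cannot simply assume it.

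The rank-one case is not a residual technicality but a genuine hole: your sketch (``replace $L$ by $L\oplus L$ \ldots\ transfer back via an explicit nullhomotopy involving $\eta$'') is not an argument, and no mechanism is given for the transfer. In short, you have correctly identified the geometric construction and reduced everything to the vanishing of $z_V$, but that vanishing is precisely the nontrivial content of the result in \cite{eta}, and your proposal does not supply an independent proof of it.
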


\begin{lemma}
\label{lemm:zero_pb}
Let $X \in \Sm_S$. Let $E \to X$ be a vector bundle, and $F \subset E$ a subbundle such that $E/F$ has constant odd rank. Then the morphism $\Th_X(F) \to \Th_X(E)$ vanishes in $\SH(S)[\eta^{-1}]$.
\end{lemma}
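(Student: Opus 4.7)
The plan is to identify the given morphism with a zero-section, pass to $\SH(X)[\eta^{-1}]$ in order to reduce to the case of a rank-zero bundle $F$, apply the splitting principle of \ref{prop:splitting_odd} to find a trivial line-bundle summand, and invoke the vanishing of $z_1$ recorded in \ref{p:z_1}.

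Setting $Q := E/F$, the canonical isomorphism of \ref{p:Thom_ses} applied to $0 \to F \to E \to Q \to 0$ identifies $\Th_X(E)$ with $\Th_X(F \oplus Q)$ in $\Hop(S)$, and identifies the given morphism with the zero-section $z_Q \colon \Th_X(F) \to \Th_X(F \oplus Q)$ of \eqref{eq:p:z:2}. Since the functor $p_\sharp\colon \SH(X)[\eta^{-1}] \to \SH(S)[\eta^{-1}]$ preserves zero morphisms, it suffices to prove $z_Q = 0$ in $\SH(X)[\eta^{-1}]$. Viewed in $\SH(X)$, one has $z_Q = \Sigma^F z_Q^0$, where $z_Q^0 \colon \Un_X \to \Th_X(Q)$ is the zero-section and $\Sigma^F$ is an autoequivalence of $\SH(X)[\eta^{-1}]$, so the problem further reduces to showing $z_Q^0 = 0$ in $\SH(X)[\eta^{-1}]$.

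For this, I apply \ref{prop:splitting_odd} with $X$ in place of the ambient base scheme---legitimate since $X$ is noetherian of finite dimension---to the odd-rank bundle $Q$, obtaining $f\colon Y \to X$ in $\Sm_X$ admitting a section $s$ in $\SH(X)[\eta^{-1}]$, a bundle $W \to Y$, and an isomorphism $f^*Q \simeq W \oplus 1$. Over $Y$, the zero-section $z_{f^*Q}$ factors as
\[
Y_+ \xrightarrow{z_1} \Th_Y(1) \xrightarrow{z_W} \Th_Y(1 \oplus W) = \Th_Y(f^*Q),
\]
in which the first arrow vanishes in $\Hop(Y)$ by \ref{p:z_1}. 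Naturality of the zero-section produces a commutative square in $\SH(X)$,
\[
\xymatrix{
Y_+ \ar[r]^{z_{f^*Q}} \ar[d]_{f} & \Th_Y(f^*Q) \ar[d] \\
\Un_X \ar[r]^{z_Q^0} & \Th_X(Q),
}
\]
yielding $z_Q^0 \circ f = 0$ in $\SH(X)$; composing with $s$ then gives $z_Q^0 = z_Q^0 \circ f \circ s = 0$ in $\SH(X)[\eta^{-1}]$, as required.

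The main obstacle is the initial shift of viewpoint. A direct attempt in $\SH(S)[\eta^{-1}]$ produces $z_Q \circ \tilde f = 0$ where $\tilde f \colon \Th_Y(f^*F) \to \Th_X(F)$ is the induced map on Thom spaces, but cancelling $\tilde f$ requires a section that is not immediately delivered by the statement of \ref{prop:splitting_odd} as phrased over $S$. Passing to $\SH(X)[\eta^{-1}]$ sidesteps this by making the autoequivalence $\Sigma^F$ available to dispose of $F$ altogether, leaving only the $F = 0$ case in which the section of $f$ provided by the splitting principle applied over $X$ suffices.
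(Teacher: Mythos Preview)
Your proof is correct and follows essentially the same route as the paper's: identify the morphism with the zero-section $z_Q$, strip off $F$ via the autoequivalence $\Sigma^F$, apply the odd-rank splitting principle \rref{prop:splitting_odd} to produce a trivial line summand after pullback along a map with a section, and invoke the vanishing of $z_1$ from \rref{p:z_1}. The only cosmetic differences are that the paper uses the Jouanolou-type device \rref{p:Jouanolou} rather than \rref{p:Thom_ses} to obtain the identification with $z_Q$, and that it phrases the reduction as ``assume $X=S$'' via $f_\sharp$ rather than working in $\SH(X)[\eta^{-1}]$ as you do; these are equivalent formulations of the same reduction.
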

\begin{proof}
Let $Q=E/F$. By \rref{p:Jouanolou}, we may assume that $F \subset E$ extends to an isomorphism $Q \oplus F \simeq E$. Then the composite $\Th_X(F) \to \Th_X(E) \simeq \Th_X(Q \oplus F)$ is the map $z_Q$ of \rref{eq:p:z:2}. Applying the functor $f_\sharp \colon \SH(X)[\eta^{-1}] \to \SH(S)[\eta^{-1}]$ of \rref{p:f_sharp} (in view of \rref{p:eta_periodic}) we may assume that $X=S$, and applying the functor $\Sigma^F \colon \SH(S)[\eta^{-1}] \to \SH(S)[\eta^{-1}]$ of \rref{p:sigma_V} (in view of \rref{p:eta_periodic}) we reduce to the case $F=0$. It will thus suffice to show that $z_Q \colon \Un_S \to \Th_S(Q)$ vanishes in $\SH(S)[\eta^{-1}]$. By \rref{prop:splitting_odd}, we may find a morphism $f \colon Y \to S$ in $\Sm_S$ admitting a section $\sigma$ in $\SH(S)[\eta^{-1}]$, and such that $f^*Q$ admits the trivial line bundle as a direct summand over $Y$. In view of the commutative diagram in $\SH(S)[\eta^{-1}]$ (where horizontal composites are the identities)
\[ \xymatrix{
\Un_S \ar[rr]^{\sigma} \ar[d]_{z_Q} && \Sup Y \ar[rr]^f \ar[d]^{\id \wedge z_Q}&& \Un_S \ar[d]^{z_Q}\\ 
\Th_S(Q) \ar[rr]^-{\sigma \wedge \id_{\Th_S(Q)}}&&\Sup Y \wedge \Th_S(Q) \ar[rr]^-{f \wedge \id_{\Th_S(Q)}}&&\Th_S(Q)
}\]
it will suffice to prove the vanishing of the middle vertical map. That map may be identified with $z_{f^*Q} \colon \Sup Y \to \Th_Y(Q)$, hence factors through $z_1 \colon \Sup Y \to \Th_Y(1)$ (because the inclusion $0 \subset f^*Q$ factors through $1$ over $Y$), which vanishes by \rref{p:z_1}.
\end{proof}

\subsection{Hyperbolic orientations}

\begin{para}
\label{p:sw}
Let $E,F \to X$ be vector bundles with $X \in \Sm_S$. We denote by
\[
\sw_{E,F} \colon \Th_X(E \oplus F \oplus E \oplus F) \xrightarrow{\sim} \Th_X(E \oplus E \oplus F \oplus F)
\]
the isomorphism of pointed motivic spaces over $S$ given by $(e,f,e',f') \mapsto (e,e',f,f')$.
\end{para}

\begin{definition}
\label{def:hyp_orientation}
Let $A \in \SH(S)$ be a ring spectrum. A \emph{hyperbolic orientation} on $A$ is the datum of a class
\[
\hypo_E \in A^{0,0}(X;E \oplus E)
\]
for each vector bundle $E \to X$ with $X \in \Sm_S$, subject to the following conditions:
\begin{enumerate}[(i)]
\item \label{def:hyp_orientation:central}
the class $\hypo_E$ is $\Aa(X)$-central,
\item \label{def:hyp_orientation:funct}
if $f \colon Y \to X$ is a morphism in $\Sm_S$ and $E \to X$ a vector bundle, then $f^*\hypo_E = \hypo_{f^*E}$,

\item \label{def:hyp_orientation:isom}
if $E \xrightarrow{\sim} F$ is an isomorphism of vector bundles over $X \in \Sm_S$, then the induced isomorphism $\Aa(X;F \oplus F) \xrightarrow{\sim} \Aa(X;E \oplus E)$ maps $\hypo_F$ to $\hypo_E$,

\item 
\label{def:hyp_orientation:norm}
$\hypo_1 =\Sigma^{4,2}1$,

\item
\label{def:hyp_orientation:mult}
if $E,F$ are vector bundles over $X \in \Sm_S$, we have $\hypo_{E \oplus F} = \sw_{E,F}^*(\hypo_E \cup \hypo_F)$.
\end{enumerate}
\end{definition}

\begin{remark}
Axioms \dref{def:hyp_orientation}{def:hyp_orientation:norm} and \dref{def:hyp_orientation}{def:hyp_orientation:mult} imply that $\hypo_0=1$.
\end{remark}

\begin{para}
\label{p:com_hyp}
If the ring spectrum $A$ is commutative, then the axiom \dref{def:hyp_orientation}{def:hyp_orientation:central} is automatically satisfied: indeed if $E$ has constant rank $r$, then $\hypo_E \in A^{0,0}(X;E \oplus E)= A^{4r,2r}(\Th_X(E \oplus E))$ commutes with every elements of $\Aa(X)$ (see e.g.\ \cite[Theorem~2.4]{PW-MSL-Msp}).
\end{para}

\begin{lemma}
\label{lemm:hyp_isom}
Let $A \in \SH(S)$ be a hyperbolically oriented ring spectrum. If $E,V$ are vector bundles over $X \in \Sm_S$, then the morphism
\[
\Aa(X;V) \xrightarrow{\sim} \Aa(X;V \oplus E\oplus E), \quad x \mapsto x \cup \hypo_E
\]
is bijective.
\end{lemma}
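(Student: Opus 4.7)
The plan is to reduce the claim to the case of trivial bundles, where it follows from the normalization and multiplicativity axioms.

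First, for $E \cong 1^{\oplus r}$, an iterated application of axiom \dref{def:hyp_orientation}{def:hyp_orientation:mult} starting from $\hypo_1 = \Sigma^{4,2}1$ (axiom \dref{def:hyp_orientation}{def:hyp_orientation:norm}) exhibits $\hypo_{1^{\oplus r}}$ as the $r$-fold cup power of $\Sigma^{4,2}1$, up to the swap isomorphisms of Thom spaces of \rref{p:sw}. Using \eqref{eq:Sigma_cup}, cup product with this class agrees, up to these swap-induced Thom-space isomorphisms, with the suspension isomorphism $\Sigma^{4r,2r}$ of \eqref{eq:twist_shift}, hence is bijective.

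Next, for general $E$, I would apply Jouanolou's trick to replace $X$ by an affine $S$-scheme via an $\Ab^1$-weak equivalence; pullback along this map induces an isomorphism on $\Aa$ and is compatible with $\hypo_E$ by axiom \dref{def:hyp_orientation}{def:hyp_orientation:funct}. Since every vector bundle on an affine noetherian scheme is a direct summand of a trivial one, we may then choose a complement $F$ with $E \oplus F \cong 1^{\oplus N}$.

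With such an $F$ in hand, write $\phi_E(x) = x \cup \hypo_E$. By axioms \dref{def:hyp_orientation}{def:hyp_orientation:mult} and \dref{def:hyp_orientation}{def:hyp_orientation:isom}, together with associativity of cup product, the two composites $\phi_F \circ \phi_E$ and $\phi_E \circ \phi_F$ agree (up to isomorphisms of Thom spaces induced by $\sw_{E,F}$ and $\sw_{F,E}$) with $\phi_{E \oplus F} \cong \phi_{1^{\oplus N}}$, and are therefore bijective on $\Aa(X; V')$ for every $V'$ by the first paragraph. Applying the bijectivity of $\phi_E \circ \phi_F$ with $V' = V \oplus E \oplus E$ shows in particular that $\phi_F \colon \Aa(X; V \oplus E \oplus E) \to \Aa(X; V \oplus E \oplus E \oplus F \oplus F)$ is injective. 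Combined with the bijectivity of $\phi_F \circ \phi_E \colon \Aa(X; V) \to \Aa(X; V \oplus E \oplus E \oplus F \oplus F)$, the elementary observation that ``$g$ injective and $g \circ f$ bijective imply $f$ bijective'' yields the bijectivity of $\phi_E$.

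The main obstacle will be the affine reduction in the second paragraph: ensuring that $E$ admits a trivialising complement. Jouanolou's device delivers this under suitable hypotheses on $X$ and $S$; alternatively, one could invoke Nisnevich descent for $\SH(S)$ combined with the Zariski-local triviality of vector bundles, yielding a local-to-global argument that bypasses the affineness question entirely.
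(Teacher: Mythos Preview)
Your proof is correct, but takes a more elaborate route than the paper. The paper dispenses with the complement-bundle maneuver entirely: it applies a Mayer--Vietoris argument (using axiom \dref{def:hyp_orientation}{def:hyp_orientation:funct} and the five lemma) to reduce directly to the case where $E$ is trivial, then uses axiom \dref{def:hyp_orientation}{def:hyp_orientation:isom} to assume $E = 1^{\oplus s}$, reduces to $s=1$ by iterated multiplicativity \dref{def:hyp_orientation}{def:hyp_orientation:mult} (replacing $V$ with $V \oplus 1^{\oplus 2n}$ for $n=0,\dots,s-1$, which is essentially your first paragraph unwound), and concludes by normalization \dref{def:hyp_orientation}{def:hyp_orientation:norm}. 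This is exactly the ``local-to-global'' alternative you sketch at the end.

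Your primary approach via a trivialising complement does work, but as you anticipate it hinges on Jouanolou's trick, which in this paper is only available under the additional standing hypothesis that $S$ is regular separated (imposed from \S\ref{sect:MH} onward, see \rref{p:Jouanolou_affine}). The lemma is stated in the earlier generality of an arbitrary noetherian finite-dimensional base, where Mayer--Vietoris applies without restriction. So your backup plan is in fact the cleaner and more general one here; the complement-bundle argument, while a nice trick, buys nothing extra and costs an additional hypothesis on $S$.
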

\begin{proof}
By a Mayer--Vietoris argument and \dref{def:hyp_orientation}{def:hyp_orientation:funct}, we can assume that $E \simeq 1^{\oplus s}$ for some $s\in \Nn$, and then by \dref{def:hyp_orientation}{def:hyp_orientation:isom} that $E=1^{\oplus s}$. We reduce to the case $s=1$ using \dref{def:hyp_orientation}{def:hyp_orientation:mult} (replacing $V$ with $V \oplus 1^{\oplus 2n}$ for $n=0,\dots,s-1$), and conclude with \dref{def:hyp_orientation}{def:hyp_orientation:norm}.
\end{proof}

\begin{example}
\label{ex:Sp_hyp}
Let $A \in \SH(S)$ be a commutative ring spectrum equipped with a normalised $\Sp$-orientation, in the sense of \cite[Definition~3.3]{Ana-SL}. Thus every symplectic bundle $V \to X$ with $X \in \Sm_S$ has a Thom class $\thom_V \in A^{0,0}(X;V)$. For a vector bundle $E \to X$ with $X \in \Sm_S$ of constant rank $n$, we set (using the element $\epsilon$ of \rref{p:epsilon})
\begin{equation}
\label{eq:hyp_Sp}
\hypo_E = \epsilon^{-n} \cdot \sigma_E^*(\thom_{H(E)}) \in A^{0,0}(X;E \oplus E)
\end{equation}
where $\sigma_E \colon \Th_X(E\oplus E) \xrightarrow{\sim} \Th_X(E\oplus E^\vee)$ is the isomorphism of \rref{p:Thom_dual_vb}, and $H(E)$ the vector bundle $E \oplus E^\vee$ equipped with the hyperbolic symplectic form. Then one verifies that $E \mapsto \hypo_E$ defines a hyperbolic orientation of $A$. Similarly any normalised $\Oo$-orientation (defined in the expected way) yields a hyperbolic orientation. (This is the origin of the terminology of \rref{def:hyp_orientation}.)

Recall from \rref{p:epsilon} that $\epsilon=1 \in A^{0,0}(S)$ when $A$ is $\eta$-periodic, so that the formula \eqref{eq:hyp_Sp} simplifies to
\begin{equation}
\label{eq:hyp_Sp_eta}
\hypo_E = \sigma_E^*(\thom_{H(E)}) \quad \text{when $A$ is $\eta$-periodic.}
\end{equation}
\end{example}

\begin{definition}
\label{def:Euler:hyp}
Let $A \in \SH(S)$ be a hyperbolically oriented ring spectrum. When $E \to X$ is a vector bundle with $X \in \Sm_S$, we define its \emph{Euler class} 
\[
e(E) = z_E^*(\hypo_E) \in \Aa(X;E)
\]
(the map $z_E$ was defined in \rref{p:z}), and its \emph{top Pontryagin class}
\[
\pi(E) = z_{E\oplus E}^*(\hypo_E) \in \Aa(X).
\]
If $E$ has constant rank $r$, then $e(E) \in A^{2r,r}(X;E)$ and $\pi(E) \in A^{4r,2r}(X)$.
\end{definition}

\begin{para}
\label{prop:Euler}
The following basic properties of the Euler and top Pontryagin classes are easily verified, where $E\to X$ is a vector bundle with $X \in \Sm_S$:
\begin{enumerate}[(i)]
\item the elements $e(E) \in \Aa(X;E)$ and $\pi(E) \in \Aa(X)$ are $\Aa(X)$-central,
\item \label{prop:Euler:pb}
$e(f^*E) = f^*e(E)$ and $\pi(f^*E) = f^*(\pi(E))$ for any morphism $f\colon Y \to X$ in $\Sm_S$,
\item \label{prop:Euler:1}
$e(1) =0$ and $\pi(1) =0$,
\item \label{prop:Euler:0}
$e(0) =1$ and $\pi(0) =1$,
\item \label{prop:Euler:mult}
$e(E\oplus F) = e(E) \cup e(F)$ and $\pi(E\oplus F) = \pi(E) \pi(F)$ for any vector bundle $F \to X$.
\end{enumerate}
\end{para}

The next proposition expresses the familiar fact that the top Pontryagin class is the square of the Euler class.
\begin{proposition}
Let $A \in \SH(S)$ be a hyperbolically oriented ring spectrum. Let $X \in \Sm_S$, and $E\to X$ be a vector bundle. Then
\[
e(E) \cup e(E) = \pi(E) \cup \hypo_E \in \Aa(X;E \oplus E).
\]
\end{proposition}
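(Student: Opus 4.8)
The plan is to rewrite both sides as pull-backs, along two maps $\Th_X(E\oplus E)\to\Th_X(E\oplus E\oplus E\oplus E)$ of Thom spectra, of the single class $\hypo_E\cup\hypo_E\in\Aa(X;E\oplus E\oplus E\oplus E)$, and then to observe that these two maps agree in $\Hop(S)$.

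First I would unwind the definitions. Using $e(E)=z_E^*(\hypo_E)$ from \rref{def:Euler:hyp}, with $z_E\colon\Th_X(E)\to\Th_X(E\oplus E)$ the zero-section map of \rref{p:z}, and expanding the cup product via \rref{p:cup_product}, the class $e(E)\cup e(E)$ becomes the composite of the diagonal $\Th_X(E\oplus E)\to\Th_X(E)\wedge\Th_X(E)$, followed by $z_E\wedge z_E$, by $\hypo_E\wedge\hypo_E$, and by the multiplication $\mu$ of $A$. Comparing with the analogous description of $\hypo_E\cup\hypo_E$, one reads off $e(E)\cup e(E)=\alpha^*(\hypo_E\cup\hypo_E)$, where $\alpha\colon\Th_X(E\oplus E)\to\Th_X(E^{\oplus 4})$ is induced by the vector-bundle inclusion placing the two given copies of $E$ into the ``surviving'' summand of each of the two $\hypo_E$-twists (for instance $(v_1,v_2)\mapsto(0,v_1,0,v_2)$). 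Likewise, starting from $\pi(E)=z_{E\oplus E}^*(\hypo_E)$ with $z_{E\oplus E}\colon X_+\to\Th_X(E\oplus E)$, the same manipulation gives $\pi(E)\cup\hypo_E=\beta^*(\hypo_E\cup\hypo_E)$, where $\beta\colon\Th_X(E\oplus E)\to\Th_X(E^{\oplus 4})$ is induced by the inclusion onto the second $\hypo_E$-twist (for instance $(v_1,v_2)\mapsto(0,0,v_1,v_2)$). As a check, \dref{prop:Euler}{prop:Euler:mult} together with \rref{def:Euler:hyp}, \dref{def:hyp_orientation}{def:hyp_orientation:mult} and \rref{p:sw} identify $e(E)\cup e(E)=e(E\oplus E)$ with the pull-back of $\hypo_E\cup\hypo_E$ along $\sw_{E,E}\circ z_{E\oplus E}$, recovering $\alpha$.

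It now suffices to show $\alpha^*(\hypo_E\cup\hypo_E)=\beta^*(\hypo_E\cup\hypo_E)$. The two underlying bundle inclusions $E\oplus E\hookrightarrow E^{\oplus 4}$ are coordinate inclusions, and they differ by composition with an automorphism $\phi$ of $E^{\oplus 4}$ that cyclically permutes three of the four copies of $E$ (fixing the copy receiving $v_2$); in the sample coordinates above one may take $\phi(w_1,w_2,w_3,w_4)=(w_3,w_1,w_2,w_4)$, and this does carry the inclusion underlying $\alpha$ to the one underlying $\beta$. Being a cyclic permutation of three summands, $\phi$ has determinant $1$ and can be written as a product of elementary transvections of $E^{\oplus 4}$ (a factorisation valid already over $\Zz$); each such transvection is a change-of-splitting automorphism for a suitable two-step filtration of $E^{\oplus 4}$, hence induces the identity on $\Th_X(E^{\oplus 4})$ in $\Hop(S)$, as in \rref{p:Thom_ses}. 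Therefore $\Th_X(\phi)=\id$ in $\Hop(S)$, so $\beta^*=\alpha^*\circ\Th_X(\phi)^*=\alpha^*$, and the two sides of the asserted identity coincide.

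The only genuinely delicate step is the bookkeeping in the first part: matching up the diagonals, zero-sections and the four copies of $E$ (with their twists) carefully enough to pin down $\alpha$ and $\beta$ precisely — in particular, keeping track of which of the two copies of $E$ inside $\Th_X(E\oplus E)$ is used in $e(E)=z_E^*(\hypo_E)$. Once $\alpha$ and $\beta$ are in hand, the concluding homotopy-triviality argument for $\phi$ is soft.
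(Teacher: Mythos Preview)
Your proof is correct and follows essentially the same route as the paper: both express the two sides as pull-backs of $\hypo_E\cup\hypo_E$ along maps $\Th_X(E^{\oplus 2})\to\Th_X(E^{\oplus 4})$ given by the very matrices $A$ and $B$ you wrote down, and then relate these via an automorphism of $E^{\oplus 4}$ of determinant one (hence a product of transvections, inducing the identity in $\Hop(S)$). The only cosmetic difference is that the paper uses $C=\begin{pmatrix}-1&0&0&0\\0&0&1&0\\0&1&0&0\\0&0&0&1\end{pmatrix}$ in place of your cyclic permutation $\phi$; both satisfy $CA=B$ (resp.\ $\phi A=B$) and have determinant one, so either choice works.
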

\begin{proof}
Consider the maps in $\SH(S)$
\[
a \colon \Th_X(E^{\oplus 2}) \to \Th_X(E^{\oplus 4}), \; b \colon \Th_X(E^{\oplus 2}) \to \Th_X(E^{\oplus 4}), \; c \colon \Th_X(E^{\oplus 4}) \to \Th_X(E^{\oplus 4})
\]
respectively given by the injective matrices
\[
A=\begin{pmatrix}
0 & 0\\
1 & 0\\
0 & 0\\
0 & 1
\end{pmatrix}, \quad
B=\begin{pmatrix}
0 & 0\\
0 & 0\\
1 & 0\\
0 & 1
\end{pmatrix}, \quad
C=\begin{pmatrix}
-1 & 0 & 0 & 0\\
0 &  0 & 1 & 0\\
0 &  1 & 0 & 0\\
0 &  0 & 0 & 1
\end{pmatrix}.
\]
Then in $\Aa(X;E \oplus E)$
\begin{equation}
\label{eq:e_pi}
e(E) \cup e(E) = a^*(\hypo_E\cup \hypo_E) \quad ; \quad \pi(E) \cup \hypo_E=b^*(\hypo_E\cup \hypo_E).
\end{equation}
Since $CA=B$, we have $c \circ a =b$. As the matrix $C$ has coefficients in $\Zz$ and determinant one, it is a product of transvections, hence we have $c=\id$ in $\SH(S)$ (see e.g.\ \cite[Lemma~1]{Ana-Pushforwards} where the fact $S$ is the spectrum of a field plays no role). Thus $a=b$ in $\SH(S)$, and the result follows from the formulas \eqref{eq:e_pi}.
\end{proof}

The next proposition is a variant of results of Ananyevskiy \cite[Corollary~2]{Ananyevskiy-SL_PB}, \cite[Theorem~7.4]{Ana-SL} and Levine \cite[Lemma~4.3]{Levine-motivic_Euler}.

\begin{proposition}
\label{prop:Euler_odd}
Let $A \in \SH(S)$ be an $\eta$-periodic hyperbolically oriented ring spectrum. Let $E$ be a vector bundle over $X \in \Sm_S$. If $E$ admits a quotient of constant odd rank, then
\[
e(E)=0 \quad \text{ and } \quad \pi(E)=0.
\]
\end{proposition}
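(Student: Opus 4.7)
The plan is to reduce both vanishing statements to a single application of \rref{lemm:zero_pb}, via appropriate factorisations of the zero-section maps used in the definitions of $e(E)$ and $\pi(E)$.

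First I would set up the key factorisation. Let $Q$ be a quotient of $E$ of constant odd rank, and let $F \subset E$ be the kernel, so that $E/F \simeq Q$. Consider the subbundle $F \oplus E \subset E \oplus E$; its quotient is $(E\oplus E)/(F\oplus E) \simeq E/F \simeq Q$, which has constant odd rank. By \rref{lemm:zero_pb}, the induced map $\Th_X(F \oplus E) \to \Th_X(E \oplus E)$ vanishes in $\SH(S)[\eta^{-1}]$. This is the one vanishing I need.

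Next I would observe that both maps of interest factor through this vanishing map. For the Euler class: the map $z_E \colon \Th_X(E) \to \Th_X(E \oplus E)$ of \eqref{eq:p:z:2} is induced by the inclusion $E \subset E \oplus E$ as the second summand, which factors through $F \oplus E$, giving a commutative diagram $\Th_X(E) \to \Th_X(F \oplus E) \to \Th_X(E \oplus E)$ whose composite is $z_E$. For the top Pontryagin class: the map $z_{E \oplus E} \colon X_+ = \Th_X(0) \to \Th_X(E \oplus E)$ is induced by $0 \subset E \oplus E$, which factors through $0 \subset F \oplus E \subset E \oplus E$, so it factors through $\Th_X(F \oplus E) \to \Th_X(E \oplus E)$ as well.

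Finally, since $A$ is $\eta$-periodic, its cohomology factors through $\SH(S)[\eta^{-1}]$ by \rref{p:eta_periodic}. Hence any morphism vanishing in $\SH(S)[\eta^{-1}]$ induces the zero pullback on $\Aa$. Applying this to $z_E^*$ and $z_{E\oplus E}^*$ of the class $\hypo_E \in A^{0,0}(X; E \oplus E)$ yields $e(E) = z_E^*(\hypo_E) = 0$ and $\pi(E) = z_{E \oplus E}^*(\hypo_E) = 0$. There is no serious obstacle here; all the work has already been done in \rref{lemm:zero_pb}, and the only point requiring a moment of thought is choosing the right intermediate subbundle $F \oplus E$ (rather than $F \oplus F$, whose quotient $Q \oplus Q$ is even-dimensional and therefore useless for applying the lemma).
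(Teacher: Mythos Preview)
Your proof is correct and follows essentially the same approach as the paper's: both factor $z_E$ through the vanishing map $\Th_X(F\oplus E)\to\Th_X(E\oplus E)$ supplied by \rref{lemm:zero_pb}. The only cosmetic difference is that the paper deduces $\pi(E)=0$ from $e(E)=0$ via $\pi(E)=z_E^*(e(E))$, whereas you factor $z_{E\oplus E}$ through the same vanishing map directly; both are immediate.
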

\begin{proof}
Let $F \subset E$ be a subbundle such that $E/F$ has odd rank. Then by definition $e(E)$ is the image of $\hypo_E$ under the pullback along the composite $\Th_X(E) \xrightarrow{z_F} \Th_X(F \oplus E) \to \Th_X(E \oplus E)$ in $\SH(S)$. The latter vanishes in $\SH(S)[\eta^{-1}]$ by \rref{lemm:zero_pb} (applied to the subbundle $F\oplus E \subset E \oplus E$), hence $e(E)=0$. Therefore $\pi(E) = z_E^*(e(E))=0$.
\end{proof}

\subsection{Weak hyperbolic orientations}

\begin{definition}
\label{def:hyp_thom}
Let $A \in \SH(S)$ be a ring spectrum. A \emph{weak hyperbolic orientation} on $A$ is the datum of a class
\[
\hypt_E \in A^{0,0}(X;E \oplus E)
\]
for each rank two vector bundle $E \to X$ with $X \in \Sm_S$, subject to the following conditions:
\begin{enumerate}[(i)]
\item
\label{def:hyp_thom:central}
the class $\hypt_E$ is $\Aa(X)$-central,

\item 
\label{def:hyp_thom:funct}
if $f \colon Y \to X$ is a morphism in $\Sm_S$ and $E \to X$ a rank two vector bundle, then $f^*\hypt_E = \hypt_{f^*E}$,

\item 
\label{def:hyp_thom:isom}
if $E \xrightarrow{\sim} F$ is an isomorphism of rank two vector bundles over $X \in \Sm_S$, then the induced isomorphism $\Aa(X;F \oplus F) \xrightarrow{\sim} \Aa(X;E \oplus E)$ maps $\hypt_F$ to $\hypt_E$,

\item \label{def:hyp_thom:norm}
$\hypt_{1^{\oplus 2}} =\sw_{1,1}^*(\Sigma^{8,4}1)$ (in the notation of \rref{p:sw}).
\end{enumerate}
\end{definition}

\begin{remark}
Using a standard notation (see e.g.\ \cite[Lemma~6.3.4]{Morel-Intro_A1}), we have $\sw_{1,1}^*(\Sigma^{8,4}1)=\Sigma^{8,4}\langle -1 \rangle$.
\end{remark}

\begin{remark}
It is clear that a hyperbolic orientation (see \rref{def:hyp_orientation}) on a motivic ring spectrum $A$ induces a weak hyperbolic orientation. We will see in \rref{th:Thom_orientation} that the two notions in fact coincide when $A$ is $\eta$-periodic.
\end{remark}

\begin{para}
\label{p:com_whyp}
As explained in \rref{p:com_hyp}, the axiom \dref{def:hyp_thom}{def:hyp_thom:central} is automatically satisfied when the ring spectrum $A$ is commutative.
\end{para}

\begin{lemma}
\label{lemm:whyp_isom}
Let $A \in \SH(S)$ be a hyperbolically oriented ring spectrum. If $E \to X$ is a rank two vector bundle with $X \in \Sm_S$, then the morphism
\[
\Aa(X) \xrightarrow{\sim} \Aa(X;E\oplus E), \quad x \mapsto x \cup \hypt_E
\]
is bijective.
\end{lemma}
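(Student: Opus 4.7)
The plan is to recognize that this is essentially a special case of the already-established Lemma \ref{lemm:hyp_isom}. Indeed, the statement of Lemma \ref{lemm:whyp_isom} refers to a hyperbolically oriented ring spectrum $A$ (not merely a weakly hyperbolically oriented one), and the remark preceding the lemma records that any hyperbolic orientation induces a weak hyperbolic orientation. Inspection of axioms \rref{def:hyp_orientation}--\rref{def:hyp_thom} shows that the only reasonable way to perform this induction is to define $\hypt_E := \hypo_E$ whenever $E$ has rank two: axioms \dref{def:hyp_thom}{def:hyp_thom:central}--\dref{def:hyp_thom}{def:hyp_thom:isom} follow directly from the corresponding axioms of \rref{def:hyp_orientation}, while the normalization \dref{def:hyp_thom}{def:hyp_thom:norm} follows from \dref{def:hyp_orientation}{def:hyp_orientation:norm} and \dref{def:hyp_orientation}{def:hyp_orientation:mult} applied to $1 \oplus 1$, together with \eqref{eq:Sigma_cup}.

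Granting this identification, the map $x \mapsto x \cup \hypt_E$ of the present lemma is the same as the map $x \mapsto x \cup \hypo_E$ of Lemma \ref{lemm:hyp_isom} in the case $V = 0$. That lemma asserts that the latter is a bijection $\Aa(X; V) \xrightarrow{\sim} \Aa(X; V \oplus E \oplus E)$ for \emph{any} vector bundle $V$, so specializing to $V=0$ yields the result.

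I expect no genuine obstacle; the only point requiring a brief verification is that the induced weak hyperbolic orientation really does satisfy $\hypt_E = \hypo_E$ on rank two bundles, but this is essentially by construction. The proof can therefore be written in one line: apply Lemma \ref{lemm:hyp_isom} with $V = 0$, noting that $\hypt_E = \hypo_E$ under the weak hyperbolic orientation induced by the given hyperbolic orientation.
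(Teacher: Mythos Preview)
Your reduction to Lemma \ref{lemm:hyp_isom} with $V=0$ is valid under the literal hypothesis ``hyperbolically oriented,'' and the identification $\hypt_E=\hypo_E$ for rank two $E$ is indeed how the induced weak hyperbolic orientation is defined. So your argument is correct as a proof of the statement exactly as written.

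However, the paper does not argue this way. It reproves the isomorphism directly: a Mayer--Vietoris argument and \dref{def:hyp_thom}{def:hyp_thom:funct} reduce to $E\simeq 1^{\oplus 2}$, then \dref{def:hyp_thom}{def:hyp_thom:isom} reduces to $E=1^{\oplus 2}$, and the normalisation axiom \dref{def:hyp_thom}{def:hyp_thom:norm} finishes. The point is that this proof uses \emph{only} the axioms of a weak hyperbolic orientation \rref{def:hyp_thom}, never the full orientation. This matters: the lemma sits in the subsection on weak hyperbolic orientations, and it is subsequently invoked (for instance in \rref{prop:A_Gr_2} and \rref{cor:splitting_twisted}) for ring spectra assumed only to carry a \emph{weak} hyperbolic orientation. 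The phrase ``hyperbolically oriented'' in the statement is thus almost certainly a slip for ``with a weak hyperbolic orientation.'' Under that intended hypothesis Lemma~\ref{lemm:hyp_isom} is not available, so your shortcut would not apply, whereas the paper's direct argument still goes through. Your approach buys brevity at the cost of the generality the paper actually needs.
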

\begin{proof}
By a Mayer--Vietoris argument and \dref{def:hyp_thom}{def:hyp_thom:funct}, we can assume that $E \simeq 1^{\oplus 2}$, and then by \dref{def:hyp_thom}{def:hyp_thom:isom} that $E=1^{\oplus 2}$. Then the statement follows from \dref{def:hyp_thom}{def:hyp_thom:norm}.
\end{proof}

\begin{definition}
\label{def:Euler_thom}
Let $A \in \SH(S)$ be a ring spectrum with a weak hyperbolic orientation. For every rank two vector bundle $E\to X$ with $X \in \Sm_S$, we define its \emph{Euler class}
\[
e(E) = z_E^*(\hypt_E) \in A^{4,2}(X;E)
\]
(the map $z_E$ was defined in \rref{p:z}), and \emph{top Pontryagin class}
\[
\pi(E) =z_{E \oplus E}^*(\hypt_E) \in A^{8,4}(X).
\]
\end{definition}

\begin{lemma}
\label{lemm:pi_dual}
Let $A\in \SH(S)$ be an $\eta$-periodic ring spectrum with a weak hyperbolic orientation. Let $X \in \Sm_S$ and $E \to X$ be a rank two vector bundle. Then
\[
\pi(E) = \pi(E^\vee).
\]
\end{lemma}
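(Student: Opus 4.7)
The plan is to compare $\pi(E)$ with $\pi(E^\vee)$ via the canonical isomorphism
\[
\sigma \colon \Th_X(E \oplus E) \xrightarrow{\sim} \Th_X(E^\vee \oplus E^\vee)
\]
in $\Hop(S)$ provided by \rref{p:Thom_dual_vb}, applied to the rank-four bundle $E \oplus E$ (with $V=0$), noting the canonical identification $(E \oplus E)^\vee = E^\vee \oplus E^\vee$. The commutative diagram \rref{eq:diag_Thom_dual} then shows $\sigma \circ z_{E \oplus E} = z_{E^\vee \oplus E^\vee}$ in $\Hop(S)$, and therefore
\[
\pi(E^\vee) = z_{E^\vee \oplus E^\vee}^*(\hypt_{E^\vee}) = z_{E \oplus E}^* \bigl( \sigma^*(\hypt_{E^\vee}) \bigr).
\]

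It consequently suffices to show $\sigma^*(\hypt_{E^\vee}) = \hypt_E$ in $A^{0,0}(X; E \oplus E)$. Invoking \rref{lemm:whyp_isom} (whose proof in fact uses only the weak hyperbolic orientation axioms), I write $\sigma^*(\hypt_{E^\vee}) = x \cup \hypt_E$ for a unique $x \in A^{0,0}(X)$, reducing the task to proving the scalar identity $x = 1$.

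Since the construction of $\sigma$ and the formation of $\hypt$ are both natural in $X$ (see axiom \dref{def:hyp_thom}{def:hyp_thom:funct}), a Mayer--Vietoris argument reduces the problem to the case where $E = 1^{\oplus 2}$ is a trivial bundle. In that setting $E^\vee$ is canonically identified with $E$ via the isomorphism $\can \colon 1^\vee \xrightarrow{\sim} 1$, and $\sigma$ becomes an explicit self-map $\sigma_{1^{\oplus 4}}$ of $\Th_X(1^{\oplus 4})$. By the computation of \rref{p:epsilon}, each trivial line-bundle factor contributes an $\epsilon$ when crossing through $\sigma_1$, and in the $\eta$-periodic setting the identity $\epsilon = 1$ of \rref{p:epsilon} makes $\sigma_{1^{\oplus 4}}^*$ trivial on $A^{0,0}(X; 1^{\oplus 4})$. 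Combining this with the normalisation axiom \dref{def:hyp_thom}{def:hyp_thom:norm} for $\hypt_{1^{\oplus 2}}$ (which determines both $\hypt_E$ and $\hypt_{E^\vee}$ in the trivial case under $\can$), yields $\sigma^*(\hypt_{E^\vee}) = \hypt_E$, hence $x=1$ as required.

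The main obstacle is the bookkeeping in the trivial case: unfolding the definition of $\sigma_{1^{\oplus 4}}$ from \rref{p:Thom_dual_vb} to relate it multiplicatively to the one-dimensional $\sigma_1$ of \rref{p:epsilon}, and confirming that the resulting power of $\epsilon$ collapses to $1$ in the $\eta$-periodic setting. The role of $\eta$-periodicity in the conclusion is precisely to trivialise these $\epsilon$-twists; indeed a similar identity in the non-$\eta$-periodic setting would be expected to hold only up to a sign $\langle -1 \rangle$.
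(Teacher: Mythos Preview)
Your approach is genuinely different from the paper's, and it has a real gap.

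The paper's proof is much shorter: it applies \rref{prop:splitting_odd} to the line bundle $\det E$ (which has odd rank), obtaining a morphism $f\colon Y \to X$ such that $f^*$ is split injective on $\Aa$ and $\det f^*E$ is trivial. Once $\det E \simeq 1$, the alternating pairing $E \otimes E \to \Lambda^2 E = \det E \simeq 1$ is nondegenerate, so $E \simeq E^\vee$ as vector bundles, and $\pi(E) = \pi(E^\vee)$ follows directly from axiom \dref{def:hyp_thom}{def:hyp_thom:isom}. No Thom-space duality, no $\epsilon$-bookkeeping.

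Your route, by contrast, attempts to prove the stronger (and unnecessary) identity $\sigma^*(\hypt_{E^\vee}) = \hypt_E$. The gap is in the Mayer--Vietoris reduction. You write $\sigma^*(\hypt_{E^\vee}) = x \cup \hypt_E$ with $x \in A^{0,0}(X)$ and then claim that checking $x=1$ locally suffices. But Mayer--Vietoris does not give injectivity of $A^{0,0}(X) \to \prod_i A^{0,0}(U_i)$: the relevant long exact sequence has a boundary map from $A^{-1,0}(U\cap V)$, and there is no reason for this group to vanish. So even if the trivial-bundle case works, the passage back to arbitrary $E$ is not justified. (One could try instead to pass to the universal bundle on a Grassmannian, but then one would need the cohomology computation of \rref{prop:A_Gr_2}, which itself uses \rref{lemm:pi_dual} downstream --- or at least requires care to avoid circularity.)

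There is also a secondary issue you flag yourself: the ``bookkeeping'' relating $\sigma_{1^{\oplus 4}}$ to a fourth power of $\sigma_1$ is not carried out. The construction in \rref{p:Thom_dual_vb} is given for a single bundle at a time, and its multiplicativity under direct sums is not established in the paper; you would need to prove this compatibility before invoking $\epsilon^4$.

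The takeaway: the paper's argument sidesteps all of this by never comparing Thom classes across the duality isomorphism. It instead arranges, after an allowed base change, for $E$ and $E^\vee$ to be isomorphic as bundles, so that $\pi$ agrees on them by the isomorphism axiom alone.
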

\begin{proof}
By \rref{prop:splitting_odd} we may assume that the line bundle $\det E$ admits a trivialisation. Then the alternated form $E^{\otimes 2} \to \Lambda^2 E=\det E \simeq 1$ is nondegenerate, hence $E \simeq E^\vee$ as vector bundles.
\end{proof}

\begin{remark}
Lemma \rref{lemm:pi_dual} will be generalised to vector bundles of higher rank when $A$ is hyperbolically oriented (see \rref{cor:Pontryagin_dual} and \rref{lemm:Pontryagin_pi}).
\end{remark}

\subsection{Pushforwards along closed immersions}
\begin{para}
\label{p:pf}
(See e.g.\ \cite[\S3.5]{Hoyois-equivariant}.)
Let $i \colon Y \to X$ be a closed immersion in $\Sm_S$ with normal bundle $N$, and $u \colon U \to X$ its open complement. There exists a canonical isomorphism $X/U \simeq \Th_Y(N)$ in $\Hop(S)$, called purity isomorphism. We 
consider the composite
\[
\pf{i} \colon X_+ \to X/U \simeq \Th_Y(N) \; \text{in $\Hop(S)$.}
\]
More generally when $V\to X$ is a vector bundle, a map in $\Hop(S)$
\[
\pf{i}\colon \Th_X(V) \to \Th_Y(N \oplus i^*V)
\]
is constructed from the above map by first using the functor $f_\sharp \colon \Hop(X) \to \Hop(S)$ of \rref{p:f_sharp} to reduce to the case when $S=X$, and then applying the functor $\Sigma^V$ of \rref{p:sigma_V} to reduce to the case $V=0$. We then have a distinguished triangle in $\SH(S)$
\begin{equation}
\label{eq:loc_SH}
\Th_U(V) \xrightarrow{u} \Th_X(V) \xrightarrow{\pf{i}} \Th_Y(N \oplus i^*V) \to \Sigma^{1,0} \Th_U(V).
\end{equation}
\end{para}

\begin{para}
\label{p:localisation}
In the situation of \rref{p:pf}, let $A \in \SH(S)$ be a ring spectrum. When $N$ has constant rank $r$, we will write
\[
i_* \colon A^{p-2r,q-r}(Y;N \oplus i^*V)=A^{p,q}(\Th_Y(N\oplus i^*V)) \xrightarrow{\pf{i}^*} A^{p,q}(X;V),
\]
and extend this notation in an obvious fashion to the case when $N$ is arbitrary. We thus have a long exact sequence
\begin{equation}
\label{eq:loc_coh}
\dots \to \Aa(Y;N\oplus i^*V) \xrightarrow{i_*} \Aa(X;V) \xrightarrow{u^*} \Aa(U;V) \to \cdots
\end{equation}
\end{para}

\begin{para}
\label{p:pf_zero_section}
Let $X \in \Sm_S$, and $E \to X$ be a vector bundle. It follows from the discussion in \cite[\S2.4.5]{Panin-Oriented_II} that the purity isomorphism $E/E^\circ \simeq \Th_X(E)$ in $\Hop(S)$ coincides with the identification arising from the definition of the Thom space $\Th_X(E)$. 
\end{para}

\begin{para}
\label{p:pf_composite}
(See e.g.\ \cite[p.233]{Hoyois-equivariant}.)
Let $i\colon Y \to X$ and $j\colon Z \to Y$ be closed immersions in $\Sm_S$. Denote by $N_i,N_j,N_{j\circ i}$ the respective normal bundles of $i,j,j\circ i$. Then we have an exact sequence of vector bundles over $Z$
\[
0 \to N_j \to N_{j\circ i} \to j^*N_i \to 0.
\]
Let $V \to X$ be a vector bundle. Then the composite
\[
\Th_X(V) \xrightarrow{\pf{i}} \Th_Y(N_i \oplus i^*V) \xrightarrow{\pf{j}} \Th_Z(N_j \oplus j^*N_i \oplus j^*i^*V) \overset{\rref{p:Thom_ses}}{\simeq} \Th_Z(N_{j\circ i})
\]
coincides with $\pf{i\circ j}$ in $\Hop(S)$.
\end{para}

\begin{para}
\label{p:pf_funct}
Consider a cartesian square in $\Sm_S$
\begin{equation}
\label{squ:pf_funct}
\begin{gathered}
\xymatrix{
Y'\ar[r]^{i'} \ar[d]_g & X' \ar[d]^f \\ 
Y \ar[r]^i & X
}
\end{gathered}
\end{equation}
where $i,i'$ are closed immersions with respective normal bundles $N,N'$. Let $V \to X$ be a vector bundle. Then we have a commutative diagram in $\Hop(S)$, where $e$ is induced by the natural inclusion $N' \subset g^*N$, and $h=i\circ g = f\circ i'$,
\begin{equation}
\label{diag:pf_funct}
\begin{gathered}
\xymatrix{
\Th_{Y'}(N \oplus h^*V) \ar[d]^g &\Th_{Y'}(N'\oplus h^*V)\ar[l]_e &\Th_{X'}(V) \ar[d]^f \ar[l]_-{\pf{i'}}\\ 
\Th_Y(N) && \Th_X(V) \ar[ll]_-{\pf{i}} 
}
\end{gathered}
\end{equation}
\end{para}

\begin{para}
\label{p:pf_empty}
If $Y'=\varnothing$ in \rref{p:pf_funct}, then it follows from the commutative diagram \eqref{diag:pf_funct} that the composite $\Th_{X'}(V) \xrightarrow{f} \Th_X(V) \xrightarrow{\pf{i}} \Th_Y(N \oplus i^*V)$ vanishes in $\Hop(S)$.
\end{para}

\begin{para}
\label{p:transverse}
We say that the cartesian square \eqref{squ:pf_funct} is \emph{transverse} if the natural inclusion $N' \subset g^*N$ is an isomorphism, in which case the map $e$ in the diagram \eqref{diag:pf_funct} is an isomorphism.
\end{para}

We will use the following form of the projection formula:
\begin{lemma}
\label{lemm:proj_formula}
Let $A \in \SH(S)$ be a ring spectrum. Let $i\colon Y \to X$ be a closed immersion in $\Sm_S$ with normal bundle $N$. 
\begin{enumerate}[(i)]
\item
\label{lemm:proj_formula:1}
Let $V \to X$ be a vector bundle, and $x \in \Aa(Y;N)$ and $a\in \Aa(X;V)$. Then
\[
i_* (x \cup i^*a) = i_*(x) \cup a \in \Aa(X;V).
\]
\item
\label{lemm:proj_formula:2}
If $b \in \Aa(X)$ and $y \in \Aa(Y;N)$, then we have
\[
i_* (i^*b \cup y) = b \cup i_*(y) \in \Aa(X)
\]
\end{enumerate}
\end{lemma}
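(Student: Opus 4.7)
My plan is to reduce each statement to a compatibility diagram for the Thom--Pontryagin collapse $\pf{i}$ with the diagonal, which I then verify using \rref{p:pf_funct}.

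For \rref{lemm:proj_formula:1}, I first unfold the cup products via the diagonal (as in \rref{p:cup_product}) and the pushforward $i_*$ (as in \rref{p:localisation}), using that $i^*a$ is represented by $a \circ \tilde{\imath}$ in $\SH(S)$, where $\tilde{\imath}\colon \Th_Y(i^*V) \to \Th_X(V)$ is the natural map of Thom spaces induced by $i$. The identity $i_*(x \cup i^*a) = i_*(x) \cup a$ then becomes: both sides factor as $\mu \circ (x \wedge a) \circ \phi$, and the problem reduces to showing that the two candidate maps $\phi \colon \Th_X(V) \to \Th_Y(N) \wedge \Th_X(V)$ agree, namely the commutativity in $\Hop(S)$ of
\[
\xymatrix{
\Th_X(V) \ar[r]^-{\pf{i}} \ar[d]_-{\Delta_X} & \Th_Y(N \oplus i^*V) \ar[r]^-{\Delta_Y} & \Th_Y(N) \wedge \Th_Y(i^*V) \ar[d]^-{\id \wedge \tilde{\imath}} \\
X_+ \wedge \Th_X(V) \ar[rr]_-{\pf{i} \wedge \id} && \Th_Y(N) \wedge \Th_X(V)
}
\]
where the left-hand $\Delta_X$ uses the splitting $V = 0 \oplus V$.

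To establish this commutativity, I apply \rref{p:pf_funct} to the cartesian square
\[
\xymatrix{
Y \ar[r]^-{i} \ar[d]_-{(\id_Y, i)} & X \ar[d]^-{\Delta_X} \\
Y \times_S X \ar[r]_-{i \times_S \id_X} & X \times_S X
}
\]
equipped with the vector bundle $p_2^* V$ on $X \times_S X$, where $p_2$ is the second projection. A direct check shows this square is transverse: the normal bundle of $i \times_S \id_X$ is $p_Y^*N$, and its pullback along $(\id_Y, i)$ is $N$, the normal bundle of $i$; the comparison map is the identity. The resulting commutative diagram reads $(\id_Y, i) \circ \pf{i} = \pf{i \times_S \id_X} \circ \Delta_X$ as maps $\Th_X(V) \to \Th_{Y \times_S X}(p_Y^*N \oplus p_X^*V)$. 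Under the natural identifications $\Th_{Y \times_S X}(p_Y^*N \oplus p_X^*V) = \Th_Y(N) \wedge \Th_X(V)$ and $\pf{i \times_S \id_X} = \pf{i} \wedge \id$, combined with the factorisation $(\id_Y, i) = (\id_Y \times_S i) \circ \Delta_Y$, which on Thom spaces becomes $(\id \wedge \tilde{\imath}) \circ \Delta_Y$, this is precisely the diagram above. Statement \rref{lemm:proj_formula:2} is then obtained by the symmetric argument using the transposed cartesian square with $(i, \id_Y) \colon Y \to X \times_S Y$ and $\id_X \times_S i$ on the right.

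The main obstacle is verifying the two auxiliary identifications $\pf{i \times_S \id_X} = \pf{i} \wedge \id_{\Th_X(V)}$ and the Thom-space decomposition $\Th_{Y \times_S X}(p_Y^*N \oplus p_X^*V) = \Th_Y(N) \wedge \Th_X(V)$; both express the compatibility of the Thom--Pontryagin collapse and the Thom space construction with smash products, and follow from the construction of $\pf{i}$ via the purity isomorphism (\rref{p:pf}) together with the functor $f_\sharp$ of \rref{p:f_sharp} which reduces to the case $X=S$.
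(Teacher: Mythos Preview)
Your proposal is correct and follows essentially the same approach as the paper: both arguments apply \rref{p:pf_funct} to the transverse cartesian square involving $\Delta_X$ and $i \times_S \id_X$ (and its transpose for part \eqref{lemm:proj_formula:2}), factor the resulting map through $(\id \wedge i) \circ \Delta_Y$, and then read off the projection formula by unfolding the definitions of cup product and $i_*$. The auxiliary identifications you flag are used implicitly in the paper as well.
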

\begin{proof}
We use the notation of \rref{p:cup_product}. The cartesian squares in $\Sm_S$
\[ \xymatrix{
Y\ar[rr]^-i \ar[d]_{(\id_Y,i)} && X \ar[d]^{\Delta_X} 
&& Y\ar[rr]^-i \ar[d]_{(i,\id_Y)} && X \ar[d]^{\Delta_X}\\ 
Y \times_S X \ar[rr]^-{i \times \id_X} && X \times_S X
&& X \times_S Y \ar[rr]^-{\id_X \times i} && X \times_S X
}\]
are transverse, and thus yield by \rref{p:pf_funct} and \rref{p:transverse} commutative diagrams in $\Hop(S)$
\[ \xymatrix{
\Th_Y(N \oplus i^*V) \ar[d] && \Th_X(V)\ar[ll]_-{\pf{i}} \ar[d]^{\Delta_X} 
& \Th_Y(N) \ar[d] && X_+\ar[ll]_-{\pf{i}} \ar[d]^{\Delta_X}\\ 
\Th_Y(N)\wedge \Th_X(V) && X_+ \wedge \Th_X(V) \ar[ll]_-{\pf{i} \wedge \id}
&X_+ \wedge \Th_Y(N) && X_+ \wedge X_+ \ar[ll]_-{\id \wedge \pf{i}}
}\]
The left vertical morphism of the left square factors as
\[
\Th_Y(N \oplus i^*V) \xrightarrow{\Delta_Y} \Th_Y(N) \wedge \Th_Y(V) \xrightarrow{\id \wedge i} \Th_Y(N) \wedge \Th_X(V),
\]
while the left vertical morphism of the right square factors as
\[
\Th_Y(N) \xrightarrow{\Delta_Y} Y_+ \wedge \Th_Y(N) \xrightarrow{i \wedge \id} X_+ \wedge \Th_Y(N).
\]
Thus, denoting by $\mu \colon A \wedge A \to A$ the product, for $x,a$ as in \eqref{lemm:proj_formula:1} we have
\begin{align*}
i_* (x \cup i^*a) 
&= i_* \circ \Delta_Y^* \circ (\id \wedge i)^*(\mu \circ (x \wedge a)) \\ 
&= \Delta_X^* \circ (\pf{i} \wedge \id)^*(\mu \circ (x \wedge a))\\
&= i_*(x) \cup a,
\end{align*}
proving the first formula. The other formula follows from the computation
\begin{align*}
i_* (i^*b \cup y)
&= i_* \circ \Delta_Y^* \circ (i \wedge \id)^*(\mu \circ (b \wedge y))\\
&= \Delta_X^* \circ (\id \wedge \pf{i})^*(\mu \circ (b \wedge y))\\
&= b \cup i_*(y).\qedhere
\end{align*}
\end{proof}

\begin{para}
\label{p:transverse_section}
Let $X \in \Sm_S$ and $E\to X$ be a vector bundle. Let $s$ be a section of $E$, and consider its zero-locus $i \colon Y \to X$, defined as the equaliser of $s$ and the zero-section in the category of $S$-schemes. We will say that $s$ is \emph{transverse to the zero-section} if $Y\in \Sm_S$ and the natural inclusion $N \subset i^*E$ is an isomorphism, where $N$ is the normal bundle of $i$.
\end{para}

\begin{lemma}
\label{lemm:push_pull}
Let $E,V$ be vector bundles over $X \in \Sm_S$. Let $s\colon X \to E$ be a section transverse to the zero-section, whose zero-locus we denote by $i\colon Y \to X$. Then in $\Hop(S)$ the composite
\[
\Th_X(V) \xrightarrow{\pf{i}} \Th_Y(E \oplus V) \xrightarrow{i} \Th_X(E \oplus V)
\]
coincides with $z_E$ (see \rref{p:z}).
\end{lemma}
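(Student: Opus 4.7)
The plan is to reduce to the case $X=S$ and $V=0$, then apply the base-change formula \rref{p:pf_funct} to the natural transverse square cut out by the section $s$, and finally use an elementary $\Ab^1$-homotopy to identify the result with $z_E$.

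First, I would use the functor $f_\sharp\colon \Hop(X) \to \Hop(S)$ of \rref{p:f_sharp} to reduce to $X=S$, and then $\Sigma^V$ of \rref{p:sigma_V} to reduce to $V=0$. Both reductions are compatible with the three maps appearing in the statement: by the definitions in \rref{p:pf} and \rref{p:z}, the morphisms $\pf{i}$, $z_E$, and the Thom map induced by $i$ are all constructed by applying precisely these functors to their counterparts in the case $X=S$, $V=0$.

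Writing $0_E\colon S\to E$ for the zero-section, the definition of the zero locus of $s$ produces a cartesian square
\[
\xymatrix{
Y \ar[r]^i \ar[d]_i & S \ar[d]^s \\
S \ar[r]^{0_E} & E.
}
\]
The normal bundle of $0_E$ is canonically $E$, and by the transversality hypothesis the normal bundle of $i$ is $i^*E$; so the square is transverse in the sense of \rref{p:transverse}. Applying \rref{p:pf_funct} (with $V=0$ in its notation) together with \rref{p:transverse} yields the identity $i\circ \pf{i} = \pf{0_E}\circ s$ in $\Hop(S)$, as maps $S_+ \to \Th_S(E)$. By \rref{p:pf_zero_section}, $\pf{0_E}\colon E_+ \to \Th_S(E)$ coincides with the canonical quotient $E_+ \to E/E^\circ$, so $\pf{0_E}\circ s$ is simply the composite $S_+ \xrightarrow{s} E_+ \to E/E^\circ = \Th_S(E)$.

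Finally, this last composite agrees with $z_E$ in $\Hop(S)$ via the $\Ab^1$-homotopy $H\colon \Ab^1\times S \to E$, $(t,x)\mapsto t\cdot s(x)$, whose restrictions to $t=0$ and $t=1$ are $0_E$ and $s$ respectively; postcomposing $H$ with the canonical map $E\to E/E^\circ$ gives an $\Ab^1$-homotopy in $\Hop(S)$ between $z_E$ and $\pf{0_E}\circ s$. The only non-routine point is recognising the transverse square above; once it is in hand, everything else follows immediately from the base-change properties of purity and the standard scaling homotopy between a section and the zero-section.
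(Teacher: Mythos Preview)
Your proof is correct and follows essentially the same approach as the paper: reduce via $f_\sharp$ and $\Sigma^V$, apply base change \rref{p:pf_funct} to the transverse cartesian square determined by $s$ and the zero-section, and then identify $s$ with the zero-section in $\Hop(S)$. The only cosmetic difference is that the paper phrases the last step as ``$s$ and $z$ are both sections of the weak equivalence $E\to X$, hence $s_+=z_+$ in $\Hop(S)$,'' whereas you write down the explicit scaling homotopy; these are the same argument.
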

\begin{proof}
Using the functor $f_\sharp \colon \Hop(X) \to \Hop(S)$ of \rref{p:f_sharp}, we reduce to the case when $S=X$. Applying the functor $\Sigma^V$ of \rref{p:sigma_V}, we reduce to the case $V=0$.

Let us denote by $z \colon X \to E$ the zero-section. The cartesian square in $\Sm_S$
\[ \xymatrix{
Y\ar[r]^i \ar[d]_i & X \ar[d]^s \\ 
X \ar[r]^z & E
}\]
is transverse, and yields by \rref{p:pf_funct} and \rref{p:transverse} a commutative square in $\Hop(S)$
\[ \xymatrix{
\Th_Y(E) \ar[d]_i & X_+ \ar[l]_-{\pf{i}} \ar[d]^{s_+} \\ 
\Th_X(E) & E_+\ar[l]_-{\pf{z}} 
}\]
Now the maps $s,z \colon X \to E$ coincide in $\Hot(S)$, being sections of the vector bundle projection $E\to X$. Thus $s_+=z_+ \colon X_+ \to E_+$ in $\Hop(S)$. In view of \rref{p:pf_zero_section} and \rref{p:z}, the composite $\pf{z} \circ z_+ \colon X_+ \to \Th_X(E)$ coincides with $z_E$, and the statement follows.
\end{proof}

\begin{lemma}
\label{lemm:e_pi_push_pull}
Let $A \in \SH(S)$ be a ring spectrum. Let  $X \in \Sm_S$, and $E \to X$ be a vector bundle. Assume that $E$, resp.\ $E \oplus E$, admits a section transverse to the zero-section, whose zero-locus we denote by $i \colon Y \to X$.
\begin{enumerate}[(i)]
\item
\label{lemm:e_pi_push_pull:1}
If $A$ is endowed with a hyperbolic orientation, then the composite
\[
\Aa(X) \xrightarrow{i^*} \Aa(Y) \xrightarrow{\cup \hypo_{i^*E}}  \Aa(Y;E \oplus E) \xrightarrow{i_*} \Aa(X;E),
\]
\[
\text{resp.} \quad \Aa(X) \xrightarrow{i^*} \Aa(Y) \xrightarrow{\cup \hypo_{i^*E}}  \Aa(Y;E \oplus E) \xrightarrow{i_*} \Aa(X),
\]
is (left or right) multiplication with $e(E)$, resp.\ $\pi(E)$.

\item
\label{lemm:e_pi_push_pull:2}
Assume that $E$ has rank two. If $A$ is endowed with a weak hyperbolic orientation, then the composite
\[
\Aa(X) \xrightarrow{i^*} \Aa(Y) \xrightarrow{\cup \hypt_{i^*E}}  \Aa(Y;E \oplus E) \xrightarrow{i_*} \Aa(X;E),
\]
\[
\text{resp.} \quad \Aa(X) \xrightarrow{i^*} \Aa(Y) \xrightarrow{\cup \hypt_{i^*E}}  \Aa(Y;E \oplus E) \xrightarrow{i_*} \Aa(X)
\]
is (left or right) multiplication with $e(E)$, resp.\ $\pi(E)$.
\end{enumerate}
\end{lemma}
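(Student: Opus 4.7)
The plan is to reduce both parts to the previous lemma \rref{lemm:push_pull} and the projection formula \rref{lemm:proj_formula}; the key intermediate fact is that the hyperbolic Thom class on $Y$ pushes forward to the Euler (resp.\ top Pontryagin) class on $X$.

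First I would establish, in the setting of \eqref{lemm:e_pi_push_pull:1}, the identities
\[
e(E) = i_*(\hypo_{i^*E}) \in \Aa(X;E), \qquad \pi(E) = i_*(\hypo_{i^*E}) \in \Aa(X),
\]
where in the first equality $i$ is the zero-locus of a section of $E$ (so $N=i^*E$, and $\hypo_{i^*E}\in\Aa(Y;i^*E\oplus i^*E)=\Aa(Y;N\oplus i^*E)$), while in the second $i$ is the zero-locus of a section of $E\oplus E$ (so $N=i^*E\oplus i^*E$, and $\hypo_{i^*E}\in\Aa(Y;N)$). For the Euler case, apply \rref{lemm:push_pull} with $V=E$: the map $z_E\colon \Th_X(E)\to\Th_X(E\oplus E)$ factors as $\pf{i}$ followed by the map induced by $i$. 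Pulling back $\hypo_E$ and using the functoriality axiom \dref{def:hyp_orientation}{def:hyp_orientation:funct} (so that $i^*\hypo_E=\hypo_{i^*E}$) gives $e(E)=z_E^*(\hypo_E)=\pf{i}^*(\hypo_{i^*E})=i_*(\hypo_{i^*E})$. The $\pi(E)$ case is the same argument, this time taking $V=0$ and applying \rref{lemm:push_pull} to the section of $E\oplus E$.

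Next I would apply the projection formula. By \rref{lemm:proj_formula}\eqref{lemm:proj_formula:2} (in its evident extension to classes supported in twisted Thom spaces, whose proof is identical to the one given), taking $b=a$ and $y=\hypo_{i^*E}$ yields
\[
i_*(i^*a\cup \hypo_{i^*E}) \;=\; a\cup i_*(\hypo_{i^*E}) \;=\; a\cup e(E),
\]
proving the left-multiplication statement. The right-multiplication version follows symmetrically from \rref{lemm:proj_formula}\eqref{lemm:proj_formula:1} applied to $x=\hypo_{i^*E}$ and $a\in\Aa(X)$, using the $\Aa(Y)$-centrality of $\hypo_{i^*E}$ (axiom \dref{def:hyp_orientation}{def:hyp_orientation:central}) to identify $\hypo_{i^*E}\cup i^*a=i^*a\cup \hypo_{i^*E}$; it gives $i_*(\hypo_{i^*E}\cup i^*a)=e(E)\cup a$. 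The case of $\pi(E)$ is formally identical, with target $\Aa(X)$ instead of $\Aa(X;E)$.

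Part \eqref{lemm:e_pi_push_pull:2} is obtained by repeating the above argument verbatim, replacing $\hypo$ by $\hypt$, invoking the centrality and functoriality axioms of \rref{def:hyp_thom}, and noting that $i^*E$ is still of rank two so that $\hypt_{i^*E}$ is defined. There is no genuine obstacle in the proof; the only care required is the bookkeeping of Thom twists when invoking the projection formula (making sure the normal bundle of $i$ combines correctly with the Thom twists of the classes involved), and recognizing that the lemma is in effect a direct translation of the factorisation $z_E=i\circ\pf{i}$ from \rref{lemm:push_pull} into cohomology via the projection formula.
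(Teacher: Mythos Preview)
Your proof is correct and follows essentially the same route as the paper: both first show that $i_*(\mathfrak{h}_{i^*E})$ equals $e(E)$ (resp.\ $\pi(E)$) by invoking \rref{lemm:push_pull} with $V=E$ (resp.\ $V=0$), and then conclude via the projection formula \dref{lemm:proj_formula}{lemm:proj_formula:2}. Your explicit remark that the projection formula needs its evident extension to $y\in\Aa(Y;N\oplus i^*V)$ in the Euler case is a detail the paper leaves implicit.
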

\begin{proof}
Let $\mathfrak{h}_E = \hypo_E \in A^{0,0}(X;E \oplus E)$ in case \eqref{lemm:e_pi_push_pull:1}, and $\mathfrak{h}_E = \hypt_E \in A^{0,0}(X;E \oplus E)$ in case \eqref{lemm:e_pi_push_pull:2}. Then the composite of the statement coincides with
\[
\Aa(X) \xrightarrow{\cup \mathfrak{h}_E} \Aa(X;E\oplus E) \xrightarrow{i^*} \Aa(Y;E \oplus E) \xrightarrow{i_*} \Aa(X;E),
\]
\[
\text{resp.} \quad \Aa(X) \xrightarrow{\cup \mathfrak{h}_E} \Aa(X;E\oplus E) \xrightarrow{i^*} \Aa(Y;E \oplus E) \xrightarrow{i_*} \Aa(X),
\]
hence maps $1$ to $e(E)$, resp.\ $\pi(E)$, by \rref{lemm:push_pull} (with $V=E$, resp.\ $V=0$). The statement then follows from the projection formula \dref{lemm:proj_formula}{lemm:proj_formula:2}.
\end{proof}

\section{Grassmannians of \texorpdfstring{$2$}{2}-planes}

\subsection{Projective bundles}

We will use the following:
\begin{proposition}[{\cite[(4.1.6) and (4.1.5)]{eta}}]
\label{prop:PE}
Let $E \to X$ be a vector bundle of constant rank $r$, with $X \in \Sm_S$.
\begin{enumerate}[(i)]
\item\label{prop:PE:odd}
 If $r$ is odd, then the map $\Sup \Pp(E) \to \Sup X$ is an isomorphism in $\SH(S)[\eta^{-1}]$.

\item\label{prop:PE:even}
 If $r$ is even, then $\Th_{\Pp(E)}(\Oc_{\Pp(E)}(1))=0$ in $\SH(S)[\eta^{-1}]$.
\end{enumerate}
\end{proposition}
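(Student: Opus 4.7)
\emph{Plan.} I would prove both statements together by induction on $r$, driven by the homotopy purity cofibre sequence attached to the closed immersion $i \colon \Pp(F) \hookrightarrow \Pp(F \oplus 1)$ of a vector bundle $F \to X$ as the hyperplane at infinity. Its open complement identifies with the total space of $F$ (the line $[v:1]$ corresponds to $v \in F$), which is $\Ab^1$-equivalent to $X$ via the structural projection; its normal bundle equals $\Oc_{\Pp(F)}(1)$ (at a line $\ell \subset F$, the normal direction is $\Hom(\ell, 1) = \ell^\vee$). Purity then yields a cofibre sequence in $\SH(S)$
\begin{equation*}
\Sup X_+ \xrightarrow{s} \Sup \Pp(F \oplus 1)_+ \to \Th_{\Pp(F)}(\Oc(1))
\end{equation*}
in which $s$ is a section of the structural projection $\pi \colon \Sup \Pp(F \oplus 1)_+ \to \Sup X_+$.

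Applied with $F = E$ of even rank $r$, this deduces part (ii) for rank $r$ from part (i) for rank $r+1$: if $\pi$ becomes an isomorphism in $\SH(S)[\eta^{-1}]$, then so does its section $s$, and the cofibre $\Th_{\Pp(E)}(\Oc(1))$ must vanish. To prove part (i) for odd $r$, I would reduce first to the case where $E$ is trivial via Zariski descent (turning trivialising covers into Mayer--Vietoris cofibre sequences), and then to $X = S$ using the base-change functor $f_\sharp$ of \rref{p:f_sharp} applied to the structure morphism of $X$. The claim becomes $\Sup \Pp^{r-1} = 0$ in $\SH(S)[\eta^{-1}]$ for $r$ odd.

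This is established by induction on $n \geq 0$, simultaneously proving that in $\SH(S)[\eta^{-1}]$ one has $\Sup \Pp^n = 0$ for $n$ even and $\Sup \Pp^n \simeq S^{2n, n}$ for $n$ odd. The inductive step is based on the cellular cofibre sequence $\Sup \Pp^{n-1} \to \Sup \Pp^n \to S^{2n, n}$. For $n$ odd, $\Sup \Pp^{n-1} = 0$ by induction, so $\Sup \Pp^n \simeq S^{2n, n}$. For $n$ even, $\Sup \Pp^{n-2} = 0$ gives $\Sup \Pp^{n-1} \simeq S^{2(n-1), n-1}$ via the collapse map, and the connecting map $S^{2n-1, n} \to \Sup \Pp^{n-1} \simeq S^{2(n-1), n-1}$ must be shown to coincide with a unit multiple of the motivic Hopf element $\eta \in \pi_{1,1}(\Un_S)$; granting this, the connecting map is an isomorphism in $\SH(S)[\eta^{-1}]$, so $\Sup \Pp^n$ vanishes.

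The main obstacle lies precisely in this identification of the attaching map with $\eta$. Intuitively it is clear since unstably the attaching map is, up to $\Ab^1$-homotopy, the generalised Hopf map $\Ab^n \smallsetminus \{0\} \to \Pp^{n-1}$. Rigorously, one exploits naturality under the standard linear embedding $\Pp^1 \hookrightarrow \Pp^n$, which together with compatibility of attaching maps reduces the coefficient computation in $\pi_{1,1}(\Un_S)$ to the case $n = 2$, where the attaching map is $\eta$ by definition.
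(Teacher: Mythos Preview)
The paper does not prove this proposition; it is quoted verbatim from \cite[(4.1.6) and (4.1.5)]{eta} and used as a black box (the introduction explicitly lists it among the external inputs). So there is no ``paper's own proof'' to compare against. Your outline is a reasonable reconstruction of how such a statement could be proved, and the deduction of (ii) from (i) via the purity sequence for $\Pp(E) \hookrightarrow \Pp(E\oplus 1)$ is clean and correct.

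Two issues in your sketch deserve attention. First, a notational one: you repeatedly write ``$\Sup \Pp^n = 0$'' and ``$\Sup \Pp^n \simeq S^{2n,n}$'', but $\Sup$ carries a disjoint basepoint, so these literally cannot hold. You mean the \emph{reduced} suspension spectrum $\Su \Pp^n$ (with $\Pp^n$ pointed by a rational point), or equivalently that $\Sup \Pp^n \to \Un_S$ is an isomorphism. This is harmless once corrected.

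Second, and more substantively: the identification of the connecting map with a unit multiple of $\eta$ is the heart of the argument, and your justification via ``naturality under the linear embedding $\Pp^1 \hookrightarrow \Pp^n$'' does not work as stated. The naturality square
\[
\xymatrix{
S^{4,2} \ar[r] \ar[d] & \Sigma\,\Su\Pp^1 \ar[d]\\
S^{2n,n} \ar[r] & \Sigma\,\Su\Pp^{n-1}
}
\]
has left vertical map induced by $T^{\wedge 2} \to T^{\wedge n}$ (inclusion of factors), which is the zero-section map $z_{1^{\oplus(n-2)}}$ and hence null for $n\geq 3$ by \rref{p:z_1}; similarly the composite $\Pp^1 \hookrightarrow \Pp^{n-1} \to \Pp^{n-1}/\Pp^{n-2}$ is null. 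So the square gives no information about the bottom connecting map. One genuinely has to compute that the composite $\Ab^n\smallsetminus 0 \to \Pp^{n-1} \to \Pp^{n-1}/\Pp^{n-2}\simeq T^{\wedge(n-1)}$ represents $\eta$ in $\pi_{1,1}(\Un_S)$; this is true, but it requires an actual calculation (for instance via the open--closed decomposition of $\Ab^n\smallsetminus 0$ along $x_n=0$, or by identifying the generalized Hopf map with the $\Gm$-torsor $\Oc(-1)^\circ \to \Pp^{n-1}$ and unwinding), not a naturality trick.
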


\begin{para}
Let $E \to X$ be a vector bundle with $X \in \Sm_S$, and denote by $q \colon \Pp(E) \to X$ the projection. Then $\Oc_{\Pp(E)}(-1)$ is naturally a subbundle of $q^*E$, and we denote by $\Quo_E=q^*E/\Oc_{\Pp(E)}(-1)$ the quotient bundle.
\end{para}

\begin{lemma}
\label{lemm:PE_even_Q}
Let $E \to X$ be a vector bundle of constant even rank, with $X \in \Sm_S$. Then $\Th_{\Pp(E)}(\Quo_E)=0$ in $\SH(S)[\eta^{-1}]$.
\end{lemma}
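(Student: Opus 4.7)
My plan is to combine \rref{prop:PE} with a passage to the partial flag variety $F = \mathrm{Fl}(1, 2; E)$ over $X$. I first handle the rank-two case: when $E$ has rank $2$, the quotient $\Quo_E$ is a line bundle on $\Pp(E)$, and the tautological sequence $0 \to \Oc(-1) \to q^*E \to \Quo_E \to 0$ together with the determinant formula gives $\Quo_E \simeq \Oc_{\Pp(E)}(1) \otimes q^*\det E$. Under the canonical isomorphism $\Pp(E) \simeq \Pp(E \otimes \det E^\vee)$ and the corresponding transformation of tautological line bundles, this identifies $\Quo_E$ with $\Oc_{\Pp(E \otimes \det E^\vee)}(1)$; since $E \otimes \det E^\vee$ has rank $2$, \rref{prop:PE}\,(ii) yields $\Th_{\Pp(E)}(\Quo_E) = 0$ in $\SH(S)[\eta^{-1}]$.

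For general even rank $r = 2d$, I introduce $F = \mathrm{Fl}(1, 2; E)$, which admits two projections: $\pi_1 \colon F \to \Pp(E)$ realizing $F \simeq \Pp_{\Pp(E)}(\Quo_E)$, and $\pi_2 \colon F \to \Gr(2, E)$ realizing $F \simeq \Pp_{\Gr(2, E)}(\Uni_2)$, where $\Uni_2$ denotes the universal rank-two subbundle on $\Gr(2, E)$. Direct inspection of the tautological line bundles on $F$ gives $\Oc_{\pi_1}(-1) \simeq \Quo_{\Uni_2}$, both identifying fibrewise with $\Uni_2/\ell$ over a point $(\ell \subset \Uni_2) \in F$; moreover the tautological $\pi_1$-quotient on $F$ equals $\pi_2^*\Quo$, where $\Quo$ denotes the universal quotient on $\Gr(2, E)$. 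The tautological short exact sequence for $\pi_1$ combined with \rref{p:Thom_ses} then gives
\[
\Th_F(\pi_1^*\Quo_E) \simeq \Th_F(\Quo_{\Uni_2} \oplus \pi_2^*\Quo).
\]
Since $\Quo_E$ has odd rank $2d - 1$ on $\Pp(E)$, applying \rref{prop:PE}\,(i) to the pullback of $\Quo_E$ to the total space of any vector bundle $W \to \Pp(E)$ (and to its zero-section complement), then taking cofibers, shows that the natural map $\Th_F(\pi_1^*W) \to \Th_{\Pp(E)}(W)$ is an isomorphism in $\SH(S)[\eta^{-1}]$. Specializing to $W = \Quo_E$, it suffices to show that $\Th_F(\Quo_{\Uni_2} \oplus \pi_2^*\Quo) = 0$.

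For the latter, I apply the rank-two case relatively to the universal bundle $\Uni_2 \to \Gr(2, E)$, obtaining that $\Th_{\Pp(\Uni_2)}(\Quo_{\Uni_2})$ vanishes as an object of $\SH(\Gr(2, E))[\eta^{-1}]$. The projection formula for the smooth morphism $\pi_2$, together with the invertibility of $\Sigma^{\pi_2^*\Quo}$, then yields the vanishing of $\Th_F(\Quo_{\Uni_2} \oplus \pi_2^*\Quo)$ in $\SH(S)[\eta^{-1}]$ after pushing forward to $S$. The main obstacle in this plan is ensuring that \rref{prop:PE} applies at the level of the relative base $\Gr(2, E)$ rather than only over $S$, which is needed for the projection-formula step; I expect this to follow from base-change uniformity of the proofs in \cite{eta}.
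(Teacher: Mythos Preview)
Your argument is correct, but it takes a substantially different route from the paper's.  The paper's proof is a two--line computation: using the tautological sequence and \rref{p:Thom_ses} one has
\[
\Th_{\Pp(E)}(\Quo_E \oplus \Oc(-1)^{\oplus 2}) \simeq \Th_{\Pp(E)}(q^*E \oplus \Oc(-1)),
\]
the left side is isomorphic to $\Th_{\Pp(E)}(\Quo_E)$ by the fact that $\Sigma^{L\oplus L}\simeq \id$ in $\SH(S)[\eta^{-1}]$ for any line bundle $L$ (this is \cite[(3.3.1.i)]{eta}), and the right side vanishes by \cite[(4.1.5)]{eta}.  No flag varieties, no rank--two reduction, no base change.

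Your approach avoids the extra input \cite[(3.3.1.i)]{eta} entirely, trading it for geometry: a determinant twist to settle rank two, the partial flag $\mathrm{Fl}(1,2;E)$ to link $\Pp(E)$ and $\Gr(2,E)$, and then the odd--rank case of \rref{prop:PE} plus the rank--two case applied relatively.  This works.  Your ``main obstacle'' is not actually an obstacle: the standing hypothesis on $S$ is only that it be noetherian of finite dimension, and $\Gr(2,E)$ inherits this, so \rref{prop:PE} (and hence your rank--two argument) applies verbatim with $\Gr(2,E)$ as the base; the projection formula $g_\sharp(\Sigma^{g^*V}(-))\simeq \Sigma^V g_\sharp(-)$ then transports the vanishing back to $\SH(S)[\eta^{-1}]$.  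The upshot is that your proof is longer but uses strictly less from \cite{eta}; the paper's proof is shorter because it invokes the line--bundle periodicity \cite[(3.3.1.i)]{eta} as a black box.
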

\begin{proof}
By \rref{p:Thom_ses} we have an isomorphism in $\SH(S)$
\[
\Th_{\Pp(E)}(\Quo_E \oplus \Oc_{\Pp(E)}(-1) \oplus \Oc_{\Pp(E)}(-1)) \simeq \Th_{\Pp(E)}(q^*E \oplus \Oc_{\Pp(E)}(-1)),
\]
where $q\colon \Pp(E) \to X$ is the projection. Now in $\SH(S)[\eta^{-1}]$ we have $\Th_{\Pp(E)}(\Quo_E \oplus \Oc_{\Pp(E)}(-1) \oplus \Oc_{\Pp(E)}(-1)) \simeq \Th_{\Pp(E)}(\Quo_E)$ by \cite[(3.3.1.i)]{eta}, and $\Th_{\Pp(E)}(q^*E \oplus \Oc_{\Pp(E)}(-1)) =0$ by \cite[(4.1.5)]{eta}. This proves the statement.
\end{proof}

\begin{lemma}
\label{lemm:PE_odd_Q}
Let $E \to X$ be a vector bundle of constant even rank, with $X \in \Sm_S$. Consider the closed immersion $i\colon X=\Pp(1) \to \Pp(E \oplus 1)$. Then the natural isomorphism $E \xrightarrow{\sim} i^*\Quo_{E\oplus 1}$ induces an isomorphism in $\SH(S)[\eta^{-1}]$ (see \rref{p:pf})
\[
\pf{i} \colon \Th_{\Pp(E \oplus 1)}(\Quo_{E\oplus 1})\xrightarrow{\sim}\Th_X(E \oplus E).
\]
\end{lemma}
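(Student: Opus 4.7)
My plan is to apply the localisation triangle \eqref{eq:loc_SH} to the closed immersion $i\colon X=\Pp(1)\hookrightarrow P=\Pp(E\oplus 1)$, with open complement $U=P\setminus X$. Since the tautological line $\Oc_P(-1)|_X$ is canonically the trivial subbundle $1\subset E\oplus 1$, the normal bundle of $i$ identifies with $\Hom(1,E)\simeq E$, and the pullback $i^*\Quo_{E\oplus 1}$ with $(E\oplus 1)/1\simeq E$. Under these identifications the triangle reads
\[
\Th_U(\Quo_{E\oplus 1}|_U)\to \Th_P(\Quo_{E\oplus 1})\xrightarrow{\pf{i}} \Th_X(E\oplus E)\to \Sigma^{1,0}\Th_U(\Quo_{E\oplus 1}|_U),
\]
so it suffices to show $\Th_U(\Quo_{E\oplus 1}|_U)=0$ in $\SH(S)[\eta^{-1}]$.

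The assignment $(x,[e:t])\mapsto (x,[e])$ is well-defined on $U$ (since $(x,[e:t])\in U$ forces $e\neq 0$) and realises $U$ as the total space of the line bundle $\Oc_{\Pp(E)}(1)$ over $\Pp(E)$, with $\Pp(E)\subset U$ the zero section. Write $\pi\colon U\to \Pp(E)$ for this projection (an $\Ab^1$-weak equivalence) and $q'\colon \Pp(E)\to X$ for the structural map. Restricting the tautological sequence of $P$ to $U$ and identifying $\Oc_P(-1)|_U\simeq \pi^*\Oc_{\Pp(E)}(-1)$ via projection onto the $q^*E$-summand produces a short exact sequence on $U$
\[
0\to \pi^*\Oc_{\Pp(E)}(-1)\to \pi^*q'^*(E\oplus 1)\to \Quo_{E\oplus 1}|_U\to 0.
\]
Applying \rref{p:Thom_ses} to this sequence and to the pullback along $\pi$ of the tautological sequence on $\Pp(E)$ augmented by a trivial line bundle yields in $\Hop(S)$
\[
\Th_U\bigl(\Quo_{E\oplus 1}|_U\oplus \pi^*\Oc_{\Pp(E)}(-1)\bigr)\simeq \Th_U\bigl(\pi^*(\Quo_E\oplus 1)\oplus \pi^*\Oc_{\Pp(E)}(-1)\bigr).
\]

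After inserting a second $\pi^*\Oc_{\Pp(E)}(-1)$-summand into each side (by reapplying \rref{p:Thom_ses} to the above short exact sequences extended trivially by this direct summand), the $\SH(S)[\eta^{-1}]$-identification $\Th_U(V\oplus L\oplus L)\simeq \Sigma^{4,2}\Th_U(V)$ from \cite[(3.3.1.i)]{eta} (applied with $L=\pi^*\Oc_{\Pp(E)}(-1)$) absorbs both doubled line-bundle summands, and desuspension yields in $\SH(S)[\eta^{-1}]$
\[
\Th_U(\Quo_{E\oplus 1}|_U)\simeq \Th_U\bigl(\pi^*(\Quo_E\oplus 1)\bigr)\simeq \Th_{\Pp(E)}(\Quo_E\oplus 1)\simeq \Sigma^{2,1}\Th_{\Pp(E)}(\Quo_E),
\]
the middle isomorphism coming from the $\Ab^1$-weak equivalence $\pi$. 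The right-hand side vanishes in $\SH(S)[\eta^{-1}]$ by \rref{lemm:PE_even_Q} under the even-rank hypothesis on $E$, proving the claim. The main obstacle is the cancellation of the parasitic $\pi^*\Oc_{\Pp(E)}(-1)$-summand introduced by \rref{p:Thom_ses}: the two short exact sequences on $U$ share their subbundle and middle term but have possibly non-isomorphic quotients, so no direct cancellation is available, and the $\eta$-periodic doubling trick of \cite[(3.3.1.i)]{eta} is the crucial ingredient for bypassing this.
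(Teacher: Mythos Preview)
Your proof is correct and follows the same overall strategy as the paper: apply the localisation triangle along $i$, identify the open complement $U$ with the total space of $\Oc_{\Pp(E)}(1)$ over $\Pp(E)$, and conclude via \rref{lemm:PE_even_Q}. The difference lies in the computation of $\Th_U(\Quo_{E\oplus 1}|_U)$. The paper simply restricts along the zero-section $j\colon \Pp(E)\hookrightarrow U$: since $j^*\Oc_{\Pp(E\oplus 1)}(-1)=\Oc_{\Pp(E)}(-1)$ lands inside $q'^*E\subset q'^*(E\oplus 1)$, one has $j^*\Quo_{E\oplus 1}=\Quo_E\oplus 1$ as a vector bundle, and homotopy invariance of Thom spaces along the $\Ab^1$-equivalence $j$ gives $\Th_U(\Quo_{E\oplus 1}|_U)\simeq \Th_{\Pp(E)}(\Quo_E\oplus 1)$ directly in $\Hop(S)$. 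So the ``parasitic $\Oc(-1)$'' obstacle you flag never actually arises: you used the equivalence $\pi$ only \emph{after} identifying $\Quo_{E\oplus 1}|_U$ with a pullback bundle, but using it \emph{first} (via the section $j$) makes that identification unnecessary. Your doubling trick from \cite[(3.3.1.i)]{eta} is valid but superfluous here.
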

\begin{proof}
The open complement $W$ of $i$ is a line bundle over $\Pp(E)$ (namely $\Oc_{\Pp(E)}(1)$). Its zero-section $\Pp(E) \to W$ induces an isomorphism in $\SH(S)$, and the composite $\Pp(E) \to W \subset \Pp(E \oplus 1)$ is the closed immersion $j\colon \Pp(E) \to \Pp(E \oplus 1)$ induced by the inclusion $E \subset E \oplus 1$. Since the normal bundle to $i$ is $E$ and $j^*\Quo_{E\oplus 1}=\Quo_E \oplus 1$, this yields by \rref{p:pf} a distinguished triangle in $\SH(S)$
\[
\Th_{\Pp(E)}(\Quo_E \oplus 1) \xrightarrow{j} \Th_{\Pp(E \oplus 1)}(\Quo_{E\oplus 1}) \xrightarrow{\pf{i}} \Th_X(E \oplus i^*\Quo_{E\oplus 1}) \to \Sigma^{1,0}\Th_{\Pp(E)}(\Quo_E \oplus 1).
\]
Now $\Th_{\Pp(E)}(\Quo_E \oplus 1) =0$ by \rref{lemm:PE_even_Q}, hence $\pf{i}$ is an isomorphism. The composite $\Oc_{\Pp(E\oplus 1)}(-1) \subset q^*(E \oplus 1) \to 1$ restricts to an isomorphism on the open complement of $j$, where $q \colon \Pp(E \oplus 1) \to X$ is the projection. Since $i$ factors through that open subscheme, it follows that the composite $i^*\Oc_{\Pp(E\oplus 1)}(-1) \to E \oplus 1 \to 1$ is an isomorphism, hence so is the composite $E \subset E \oplus 1 \to (E \oplus 1)/i^*\Oc_{\Pp(E\oplus 1)}(-1) = i^*\Quo_{E\oplus 1}$. This induces an isomorphism $\Th_X(E \oplus i^*\Quo_{E\oplus 1}) \simeq \Th_X(E \oplus E)$, completing the proof.
\end{proof}

\subsection{Grassmann bundles}
We now gather basic observations on Grassmann bundles that will be used repeatedly in the paper.

\begin{para}
\label{def:Gr}
Let $X \in \Sm_S$ and $E\to X$ be a vector bundle. Let $n\in \Zz$. We will denote by $\Gr(n,E)$ the Grassmann bundle of $n$-planes in $E$, classifying the rank $n$ subbundles of $U \subset E$ (for us a subbundle is locally split, so $E/U$ is also a vector bundle). Denoting by $q \colon \Gr(n,E) \to X$ the projection, the scheme $\Gr(n,E)$ carries a universal rank $n$ subbundle $\Uni_n \subset q^*E$, and quotient bundle $\Quo_n = q^*E/\Uni_n$.
\end{para}

\begin{para}
\label{eq:Gr_1_P}
When $E \to X$ is a vector bundle with $X \in \Sm_S$, we have $\Gr(1,E) = \Pp(E)$ and $\Uni_1=\Oc(-1)$. Moreover $\Quo_1$ is the quotient bundle $\Quo=q^*E/\Oc(-1)$, where $q \colon \Pp(E) \to X$ is the projection.
\end{para}

\begin{para}
\label{p:g_h}
In the situation of \rref{def:Gr}, assume that $n\geq 1$. We have a closed immersion
\[
g_E \colon \Gr(n,E) \to \Gr(n,E\oplus 1) \quad ; \quad (U \subset E) \mapsto (U \oplus 0 \subset E \oplus 1),
\]
which satisfies
\begin{equation}
\label{eq:g_pb}
g_E^*\Uni_n=\Uni_n \quad ; \quad g_E^*\Quo_n = \Quo_n \oplus 1.
\end{equation}
We also have a closed immersion
\[
h_E \colon \Gr(n-1,E) \to \Gr(n,E\oplus 1) \quad ; \quad (U \subset E) \mapsto (U \oplus 1 \subset E \oplus 1),
\]
which satisfies
\begin{equation}
\label{eq:h_pb}
h_E^*\Uni_n=\Uni_{n-1} \oplus 1 \quad ; \quad h_E^*\Quo_n = \Quo_{n-1}.
\end{equation}
\end{para}

\begin{para}
\label{p:Gr_n-1_n_n}
The closed immersion $g_E$ of \rref{p:g_h} is the zero-locus of a section of the vector bundle $\Uni_n^\vee \to \Gr(n,E\oplus 1)$ transverse to the zero-section, hence its normal bundle is $g_E^*\Uni_n^\vee=\Uni_n^\vee$. The open complement $Y \subset \Gr(n,E\oplus 1)$ of $g_E$ is naturally a vector bundle over $\Gr(n-1,E)$. Its zero-section $\Gr(n-1,E) \to Y$ induces an isomorphism in $\SH(S)$, and the composite $\Gr(n-1,E) \to Y \subset \Gr(n,E\oplus 1)$ is the closed immersion $h_E$. In view of \eqref{eq:loc_SH} we thus have a distinguished triangle in $\SH(S)$, for any vector bundle $V \to \Gr(n,E \oplus 1)$
\[
\Th_{\Gr(n-1,E)}(V) \xrightarrow{h_E} \Th_{\Gr(n,E\oplus 1)}(V) \xrightarrow{\pf{g_E}} \Th_{\Gr(n,E)}(\Uni_n^\vee \oplus g_E^*V) \to \Sigma^{1,0}\Th_{\Gr(n-1,E)}(V),
\]
and thus, by \eqref{eq:loc_coh}, for any ring spectrum $A \in \SH(S)$ a long exact sequence
\[
\dots \to \Aa(\Gr(n,E);\Uni_n^\vee \oplus g_E^*V) \xrightarrow{{g_E}_*} \Aa(\Gr(n,E\oplus 1);V) \xrightarrow{h_E^*} \Aa(\Gr(n-1,E);V) \to \cdots
\]
\end{para}

\begin{para}
\label{p:Gr_n_n_n-1}
The closed immersion $h_E$ of \rref{p:g_h} is the zero-locus of a section of $\Quo_n \to \Gr(n,E\oplus 1)$ transverse to the zero-section, hence its normal bundle is $h_E^*\Quo_n = \Quo_{n-1}$. The open complement $W \subset \Gr(n,E\oplus 1)$ of $h_E$ is a vector bundle over $\Gr(n,E)$. Its zero-section $\Gr(n,E) \to W$ induces an isomorphism in $\SH(S)$, and the composite $\Gr(n,E) \to W \subset \Gr(n,E\oplus 1)$ is the closed immersion $g_E$. In view of \eqref{eq:loc_SH} we thus have a distinguished triangle in $\SH(S)$, for any vector bundle $V \to \Gr(n,E \oplus 1)$
\[
\Th_{\Gr(n,E)}(V) \xrightarrow{g_E} \Th_{\Gr(n,E\oplus 1)}(V) \xrightarrow{\pf{h_E}} \Th_{\Gr(n-1,E)}(\Quo_{n-1} \oplus h_E^*V) \to \Sigma^{1,0}\Th_{\Gr(n,E)}(V),
\]
and thus, by \eqref{eq:loc_coh}, for any ring spectrum $A \in \SH(S)$ a long exact sequence
\[
\dots \to \Aa(\Gr(n-1,E);\Quo_{n-1} \oplus h_E^*V) \xrightarrow{{h_E}_*} \Aa(\Gr(n,E\oplus 1);V) \xrightarrow{g_E^*} \Aa(\Gr(n,E);V) \to \cdots
\]
\end{para}

\begin{para}
\label{p:g_h_s}
When $X=S$, we will write $\Gr(n,s)$ instead of $\Gr(n,1^{\oplus s})$.
\end{para}

\begin{lemma}
\label{prop:Gr+1}
Let $E \to S$ be a vector bundle of constant even rank. Denote by  $q\colon \Gr(2,E \oplus 1^{\oplus 2}) \to S$ the projection. Then we have isomorphisms in $\SH(S)[\eta^{-1}]$
\begin{equation}
\label{prop:Gr+1:2}
g_E \colon \Sup \Gr(2,E) \xrightarrow{\sim} \Sup \Gr(2,E\oplus 1),
\end{equation}
\begin{equation}
\label{prop:Gr+1:1}
\pf{g_E} \colon \Th_{\Gr(2,E\oplus 1)}(\Uni_2^\vee) \xrightarrow{\sim} \Th_{\Gr(2,E)}(\Uni_2^\vee \oplus \Uni_2^\vee),
\end{equation}
\begin{equation}
\label{prop:Gr+1:3}
(q,\pf{g_{E\oplus 1}}) \colon \Sup \Gr(2,E \oplus 1^{\oplus 2}) \to \Un_S\vo \Th_{\Gr(2,E \oplus 1)}(\Uni_2^\vee).
\end{equation}
\end{lemma}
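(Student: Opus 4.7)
The plan is to derive each of the three isomorphisms from one of the distinguished triangles of \rref{p:Gr_n-1_n_n} or \rref{p:Gr_n_n_n-1} (in the case $n=2$), combined with the vanishing results in $\SH(S)[\eta^{-1}]$ provided by \rref{prop:PE} and \rref{lemm:PE_even_Q}.

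For \eqref{prop:Gr+1:2}, I apply the distinguished triangle of \rref{p:Gr_n_n_n-1} with $n=2$ and trivial $V$; its third term is $\Th_{\Gr(1,E)}(\Quo_1) = \Th_{\Pp(E)}(\Quo_E)$, which vanishes in $\SH(S)[\eta^{-1}]$ by \rref{lemm:PE_even_Q} since $\rank E$ is even, so that $g_E$ becomes an isomorphism. For \eqref{prop:Gr+1:1}, I apply the distinguished triangle of \rref{p:Gr_n-1_n_n} with $n=2$ and $V = \Uni_2^\vee$; using \eqref{eq:h_pb}, its first term becomes $\Th_{\Pp(E)}(\Oc_{\Pp(E)}(1) \oplus 1) \simeq \Sigma^{2,1}\Th_{\Pp(E)}(\Oc_{\Pp(E)}(1))$, which vanishes in $\SH(S)[\eta^{-1}]$ by \dref{prop:PE}{prop:PE:even}, so that $\pf{g_E}$ becomes an isomorphism.

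For \eqref{prop:Gr+1:3}, the idea is to apply the triangle of \rref{p:Gr_n-1_n_n} with $E$ replaced by $E \oplus 1$, $n=2$ and trivial $V$, yielding
\[
\Sup \Pp(E \oplus 1) \xrightarrow{h_{E\oplus 1}} \Sup \Gr(2, E \oplus 1^{\oplus 2}) \xrightarrow{\pf{g_{E\oplus 1}}} \Th_{\Gr(2, E \oplus 1)}(\Uni_2^\vee) \to \Sigma^{1,0}\Sup \Pp(E \oplus 1).
\]
Since $E \oplus 1$ has odd rank, \dref{prop:PE}{prop:PE:odd} asserts that the structural projection $q' \colon \Pp(E \oplus 1) \to S$ is an isomorphism in $\SH(S)[\eta^{-1}]$, and by construction of $h_{E \oplus 1}$ one has $q \circ h_{E \oplus 1} = q'$. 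Hence $h_{E\oplus 1}$ is a split monomorphism in $\SH(S)[\eta^{-1}]$ with retraction $(q')^{-1} \circ q$; the triangle therefore splits, giving an isomorphism $((q')^{-1} \circ q,\, \pf{g_{E\oplus 1}})$ onto $\Sup \Pp(E \oplus 1) \vo \Th_{\Gr(2, E \oplus 1)}(\Uni_2^\vee)$. Composing with $q' \vo \id$ on the first summand then yields the desired map \eqref{prop:Gr+1:3}.

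The only non-formal step is the splitting argument in \eqref{prop:Gr+1:3}: once one observes that $q \circ h_{E \oplus 1}$ coincides with the structural projection $q'$ of $\Pp(E \oplus 1)$ and invokes \dref{prop:PE}{prop:PE:odd}, the rest reduces to routine manipulations with distinguished triangles in $\SH(S)[\eta^{-1}]$.
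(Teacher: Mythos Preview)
Your proof is correct and follows essentially the same approach as the paper. For \eqref{prop:Gr+1:3} the paper composes $h_{E\oplus 1}$ with the section $s\colon S=\Pp(1)\to\Pp(E\oplus 1)$ (an isomorphism in $\SH(S)[\eta^{-1}]$ by \dref{prop:PE}{prop:PE:odd}) to obtain $j\colon \Un_S\to \Sup\Gr(2,E\oplus 1^{\oplus 2})$ with $q\circ j=\id_S$, whereas you keep $\Sup\Pp(E\oplus 1)$ and identify it with $\Un_S$ via the projection $q'$ afterward; these are the same argument up to the direction in which one uses the isomorphism $\Sup\Pp(E\oplus 1)\simeq\Un_S$.
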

\begin{proof}
In view of \rref{eq:Gr_1_P}, we have by \rref{p:Gr_n_n_n-1} a distinguished triangle in $\SH(S)$
\[
\Sup \Gr(2,E) \xrightarrow{g_E} \Sup \Gr(2,E\oplus 1) \xrightarrow{\pf{h_E}} \Th_{\Pp(E)}(\Quo) \to \Sigma^{1,0}\Sup \Gr(2,E).
\]
Since $\Th_{\Pp(E)}(\Quo) =0$ in  $\SH(S)[\eta^{-1}]$ by \rref{lemm:PE_even_Q}, we deduce \eqref{prop:Gr+1:2}.

The vector bundle $\Uni_2^\vee$ restricts to $\Oc(1) \oplus 1$ along $h_E \colon \Pp(E) \to \Gr(2,E\oplus 1)$ (see \rref{eq:Gr_1_P} and \rref{eq:h_pb}). Thus by \rref{p:Gr_n-1_n_n}, we have a distinguished triangle in $\SH(S)$
\[
\Th_{\Pp(E)}(\Oc(1) \oplus 1) \xrightarrow{h_E} \Th_{\Gr(2,E\oplus 1)}(\Uni_2^\vee) \xrightarrow{\pf{g_E}} \Th_{\Gr(2,E)}(\Uni_2^\vee \oplus \Uni_2^\vee) \to \Sigma^{1,0} \Th_{\Pp(E)}(\Oc(1) \oplus 1).
\]
Since $\Th_{\Pp(E)}(\Oc(1) \oplus 1) =0$ in $\SH(S)[\eta^{-1}]$  by \dref{prop:PE}{prop:PE:even}, we deduce \eqref{prop:Gr+1:1}.

We now prove \eqref{prop:Gr+1:3}. Since the projection $\Pp(E \oplus 1) \to S$ induces an isomorphism in $\SH(S)[\eta^{-1}]$ by \dref{prop:PE}{prop:PE:odd}, so does its section $s\colon S=\Pp(1) \to \Pp(E \oplus 1)$. The closed immersion $j \colon S = \Gr(2,1^{\oplus 2}) \to \Gr(2,E \oplus 1^{\oplus 2})$ factors as $j = h_{E \oplus 1} \circ s$. Therefore by \rref{p:Gr_n-1_n_n}, we have a distinguished triangle in $\SH(S)[\eta^{-1}]$
\[
\Un_S \xrightarrow{j} \Sup \Gr(2,E\oplus 1^{\oplus 2}) \xrightarrow{\pf{g_{E\oplus 1}}} \Th_{\Gr(2,E\oplus 1)}(\Uni_2^\vee) \to \Sigma^{1,0} \Un_S.
\]
Since $q \circ j=\id_S$, this triangle splits, and we deduce \eqref{prop:Gr+1:3}
\end{proof}

\begin{para}
\label{p:E_F}
Let $E$ be a vector bundle over $X \in \Sm_S$, and $F \subset E$ a subbundle. Then for every $n \in \Nn$ the closed immersion
\[
i \colon \Gr(n,F) \to \Gr(n,E) \quad ; \quad (U \subset F) \mapsto (U \subset E)
\]
is the zero-locus of a section of $\Hom(\Uni_2,q^*(E/F)) \simeq \Uni_2^\vee\otimes q^*(E/F)$ transverse to the zero-section (namely, the composite $\Uni_2 \subset q^*E \to q^*(E/F)$).
\end{para}

\begin{lemma}
\label{lemm:E+D}
Let $E,D$ be vector bundles over $S$. Assume that $D$ has rank two and that $E$ has constant even rank. Consider the closed immersion $i\colon \Gr(2,E) \to \Gr(2,E\oplus D)$ induced by the inclusion $E \subset E \oplus D$ (see \rref{p:E_F}), and the projection $q \colon \Gr(2,E \oplus D) \to S$. Then we have an isomorphism in $\SH(S)[\eta^{-1}]$ 
\[
(q,\pf{i}) \colon \Sup \Gr(2,E \oplus D) \xrightarrow{\sim} \Un_S \vo \Th_{\Gr(2,E)}(\Uni_2^\vee\otimes q^*D).
\]
\end{lemma}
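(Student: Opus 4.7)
The plan is to reduce to the case $D \simeq 1^{\oplus 2}$ by a Mayer--Vietoris argument, where the statement is essentially already contained in \rref{prop:Gr+1}. The main ingredient is then the composition formula \rref{p:pf_composite} for Gysin pushforwards, together with the identification $\Uni_2^\vee \otimes q^*D = \Uni_2^\vee\oplus \Uni_2^\vee$ in the trivialised case.

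For the reduction, I would note that the map $(q, \pf{i})$ is natural with respect to restriction along Zariski open immersions $S' \to S$: both $q$ and the Gysin map $\pf{i}$ pull back to the analogous morphisms constructed from the restricted data $(E|_{S'},D|_{S'})$. Since $\SH(-)[\eta^{-1}]$ satisfies Nisnevich (in particular Zariski) descent, and since the rank two bundle $D$ is Zariski-locally trivial on $S$, it suffices to prove the lemma when $D = 1^{\oplus 2}$. This is the same type of Mayer--Vietoris reduction used e.g.\ in the proofs of \rref{lemm:hyp_isom} and \rref{lemm:whyp_isom}.

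Assuming $D = 1^{\oplus 2}$, I would factor the closed immersion as $i = g_{E\oplus 1} \circ g_E$, using the notation of \rref{p:g_h}. The isomorphism \eqref{prop:Gr+1:3} then gives
\[
(q, \pf{g_{E\oplus 1}}) \colon \Sup\Gr(2,E \oplus 1^{\oplus 2}) \xrightarrow{\sim} \Un_S \vo \Th_{\Gr(2,E\oplus 1)}(\Uni_2^\vee)
\]
in $\SH(S)[\eta^{-1}]$, while \eqref{prop:Gr+1:1} provides an isomorphism $\pf{g_E}\colon \Th_{\Gr(2,E\oplus 1)}(\Uni_2^\vee)\xrightarrow{\sim} \Th_{\Gr(2,E)}(\Uni_2^\vee \oplus \Uni_2^\vee)$. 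Applying $\id \vo \pf{g_E}$ to the codomain of the first and composing with it yields an isomorphism whose second component is $\pf{g_E}\circ \pf{g_{E\oplus 1}}$. By the composition rule for Gysin pushforwards \rref{p:pf_composite}, this equals $\pf{g_{E\oplus 1}\circ g_E} = \pf{i}$, and in the case $D = 1^{\oplus 2}$ we have the tautological identification $\Uni_2^\vee \oplus \Uni_2^\vee = \Uni_2^\vee \otimes q^*D$. This produces precisely the isomorphism $(q,\pf{i})$ claimed in the statement.

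The main subtlety rather than obstacle is verifying the Mayer--Vietoris reduction; once one accepts the Zariski locality of the construction and the descent property of $\SH(-)[\eta^{-1}]$, the remainder of the argument is a bookkeeping exercise combining two results already available in the paper.
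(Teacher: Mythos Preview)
Your proposal is correct and follows essentially the same approach as the paper: reduce to trivial $D$ by Zariski locality, factor $i = g_{E\oplus 1}\circ g_E$, and then combine \eqref{prop:Gr+1:3} with \eqref{prop:Gr+1:1} via the composition formula \rref{p:pf_composite}. The paper's proof is simply a more terse version of what you wrote.
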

\begin{proof}
The statement is local in the Zariski topology of $S$, so we may assume that the vector bundle $D$ is trivial. Then $i$ factors as
\[
i \colon \Gr(2,E) \xrightarrow{g_E} \Gr(2,E\oplus 1) \xrightarrow{g_{E \oplus 1}} \Gr(2,E \oplus 1^{\oplus 2}).
\]
In view of \rref{p:pf_composite}, the statement follows by combining \eqref{prop:Gr+1:1} with \eqref{prop:Gr+1:3}.
\end{proof}

\begin{remark}
Taking $E=1^{\oplus 2}$ and $D=1^{\oplus 2}$ in \rref{lemm:E+D} yields a natural isomorphism
\[
\Sup \Gr(2,4) \simeq \Un_S \oplus \Sigma^{8,4} \Un_S \in \SH(S)[\eta^{-1}].
\]
\end{remark}

\subsection{Cohomology of \texorpdfstring{$2$}{2}-grassmannians}

\begin{para}
\label{p:i_j}
Let $A\in \SH(S)$ be an $\eta$-periodic ring spectrum. Let $X\in \Sm_S$ and $E,D\to X$ vector bundles, with $D$ of rank two. Denote by $q \colon \Gr(2,E\oplus D) \to X$ the projection. The inclusions $E \subset E \oplus D$ and $D \subset E \oplus D$ yield closed immersions (see \rref{p:E_F})
\[
i \colon \Gr(2,E) \to \Gr(2,E \oplus D) \quad \text{and} \quad j \colon X=\Gr(2,D) \to \Gr(2,E \oplus D).
\]
Since $i$ factors through the open complement of $j$, it follows from \rref{p:pf_empty} that $j^* \circ i_* \colon \Aa(\Gr(2,E);\Uni_2^\vee \otimes q^*D) \to \Aa(X)$ vanishes. As $q\circ j =\id$, Lemma~\rref{lemm:E+D} yields a split exact sequence of $\Aa(X)$-modules
\[
0\to \Aa(\Gr(2,E);\Uni_2^\vee \otimes q^*D) \xrightarrow{i_*} \Aa(\Gr(2,E \oplus D)) \xrightarrow{j^*} \Aa(X) \to 0.
\]
\end{para}

\begin{para}
\label{p:1_oplus_2}
Let $A \in \SH(S)$ be an $\eta$-periodic ring spectrum with a weak hyperbolic orientation (see \rref{def:hyp_thom}). Consider the situation of \rref{p:i_j}, and assume that $D= 1^{\oplus 2}$. By \rref{p:E_F} the closed immersion $i \colon \Gr(2,E) \to \Gr(2,E \oplus 1^{\oplus 2})$ is the zero-locus of a section of $\Uni_2^\vee \otimes (1^{\oplus 2})=\Uni_2^\vee \oplus \Uni_2^\vee$ transverse to the zero-section, hence by \dref{lemm:e_pi_push_pull}{lemm:e_pi_push_pull:2} we have
\begin{equation}
\label{eq:i_hypt_pi}
i_*(\hypt_{\Uni_2^\vee}) = \pi(\Uni_2^\vee) \in \Aa(\Gr(2,E \oplus 1^{\oplus 2})).
\end{equation}
Together with the projection formula \dref{lemm:proj_formula}{lemm:proj_formula:2}, this implies that, for any $k \in \Nn$
\begin{equation}
\label{eq:i_hypt_pik}
i_*(\pi(\Uni_2^\vee)^k \cup \hypt_{\Uni_2^\vee}) = \pi(\Uni_2^\vee)^{k+1} \in \Aa(\Gr(2,E \oplus 1^{\oplus 2})).
\end{equation}
\end{para}

\begin{proposition}
\label{prop:A_Gr_2}
Let $A \in \SH(S)$ be an $\eta$-periodic ring spectrum with a weak hyperbolic orientation (see \rref{def:hyp_thom}). Let $d\in \Nn$ and $s \in \{2d,2d+1\}$. Sending $u$ to the top Pontryagin class $\pi(\Uni_2^\vee)$ (see \rref{def:Euler_thom}) yields an isomorphism of $\Aa(S)$-algebras
\[
\Aa(\Gr(2,s)) \simeq \Aa(S)[u]/u^d.
\]

In addition, when $s$ is odd the (left or right) $\Aa(\Gr(2,s))$-module $\Aa(\Gr(2,s);\Uni_2^\vee)$ is freely generated by the Euler class $e(\Uni_2^\vee)$ (see \rref{def:Euler_thom}).
\end{proposition}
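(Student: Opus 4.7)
The plan is to proceed by induction on $d$, treating both statements at the same stage. The base case $d=0$ is vacuous since $\Gr(2,0)=\Gr(2,1)=\varnothing$ forces both sides to vanish.

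For the ring statement with $s\in\{2d,2d+1\}$, I would apply the split short exact sequence of \rref{p:i_j} with $X=S$, $E=1^{\oplus s-2}$, $D=1^{\oplus 2}$:
\[
0\to \Aa(\Gr(2,s-2);\Uni_2^\vee\oplus \Uni_2^\vee)\xrightarrow{i_*}\Aa(\Gr(2,s))\xrightarrow{j^*}\Aa(S)\to 0.
\]
Via \rref{lemm:whyp_isom}, the left term is identified with $\Aa(\Gr(2,s-2))$ through cup product with $\hypt_{\Uni_2^\vee}$; by induction this is free on $1,\pi(\Uni_2^\vee),\dots,\pi(\Uni_2^\vee)^{d-2}$. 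The formula \eqref{eq:i_hypt_pik} transports these to $\pi(\Uni_2^\vee),\dots,\pi(\Uni_2^\vee)^{d-1}$ in $\Aa(\Gr(2,s))$, and together with the class $1$ from a retraction of $j^*$ (afforded by the projection $q$) they furnish the claimed $\Aa(S)$-basis. The relation $\pi(\Uni_2^\vee)^d=0$ follows by applying \eqref{eq:i_hypt_pik} one more time to $\pi(\Uni_2^\vee)^{d-1}=0$ in $\Aa(\Gr(2,s-2))$.

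For the module statement with $s=2d+1$, set $F=1^{\oplus 2d}$, and use the distinguished triangle of \rref{p:Gr_n-1_n_n} with $n=2$ and twist $V=\Uni_2^\vee$:
\[
\Th_{\Pp(F)}(\Oc(1)\oplus 1)\to \Th_{\Gr(2,2d+1)}(\Uni_2^\vee)\xrightarrow{\pf{g_F}}\Th_{\Gr(2,2d)}(\Uni_2^\vee\oplus \Uni_2^\vee)\to\cdots
\]
Because $F$ has even rank, \dref{prop:PE}{prop:PE:even} yields $\Th_{\Pp(F)}(\Oc(1))=0$ in $\SH(S)[\eta^{-1}]$, and adding a trivial line bundle is a $\Sigma^{2,1}$-suspension, so the first term vanishes. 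Combining the resulting isomorphism $(g_F)_*$ with \rref{lemm:whyp_isom} gives an $\Aa(S)$-linear iso
\[
\Aa(\Gr(2,2d))\xrightarrow{\sim}\Aa(\Gr(2,2d+1);\Uni_2^\vee),\qquad x\mapsto (g_F)_*(x\cup \hypt_{\Uni_2^\vee}).
\]

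The delicate step is matching this isomorphism to the claimed generator $e(\Uni_2^\vee)$. Since $g_F$ is the zero-locus of a section of $\Uni_2^\vee$ transverse to the zero-section (\rref{p:Gr_n-1_n_n}), \rref{lemm:push_pull} with $V=\Uni_2^\vee$ identifies $z_{\Uni_2^\vee}^*$ with $(g_F)_*\circ g_F^*$; applying this to $\hypt_{\Uni_2^\vee}$ and using naturality \dref{def:hyp_thom}{def:hyp_thom:funct} produces $e(\Uni_2^\vee)=(g_F)_*(\hypt_{\Uni_2^\vee})$. The projection formula \dref{lemm:proj_formula}{lemm:proj_formula:2}, together with $g_F^*\pi(\Uni_2^\vee)=\pi(\Uni_2^\vee)$, then converts each basis element $\pi(\Uni_2^\vee)^k\in \Aa(\Gr(2,2d))$ to $\pi(\Uni_2^\vee)^k\cup e(\Uni_2^\vee)$ in $\Aa(\Gr(2,2d+1);\Uni_2^\vee)$; thus the basis from part 1 transfers to $\{\pi(\Uni_2^\vee)^k\cup e(\Uni_2^\vee)\}_{k=0}^{d-1}$, which is precisely the freeness statement with generator $e(\Uni_2^\vee)$ over $\Aa(\Gr(2,2d+1))=\Aa(S)[u]/u^d$. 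I expect this identification of $e(\Uni_2^\vee)$ as the pushforward $(g_F)_*(\hypt_{\Uni_2^\vee})$ to be the main obstacle, since it is what bridges the geometric definition of the Euler class and the algebraic splitting provided by the distinguished triangle.
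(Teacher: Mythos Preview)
Your argument for the even case $s=2d$ and for the module statement is essentially the paper's: the same short exact sequence from \rref{p:i_j}, the same use of \eqref{eq:i_hypt_pik} to step the powers of $\pi(\Uni_2^\vee)$, and for the twisted part the same vanishing of $\Th_{\Pp(F)}(\Oc(1)\oplus 1)$ (this is exactly \eqref{prop:Gr+1:1}) followed by the identification of the resulting composite with multiplication by $e(\Uni_2^\vee)$ (which is \dref{lemm:e_pi_push_pull}{lemm:e_pi_push_pull:2}, i.e.\ precisely your \rref{lemm:push_pull} computation plus the projection formula).

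There is, however, a genuine gap in your treatment of the ring statement when $s=2d+1$. You invoke \rref{p:i_j} with $E=1^{\oplus s-2}=1^{\oplus 2d-1}$, but that split exact sequence rests on \rref{lemm:E+D}, which explicitly assumes that $E$ has constant \emph{even} rank; its proof uses \eqref{prop:Gr+1:1} and \eqref{prop:Gr+1:3}, both of which require an even-rank input. So the inductive step from $\Gr(2,2d-1)$ to $\Gr(2,2d+1)$ is not available from the results you cite. The paper avoids this by handling only $s=2d$ with the short exact sequence, and then deducing the odd case from the even one via the isomorphism $g_{1^{\oplus 2d}}^*\colon \Aa(\Gr(2,2d+1))\xrightarrow{\sim}\Aa(\Gr(2,2d))$ of \eqref{prop:Gr+1:2}, which preserves $\pi(\Uni_2^\vee)$. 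Incidentally, you already use $g_F^*$ in your module argument, so this reduction is readily at hand; inserting it (and restricting your exact-sequence step to even $s$) makes your proof complete and, at that point, identical to the paper's.
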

\begin{proof}
\emph{Case $s=2d$:} The case $s=0$, being clear, we may assume that $s\geq 2$. Consider the closed immersion $i\colon \Gr(2,s-2) \to \Gr(2,s)$ given by the vanishing of the last two coordinates. We are in the situation of \rref{p:i_j} with $E=1^{\oplus s}$ and $D=1^{\oplus 2}$, so that, taking into account  \rref{lemm:whyp_isom}, we obtain an exact sequence of $\Aa(S)$-modules
\[
0 \to \Aa(\Gr(2,s-2)) \xrightarrow{\alpha} \Aa(\Gr(2,s)) \xrightarrow{j^*} \Aa(S) \to 0,
\]
where $\alpha$ is given by $x \mapsto i_*(x \cup \hypt_{\Uni_2^\vee})$, and $j \colon S = \Gr(2,2) \to \Gr(2,s)$ is given by the vanishing of the first $s-2$ coordinates. Note that $\alpha(\pi(\Uni_2^\vee)^k)=\pi(\Uni_2^\vee)^{k+1}$ by \eqref{eq:i_hypt_pik}. Since $\pi(\Uni_2^\vee)^{d-1}=0$ in $\Aa(\Gr(2,s-2))$ by induction, it follows that $\pi(\Uni_2^\vee)^d=0$ in $\Aa(\Gr(2,s))$. Moreover $j^*\pi(\Uni_2^\vee) = j^* \circ \alpha(1) =0$. We thus obtain a commutative diagram with exact rows
\[ \xymatrix{
0 \ar[r] &\Aa(S)[u]/u^{d-1}\ar[r]^{\cdot u} \ar[d] & \Aa(S)[u]/u^d \ar[d] \ar[rr]^-{u \mapsto 0} && \Aa(S) \ar[d]^= \ar[r] &0 \\ 
0 \ar[r] & \Aa(\Gr(2,s-2)) \ar[r]^-{\alpha} & \Aa(\Gr(2,s)) \ar[rr]^-{j^*} && \Aa(S) \ar[r] & 0
}\]
Since the left vertical arrow is an isomorphism by induction, it follows that the middle vertical arrow is one, concluding the proof in this case.

\emph{Case $s=2d+1$:} Since by \rref{prop:Gr+1:2} (with $E=1^{\oplus 2d}$) we have an isomorphism of $\Aa(S)$-algebras $\Aa(\Gr(2,2d+1)) \simeq \Aa(\Gr(2,2d))$ mapping $\pi(\Uni_2^\vee)$ to $\pi(\Uni_2^\vee)$, the isomorphism $\Aa(\Gr(2,2d+1)) \simeq \Aa(S)[u]/u^d$ follows from the case $s=2d$ above. To prove the remaining statement, note that the composite
\begin{align*}
\Aa(\Gr(2,2d+1)) &\xrightarrow{g_{2d}^*} \Aa(\Gr(2,2d))\xrightarrow{\cup \hypt_{\Uni_2^\vee}} \\
&\Aa(\Gr(2,2d);\Uni_2^\vee \oplus \Uni_2^\vee) 
\xrightarrow{{g_{2d}}_*} \Aa(\Gr(2,2d+1);\Uni_2^\vee)
\end{align*}
is bijective by \eqref{prop:Gr+1:2}, \rref{lemm:whyp_isom} and \eqref{prop:Gr+1:1}. By \dref{lemm:e_pi_push_pull}{lemm:e_pi_push_pull:2} that composite is multiplication by the Euler class $e(\Uni_2^\vee)$.
\end{proof}

\begin{remark}
\label{rem:X_S}
Let $X \in \Sm_S$ with structural morphism $f\colon X \to S$. Since the image under $f^* \colon \SH(S) \to \SH(X)$ of an $\eta$-periodic ring spectrum with a weak hyperbolic orientation remains one, it follows that \rref{prop:A_Gr_2} provides a computation of $\Aa(\Gr(2,s) \times_S X)$. 
\end{remark}

\begin{corollary}
\label{cor:splitting_twisted}
Let $A \in \SH(S)$ be an $\eta$-periodic ring spectrum with a weak hyperbolic orientation. Let $d\in \Nn\smallsetminus\{0\}$. Let $X \in \Sm_S$, and $E,V \to X$ vector bundles, with $E$ of constant rank $s\in\{2d,2d+1\}$. Denote by $q \colon \Gr(2,E) \to X$ the projection.
\begin{enumerate}[(i)]
\item 
\label{cor:splitting_twisted:1}
We have an isomorphism of $\Aa(X)$-bimodules
\[
\varphi_{X,V} \colon \Aa(X;V)^{\oplus d} \xrightarrow{\sim} \Aa(\Gr(2,E);q^*V), \quad (a_0, \dots ,a_{d-1}) \mapsto \sum_{i=0}^{d-1} q^*(a_i)\pi(\Uni_2^\vee)^i.
\]

\item 
\label{cor:splitting_twisted:2}
If $s=2d+1$, we have an isomorphism of $\Aa(\Gr(2,E))$-bimodules
\[
\Aa(\Gr(2,E);p^*V) \xrightarrow{\sim} \Aa(\Gr(2,E);p^*V \oplus \Uni_2^\vee), \quad x\mapsto x\cup e(\Uni_2^\vee).
\]
\end{enumerate}
\end{corollary}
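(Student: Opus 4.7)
The plan is to reduce both parts, by Mayer--Vietoris and naturality, to the case when $V$ and $E$ are trivial, where Proposition~\rref{prop:A_Gr_2} applies directly (in the form of \rref{rem:X_S}).

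First I would observe that $\varphi_{X,V}$ and the cup product with $e(\Uni_2^\vee)$ are homomorphisms of $\Aa(X)$-bimodules, respectively of $\Aa(\Gr(2,E))$-bimodules: this uses that the top Pontryagin class $\pi(\Uni_2^\vee)$ and the Euler class $e(\Uni_2^\vee)$ are central, which follows from the centrality axiom \dref{def:hyp_thom}{def:hyp_thom:central} for $\hypt_{\Uni_2^\vee}$ by pulling back along the zero-sections. These maps are natural in $X$, $E$, $V$ under base change. Both sides satisfy Mayer--Vietoris long exact sequences for Zariski open covers of $X$, via the standard cofiber sequences of Thom spaces induced by open immersions, and a five-lemma comparison, applied to an open cover of $X$ over which both $V$ and $E$ simultaneously trivialise, reduces the two statements to the case $V = 1^{\oplus r}$, $E = 1^{\oplus s}$.

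Next, in the case $V = 1^{\oplus r}$ the suspension isomorphism \eqref{eq:twist_shift} identifies $\Aa(X;V)$ with $\Aa(X)$ up to a shift in bidegrees, and similarly identifies $\Aa(\Gr(2,E); q^*V)$ with $\Aa(\Gr(2,E))$ (and identifies $\Aa(\Gr(2,E); q^*V \oplus \Uni_2^\vee)$ with $\Aa(\Gr(2,E); \Uni_2^\vee)$). The compatibility \eqref{eq:Sigma_cup} ensures that $\varphi$ and the cup product with $e(\Uni_2^\vee)$ transform correctly under these identifications, reducing to the case $V=0$. Finally, with $E = 1^{\oplus s}$ we have $\Gr(2,E) = \Gr(2,s) \times_S X$; viewing $A$ as a ring spectrum over $X$, \rref{rem:X_S} together with \rref{prop:A_Gr_2} asserts exactly the freeness of $\Aa(\Gr(2,E))$ as an $\Aa(X)$-module on the basis $1, \pi(\Uni_2^\vee), \dots, \pi(\Uni_2^\vee)^{d-1}$ (yielding (i)), and, when $s$ is odd, the freeness of $\Aa(\Gr(2,E); \Uni_2^\vee)$ as an $\Aa(\Gr(2,E))$-module on the generator $e(\Uni_2^\vee)$ (yielding (ii)).

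The only technical care needed is in setting up the morphism of Mayer--Vietoris long exact sequences for the twisted cohomology theories and verifying naturality of $\varphi_{X,V}$ and of the cup product with $e(\Uni_2^\vee)$ with respect to the open cover; I do not expect a serious obstacle here, since the essential cohomological content has already been computed in \rref{prop:A_Gr_2}.
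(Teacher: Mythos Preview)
Your proposal is correct and follows essentially the same approach as the paper: reduce by Mayer--Vietoris to the case where both $E$ and $V$ are trivial, use the suspension isomorphism \eqref{eq:twist_shift} to pass to $V=0$, and then invoke \rref{prop:A_Gr_2} via \rref{rem:X_S}. The additional remarks you make about centrality, naturality, and the five-lemma are appropriate elaborations of exactly this outline.
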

\begin{proof}
Using a Mayer--Vietoris argument, we may assume that both $E$ and $V$ are trivial. Using the isomorphism \eqref{eq:twist_shift} we reduce to the case $V=0$. Then the statements follow from \rref{prop:A_Gr_2} (in view of \rref{rem:X_S}).
\end{proof}

\section{Pontryagin classes}

\subsection{Pontryagin structures}
\label{sect:Pontryagin_structures}
In this section, we introduce a structure which will turn out to be equivalent to that of a weak hyperbolic orientation, where instead of Thom classes of rank two vector bundles, we specify their top Pontryagin classes.

\begin{para}
\label{p:point_Gr_2_4}
Let us denote by $i \colon S= \Gr(2,2) \to \Gr(2,4)$ the closed immersion induced by the inclusion $1^{\oplus 2} \subset 1^{\oplus 4}$ given by the vanishing of the last two coordinates. Then $i$ is the zero-locus of a section of $\Uni_2^\vee \oplus \Uni_2^\vee$ transverse to the zero-section, namely $(s_3,s_4)$, where $s_k$ is the section of $\Uni_2^\vee$ corresponding to the $k$-th coordinate for $k \in \{3,4\}$ (i.e.\ dual to the composite $\Uni_2 \subset 1^{\oplus 4} \to 1$ where the last map is the $k$-th projection). This choice of a section permits to identify the normal bundle of $i$ with $i^*(\Uni_2^\vee \oplus \Uni_2^\vee)=1^{\oplus 4}$.
\end{para}

\begin{definition}
\label{def:Pontryagin_str}
Let $A \in \SH(S)$ be a ring spectrum. A \emph{Pontryagin structure} on $A$ is the datum of a class $\pi_E \in A^{8,4}(X)$ for each rank two vector bundle $E\to X$ with $X\in \Sm_S$, such that:
\begin{enumerate}[(i)]
\item 
the class $\pi_E$ is $\Aa(X)$-central,

\item 
\label{def:Pontryagin_str:pb}
if $f \colon Y \to X$ is a morphism in $\Sm_S$ and $E \to X$ a vector bundle of rank two, then $f^*\pi_E = \pi_{f^*E}$,

\item
\label{def:Pontryagin_str:isom}
if $\varphi \colon E \xrightarrow{\sim} F$ is an isomorphism of rank two vector bundles over $X$, then $\pi_F = \pi_E$,

\item
\label{def:Pontryagin_str:2,4}
in the notation of \rref{p:point_Gr_2_4} and \rref{p:sw}, we have $\pi_{\Uni_2^\vee} = i_*\circ \sw_{1,1}^*(\Sigma^{8,4}1)$.
\end{enumerate}
\end{definition}

\begin{para}
\label{p:induced_Pontryagin}
It follows from \dref{lemm:e_pi_push_pull}{lemm:e_pi_push_pull:2} that a weak hyperbolic orientation (see \rref{def:hyp_thom}) induces a Pontryagin structure, by setting $\pi_E = \pi(E)$ for any rank two vector bundle $E\to X$ with $X\in \Sm_S$.
\end{para}

\begin{lemma}
\label{lemm:Pontryagin_str:1}
Let $A \in \SH(S)$ be a ring spectrum with a Pontryagin structure. Then $\pi_{1^{\oplus 2}} = 0 \in A^{8,4}(S)$.
\end{lemma}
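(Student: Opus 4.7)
The plan is to pull the defining formula \dref{def:Pontryagin_str}{def:Pontryagin_str:2,4} back along the closed immersion $i\colon S=\Gr(2,2)\to \Gr(2,4)$ and exploit that its normal bundle is trivial. Since $\Gr(2,2)=S$ and $\Uni_2$ over $\Gr(2,2)$ is tautologically $1^{\oplus 2}$, one has $i^*\Uni_2^\vee \simeq 1^{\oplus 2}$. Axioms \dref{def:Pontryagin_str}{def:Pontryagin_str:pb} and \dref{def:Pontryagin_str}{def:Pontryagin_str:isom} then give $\pi_{1^{\oplus 2}} = i^*\pi_{\Uni_2^\vee}$, and in view of \dref{def:Pontryagin_str}{def:Pontryagin_str:2,4}, the problem reduces to showing that $i^* \circ i_*$ vanishes on $A^{0,0}(S;1^{\oplus 2}\oplus 1^{\oplus 2})$, where $i_*$ is formed using the normal bundle identification $N \simeq 1^{\oplus 4}$ of \rref{p:point_Gr_2_4}.

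To compute this composite, I would apply \rref{p:pf_funct} to the cartesian square with $Y=Y'=X'=S$, $X=\Gr(2,4)$, and all maps into $\Gr(2,4)$ equal to $i$. The right-hand vertical map $i$ has normal bundle $N$, while the left-hand vertical $\id_S$ has normal bundle $0$. The map $e$ of \eqref{diag:pf_funct} induced by the inclusion $0 \subset N$ coincides with the zero section $z_N\colon S_+ \to \Th_S(N)$ of \rref{p:z}, so the commutative diagram \eqref{diag:pf_funct} reads $\pf{i} \circ i = z_N$ in $\Hop(S)$. Dualising gives $i^* \circ i_* = z_N^*$.

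Finally, the inclusion $0 \subset 1^{\oplus 4}$ factors through the first summand as $0 \subset 1 \subset 1^{\oplus 4}$, and by the alternative description of $z_N$ in \rref{p:z}, this exhibits $z_N$ as the composite $S_+ \xrightarrow{z_1} \Th_S(1) \to \Th_S(1^{\oplus 4})$. Since $z_1 = 0$ in $\Hop(S)$ by \rref{p:z_1}, one concludes that $z_N=0$ in $\Hop(S)$, hence $z_N^*=0$ on cohomology, and therefore $\pi_{1^{\oplus 2}}=0$. The argument is essentially formal once one notices that the normal bundle of $i$ is trivial; there is no serious obstacle.
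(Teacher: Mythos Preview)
Your proof is correct but takes a different route from the paper. The paper introduces a \emph{second} closed immersion $j \colon S = \Gr(2,2) \to \Gr(2,4)$ given by the vanishing of the \emph{first} two coordinates (rather than the last two); since $j$ factors through the open complement of $i$, one has $j^* \circ i_* = 0$ directly by \rref{p:pf_empty}, and since $j^*\Uni_2^\vee \simeq 1^{\oplus 2}$, the axioms give $\pi_{1^{\oplus 2}} = j^*\pi_{\Uni_2^\vee} = j^* \circ i_* \circ \sw_{1,1}^*(\Sigma^{8,4}1) = 0$ in one stroke. Your argument instead computes the self-intersection $i^* \circ i_* = z_N^*$ via the (non-transverse) base-change diagram \eqref{diag:pf_funct} and then kills $z_N$ using the triviality of $N$ together with \rref{p:z_1}. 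Both are short; the paper's version avoids invoking the excess-intersection diagram entirely, while yours makes transparent the underlying heuristic that $i^*i_*$ is governed by the Euler class of the normal bundle, which vanishes when $N$ is trivial of positive rank.

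One small wording quibble: in the setup of \eqref{squ:pf_funct} it is the \emph{horizontal} maps $i,i'$ that carry the normal bundles $N,N'$, not the vertical ones. In your square this causes no actual error, because with $Y=Y'=X'=S$ the bottom horizontal and the right vertical are both $i$, and the top horizontal and the left vertical are both $\id_S$; but it would be cleaner to say that the bottom closed immersion $i$ has normal bundle $N$ and the top closed immersion $i'=\id_S$ has normal bundle $0$, so that the map $e$ of \eqref{diag:pf_funct} is indeed $z_N$.
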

\begin{proof}
Let $j \colon S=\Gr(2,2) \to \Gr(2,4)$ be the immersion given by the vanishing of the first two coordinates. Then $j^*\Uni_2^\vee \simeq 1^{\oplus 2}$, and $j$ factors trough the open complement of the immersion $i$ of \rref{p:point_Gr_2_4}, hence by \rref{p:pf_empty}
\[
0=j^* \circ i_*\circ \sw_{1,1}^*(\Sigma^{8,4}1) 
\overset{\text{\dref{def:Pontryagin_str}{def:Pontryagin_str:2,4}}}{=}
i^*\pi_{\Uni_2^\vee}
\overset{\text{\dref{def:Pontryagin_str}{def:Pontryagin_str:pb}}}{=}
\pi_{i^*\Uni_2^\vee}
\overset{\text{\dref{def:Pontryagin_str}{def:Pontryagin_str:isom}}}{=} \pi_{1^{\oplus 2}}.\qedhere
\]
\end{proof}

\begin{proposition}
\label{prop:Pontryagin_str-wo}
Let $A \in \SH(S)$ be an $\eta$-periodic ring spectrum. Then each Pontryagin structure on $A$ is induced by a unique weak hyperbolic orientation (in the sense of \rref{p:induced_Pontryagin}).
\end{proposition}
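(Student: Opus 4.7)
The plan is to prove uniqueness and existence separately.

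For \emph{uniqueness}, suppose that $\hypt$ and $\hypt'$ both induce the Pontryagin structure $\pi$. By Lemma~\rref{lemm:whyp_isom} applied to $\hypt$, for each rank two bundle $E \to X$ there is a unique $a_E \in A^{0,0}(X)$ with $\hypt'_E = a_E \cup \hypt_E$, and applying $z_{E \oplus E}^*$ yields $a_E \cup \pi_E = \pi_E$. Specialising to $E = \Uni_2^\vee$ on $\Gr(2, 2d)$ and invoking Proposition~\rref{prop:A_Gr_2}, the class $\pi_{\Uni_2^\vee}$ is a free $A^{0,0}(S)$-basis element in bidegree $(8,4)$, so the equation $a \cdot \pi_{\Uni_2^\vee} = \pi_{\Uni_2^\vee}$ forces $a = 1$. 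Pulling back along a classifying morphism $X \to \Gr(2, 2d)$ for $d$ sufficiently large yields $a_E = 1$ universally.

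For \emph{existence}, given a rank two bundle $F \to X$, I would form the Grassmann bundle $q \colon G := \Gr(2, F^\vee \oplus 1^{\oplus 2}) \to X$ and the two closed immersions of \rref{p:i_j}
\[
i \colon X = \Gr(2, F^\vee) \hookrightarrow G, \qquad j \colon X = \Gr(2, 1^{\oplus 2}) \hookrightarrow G.
\]
The normal bundle of $i$ is $\Uni_2^\vee \otimes q^*(1^{\oplus 2})|_X = F \oplus F$, and $j^* \Uni_2^\vee = 1^{\oplus 2}$. The exact sequence of \rref{p:i_j} in bidegree $(8,4)$ reads
\[
0 \to A^{0,0}(X; F \oplus F) \xrightarrow{i_*} A^{8,4}(G) \xrightarrow{j^*} A^{8,4}(X) \to 0,
\]
and since $j^*\pi_{\Uni_2^\vee} = \pi_{1^{\oplus 2}} = 0$ by Lemma~\rref{lemm:Pontryagin_str:1}, I would define $\hypt_F \in A^{0,0}(X; F \oplus F)$ to be the unique preimage of $\pi_{\Uni_2^\vee}$. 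The identity $\pf{i} \circ i_+ = z_{F \oplus F}$ in $\Hop(S)$ (a direct consequence of \rref{p:pf_zero_section} and the construction of the purity isomorphism) then yields
\[
z_{F \oplus F}^* \hypt_F = i_+^* i_*(\hypt_F) = i_+^* \pi_{\Uni_2^\vee} = \pi_{i_+^* \Uni_2^\vee} = \pi_F,
\]
since $i_+^* \Uni_2^\vee = (F^\vee)^\vee = F$, so $\hypt$ indeed induces $\pi$.

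It then remains to verify the axioms of Definition~\rref{def:hyp_thom}. Centrality follows from the projection formula \rref{lemm:proj_formula} and centrality of $\pi_{\Uni_2^\vee}$, combined with injectivity of $i_*$. The normalisation axiom on $F = 1^{\oplus 2}$ is immediate from axiom \dref{def:Pontryagin_str}{def:Pontryagin_str:2,4}. Functoriality and isomorphism invariance follow by applying base change (\rref{p:pf_funct}, \rref{p:transverse}) to the evident Cartesian squares of Grassmann bundles induced by a morphism $Y \to X$ or an isomorphism $E \simeq F$ of rank two bundles, and then invoking uniqueness of the lift on the smaller base. I expect this last point to be the main technical hurdle: one must carefully set up the squares, verify the transversality of the induced maps between the various Grassmann bundles $\Gr(2, F^\vee \oplus 1^{\oplus 2})$, and keep track of the compatibility of normal bundles under pullbacks.
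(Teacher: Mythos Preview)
Your existence argument is essentially identical to the paper's: form the Grassmann bundle $\Gr(2,F^\vee\oplus 1^{\oplus 2})$, use the split exact sequence of \rref{p:i_j}, observe $j^*\pi_{\Uni_2^\vee}=\pi_{1^{\oplus 2}}=0$, and lift. Your verification that the resulting $\hypt$ reproduces $\pi$ is also correct (the identity $\pf{i}\circ i=z_{F\oplus F}$ follows from \rref{p:pf_funct} applied to the non-transverse square with $Y'=Y=X$ and $i'=\id$). The axioms check out as you indicate; the ``technical hurdle'' you anticipate is routine transverse base change, nothing more.

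Your uniqueness argument, however, is more circuitous than necessary and has a small imprecision. On $\Gr(2,2d)$ the equation $(a-1)\cdot u=0$ in $\Aa(S)[u]/u^d$ does \emph{not} force $a=1$: writing $a=\sum_{i=0}^{d-1}a_iu^i$ with $a_i\in A^{-8i,-4i}(S)$, you only get $a_0=1$ and $a_1=\cdots=a_{d-2}=0$, leaving $a_{d-1}$ unconstrained. You need an extra step: by functoriality of $a_E$, the restriction of $a_{\Uni_2^\vee}$ from $\Gr(2,2(d+1))$ to $\Gr(2,2d)$ kills the top coefficient, so in fact $a_{\Uni_2^\vee}=1$ on every $\Gr(2,2d)$; then Jouanolou's trick produces a classifying map and finishes. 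This is all repairable, but the paper avoids it entirely: having set up the injection $i_*\colon\Aa(X;E\oplus E)\hookrightarrow\Aa(\Gr(2,E^\vee\oplus 1^{\oplus 2}))$ for existence, it simply observes that any weak hyperbolic orientation inducing $\pi_\bullet$ satisfies $i_*(\hypt_E)=\pi(\Uni_2^\vee)=\pi_{\Uni_2^\vee}$ (this is \eqref{eq:i_hypt_pi}), so injectivity of $i_*$ gives uniqueness immediately. Reusing the same exact sequence for both directions is the cleaner move.
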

\begin{proof}
Let $E \to X$ be a rank two vector bundle, with $X \in \Sm_S$. Then $\Uni_2^\vee$ corresponds to $E$ under the identification $X=\Gr(2,E^\vee)$. Thus by \rref{p:i_j} (with $D=1^{\oplus 2}$ and $E$ replaced with $E^\vee$) we have an exact sequence
\[
0\to \Aa(X;E \oplus E) \xrightarrow{i_*} \Aa(\Gr(2,E^\vee \oplus 1^{\oplus 2})) \xrightarrow{j^*} \Aa(X) \to 0.
\]
Recall from \eqref{eq:i_hypt_pi} that if $A$ is endowed with a weak hyperbolic orientation, we have
\begin{equation}
\label{eq:i_hyp_E}
i_*(\hypt_E) = i_*(\hypt_{\Uni_2^\vee}) = \pi(\Uni_2^\vee).
\end{equation}
In view of the injectivity $i_*$, the uniqueness part of the statement follows.

Assume now that $A$ is endowed with a Pontryagin structure. As $j^*\Uni_2=1^{\oplus 2}$, the element $\pi_{\Uni_2^\vee} \in A^{8,4}(\Gr(2,E\oplus 1^{\oplus 2}))$ satisfies $j^* \pi_{\Uni_2^\vee} = \pi_{1^{\oplus 2}}$, which vanishes by \rref{lemm:Pontryagin_str:1}. Therefore, by the above exact sequence, there exists a unique element $\hypt_E \in A^{0,0}(X;E \oplus E)$ such that $i_*(\hypt_E) = \pi_{\Uni_2^\vee}$. It is then easy to verify that the association $E \mapsto \hypt_E$ defines a weak hyperbolic orientation of $A$. Now, as $E =i^*\Uni_2^\vee$, we have
\[
\pi(E)= i^*\pi(\Uni_2^\vee) \overset{\text{\eqref{eq:i_hyp_E}}}{=} i^* \circ i_*(\hypt_E) = i^*\pi_{\Uni^\vee_2} = \pi_E,
\]
so that this weak hyperbolic orientation induces the original Pontryagin structure.
\end{proof}

\subsection{The splitting principle}
\begin{theorem}
[Splitting principle]
\label{th:splitting}
Let $A \in \SH(S)$ be an $\eta$-periodic hyperbolically oriented ring spectrum. Let $X \in \Sm_S$, and $E \to X$ a vector bundle of constant rank. Then there exists a morphism $f \colon Y \to X$ in $\Sm_S$ and vector bundles $E_1,\dots,E_r$ over $Y$ such that:
\begin{enumerate}[(i)]
\item for any vector bundle $V\to X$, the morphism of $\Aa(S)$-bimodules $f^* \colon \Aa(X;V) \to \Aa(Y;V)$ admits a retraction,

\item $f^*E \simeq E_1 \oplus \dots \oplus E_r$,

\item $\rank E_i \in \{1,2\}$ and $\det E_i \simeq 1$ for $i=1,\dots,r$.
\end{enumerate}
\end{theorem}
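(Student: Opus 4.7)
The plan is to proceed by induction on $r=\rank E$, at each step peeling off either a trivial line bundle or a rank-two summand with trivialized determinant. The two main tools are the Grassmann bundle $\Gr(2,-)$, which produces a rank-two direct summand after a Jouanolou-type modification, and \rref{prop:splitting_odd}, used both to split off odd-rank trivial line bundles and to trivialize the determinants of rank-one line bundles.

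The base cases are $r=0$ (take $f=\id_X$ with no $E_i$) and $r=1$ (apply \rref{prop:splitting_odd} directly to $E$ to get $f\colon Y\to X$ with $f^*E\simeq 1$). For the inductive step with $r\geq 3$ odd, I would apply \rref{prop:splitting_odd} to $E$, obtaining $g\colon Y_1\to X$ with $g^*E\simeq W\oplus 1$ where $\rank W=r-1$, and then invoke the inductive hypothesis on $W$ over $Y_1$, appending the trivial line bundle to the resulting splitting.

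When $r\geq 2$ is even, I would pass to the Grassmann bundle $q\colon\Gr(2,E)\to X$, whose pullback is split injective on $\Aa(-;V)$ for every vector bundle $V$ by \rref{cor:splitting_twisted}. The tautological sequence $0\to\Uni_2\to q^*E\to\Quo_2\to 0$ can be split after passing to an affine-bundle torsor $u\colon Y_1\to\Gr(2,E)$ via \rref{p:Jouanolou}, giving $u^*q^*E\simeq u^*\Uni_2\oplus u^*\Quo_2$. To arrange trivial determinant on the rank-two summand, I apply \rref{prop:splitting_odd} to the line bundle $\det(u^*\Uni_2)$, producing $v\colon Y_2\to Y_1$ with $v^*\det(u^*\Uni_2)\simeq 1$, and set $E_1:=v^*u^*\Uni_2$. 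The inductive hypothesis applied to the rank-$(r-2)$ bundle $v^*u^*\Quo_2$ over $Y_2$ then yields $w\colon Y\to Y_2$ together with the remaining rank-two trivial-determinant summands $E_2,\dots,E_k$.

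With $f:=q\circ u\circ v\circ w$, conditions (ii) and (iii) are manifest, and (i) follows by composing the retractions at each stage: $q^*$ is split by \rref{cor:splitting_twisted}, $u^*$ is an isomorphism since $u$ is an affine bundle, $v^*$ is split using the section provided by \rref{prop:splitting_odd}, and $w^*$ is split by induction. The principal technicality I foresee is verifying that the retraction coming from \rref{prop:splitting_odd} exists not merely on $\Aa(-)$ but on $\Aa(-;V)$ for arbitrary twists $V$; this can be secured by invoking \rref{prop:splitting_odd} with the current intermediate scheme taken as the base, so that the section lives in the relevant $\eta$-inverted stable homotopy category and then transports through the twist $V$ via the functors $\Sigma^V$ and $f_\sharp$ of \rref{p:sigma_V} and \rref{p:f_sharp}, respecting $\Aa(S)$-bimodule structures throughout.
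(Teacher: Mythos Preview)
Your proposal is correct and follows essentially the same inductive strategy as the paper: pass to $\Gr(2,E)$ using \rref{cor:splitting_twisted}, split the tautological sequence via \rref{p:Jouanolou}, trivialize $\det\Uni_2$ via \rref{prop:splitting_odd}, and recurse on the quotient. The only cosmetic difference is that the paper treats all ranks $\geq 2$ uniformly by going straight to $\Gr(2,E)$ (since \rref{cor:splitting_twisted} covers both parities), whereas you first peel off a trivial line bundle in the odd case before invoking $\Gr(2,-)$; both routes work and your handling of the twisted retractions in the final paragraph is exactly the point one needs to check.
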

\begin{proof}
We proceed by induction on the rank of $E$, the statement being clear when $E=0$. If $\rank E=1$, this follows from \rref{prop:splitting_odd}. Assume that $\rank E \geq 2$, and consider the morphism $q \colon \Gr(2,E) \to X$. Then $q^* \colon \Aa(X;V) \to \Aa(\Gr(2,E);V)$ is a split injection by \rref{cor:splitting_twisted}, and $q^*E$ admits a rank two subbundle $\Uni_2$. Consider the vector bundle $\Quo_2=p^*E/\Uni_2$. By \rref{p:Jouanolou} we find a morphism $g\colon T \to \Gr(2,E)$ such that $g^*q^*E \simeq g^*\Uni_2 \oplus g^*\Quo_2$ and $g^* \colon \Aa(\Gr(2,E);V) \to \Aa(T;V)$ is bijective. By \rref{prop:splitting_odd}, we find a morphism $h \colon Z \to T$, such that the vector bundle $h^*g^*\Uni_2$ has trivial determinant, and $h^* \colon  \Aa(T;V) \to \Aa(Z;V)$ is split injective. We conclude by applying the inductive hypothesis to the vector bundle $h^*g^*\Quo_2$, whose rank is $\rank E -2$.
\end{proof}

As a corollary, we obtain an $\SL$-oriented splitting principle, generalising \cite[\S9]{Ananyevskiy-SL_PB}. (Recall from \rref{ex:Sp_hyp} that a commutative ring spectrum in $\SH(S)$ equipped with a normalised $\Sp$-orientation in the sense of \cite[Definition~3.3]{Ana-SL}, and a fortiori one equipped with a normalised $\SL$-orientation, inherits a hyperbolic orientation.)

\begin{corollary}
\label{th:SL_splitting}
Let $A \in \SH(S)$ be an $\eta$-periodic hyperbolically oriented ring spectrum. Let $X \in \Sm_S$, and $E \to X$ a $\SL$-oriented vector bundle of constant rank (see e.g.\ \cite[Definition~2.2]{Ana-SL}). Then there exists a morphism $f \colon Y \to X$ in $\Sm_S$ such that
\begin{enumerate}[(i)]
\item for any vector bundle $V\to X$, the morphism of $\Aa(S)$-bimodules $f^* \colon \Aa(X;V) \to \Aa(Y;V)$ admits a retraction,

\item the $\SL$-oriented vector bundle $f^*E$ splits as a direct sum of $\SL$-oriented vector bundles of ranks one or two.
\end{enumerate}
\end{corollary}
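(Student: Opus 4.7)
The strategy is to reduce to Theorem~\ref{th:splitting} applied to the underlying vector bundle of $E$, and then to rescale the resulting trivialisations of the $\det E_i$ so as to make them compatible with the given $\SL$-orientation. No further interaction with the hyperbolic orientation on $A$ is needed beyond what Theorem~\ref{th:splitting} already provides.

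First I would apply Theorem~\ref{th:splitting} to the underlying vector bundle $E$, forgetting its $\SL$-structure. This yields a morphism $f \colon Y \to X$ in $\Sm_S$ for which $f^* \colon \Aa(X;V) \to \Aa(Y;V)$ admits a retraction for every vector bundle $V \to X$, an isomorphism $f^*E \simeq E_1 \oplus \cdots \oplus E_r$ with each $E_i$ of rank one or two, and trivialisations $\alpha_i \colon \det E_i \xrightarrow{\sim} 1$. Condition (i) is then immediate, and condition (ii) is achieved up to the point of upgrading each $E_i$ to an $\SL$-oriented vector bundle in such a way that the direct sum $\SL$-orientation coincides with the pullback of the $\SL$-orientation of $E$.

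To obtain this compatibility, denote by $\lambda \colon \det E \xrightarrow{\sim} 1$ the $\SL$-orientation of $E$. Under the canonical isomorphism $\det(f^*E) \simeq \det E_1 \otimes \cdots \otimes \det E_r$, the tensor product $\alpha_1 \otimes \cdots \otimes \alpha_r$ defines a trivialisation $\mu$ of $\det(f^*E)$, while $f^*\lambda$ defines another. They differ by a unique unit $u \in \Oc(Y)^{\times}$, in the sense that $f^*\lambda = u\cdot \mu$. Replacing $\alpha_1$ with $u\cdot \alpha_1$ produces a new trivialisation of $\det E_1$, so that the corresponding $\SL$-oriented decomposition $(E_1,u\alpha_1) \oplus (E_2,\alpha_2) \oplus \cdots \oplus (E_r,\alpha_r)$ recovers $(f^*E, f^*\lambda)$.

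I do not anticipate a real obstacle: the content lies entirely in Theorem~\ref{th:splitting}, and the $\SL$-structure is handled by the elementary fact that trivialisations of a line bundle form a torsor under $\Oc(Y)^{\times}$, so a single rescaling of one of the $\alpha_i$ suffices to bring the direct sum orientation into agreement with the prescribed one.
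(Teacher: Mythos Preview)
Your proposal is correct and follows essentially the same approach as the paper: apply Theorem~\ref{th:splitting} to the underlying bundle, then adjust a single trivialisation so that the direct-sum $\SL$-orientation matches $f^*\lambda$. The paper phrases this by leaving $\lambda_1,\dots,\lambda_{r-1}$ arbitrary and \emph{defining} $\lambda_r$ from $\lambda$ and the others, whereas you rescale $\alpha_1$ by the discrepancy unit $u$; these are the same idea up to bookkeeping (and the harmless choice of direction $1 \to \det E$ versus $\det E \to 1$).
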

\begin{proof}
Denote by $\lambda \colon 1 \xrightarrow{\sim} \det E$ the isomorphism giving the $\SL$-orientation of $E$. Let us pick $f\colon Y \to X$ and $E_1,\dots,E_r$ as in \rref{th:splitting}, together with isomorphisms $\lambda_i \colon 1 \xrightarrow{\sim} \det E_i$ for $i=1,\dots,r-1$. We may assume that $E\neq 0$, so that $r\geq 1$. Letting
\[
\lambda_r \colon 1 \xrightarrow{ \lambda \otimes (\lambda_1^{-1} \otimes \dots \otimes \lambda_{r-1}^{-1})^\vee } \det E \otimes (\det E_1 \otimes \dots \otimes \det E_{r-1})^\vee \simeq \det E_r,
\]
we have, as $\SL$-oriented vector bundles
\[
(E,\lambda) = (E_1,\lambda_1) \oplus \dots \oplus (E_r,\lambda_r).\qedhere
\]
\end{proof}

For the next statement, we use the terminology of \cite[Definition~3.3]{Ana-SL}.
\begin{corollary}
Let $A \in \SH(S)$ be an $\eta$-periodic commutative ring spectrum. Then each $\Sp$-orientation of $A$ is induced by at most one normalised $\SL$-orientation.
\end{corollary}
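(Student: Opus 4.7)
The plan is to use the splitting principle from Corollary~\ref{th:SL_splitting} to reduce uniqueness to the cases of rank one and rank two bundles, where the assertion becomes tautological.

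Suppose $\tau_1, \tau_2$ are two normalised $\SL$-orientations of $A$ inducing the same $\Sp$-orientation. By Example~\ref{ex:Sp_hyp}, this common $\Sp$-orientation (or either $\SL$-orientation) endows $A$ with a hyperbolic orientation, so Corollary~\ref{th:SL_splitting} applies. I would fix an $\SL$-oriented bundle $(E,\lambda)$ over some $X \in \Sm_S$ and choose $f\colon Y \to X$ as provided by that corollary, so that $f^* \colon \Aa(X;E) \to \Aa(Y;f^*E)$ admits a retraction and $f^*(E,\lambda)$ splits as an $\SL$-oriented direct sum $\bigoplus_i (E_i,\lambda_i)$ with each $E_i$ of rank one or two. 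By the multiplicativity of Thom classes (an axiom of $\SL$-orientation), for $k=1,2$ the Thom class assigned by $\tau_k$ to $f^*(E,\lambda)$ is the cup product of the Thom classes assigned by $\tau_k$ to the summands $(E_i,\lambda_i)$. Using the retraction of $f^*$, it thus suffices to prove that $\tau_1$ and $\tau_2$ coincide on all $\SL$-oriented bundles of rank one or two.

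For rank one, any $\SL$-oriented bundle $(L,\lambda)$ is canonically identified (via $\lambda$) with the trivial line bundle equipped with its canonical trivialisation of determinant; the normalisation axiom of an $\SL$-orientation then forces $\tau_1$ and $\tau_2$ to assign the same (standard) Thom class. For rank two, any $\SL$-oriented bundle $(E,\lambda)$ carries a canonical symplectic form, namely the composite
\[
\omega_\lambda \colon E \otimes E \to \Lambda^2 E = \det E \xrightarrow{\lambda^{-1}} 1,
\]
which is nondegenerate exactly as in the proof of Lemma~\ref{lemm:pi_dual}, and whose Pfaffian recovers $\lambda$. By definition of the $\Sp$-orientation induced by an $\SL$-orientation (Thom classes assigned to $(V,\omega)$ via the Pfaffian trivialisation of $\det V$), the Thom class of $(E,\lambda)$ under either $\tau_k$ equals the Thom class of $(E,\omega_\lambda)$ under the induced $\Sp$-orientation. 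Since $\tau_1$ and $\tau_2$ induce the same $\Sp$-orientation by hypothesis, they agree on $(E,\lambda)$.

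The main obstacle is psychological rather than technical: one needs $f^*$ on cohomology \emph{twisted by $E$} to be split injective (not just on untwisted cohomology), which is precisely what the first clause of Corollary~\ref{th:SL_splitting} provides, and one must track the $\SL$-orientations through the splitting in order to apply multiplicativity. Once this bookkeeping is in place, the rank one and rank two cases reduce by design to the normalisation axiom and to the definition of "induced $\Sp$-orientation", respectively.
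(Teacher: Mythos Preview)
Your proof is correct and follows essentially the same route as the paper: invoke the hyperbolic orientation induced by the $\Sp$-orientation, apply the $\SL$-splitting principle (Corollary~\ref{th:SL_splitting}) with the twisted split-injectivity to reduce to ranks one and two, then settle rank one by normalisation and rank two via the identification $\SL_2=\Sp_2$ (your symplectic form $\omega_\lambda$). The only cosmetic difference is that the paper phrases the argument as ``the Thom class is determined by the induced $\Sp$-orientation'' rather than comparing two $\SL$-orientations directly.
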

\begin{proof}
Let us assume that $A$ is endowed with a normalised $\SL$-orientation. Then $A$ carries an induced $\Sp$-orientation, and thus by \rref{ex:Sp_hyp} an induced hyperbolic orientation. In view of \rref{th:SL_splitting}, it will suffice to prove that the Thom class (for the $\SL$-orientation) of an $\SL$-oriented vector bundle $E \to X$ is determined by the induced $\Sp$-orientation of $A$, when $E$ has rank one or two. 

If $E$ has rank one, it is isomorphic to the trivial $\SL$-oriented line bundle, hence its Thom class in $A^{0,0}(X;E)$ must be the image of $\Sigma^{2,1}1 \in A^{0,0}(X;1)$, as the $\SL$-orientation is normalised by assumption. If $E$ has rank two, as $\SL_2=\Sp_2$, the vector bundle $E$ is $\Sp$-oriented (explicitly the symplectic form is given by the morphism $E \otimes E \to \Lambda^2E =\det E \simeq 1$), and so its Thom class is determined by the induced $\Sp$-orientation.
\end{proof}

\subsection{Higher Pontryagin classes}
Throughout this section $A \in \SH(S)$ will be an $\eta$-periodic ring spectrum with a weak hyperbolic orientation.

\begin{definition}
\label{def:Pontryagin}
Let $X \in \Sm_S$, and $E \to X$ be a vector bundle of constant rank $2d$ or $2d+1$, with $d\in \Nn$. Consider the Grassmann bundle $q_E \colon \Gr(2,E^\vee) \to X$. We define the \emph{Pontryagin classes}
\[
p_k(E) \in A^{8k,4k}(X) \quad \text{ for $k\in \Zz$}
\]
by the formulas
\[
p_0(E) = 1 \quad \text{ and } \quad p_k(E) =0 \text{ when $k \notin \{0,\dots,d\}$},
\]
and, in view of \rref{cor:splitting_twisted},
\begin{equation}
\label{eq:def_Pontryagin}
\sum_{k=0}^d (-1)^kq_E^*(p_{d-k}(E)) \pi(\Uni_2^\vee)^k=0 \in A^{8d,4d}(\Gr(2,E^\vee)).
\end{equation}
This definition extends in an obvious way to the case when the rank of $E$ is not constant.
\end{definition}

\begin{remark}
In view of \rref{lemm:pi_dual}, we may replace $\pi(\Uni_2^\vee)$ with $\pi(\Uni_2)$ in the formula \eqref{eq:def_Pontryagin}.
\end{remark}

\begin{para}
The Pontryagin classes $p_k(E)$ are functorial in $X \in \Sm_S$, central in $\Aa(X)$, and depend only on the isomorphism class of the vector bundle $E \to X$.
\end{para}

\begin{para}
Assume that $d=1$ and so $E$ has rank two. Then the morphism $q_E \colon \Gr(2,E^\vee) \to X$ is an isomorphism, and $\Uni_2^\vee =q_E^*E$. Therefore \eqref{eq:def_Pontryagin} implies that
\begin{equation}
\label{eq:Pontrygin_pi_2}
p_1(E) = \pi(E) \in A^{8,4}(X).
\end{equation}
\end{para}

\begin{lemma}
\label{lemm:pontryagin_no_line}
Let $X \in \Sm_S$. Let $E \to X$ be a vector bundle, and $F$ the quotient of $E$ by a subbundle of rank one. Then
\[
p_k(E) = p_k(F) \quad \text{for any $k \in \Zz$}.
\]
\end{lemma}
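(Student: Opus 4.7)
Setup. The rank-one subbundle $L \subset E$ induces by dualisation the corank-one subbundle $F^\vee \subset E^\vee$, and hence via \rref{p:E_F} a closed immersion $i \colon \Gr(2,F^\vee) \hookrightarrow \Gr(2,E^\vee)$ satisfying $i^*\Uni_2 = \Uni_2$ and $q_E \circ i = q_F$. Applying $i^*$ to the defining equation \rref{eq:def_Pontryagin} for $p_k(E)$ yields
\[
\sum_{k=0}^{d} (-1)^{k}\, q_F^{*}(p_{d-k}(E))\, \pi(\Uni_2^\vee)^{k} = 0 \qquad \text{in } \Aa(\Gr(2,F^\vee)). \qquad (\ast)
\]
The proof will then split on the parity of $\rank E$.

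Odd case. When $\rank E = 2d+1$, the quotient $F$ has rank $2d$ and both bundles use the same parameter $d$ in \rref{def:Pontryagin}, so that $(\ast)$ is exactly the defining relation characterising $p_k(F)$. The uniqueness of classes $(c_0,\dots,c_d)$ with $c_0 = 1$ satisfying such a relation follows from \rref{cor:splitting_twisted}, which presents $\Aa(\Gr(2,F^\vee))$ as a free $\Aa(X)$-module on the basis $1, \pi(\Uni_2^\vee),\dots, \pi(\Uni_2^\vee)^{d-1}$; hence $p_k(E) = p_k(F)$ for every $k$ (both vanish for $k > d$).

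Even case. When $\rank E = 2d$, $F$ has parameter $d-1$, so $(\ast)$ has the wrong degree and one cannot conclude directly. The plan is to reduce to the odd case via the auxiliary statement that $p_k(V \oplus 1) = p_k(V)$ for every vector bundle $V \to X$, which then yields
\[
p_k(E) \;=\; p_k(E \oplus 1) \;=\; p_k(F \oplus 1) \;=\; p_k(F),
\]
the middle equality coming from the odd case applied to $E \oplus 1$ (of rank $2d+1$, with rank-one subbundle $L$ and quotient $F \oplus 1$). The auxiliary statement is easy when $\rank V$ is even: by \rref{prop:Gr+1:2} the map $g_{V^\vee} \colon \Gr(2, V^\vee) \to \Gr(2, V^\vee \oplus 1)$ is an isomorphism in $\SH(S)[\eta^{-1}]$, so pulling back the defining relation for $p_k(V \oplus 1)$ along $g_{V^\vee}^*$ reduces it to the defining relation for $p_k(V)$, and uniqueness concludes.

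The main obstacle is the auxiliary statement when $\rank V = 2e+1$ is odd. I first pull the defining relation for $p_k(V \oplus 1)$ back along $h_{V^\vee} \colon \Pp(V^\vee) \to \Gr(2, V^\vee \oplus 1)$: by \rref{eq:h_pb} one has $h_{V^\vee}^*\Uni_2 = \Oc(-1) \oplus 1$, and the rank-two case of \rref{prop:Euler_odd} (valid with just a weak hyperbolic orientation thanks to \rref{lemm:zero_pb}), applied to the rank-two bundle $\Oc(1) \oplus 1$ with its odd-rank quotient $1$, gives $h_{V^\vee}^*\pi(\Uni_2^\vee) = 0$; combined with the isomorphism $\Pp(V^\vee) \simeq X$ in $\SH(S)[\eta^{-1}]$ furnished by \dref{prop:PE}{prop:PE:odd} (as $V^\vee$ has odd rank), only the constant term survives and $p_{e+1}(V \oplus 1) = 0$ in $\Aa(X)$. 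Polynomial division in $\Aa(X)[T]$ (possible because the leading coefficient $(-1)^e$ of $Q_V(T) = \sum_k (-1)^k p_{e-k}(V) T^k$ is a unit) then produces a unique factorisation
\[
Q_{V\oplus 1}(T) \;=\; (-T + b)\, Q_V(T), \qquad b = p_1(V \oplus 1) - p_1(V) \in \Aa(X),
\]
with $b \cdot p_e(V) = p_{e+1}(V\oplus 1) = 0$; the remaining task is to show $b = 0$. The plan for this last step is to reduce via \rref{prop:splitting_odd} to the case $V^\vee = W \oplus 1$ with $W$ of even rank $2e$, apply \rref{lemm:E+D} to obtain a splitting $\Aa(\Gr(2, V^\vee \oplus 1)) \simeq \Aa(X) \oplus \Aa(\Gr(2, W))$ in $\SH(S)[\eta^{-1}]$, and trace the relation $\pi(\Uni_2^\vee) \cdot S(\pi(\Uni_2^\vee)) = 0$ through this decomposition with the aid of the pushforward formula \dref{lemm:e_pi_push_pull}{lemm:e_pi_push_pull:2} applied to $g_{V^\vee}$. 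Once $b = 0$ is established, the relations $p_n(V \oplus 1) = p_n(V) + b p_{n-1}(V)$ collapse to the desired $p_n(V \oplus 1) = p_n(V)$.
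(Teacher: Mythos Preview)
Your odd case is correct and identical to the paper's. The even case, however, has a genuine gap at the polynomial division step.

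Polynomial division of $Q_{V\oplus 1}$ by $Q_V$ does not produce a factorisation: it produces $Q_{V\oplus 1}(T) = (-T+b)\,Q_V(T) + R(T)$ with a remainder $R$ of degree $\leq e-1$. Comparing coefficients, the vanishing of $R$ is equivalent to the relations $p_n(V\oplus 1) = p_n(V) + b\,p_{n-1}(V)$ for all $n$, which is precisely what you are trying to establish. So the ``unique factorisation'' is asserted, not proved, and your closing plan only addresses $b=0$, not $R=0$. Moreover, even for the question of $b$, the sketch via \rref{lemm:E+D} and \rref{prop:splitting_odd} reduces the odd-rank auxiliary for $V$ to showing $p_k(W^\vee\oplus 1^{\oplus 2})=p_k(W^\vee)$ with $W^\vee$ of even rank $2e$; this is again an instance of the even case of the lemma at the same rank $2d$ you started with, so the argument does not terminate as written.

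The paper closes this gap by a pushforward rather than a pullback. After reducing (via \rref{p:Jouanolou} and \rref{prop:splitting_odd}) to $E = G \oplus 1^{\oplus 2}$ with $G$ of even rank $2(d-1)$, it takes the defining relation for $G$ in $\Aa(\Gr(2,G^\vee))$ and applies the map $x \mapsto -\,i_*(x \cup \hypt_{\Uni_2^\vee})$, where $i\colon \Gr(2,G^\vee)\hookrightarrow \Gr(2,E^\vee)$. The key identity $i_*(\pi(\Uni_2^\vee)^k \cup \hypt_{\Uni_2^\vee}) = \pi(\Uni_2^\vee)^{k+1}$ from \eqref{eq:i_hypt_pik} converts the degree-$(d-1)$ relation for $G$ into a degree-$d$ relation in $\Aa(\Gr(2,E^\vee))$ with the same coefficients $p_k(G)$; by the uniqueness in \rref{cor:splitting_twisted} this forces $p_k(E)=p_k(G)$, and the already-proved odd case gives $p_k(G)=p_k(G\oplus 1)=p_k(F)$. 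The missing ingredient in your argument is exactly this pushforward step.
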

\begin{proof}
We may assume that $E$ has constant rank $r$. If $r=2d+1$ is odd, pulling back the relation \eqref{eq:def_Pontryagin} along the morphism $\Gr(2,F^\vee) \to \Gr(2,E^\vee)$ induced by the inclusion $F^\vee \subset E^\vee$ yields
\[
0=\sum_{k=0}^d (-1)^k q_F^*(p_{d-k}(E)) \pi(\Uni_2^\vee)^k \in A^{8d,4d}(\Gr(2,F^\vee)),
\]
which proves the lemma, in view of the definition of the Pontryagin classes of $F$.

Assume now that $r=2d$ is even. By \rref{p:Jouanolou}, we may assume that $E=F \oplus L$ with $L \to X$ a line bundle. By \rref{prop:splitting_odd} we may assume that $L=1$ and that $F=G \oplus 1$ for some vector bundle $G\to X$. Consider the closed immersion $i \colon \Gr(2,G^\vee) \to \Gr(2,E^\vee)$ induced by the inclusion $G^\vee \subset G^\vee \oplus 1^{\oplus 2} = E^\vee$. Since $G$ has rank $2(d-1)$, the defining relation \eqref{eq:def_Pontryagin} reads
\[
0=\sum_{k=0}^{d-1} (-1)^k q_G^*(p_{d-1-k}(G)) \pi(\Uni_2^\vee)^{k} \in A^{8(d-1),4(d-1)}(\Gr(2,G^\vee)).
\]
Applying the map $x \mapsto -i_*(x \cup \hypt_{\Uni_2^\vee})$ to this relation yields in $A^{8d,4d}(\Gr(2,E^\vee))$
\begin{align*}
0 
&= -\sum_{k=0}^{d-1} (-1)^k  q_E^*(p_{d-1-k}(G)) i_*(\pi(\Uni_2^\vee)^k \cup \hypt_{\Uni_2^\vee}) && \text{by \dref{lemm:proj_formula}{lemm:proj_formula:2}, as $q_G=q_E \circ i$}\\
&=  \sum_{k=0}^{d-1} (-1)^{k+1}  q_E^*(p_{d-1-k}(G)) \pi(\Uni_2^\vee)^{k+1} && \text{by \eqref{eq:i_hypt_pik}}\\
&= \sum_{k=0}^d (-1)^k  q_E^*(p_{d-k}(G)) \pi(\Uni_2^\vee)^k && \text{as $p_d(G)=0$ by definition.}
\end{align*}
It thus follows that $p_k(G) = p_k(E)$ for all $k$. Since $p_k(G) = p_k(G \oplus 1) = p_k(F)$ by the odd rank case considered above, this concludes the proof.
\end{proof}

\begin{remark}
\label{rem:pontryagin_trivial}
It follows from \rref{lemm:pontryagin_no_line} that $p_k(1^{\oplus r})=0$ for $k \neq 0$ and $r \in \Nn$.
\end{remark}

\begin{lemma}
\label{lemm:prod_vanish}
Let $U_1,\dots,U_d$ be rank two vector bundles over $X \in \Sm_S$, and set $E=U_1 \oplus \dots \oplus U_d$. Consider the Grassmann bundle $q_E \colon \Gr(2,E^\vee) \to X$ and its universal rank two vector bundle $\Uni_2$. Then
\[
\prod_{k=1}^d \Big(\pi(\Uni_2^\vee) - q_E^*\pi(U_k)\Big) =0 \in A^{8d,4d}(\Gr(2,E^\vee)).
\]
\end{lemma}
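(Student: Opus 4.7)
The plan is to argue by induction on $d$, relying on the decomposition of $\Aa(\Gr(2,E^\vee))$ provided by \rref{p:i_j} together with the projection formula \dref{lemm:proj_formula}{lemm:proj_formula:2}.

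The base case $d=1$ is immediate: the Grassmann bundle $q_E \colon \Gr(2,U_1^\vee) \to X$ is the identity and $\Uni_2^\vee = U_1$, so the single factor $\pi(\Uni_2^\vee) - q_E^*\pi(U_1)$ already vanishes.

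For the inductive step, I would set $F = U_1 \oplus \cdots \oplus U_{d-1}$, so that $E^\vee = F^\vee \oplus U_d^\vee$. Applying \rref{p:i_j} with the roles of $E,D$ played by $F^\vee, U_d^\vee$ yields a split short exact sequence
\[
0 \to \Aa(\Gr(2,F^\vee); \Uni_2^\vee \otimes q_{F}^* U_d^\vee) \xrightarrow{i_*} \Aa(\Gr(2,E^\vee)) \xrightarrow{j^*} \Aa(X) \to 0,
\]
where $i \colon \Gr(2,F^\vee) \to \Gr(2,E^\vee)$ is induced by the inclusion $F^\vee \subset E^\vee$ and $j \colon X = \Gr(2,U_d^\vee) \to \Gr(2,E^\vee)$ by $U_d^\vee \subset E^\vee$. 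Along $j$ the universal rank two subbundle pulls back to $U_d^\vee$, so by functoriality (\dref{prop:Euler}{prop:Euler:pb}) we get $j^*\pi(\Uni_2^\vee) = \pi(U_d)$, whence $j^*(\pi(\Uni_2^\vee) - q_E^*\pi(U_d)) = 0$. By exactness this class lifts: there exists a (unique) element
\[
c \in \Aa(\Gr(2,F^\vee); \Uni_2^\vee \otimes q_F^* U_d^\vee) \quad \text{with} \quad i_*(c) = \pi(\Uni_2^\vee) - q_E^*\pi(U_d).
\]

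Now set $a = \prod_{k=1}^{d-1}(\pi(\Uni_2^\vee) - q_E^*\pi(U_k))$. Along $i$ the universal subbundle pulls back to the universal subbundle of $\Gr(2,F^\vee)$ and $q_E \circ i = q_F$, so
\[
i^*(a) = \prod_{k=1}^{d-1}(\pi(\Uni_2^\vee) - q_F^*\pi(U_k)) = 0
\]
by the inductive hypothesis applied to $F = U_1 \oplus \cdots \oplus U_{d-1}$. The projection formula \dref{lemm:proj_formula}{lemm:proj_formula:2} then finishes the argument:
\[
\prod_{k=1}^d (\pi(\Uni_2^\vee) - q_E^*\pi(U_k)) = a \cup i_*(c) = i_*(i^*(a) \cup c) = 0.
\]
The main conceptual point to verify (which is not really an obstacle, but is what makes the argument run) is that the stratification of $\Gr(2,E^\vee)$ afforded by \rref{p:i_j} splits the product: one chosen factor is annihilated by $j^*$ and hence lies in the image of $i_*$, while the remaining factors restrict via $i^*$ to the analogous product on the smaller Grassmannian, which vanishes by induction.
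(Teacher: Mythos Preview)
Your proof is correct and is essentially identical to the paper's: both proceed by induction on $d$, use the split exact sequence of \rref{p:i_j} (with $F=U_1\oplus\cdots\oplus U_{d-1}$ and $D=U_d^\vee$) to lift the factor $\pi(\Uni_2^\vee)-q_E^*\pi(U_d)$ through $i_*$, and then apply the projection formula \dref{lemm:proj_formula}{lemm:proj_formula:2} together with the inductive hypothesis on $\Gr(2,F^\vee)$. The only cosmetic differences are that the paper starts the induction at $d=0$ rather than $d=1$, and that your citation of \dref{prop:Euler}{prop:Euler:pb} for functoriality of $\pi$ technically refers to the hyperbolic (rather than weak hyperbolic) setting, though the analogous fact in the weak case is equally immediate from \dref{def:hyp_thom}{def:hyp_thom:funct}.
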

\begin{proof}
We proceed by induction on $d$, the case $d=0$ being clear. Assume that $d \geq 1$. Set $F=U_1 \oplus \dots \oplus U_{d-1}$. By \rref{p:i_j} we have an exact sequence
\[
\Aa(\Gr(2,F^\vee);\Uni_2^\vee \otimes q_F^*U_d^\vee) \xrightarrow{i_*} \Aa(\Gr(2,E^\vee)) \xrightarrow{j^*} \Aa(X),
\]
where $i \colon \Gr(2,F^\vee) \to \Gr(2,E^\vee)$ and $j\colon X=\Gr(2,U_d^\vee) \to \Gr(2,E^\vee)$ are the natural closed immersions. Since $j^*\Uni_2 = U_d^\vee$, the element $\pi(\Uni_2^\vee) - q_E^*\pi(U_d) \in A^{8,4}(\Gr(2,E^\vee))$ belongs to the kernel of $j^*$. Thus there exists an element $x \in A^{0,0}(\Gr(2,F^\vee);\Uni_2^\vee \otimes q_F^*U_d^\vee)$ such that $i_*(x) = \pi(\Uni_2^\vee) - q_E^*\pi(U_d)$. Then we have in $A^{8d,4d}(\Gr(2,E^\vee))$
\begin{align*}
\prod_{k=1}^d \big(\pi(\Uni_2^\vee)-q_E^*\pi(U_k)\big)
&= \Big(\prod_{k=1}^{d-1} \big(\pi(\Uni_2^\vee) - q_E^*\pi(U_k)\big)\Big) \cup i_*(x) \\ 
&= i_*\Big(\prod_{k=1}^{d-1} \big(\pi(\Uni_2^\vee) - q_F^*\pi(U_k)\big)\cup x\Big),
\end{align*}
by the projection formula \dref{lemm:proj_formula}{lemm:proj_formula:2}. This element vanishes by induction.
\end{proof}

\begin{theorem}
\label{prop:pont_sym}
Let $X \in \Sm_S$. Let $U_1,\dots,U_d$ be rank two vector bundles over $X$, and $L_1,\dots,L_r$ line bundles over $X$. Then for any $k \in \Nn$, we have $A^{8k,4k}(X)$
\[
p_k(U_1 \oplus \dots \oplus U_d \oplus L_1 \oplus \dots \oplus L_r) = \sigma_k(\pi(U_1),\dots,\pi(U_d)).
\]
where $\sigma_k$ denotes the $k$-th elementary symmetric polynomial in $d$ variables.
\end{theorem}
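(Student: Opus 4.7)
My plan is to first reduce to the case with no line bundle summands, and then to identify the defining relation of the Pontryagin classes with the relation coming from \rref{lemm:prod_vanish}.

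First, I would apply \rref{lemm:pontryagin_no_line} repeatedly: since each $L_j$ is a rank-one subbundle of $E$ with quotient isomorphic to the sum of the remaining summands, we obtain
\[
p_k(U_1 \oplus \dots \oplus U_d \oplus L_1 \oplus \dots \oplus L_r) = p_k(U_1 \oplus \dots \oplus U_d).
\]
Set $F = U_1 \oplus \dots \oplus U_d$, a vector bundle of constant rank $2d$, and consider $q \colon \Gr(2,F^\vee) \to X$ with its tautological rank two subbundle $\Uni_2$.

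Next, \rref{lemm:prod_vanish} applied to the rank two summands $U_1^\vee,\dots,U_d^\vee$ of $F^\vee$, combined with \rref{lemm:pi_dual} (which gives $\pi(U_k^\vee) = \pi(U_k)$), yields
\[
\prod_{k=1}^d \bigl(\pi(\Uni_2^\vee) - q^*\pi(U_k)\bigr) = 0 \in A^{8d,4d}(\Gr(2,F^\vee)).
\]
Expanding in elementary symmetric polynomials in $\pi(U_1),\dots,\pi(U_d)$ and multiplying by $(-1)^d$, this relation takes the form
\[
\sum_{k=0}^d (-1)^k q^*\bigl(\sigma_{d-k}(\pi(U_1),\dots,\pi(U_d))\bigr) \, \pi(\Uni_2^\vee)^k = 0.
\]
On the other hand, by definition (see \eqref{eq:def_Pontryagin}), the Pontryagin classes of $F$ satisfy
\[
\sum_{k=0}^d (-1)^k q^*(p_{d-k}(F)) \, \pi(\Uni_2^\vee)^k = 0.
\]
Since $p_0(F) = 1 = \sigma_0$, subtracting these relations cancels the leading term $\pi(\Uni_2^\vee)^d$, leaving
\[
\sum_{k=0}^{d-1} (-1)^k q^*\bigl(p_{d-k}(F) - \sigma_{d-k}(\pi(U_1),\dots,\pi(U_d))\bigr) \, \pi(\Uni_2^\vee)^k = 0.
\]

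To conclude, I would invoke \rref{cor:splitting_twisted}\eqref{cor:splitting_twisted:1} (in the $s=2d$ case with $V=0$): the elements $1, \pi(\Uni_2^\vee), \dots, \pi(\Uni_2^\vee)^{d-1}$ form a free basis of $\Aa(\Gr(2,F^\vee))$ as an $\Aa(X)$-module via $q^*$. This forces each coefficient $q^*(p_j(F) - \sigma_j(\pi(U_1),\dots,\pi(U_d)))$ to vanish for $j=1,\dots,d$, and the injectivity of $q^*$ then gives the desired equality. For $k=0$ both sides are $1$, and for $k>d$ both sides are $0$ (on the right because $\sigma_k$ of $d$ variables vanishes, on the left because $F$ has rank $2d$), which finishes the proof.

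There is no real obstacle here: both relations are monic polynomials of degree $d$ in $\pi(\Uni_2^\vee)$ whose coefficients lie in $q^*\Aa(X)$, and \rref{cor:splitting_twisted} supplies exactly the uniqueness needed to match them term by term. The only care required is with the sign/indexing in the expansion of $\prod_k(x - a_k)$, which is purely formal.
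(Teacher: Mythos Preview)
Your approach is essentially the paper's: reduce to $r=0$ via \rref{lemm:pontryagin_no_line}, expand the product in \rref{lemm:prod_vanish}, and compare with the defining relation \eqref{eq:def_Pontryagin} using the freeness from \rref{cor:splitting_twisted}. The paper compresses your last three paragraphs into the phrase ``expanding the product and comparing with the defining relation,'' but the content is identical.

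One small slip: you should apply \rref{lemm:prod_vanish} directly with $E=F=U_1\oplus\dots\oplus U_d$, not to the duals $U_1^\vee,\dots,U_d^\vee$. As stated, the lemma takes $E=U_1\oplus\dots\oplus U_d$ and produces a relation on $\Gr(2,E^\vee)$ already involving $\pi(U_k)$; applying it instead with $E=F^\vee$ would land you on $\Gr(2,(F^\vee)^\vee)=\Gr(2,F)$, which is not where the defining relation for $p_k(F)$ lives. With the direct application there is no need to invoke \rref{lemm:pi_dual} at all, and the rest of your argument goes through verbatim.
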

\begin{proof}
By \rref{lemm:pontryagin_no_line}, we may assume that $r=0$. Then the theorem follows by expanding the product in \rref{lemm:prod_vanish} and comparing with the defining relation \eqref{eq:def_Pontryagin}.
\end{proof}

\begin{corollary}[Whitney sum formula]
\label{prop:Pontryagin_sum}
If $0 \to E' \to E \to E'' \to 0$ is an exact sequence of vector bundles over $X \in \Sm_S$, for any $k \in \Zz$, we have in $A^{8k,4k}(X)$
\[
p_k(E) = \sum_{i=0}^k p_i(E') p_{k-i}(E'').
\]
\end{corollary}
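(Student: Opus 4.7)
The plan is to reduce to a case already settled by Theorem~\rref{prop:pont_sym}, using Jouanolou's trick and the splitting principle. First, by \rref{p:Jouanolou}, there is a morphism $g \colon X' \to X$ which is a torsor under a vector bundle (hence an $\Ab^1$-weak equivalence, so $g^* \colon \Aa(X) \to \Aa(X')$ is an isomorphism), and such that $g^*E \simeq g^*E' \oplus g^*E''$. Invoking functoriality of $p_k$ (\rref{def:Pontryagin}) and the bijectivity of $g^*$, we may replace $X$ by $X'$ and reduce to the case $E = E' \oplus E''$.

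Next, I would apply the splitting principle \rref{th:splitting} twice in succession: first to $E'$, obtaining $f_1 \colon Y_1 \to X$ with $f_1^* \colon \Aa(X) \to \Aa(Y_1)$ a split injection and $f_1^*E' \simeq U_1 \oplus \cdots \oplus U_a \oplus 1^{\oplus \varepsilon'}$ with each $U_i$ of rank two (a rank one summand with trivial determinant is automatically a trivial line bundle); then to $f_1^*E''$ over $Y_1$, obtaining $f_2 \colon Y_2 \to Y_1$ with $f_2^*$ a split injection and $f_2^*f_1^*E'' \simeq V_1 \oplus \cdots \oplus V_b \oplus 1^{\oplus \varepsilon''}$. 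Setting $f = f_1 \circ f_2$, the pullback $f^*$ remains a split injection on $\Aa(X)$, and $f^*E = f^*E' \oplus f^*E''$ splits as $U_1 \oplus \cdots \oplus U_a \oplus V_1 \oplus \cdots \oplus V_b \oplus 1^{\oplus(\varepsilon'+\varepsilon'')}$.

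By Theorem~\rref{prop:pont_sym} (together with \rref{lemm:pontryagin_no_line} and \rref{rem:pontryagin_trivial} to discard the trivial line bundle summands),
\[
f^*p_k(E') = \sigma_k(\pi(U_1),\dots,\pi(U_a)), \qquad f^*p_k(E'') = \sigma_k(\pi(V_1),\dots,\pi(V_b)),
\]
and $f^*p_k(E)$ equals $\sigma_k$ of the concatenation of these two variable lists. The classical identity
\[
\sigma_k(x_1,\dots,x_a,y_1,\dots,y_b) = \sum_{i+j=k} \sigma_i(x_1,\dots,x_a)\,\sigma_j(y_1,\dots,y_b)
\]
then yields the Whitney sum formula for $f^*p_k(E)$ in $A^{8k,4k}(Y_2)$. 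Since $f^*$ is injective on $\Aa(X)$, the identity descends to $X$. The genuine content has been absorbed by Theorem~\rref{th:splitting} and Theorem~\rref{prop:pont_sym}; the remaining step is purely combinatorial and presents no real obstacle.
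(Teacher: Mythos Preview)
Your proof is correct and follows essentially the same route as the paper: reduce to the split case via \rref{p:Jouanolou}, then apply the splitting principle \rref{th:splitting} to each summand, and finish by invoking \rref{prop:pont_sym} together with the standard identity for elementary symmetric polynomials. The paper's own argument is just a terser version of exactly this.
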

\begin{proof}
By \rref{p:Jouanolou}, we may assume that the sequence splits. By the splitting principle \rref{th:splitting} we may further assume that $E'$ and $E''$ split as direct sums of vector bundles of ranks $1$ or $2$. Then the corollary follows from \rref{prop:pont_sym}.
\end{proof}

\begin{corollary}
\label{cor:Pontryagin_dual}
If $X \in \Sm_S$ and $E\to X$ is a vector bundle, we have
\[
p_k(E) = p_k(E^\vee) \quad \text{ for any $k \in \Zz$}.
\]
\end{corollary}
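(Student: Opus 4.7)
The plan is to reduce to the split case via the splitting principle \ref{th:splitting}, and then invoke the symmetric-function description of Pontryagin classes proved in \ref{prop:pont_sym}, together with the rank-two duality \ref{lemm:pi_dual}.

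First, I would apply \ref{th:splitting} to obtain a morphism $f \colon Y \to X$ in $\Sm_S$ for which $f^* \colon \Aa(X) \to \Aa(Y)$ admits a retraction, and such that
\[
f^*E \simeq U_1 \oplus \cdots \oplus U_d \oplus L_1 \oplus \cdots \oplus L_r,
\]
where each $U_i$ is a rank two vector bundle over $Y$ and each $L_j$ is a line bundle. Since the Pontryagin classes are functorial (and $f^*$ is injective on cohomology), it suffices to verify the equality $p_k(f^*E) = p_k((f^*E)^\vee)$ on $Y$. Dualizing commutes with direct sum, so $(f^*E)^\vee$ splits as $U_1^\vee \oplus \cdots \oplus U_d^\vee \oplus L_1^\vee \oplus \cdots \oplus L_r^\vee$.

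Next, by \ref{prop:pont_sym} applied to each side, we have
\[
p_k(f^*E) = \sigma_k(\pi(U_1), \dots, \pi(U_d)) \quad \text{and} \quad p_k((f^*E)^\vee) = \sigma_k(\pi(U_1^\vee), \dots, \pi(U_d^\vee)),
\]
the line-bundle summands contributing nothing by \ref{lemm:pontryagin_no_line} (applied iteratively to remove each line bundle factor). Finally, \ref{lemm:pi_dual} yields $\pi(U_i) = \pi(U_i^\vee)$ for each $i$, so the two elementary symmetric polynomials agree termwise.

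The argument is essentially a bookkeeping exercise: the only ingredients are the three earlier results cited above, and the main technical input—showing that one can split off rank two pieces in the $\eta$-periodic hyperbolically oriented setting—has already been done in \ref{th:splitting}. No step presents a genuine obstacle; the slight subtlety is simply that \ref{prop:pont_sym} is stated for arbitrary rank two summands (not necessarily with trivial determinant), so we do not need to worry about whether the extra trivial-determinant hypothesis in the splitting principle survives dualization.
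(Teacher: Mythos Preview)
Your proof is correct and follows essentially the same approach as the paper, which simply cites \ref{th:splitting}, \ref{prop:pont_sym}, and \ref{lemm:pi_dual}. Your explicit mention of \ref{lemm:pontryagin_no_line} is slightly redundant, since \ref{prop:pont_sym} already handles the line-bundle summands directly, but this does no harm.
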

\begin{proof}
This follows by combining \rref{th:splitting}, \rref{prop:pont_sym} and \rref{lemm:pi_dual}.
\end{proof}

\begin{corollary}
\label{lemm:Pontryagin_pi}
Assume the weak hyperbolic orientation of $A$ extends to a hyperbolic orientation. Let $X \in \Sm_S$, and $E \to X$ be a vector bundle of constant rank $2d$, with $d\in \Nn$. Then $p_d(E) = \pi(E)$ in $A^{8d,4d}(X)$.
\end{corollary}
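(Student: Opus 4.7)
The plan is to reduce to the split case via the splitting principle (Theorem~\ref{th:splitting}), and then check the equality factor-by-factor using multiplicativity of $\pi$ together with the symmetric-polynomial description of Pontryagin classes from Theorem~\ref{prop:pont_sym}.

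Concretely, I would pick a morphism $f\colon Y \to X$ as in Theorem~\ref{th:splitting}, so that $f^* \colon \Aa(X) \to \Aa(Y)$ is split injective and $f^*E \simeq E_1 \oplus \dots \oplus E_r$ with each $E_i$ of rank one or two. Both $p_d$ and $\pi$ are functorial in $X$, so it suffices to establish the equality $p_d(f^*E) = \pi(f^*E)$ in $\Aa(Y)$. Since $\rank f^*E = 2d$, the number of rank-one summands among the $E_i$ is necessarily even; call it $2m$, and let $s = d - m$ be the number of rank-two summands, ordered so that $E_1,\dots,E_s$ have rank two and $E_{s+1},\dots,E_r$ have rank one.

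When $m = 0$, so $s = d$, both sides compute directly: by Theorem~\ref{prop:pont_sym} we have $p_d(f^*E) = \sigma_d(\pi(E_1),\dots,\pi(E_d)) = \prod_{i=1}^d \pi(E_i)$, and by the multiplicativity of $\pi$ (property~\ref{prop:Euler:mult} of \ref{prop:Euler}) we have $\pi(f^*E) = \prod_{i=1}^d \pi(E_i)$, so the two agree. When $m \geq 1$, both sides vanish: Theorem~\ref{prop:pont_sym} gives $p_d(f^*E) = \sigma_d(\pi(E_1),\dots,\pi(E_s))$, which is zero because $\sigma_d$ in $s<d$ variables is identically zero; on the other hand each line bundle summand $E_{s+i}$ has constant odd rank one, so Proposition~\ref{prop:Euler_odd} yields $\pi(E_{s+i}) = 0$, and then multiplicativity gives $\pi(f^*E) = 0$.

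Once both cases are handled, the injectivity of $f^*$ on $A^{8d,4d}(X)$ (part (i) of Theorem~\ref{th:splitting}) upgrades the equality from $Y$ back to $X$. No step looks genuinely hard; the only thing to be careful about is to use that the hypothesis of a full hyperbolic orientation (not merely a weak one) is what makes $\pi(E)$ and its multiplicativity available for the bundle $E$ of rank $2d$ rather than only for rank-two bundles, and to use Proposition~\ref{prop:Euler_odd} (rather than only property~\ref{prop:Euler:1} of \ref{prop:Euler}) to kill $\pi$ of possibly nontrivial line bundles.
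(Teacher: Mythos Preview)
Your proposal is correct and follows essentially the same approach as the paper's proof, which simply cites the splitting principle \rref{th:splitting}, the symmetric-polynomial formula \rref{prop:pont_sym}, and multiplicativity \dref{prop:Euler}{prop:Euler:mult}. One minor simplification: your concern about ``possibly nontrivial line bundles'' is unfounded, since condition (iii) of Theorem~\ref{th:splitting} gives $\det E_i \simeq 1$, so any rank-one summand satisfies $E_i = \det E_i \simeq 1$ and $\pi(E_i)=0$ follows already from \dref{prop:Euler}{prop:Euler:1}; invoking Proposition~\ref{prop:Euler_odd} is not wrong, just heavier than needed.
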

\begin{proof}
This follows by combining \rref{th:splitting}, \rref{prop:pont_sym} and \dref{prop:Euler}{prop:Euler:mult}.
\end{proof}

\begin{remark}
We will see in \rref{th:Thom_orientation} that the assumption of \rref{lemm:Pontryagin_pi} is automatically fulfilled.
\end{remark}

\begin{remark}
Let $A \in \SH(S)$ be an $\eta$-periodic commutative ring spectrum equipped with a normalised $\Sp$-orientation, in the sense of \cite[Definition~3.3]{Ana-SL}. Recall from \rref{ex:Sp_hyp} that $A$ inherits a hyperbolic orientation. Then we claim that the Pontryagin classes $p_i$, defined in \rref{def:Pontryagin} from the hyperbolic orientation, coincide with the Pontryagin classes $p_i'$, defined in \cite[Definition~7.7]{Ana-SL} from the $\Sp$-orientation (and denoted by $p_i$ there). 

Indeed by the splitting principle \rref{th:splitting} and the Whitney sum formulas (see \rref{prop:Pontryagin_sum} and \cite[Corollary~7.9~(2)]{Ana-SL}), it will suffice to show that $p_i(E)=p_i'(E)$ when $E$ is a vector bundle of rank $r \in \{1,2\}$. Note that for $i>r/2$ we have $p_i(E)=0$ by definition \rref{def:Pontryagin}, and $p_i'(E)=0$ by \cite[Corollary~7.9~(3)]{Ana-SL}. It will thus suffice to assume that $r=2$ and show that $p_1(E)=p_1'(E)$. But, using the notation of \rref{ex:Sp_hyp}, we have
\[
p_1(E)\overset{\eqref{eq:Pontrygin_pi_2}}{=}\pi(E) \overset{\rref{def:Euler:hyp}}{=} z_{E\oplus E}^*(\hypo_E) \overset{\eqref{eq:hyp_Sp_eta}}{=} z_{E\oplus E}^* \circ \sigma_E^*(\thom_{H(E)}) \overset{\eqref{eq:diag_Thom_dual}}{=} z_{E \oplus E^\vee}^*(\thom_{H(E)}),
\]
which equals $b_2(H(E)) = p_1'(E)$ by \cite[(13.3)]{PW-QGrass}, as required.
\end{remark}

\subsection{Orientations and weak orientations}
\label{sect:o_wo}
In this section we prove that, for an $\eta$-periodic motivic ring spectrum, the datum of a hyperbolic orientation (see \rref{def:hyp_orientation}) is equivalent to that of a weak hyperbolic orientation (see \rref{def:hyp_thom}). Throughout this section $A \in \SH(S)$ will be an $\eta$-periodic ring spectrum with a weak hyperbolic orientation.

\begin{para}
\label{p:f}
It will be convenient to introduce the polynomials, for $r \in \Zz$
\begin{equation}
\label{eq:def_f_r_E}
f_{r,E}= \sum_{k=0}^r (-1)^k p_{r-k}(E) u^k \in \Aa(X)[u],
\end{equation}
for any vector bundle $E \to X$ with $X \in \Sm_S$. Those satisfy the inductive formula
\begin{equation}
\label{eq:f_k_E_rec}
p_r(E) = f_{r,E} + u f_{r-1,E}.
\end{equation}
\end{para}

\begin{para}
\label{eq:f_k_+1}
Let $E \to X$ be a vector bundle, with $X \in \Sm_S$. It follows from \rref{lemm:pontryagin_no_line} that $f_{r,E} = f_{r,E\oplus 1}$ for any $r \in \Zz$.
\end{para}

\begin{para}
Consider an exact sequence $0 \to D \to E \to Q \to 0$ of vector bundles over $X \in \Sm_S$, where $D$ has rank two. Then by the Whitney sum formula \rref{prop:Pontryagin_sum} and \rref{eq:Pontrygin_pi_2} we have $p_k(E) = p_k(Q) + \pi(D) p_{k-1}(Q)$. Plugging this relation into \eqref{eq:def_f_r_E}
 yields, for any $r \in \Zz$
\begin{equation}
\label{eq:f_E_D}
f_{r,E} = f_{r,Q} + \pi(D) f_{r-1,Q} .
\end{equation}
In view of \eqref{eq:f_k_E_rec}, we deduce by induction on $r$ that, for any $r \in \Zz$ (the case $r<0$ being clear)
\begin{equation}
\label{eq:f_r_pi}
f_{r,E}(\pi(D)) = p_r(Q) \in A^{8r,4r}(X).
\end{equation}
\end{para}

\begin{para}
Let $X \in \Sm_S$, and $E \to X$ be a vector bundle. We will implicitly view $\Aa(\Gr(2,E))$ as an $\Aa(X)$-bimodule, via the pullback along the projection $\Gr(2,E) \to X$. By \eqref{eq:f_r_pi}, we have for any $r \in \Zz$
\begin{equation}
\label{eq:f_d_U_Q}
f_{r,E}(\pi(\Uni_2)) = p_r(\Quo_2) \in A^{8r,4r}(\Gr(2,E)).
\end{equation}
\end{para}

\begin{para}
\label{p:s_E}
Let $E \to X$ be a vector bundle of rank $2d$ over $X \in \Sm_S$, with $d\in \Nn$. The inclusion $1^{\oplus 2} \subset E \oplus 1^{\oplus 2}$ induces a closed immersion
\begin{equation}
\label{eq:s_E}
s_E \colon X=\Gr(2,1^{\oplus 2}) \to \Gr(2,E \oplus 1^{\oplus 2}).
\end{equation}
Observe that $s_E$ is defined by the vanishing of a section of $\Hom(1^{\oplus 2},\Quo_2)=\Quo_2 \oplus \Quo_2$ transverse to the zero-section, namely the composite $1^{\oplus 2} \subset q^*(E \oplus 1^{\oplus 2}) \to \Quo_2$, where $q \colon \Gr(2,E \oplus 1^{\oplus 2}) \to X$ is the projection. Since $s_E^*\Quo_2=E$, the closed immersion $s_E$ has normal bundle $E \oplus E$.

By \rref{p:Gr_n_n_n-1}, we have a long exact sequence
\[
\dots\to \Aa(\Gr(1,E \oplus 1);\Quo_1) \xrightarrow{{h_{E\oplus 1}}_*} \Aa(\Gr(2,E\oplus 1^{\oplus 2})) \xrightarrow{g_{E\oplus 1}^*} \Aa(\Gr(2,E \oplus 1)) \to\cdots
\]
It follows from \dref{cor:splitting_twisted}{cor:splitting_twisted:1} that in this sequence the map $g_{E\oplus 1}^*$ is surjective, hence ${h_{E\oplus 1}}_*$ is injective. Moreover $\Gr(1,E\oplus 1) = \Pp(E \oplus 1)$ with $\Quo_1$ corresponding to the quotient bundle $\Quo$  of \rref{eq:Gr_1_P}. Consider the closed immersion $f \colon X = \Pp(1) \to  \Pp(E \oplus 1) = \Gr(1,E\oplus 1)$ induced by the inclusion $1 \subset E \oplus 1$. Then the composite $s_E=h_{E \oplus 1} \circ f$. By \rref{lemm:PE_odd_Q} the pushforward $f_* \colon \Aa(X;E \oplus E) \to \Aa(\Pp(E\oplus 1);\Quo)$ is bijective. We thus obtain a short exact sequence of $\Aa(X)$-bimodules
\[
0 \to \Aa(X;E \oplus E) \xrightarrow{{s_E}_*} \Aa(\Gr(2,E\oplus 1^{\oplus 2})) \xrightarrow{g_{E \oplus 1}^*} \Aa(\Gr(2,E \oplus 1)) \to 0.
\]
Next, we have in $A^{8d,4d}(\Gr(2,E\oplus 1))$
\[
g_{E \oplus 1}^*(f_{d,E}(\pi(\Uni_2))) = f_{d,E}(\pi(\Uni_2)) \overset{\eqref{eq:f_k_+1}}{=} f_{d,E \oplus 1}(\pi(\Uni_2)) \overset{\eqref{eq:f_d_U_Q}}{=} p_d(\Quo_2),
\]
which vanishes since the vector bundle $\Quo_2 \to \Gr(2,E \oplus 1)$ has rank $2d-1$. Thus by the short exact sequence above, there exists a unique element
\begin{equation}
\label{eq:def_hypo}
\hypo_E \in A^{0,0}(X;E \oplus E) \quad \text{such that ${s_E}_*(\hypo_E) = f_{d,E}(\pi(\Uni_2))$}.
\end{equation}
\end{para}

\begin{lemma}
\label{lemm:o_t} 
Let $X \in \Sm_S$, and $E\to X$ a rank two vector bundle. Then
\[
\hypo_E = \hypt_E \in A^{0,0}(X;E \oplus E),
\]
where $\hypo_E$ is defined in \eqref{eq:def_hypo}, and $\hypt_E$ is given by the weak hyperbolic orientation of $A$.
\end{lemma}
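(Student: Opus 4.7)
The plan is to exploit the injectivity of the pushforward $s_{E*}\colon \Aa(X;E\oplus E) \to \Aa(G)$ established by the short exact sequence in \rref{p:s_E}, where $G := \Gr(2, E\oplus 1^{\oplus 2})$ with projection $q\colon G \to X$. In view of \eqref{eq:def_hypo} and of $p_1 = \pi$ for rank two bundles (\eqref{eq:Pontrygin_pi_2}), it suffices to show
\[
s_{E*}(\hypt_E) = f_{1,E}(\pi(\Uni_2)) = q^*\pi(E) - \pi(\Uni_2) \quad \text{in } A^{8,4}(G).
\]

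The first step is to observe that the very section of $\Quo_2\oplus\Quo_2$ used in \rref{p:s_E} to define $s_E$ and its pushforward already takes the form required by \dref{lemm:e_pi_push_pull}{lemm:e_pi_push_pull:2}: a transverse section of $F\oplus F$ for the rank two bundle $F = \Quo_2$ on the ambient $G$, with $s_E^*F = E$. Applying that lemma to $1 \in \Aa(G)$ yields
\[
s_{E*}(\hypt_E) = \pi(\Quo_2) \in A^{8,4}(G),
\]
with no sign ambiguity, since the normal bundle identification used in $s_{E*}$ coincides tautologically with the one supplied by the section.

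The second step is to establish $\pi(\Quo_2) + \pi(\Uni_2) = q^*\pi(E)$ in $\Aa(G)$. I would derive this by computing $p_1(\Uni_2\oplus\Quo_2)$ in two ways. First, \rref{prop:pont_sym} applied to the direct sum of the two rank two bundles $\Uni_2$ and $\Quo_2$ on $G$ gives $p_1(\Uni_2\oplus\Quo_2) = \pi(\Uni_2) + \pi(\Quo_2)$. Second, the tautological short exact sequence $0 \to \Uni_2 \to q^*(E\oplus 1^{\oplus 2}) \to \Quo_2 \to 0$ on $G$ splits after pullback along an affine bundle $f\colon Y \to G$ furnished by \rref{p:Jouanolou}; as $f^*$ is an isomorphism on $\Aa$-cohomology, we infer $p_1(\Uni_2\oplus\Quo_2) = p_1(q^*(E\oplus 1^{\oplus 2}))$, and a second application of \rref{prop:pont_sym} (to $q^*E$ together with the two trivial line summands) identifies the latter with $q^*\pi(E)$. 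Combining this with the first step and invoking injectivity of $s_{E*}$ concludes $\hypt_E = \hypo_E$.

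The main technical obstacle is that the full Whitney sum formula \rref{prop:Pontryagin_sum} is not yet available here: its proof goes through the splitting principle \rref{th:splitting}, which presupposes a genuine hyperbolic orientation---precisely the structure we are trying to extract from the weak one. The Jouanolou detour in the second step circumvents this, reducing the needed instance of Whitney additivity to the direct-sum case of \rref{prop:pont_sym} which is valid under a weak hyperbolic orientation.
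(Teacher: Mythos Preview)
Your proof is correct and essentially identical to the paper's: both obtain ${s_E}_*(\hypt_E)=\pi(\Quo_2)$ via \dref{lemm:e_pi_push_pull}{lemm:e_pi_push_pull:2}, and then identify $\pi(\Quo_2)$ with $f_{1,E}(\pi(\Uni_2))=q^*\pi(E)-\pi(\Uni_2)$. The paper packages the second step as the citation of \eqref{eq:f_d_U_Q}; unwinding that through \eqref{eq:f_r_pi} and \eqref{eq:f_E_D}, one lands on precisely the instance of the Whitney sum formula that you reprove by hand via Jouanolou and \rref{prop:pont_sym}.

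Your concern about circularity is unfounded. Although \rref{th:splitting} is \emph{stated} for hyperbolically oriented spectra, its proof invokes only \rref{cor:splitting_twisted}, \rref{p:Jouanolou}, and \rref{prop:splitting_odd}, none of which requires more than a weak hyperbolic orientation. Hence \rref{prop:Pontryagin_sum}, and with it \eqref{eq:f_E_D} and \eqref{eq:f_d_U_Q}, are already available in \S\ref{sect:o_wo}; your Jouanolou detour, while correct, merely reproduces the content of that citation.
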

\begin{proof}
Recall that the closed immersion $s_E \colon X \to \Gr(2,E \oplus 1^{\oplus 2})$ of \eqref{eq:s_E} is defined by the vanishing of a section of $\Quo_2 \oplus \Quo_2$ transverse to the zero-section. Therefore
\[
{s_E}_*(\hypt_E) \overset{\dref{lemm:e_pi_push_pull}{lemm:e_pi_push_pull:2}}{=} \pi(\Quo_2) \overset{\eqref{eq:Pontrygin_pi_2}}{=} p_1(\Quo_2) \overset{\eqref{eq:f_d_U_Q}}{=} f_{1,E}(\pi(\Uni_2)).\qedhere
\]
\end{proof}

\begin{lemma}
\label{lemm:mult_hypo_2}
Let $X \in \Sm_S$, and $E,D \to X$ vector bundles. Assume that $D$ has rank two, and that $E$ has constant even rank. Then the elements defined in \rref{eq:def_hypo} satisfy
\[
\hypo_{E \oplus D} = \sw_{E,D}^*(\hypo_E \cup \hypo_D).
\]
\end{lemma}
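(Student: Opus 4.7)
The plan is to use the injectivity of ${s_{E\oplus D}}_*$ (from the short exact sequence in \rref{p:s_E} applied to $F=E\oplus D$) to reduce the lemma to a computation in $\Aa(\mathcal{G})$, where $\mathcal{G}=\Gr(2,F\oplus 1^{\oplus 2})$. Setting $\mathcal{H}=\Gr(2,E\oplus 1^{\oplus 2})$ with projection $q\colon\mathcal{H}\to X$, the inclusion $E\oplus 1^{\oplus 2}\hookrightarrow F\oplus 1^{\oplus 2}$ induces a closed immersion $\iota\colon\mathcal{H}\to\mathcal{G}$ of normal bundle $\Uni_2^\vee\otimes q^*D$ (\rref{p:E_F}) satisfying $s_F=\iota\circ s_E$. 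Applying \eqref{eq:f_E_D} to the split sequence $0\to D\to F\to E\to 0$, and using $p_{d+1}(E)=0$ (which forces $f_{d+1,E}(u)=-u\,f_{d,E}(u)$), the target $f_{d+1,F}(\pi(\Uni_2^\mathcal{G}))$ simplifies to $(\pi(D)-\pi(\Uni_2^\mathcal{G}))\,f_{d,E}(\pi(\Uni_2^\mathcal{G}))$.

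By \rref{p:i_j} (applied to $E\oplus 1^{\oplus 2}$ and $D$), the class $\pi(\Uni_2^\mathcal{G})-\pi(D)$ vanishes under the restriction $j^*\colon\Aa(\mathcal{G})\to\Aa(X)$ to $j\colon\Gr(2,D)=X\hookrightarrow\mathcal{G}$, so it lifts to a unique $\gamma\in\Aa(\mathcal{H};\Uni_2^\vee\otimes q^*D)$ with $\iota_*(\gamma)=\pi(\Uni_2^\mathcal{G})-\pi(D)$. Combining the projection formula \rref{lemm:proj_formula} for $s_E$ (with ${s_E}_*(\hypo_E)=f_{d,E}(\pi(\Uni_2^\mathcal{H}))$ from \eqref{eq:def_hypo}) and for $\iota$, together with the Thom isomorphism identifying $(E\oplus D)^{\oplus 2}$ with $(E\oplus E)\oplus(D\oplus D)$ underlying the factorisation ${s_F}_*=\iota_*\circ{s_E}_*$, the desired identity reduces to the key claim
\[ s_E^*(\gamma)=-\hypo_D \quad \text{in } \Aa(X;D\oplus D). \]

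To establish this identity, I would invoke the cartesian square
\[
\xymatrix{
X \ar[r]^-{s_E} \ar[d]_-{s_D} & \mathcal{H} \ar[d]^-{\iota} \\
\Gr(2,D\oplus 1^{\oplus 2}) \ar[r]^-{\tilde\iota} & \mathcal{G}
}
\]
with $\tilde\iota$ induced by $D\oplus 1^{\oplus 2}\hookrightarrow F\oplus 1^{\oplus 2}$. This square is transverse: the normal bundle $N_{s_D}=D\oplus D$ is canonically identified with $s_E^*(\Uni_2^\vee\otimes q^*D)=1^{\oplus 2}\otimes D$. The base-change formula (\rref{p:pf_funct}, \rref{p:transverse}) then yields ${s_D}_*\circ s_E^*=\tilde\iota^*\circ\iota_*$, and applied to $\gamma$,
\[ {s_D}_*(s_E^*\gamma)=\tilde\iota^*(\pi(\Uni_2^\mathcal{G})-\pi(D))=\pi(\Uni_2)-\pi(D)={s_D}_*(-\hypo_D), \]
the final equality using $\hypo_D=\hypt_D$ (\rref{lemm:o_t}) and the defining formula \eqref{eq:def_hypo} for $\hypo_D$. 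The injectivity of ${s_D}_*$ (\rref{p:s_E} applied to $D$) then delivers the key identity, completing the proof.

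The main technical obstacle is the careful bookkeeping of normal bundles, Thom isomorphisms, and their compatibility with the swap $\sw_{E,D}$ in the projection-formula step: one must verify that the factorisation-compatible identification $(E\oplus E)\oplus(D\oplus D)$ of $N_{s_F}$ matches, via the Thom isomorphism of \rref{p:Thom_ses}, the canonical identification $(E\oplus D)^{\oplus 2}$ twisted by $\sw_{E,D}^*$. Once all identifications are consistently tracked (which is clean in the $\eta$-periodic setting, where $\epsilon=1$), the algebraic argument assembles without further obstruction.
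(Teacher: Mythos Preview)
Your proposal is correct and follows essentially the same approach as the paper: the paper's class $\omega$ is your $-\gamma$, its closed immersions $i,k$ are your $\iota,\tilde\iota$, and the same transverse cartesian square and projection-formula computation carry through identically. The technical point you flag about the compatibility of the normal-bundle identifications with $\sw_{E,D}$ is exactly what the paper addresses via the commutative square \eqref{eq:s_E_D_sw}, deduced from \rref{p:pf_composite}.
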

\begin{proof}
Let $q \colon \Gr(2,E \oplus D \oplus 1^{\oplus 2}) \to X$ be the projection. The inclusions $E \oplus 1^{\oplus 2} \subset E \oplus D \oplus 1^{\oplus 2}$ and $D \subset E \oplus D \oplus 1^{\oplus 2}$ induce closed immersions
\[
i\colon \Gr(2,E \oplus 1^{\oplus 2}) \to \Gr(2,E \oplus D \oplus 1^{\oplus 2}) \; \text{and} \; j \colon  X=\Gr(2,D) \to \Gr(2,E \oplus D \oplus 1^{\oplus 2}).
\]
As explained in \rref{p:i_j}, we have an exact sequence of $\Aa(X)$-bimodules
\[
0\to \Aa(\Gr(2,E \oplus  1^{\oplus 2});\Uni_2^\vee \otimes q^*D) \xrightarrow{i_*} \Aa(\Gr(2,E \oplus D\oplus 1^{\oplus 2})) \xrightarrow{j^*} \Aa(X) \to 0.
\]
Since $j^*\Uni_2=D$, there exists a unique element
\[
\omega \in A^{0,0}(\Gr(2,E \oplus 1^{\oplus 2});\Uni_2^\vee \otimes q^*D)
\]
such that
\begin{equation}
\label{eq:def_omega}
i_*(\omega) = q^*\pi(D)  - \pi(\Uni_2)\in A^{8,4}(\Gr(2,E \oplus D \oplus 1^{\oplus 2})).
\end{equation}

Consider now the cartesian square in $\Sm_S$
\[ \xymatrix{
X\ar[r]^{s_D} \ar[d]_{s_E} & \Gr(2,D \oplus 1^{\oplus 2}) \ar[d]^k \\ 
\Gr(2,E \oplus 1^{\oplus 2}) \ar[r]^-i & \Gr(2,E \oplus D \oplus 1^{\oplus 2})
}\]
where $s_D,s_E$ are defined in \eqref{eq:s_E}, and $k$ is induced by the inclusion $D \oplus 1^{\oplus 2} \subset E \oplus D \oplus 1^{\oplus 2}$. Set $q'=k \circ q$. The cartesian square above is transverse, hence by \rref{p:pf_funct} and \rref{p:transverse} we have in $A^{8,4}(\Gr(2,D \oplus 1^{\oplus 2}))$
\[
{s_D}_* \circ s_E^*(\omega) = k^* \circ i_*(\omega) = k^*(q^*\pi(D) - \pi(\Uni_2)) = q'^*\pi(D) - \pi(\Uni_2) = f_{1,D}(\pi(\Uni_2)),
\]
which by definition of $\hypo_D$ (see \eqref{eq:def_hypo}) implies that
\begin{equation}
\label{eq:omega_hypo}
s_E^*(\omega) = \hypo_D \in A^{0,0}(X;D \oplus D).
\end{equation}

Next, it follows from \rref{p:pf_composite} that we have a commutative square in $\Hop(S)$
\begin{equation}
\label{eq:s_E_D_sw}
\begin{gathered}
\xymatrix{
\Th_X(E \oplus D \oplus E \oplus D)  \ar[d]_{\sw_{E,D}}&& \Gr(2,E \oplus D \oplus 1^{\oplus 2})\ar[d]^{\pf{i}} \ar[ll]_-{\pf{s_{E\oplus D}}} \\ 
\Th_X(E \oplus E \oplus D \oplus D) && \Th_{\Gr(2,E \oplus 1^{\oplus 2})}(D \oplus D) \ar[ll]_-{\pf{s_E}} 
}
\end{gathered}
\end{equation}
Let now $d\in \Nn$ be such that $E$ has rank $2d$. Then, viewing $\Aa(\Gr(2,D \oplus E \oplus 1^{\oplus 2}))$ as an $\Aa(X)$-algebra via $q^*$, we have

\begin{align*}
&(s_{E\oplus D})_* \circ \sw_{E,D}^*(\hypo_E \cup \hypo_D)\\
&= i_* \circ (s_E)_*(\hypo_E \cup \hypo_D)&&\text{by \eqref{eq:s_E_D_sw}}\\
&= i_* \circ (s_E)_*(\hypo_E \cup s_E^*(\omega))&&\text{by \eqref{eq:omega_hypo}}\\
&=i_* \big((s_E)_*(\hypo_E) \cup \omega\big)&&\text{by \dref{lemm:proj_formula}{lemm:proj_formula:1}}\\
&= i_* \big(f_{d,E}(\pi(\Uni_2)) \cup \omega\big) && \text{by \eqref{eq:def_hypo}}\\
&=  f_{d,E}(\pi(\Uni_2)) \cup i_*(\omega) && \text{by \dref{lemm:proj_formula}{lemm:proj_formula:2}}\\
&= f_{d,E}(\pi(\Uni_2)) \cdot \pi(D) -  f_{d,E}(\pi(\Uni_2)) \cdot \pi(\Uni_2)&&\text{by \eqref{eq:def_omega}}\\
&= f_{d,E}(\pi(\Uni_2)) \cdot \pi(D) + f_{d+1,E}(\pi(\Uni_2)) - p_{d+1}(E) && \text{by \eqref{eq:f_k_E_rec}}\\
&= f_{d,E}(\pi(\Uni_2)) \cdot \pi(D) + f_{d+1,E}(\pi(\Uni_2))&&\text{as $\rank E \leq 2d+1$}\\
&= f_{d+1,D\oplus E}(\pi(\Uni_2)) && \text{by \eqref{eq:f_E_D}},
\end{align*}
from which the statement follows.
\end{proof}

\begin{proposition}
\label{prop:mult_o}
Let $E,F$ be vector bundles of constant even ranks over $X \in \Sm_S$. Then the elements defined in \rref{eq:def_hypo} satisfy
\[
\hypo_{E \oplus F} = \sw_{E,F}^*(\hypo_E \cup \hypo_F).
\]
\end{proposition}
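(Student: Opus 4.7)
My plan is to argue by induction on $\rank F$, treating \rref{lemm:mult_hypo_2} as the case $\rank F = 2$ and bootstrapping to arbitrary even rank. For the base case $\rank F = 0$, so $F=0$, unraveling the definition \eqref{eq:def_hypo} with $d=0$ (noting that $s_0$ is the identity and $f_{0,0}=1$) gives $\hypo_0 = 1$, and the formula $\hypo_{E\oplus 0} = \sw_{E,0}^*(\hypo_E \cup 1) = \hypo_E$ then holds trivially.

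For the inductive step with $\rank F \geq 2$, I would first reduce to the situation where $F$ splits off a rank two summand. The projection $q\colon \Gr(2,F)\to X$ induces a split injection on $\Aa(-;V)$ by \rref{cor:splitting_twisted}, and by \rref{p:Jouanolou} applied to the universal exact sequence $0\to \Uni_2 \to q^*F \to \Quo_2\to 0$ we obtain an affine bundle torsor $g\colon Y \to \Gr(2,F)$ over which $q^*F$ splits. The composite $Y \to X$ has split injective pullback on all $\Aa(-;V)$. The classes $\hypo$ are functorial: this follows from the definition \eqref{eq:def_hypo} together with the functoriality of the Pontryagin classes and the transverse base change of $s_E$ along $Y \to X$ (via \rref{p:pf_funct} and \rref{p:transverse}). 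Thus it suffices to check the equation after this pullback, and we may assume $F = F'\oplus D$ with $D$ of rank two and $F'$ of constant even rank $\rank F - 2$.

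Now I would apply \rref{lemm:mult_hypo_2} to the pair $(E\oplus F',\,D)$ to obtain
\[
\hypo_{E\oplus F} \;=\; \sw_{E\oplus F',D}^*\bigl(\hypo_{E\oplus F'} \cup \hypo_D\bigr),
\]
then invoke the inductive hypothesis $\hypo_{E\oplus F'} = \sw_{E,F'}^*(\hypo_E\cup\hypo_{F'})$ and apply \rref{lemm:mult_hypo_2} once more in the form $\hypo_F = \sw_{F',D}^*(\hypo_{F'}\cup\hypo_D)$. After substitution, both sides of the target equation become the pullback of the triple cup product $\hypo_E\cup\hypo_{F'}\cup\hypo_D \in \Aa(X;E\oplus E\oplus F'\oplus F'\oplus D\oplus D)$, along the two \emph{a priori} different composites of swap maps
\[
\Th_X(E\oplus F'\oplus D\oplus E\oplus F'\oplus D) \rightrightarrows \Th_X(E\oplus E\oplus F'\oplus F'\oplus D\oplus D),
\]
namely $(\sw_{E,F'}\wedge \id)\circ \sw_{E\oplus F',D}$ for the left-hand side and $(\id\wedge \sw_{F',D})\circ \sw_{E,F}$ for the right-hand side.

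The main obstacle is a careful bookkeeping verification that these two composites agree in $\Hop(S)$. Each swap map is induced by a permutation of vector bundle summands, so the comparison reduces to checking two linear permutations of $E\oplus F'\oplus D\oplus E\oplus F'\oplus D$: a direct inspection shows that both send $(e,f',d,e',f'',d'')$ to $(e,e',f',f'',d,d'')$. Hence the two underlying maps of vector bundles coincide, the induced maps of Thom spaces agree in $\Hop(S)$, and the two expressions for $\hypo_{E\oplus F}$ agree. This combinatorial check is routine but is the only nontrivial step after the inductive reduction.
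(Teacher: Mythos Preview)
Your argument is correct and is essentially the paper's proof unwound: the paper simply cites the splitting principle \rref{th:splitting} together with \rref{lemm:mult_hypo_2}, whereas you carry out the induction on $\rank F$ by hand using the same ingredients (\rref{cor:splitting_twisted} and \rref{p:Jouanolou}) that go into \rref{th:splitting}. Your version is slightly more economical in that you only split $F$ rather than both bundles, and the swap bookkeeping you describe is exactly the associativity check one needs; the paper leaves this implicit.
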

\begin{proof}
This follows from the splitting principle \rref{th:splitting} and \rref{lemm:mult_hypo_2}.
\end{proof}

\begin{para}
\label{p:def_hypo}
Let $E \to X$ be a vector bundle with $X \in \Sm_S$. Recall that if $E$ has constant even rank, we have defined in \eqref{eq:def_hypo} a class $\hypo_E \in A^{0,0}(X;E \oplus E)$. If $E$ has constant odd rank, we define
\begin{equation}
\label{eq:def_hypo_odd}
\hypo_E = (\Sigma^{4,2})^{-1}\circ (\sw_{E,1}^*)^{-1}(\hypo_{E \oplus 1}) \in  A^{0,0}(X;E \oplus E).
\end{equation}
This permits in an obvious way to define a class $\hypo_E \in A^{0,0}(X;E \oplus E)$ when the rank of $E$ is not necessarily constant.
\end{para}

\begin{lemma}
\label{lemm:E_F_1}
Let $E,F$ be vector bundles over $X \in \Sm_S$, with $F$ of constant odd rank. Then
\[
\sw_{E,F\oplus 1}^*(\hypo_E \cup \hypo_{F\oplus 1}) = \sw_{E \oplus F,1}^* \circ \Sigma^{4,2} \circ \sw_{E,F}^*(\hypo_E \cup \hypo_F).
\]
\end{lemma}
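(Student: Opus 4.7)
The plan is to reduce both sides to a common expression obtained by pulling back $\Sigma^{4,2}(\hypo_E \cup \hypo_F)$ along some bundle-permutation automorphism, and then verify that the two permutations of summands obtained on the two sides actually coincide.

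First, I would rewrite $\hypo_{F\oplus 1}$ using the definition \eqref{eq:def_hypo_odd} (which, since $F$ has odd rank, gives $\hypo_{F \oplus 1} = \sw_{F,1}^*\circ \Sigma^{4,2}(\hypo_F)$). Substituting this into the left-hand side yields
\[
\sw_{E,F\oplus 1}^*\bigl(\hypo_E \cup \sw_{F,1}^*\circ \Sigma^{4,2}(\hypo_F)\bigr).
\]
Next, I would use the standard compatibility of the cup product with the pullback along a bundle automorphism of the second factor: if $\psi\colon W' \to W$ is an isomorphism of bundles over $X$ and $x \in \Aa(X;V)$, $y \in \Aa(X;W)$, then $x \cup \psi^* y = (\id_V \oplus \psi)^*(x \cup y)$. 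Applied with $\psi = \sw_{F,1}$, this turns the left-hand side into
\[
\sw_{E,F\oplus 1}^*\circ (\id_{E\oplus E}\oplus \sw_{F,1})^*\bigl(\hypo_E \cup \Sigma^{4,2}\hypo_F\bigr),
\]
and using \eqref{eq:Sigma_cup} to bring $\Sigma^{4,2}$ outside the cup product, this equals
\[
\bigl((\id_{E\oplus E}\oplus \sw_{F,1})\circ \sw_{E,F\oplus 1}\bigr)^*\circ \Sigma^{4,2}(\hypo_E\cup \hypo_F).
\]
A parallel manipulation applied to the right-hand side (the commutation of $\Sigma^{4,2}$ with the swap pullback follows again from \eqref{eq:Sigma_cup} since $\Sigma^{4,2}$ is cup product with $\Sigma^{4,2}1$) produces
\[
\bigl((\sw_{E,F}\oplus \id_{1^{\oplus 2}})\circ \sw_{E\oplus F,1}\bigr)^*\circ \Sigma^{4,2}(\hypo_E\cup \hypo_F).
\]

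It then remains to check that the two maps
\[
(\id_{E\oplus E}\oplus \sw_{F,1})\circ \sw_{E,F\oplus 1} \quad \text{and} \quad (\sw_{E,F}\oplus \id_{1^{\oplus 2}})\circ \sw_{E\oplus F,1}
\]
agree as self-isomorphisms of the Thom space of $E\oplus F\oplus 1\oplus E\oplus F\oplus 1$ (after the appropriate identifications). Both are bundle automorphisms acting by a permutation of the six summands, and a direct bookkeeping of where each of the summands $E_1,F_2,1_3,E_4,F_5,1_6$ is sent shows that both composites produce the arrangement $(E_1,E_4,F_2,F_5,1_3,1_6)$. Hence the two composites coincide as maps of Thom spaces, and so do their pullbacks, giving the desired identity.

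The main potential obstacle is purely bookkeeping: ensuring the sign conventions in \eqref{eq:def_hypo_odd} and the direction conventions for $\sw_{-,-}$ (which is defined in \rref{p:sw} as $(e,f,e',f')\mapsto(e,e',f,f')$, so the pullback goes from the ``interleaved'' Thom space to the ``separated'' one) are applied consistently, and verifying the compatibility of $\cup$ with a swap-type pullback on one of the factors; the latter is a direct consequence of the definition of the cup product via the diagonal \rref{p:cup_product} and the fact that the swap of summands is compatible with the diagonal of $X$.
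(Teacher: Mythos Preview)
Your proof is correct and follows essentially the same route as the paper. The paper introduces named auxiliary maps $\sigma_{E,F}=\id_{E\oplus E}\oplus \sw_{F,1}$ and $\tau_{E,F}=\sw_{E,F}\oplus \id_{1^{\oplus 2}}$ and packages the key steps into three commutative diagrams (one expressing $\sigma_{E,F}\circ \sw_{E,F\oplus 1}=\tau_{E,F}\circ \sw_{E\oplus F,1}$, one for the cup-product compatibility with $\sigma_{E,F}$, and one identifying $\tau_{E,F}$ with $\Sigma^{4,2}\sw_{E,F}$), but these are exactly the three ingredients you isolate and verify directly by tracking the permutation of the six summands.
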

\begin{proof}
Consider the commutative square in $\SH(S)$
\begin{equation}
\label{eq:squ_sigma_tau}
\begin{gathered}
\xymatrix{
\Th_X(E \oplus F \oplus E \oplus F \oplus 1 \oplus 1) \ar[rr]^{\tau_{E,F}} && \Th_X(E \oplus E \oplus F \oplus F \oplus 1 \oplus 1)
 \\ 
\Th_X(E \oplus F \oplus 1 \oplus E \oplus F \oplus 1) \ar[u]^{\sw_{E\oplus F,1}} 
 \ar[rr]^{\sw_{E,F\oplus 1}} && \Th_X(E \oplus E \oplus F \oplus 1 \oplus F \ar[u]^{\sigma_{E,F}}  \oplus 1)
}
\end{gathered}
\end{equation}
where $\tau_{E,F}$, resp.\ $\sigma_{E,F}$, is given by
\[
(e,f,e',f',x,x') \mapsto (e,e',f,f',x,x'), \text{ resp.\ } (e,e',f,x,f',x') \mapsto (e,e',f,f',x,x').
\]
Note that we have commutative squares in $\SH(S)$
\begin{equation}
\label{eq:tau}
\begin{gathered}
\xymatrix{
\Th_X(E \oplus F \oplus E \oplus F \oplus 1 \oplus 1) \ar[rr]^-{\tau_{E,F}} \ar@{=}[d] && \Th_X(E \oplus E \oplus F \oplus F \oplus 1 \oplus 1) \ar@{=}[d] \\ 
\Sigma^{4,2}\Th_X(E \oplus F \oplus E \oplus F) \ar[rr]^-{\Sigma^{4,2}\sw_{E,F}} && \Sigma^{4,2}\Th_X(E \oplus E \oplus F \oplus F)
}
\end{gathered}
\end{equation}
\begin{equation}
\label{eq:sigma}
\begin{gathered}
\resizebox{13.3cm}{!}{
\xymatrix{
\Th_X(E \oplus E \oplus F \oplus 1 \oplus F \oplus 1)\ar[rr]^-{\sigma_{E,F}} \ar[d]_{\Delta_X} && \Th_X( E \oplus E \oplus F \oplus F \oplus 1 \oplus 1)\ar[d]^{\Delta_X} \\ 
\Th_X(E \oplus E) \wedge \Th_X(F \oplus 1 \oplus F \oplus 1) \ar[rr]^-{\id \wedge \sw_{F,1}} && \Th_X(E \oplus E) \wedge \Th_X(F \oplus F \oplus 1 \oplus 1)
}
}
\end{gathered}
\end{equation}
Now we compute in $A^{0,0}(X;E \oplus F \oplus 1 \oplus E \oplus F \oplus 1)$
\begin{align*}
\sw_{E,F \oplus 1}^*(\hypo_E \cup \hypo_{F \oplus 1})
&= \sw_{E,F \oplus 1}^*(\hypo_E \cup \sw_{F,1}^*(\Sigma^{4,2}\hypo_F)) && \text{by \eqref{eq:def_hypo_odd}}\\
&= \sw_{E,F \oplus 1}^* \circ \sigma_{E,F}^*(\hypo_E \cup \Sigma^{4,2}\hypo_F) && \text{by \eqref{eq:sigma}}\\
&= \sw_{E\oplus F,1}^* \circ \tau_{E,F}^* (\hypo_E \cup \Sigma^{4,2}\hypo_F)&& \text{by \rref{eq:squ_sigma_tau}}\\
&= \sw_{E\oplus F,1}^* \circ \tau_{E,F}^* \circ \Sigma^{4,2}(\hypo_E \cup \hypo_F)&&\text{by \eqref{eq:Sigma_cup}}\\
&= \sw_{E\oplus F,1}^* \circ \Sigma^{4,2} \circ \sw_{E,F}^*(\hypo_E \cup \hypo_F) && \text{by \eqref{eq:tau}}.\qedhere
\end{align*}
\end{proof}

\begin{theorem}
\label{th:Thom_orientation}
Let $A \in \SH(S)$ be an $\eta$-periodic ring spectrum. Then every weak hyperbolic orientation on $A$ (see \rref{def:hyp_thom}) is induced by a unique hyperbolic orientation (see \rref{def:hyp_orientation}).
\end{theorem}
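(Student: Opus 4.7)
The plan is to take the classes $\hypo_E$ already constructed in \rref{p:def_hypo} and verify that they satisfy the five axioms of \rref{def:hyp_orientation}, then deduce uniqueness from the splitting principle. Functoriality \dref{def:hyp_orientation}{def:hyp_orientation:funct} and isomorphism invariance \dref{def:hyp_orientation}{def:hyp_orientation:isom} follow from the naturality of the defining formulas \eqref{eq:def_hypo} and \eqref{eq:def_hypo_odd}: the closed immersion $s_E$, the Pontryagin classes, and the Grassmann bundle are all natural in $(X,E)$, and $(s_E)_*$ commutes with pullback via transverse base change (\rref{p:pf_funct}, \rref{p:transverse}). The centrality axiom \dref{def:hyp_orientation}{def:hyp_orientation:central} follows from the injectivity of $(s_E)_*$, the projection formula \rref{lemm:proj_formula}, and the fact that $\pi(\Uni_2)$ is $\Aa$-central (a consequence of \dref{def:hyp_thom}{def:hyp_thom:central}). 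For the normalisation \dref{def:hyp_orientation}{def:hyp_orientation:norm}, \rref{lemm:o_t} combined with \dref{def:hyp_thom}{def:hyp_thom:norm} gives $\hypo_{1^{\oplus 2}} = \hypt_{1^{\oplus 2}} = \sw_{1,1}^*(\Sigma^{8,4}1)$, and unwinding \eqref{eq:def_hypo_odd} then yields $\hypo_1 = \Sigma^{4,2}1$.

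The main obstacle is the multiplicativity axiom \dref{def:hyp_orientation}{def:hyp_orientation:mult}. The case of two even-rank bundles is exactly \rref{prop:mult_o}. For the case where $E$ has even rank and $F$ has odd rank, I would apply \rref{prop:mult_o} to the even-rank bundles $E$ and $F \oplus 1$ to obtain $\hypo_{E \oplus F \oplus 1} = \sw_{E, F \oplus 1}^*(\hypo_E \cup \hypo_{F \oplus 1})$; by \rref{lemm:E_F_1} the right-hand side equals $\sw_{E \oplus F, 1}^* \circ \Sigma^{4,2} \circ \sw_{E,F}^*(\hypo_E \cup \hypo_F)$. On the other hand, the definition \eqref{eq:def_hypo_odd} applied to the odd-rank bundle $E \oplus F$ rewrites as $\hypo_{E \oplus F \oplus 1} = \sw_{E \oplus F, 1}^*(\Sigma^{4,2}\hypo_{E \oplus F})$. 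Comparing the two expressions and using the invertibility of $\sw_{E \oplus F, 1}^*$ and $\Sigma^{4,2}$ yields $\hypo_{E \oplus F} = \sw_{E,F}^*(\hypo_E \cup \hypo_F)$. The remaining cases ($E$ odd and $F$ even, and both $E,F$ odd) are handled by analogous applications of \rref{lemm:E_F_1} combined with the cases already established; in particular, the both-odd case follows by applying the just-proved mixed case to $E$ (odd) and $F \oplus 1$ (even), then invoking \rref{lemm:E_F_1} once more and using $\hypo_1 = \Sigma^{4,2}1$ together with \eqref{eq:Sigma_cup} to extract the identity for $\hypo_{E \oplus F}$.

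For uniqueness, let $\hypo'$ be a second hyperbolic orientation of $A$ inducing the given weak hyperbolic orientation $\hypt$. Then $\hypo'_E = \hypt_E = \hypo_E$ for every rank-two bundle $E$, by \rref{lemm:o_t} applied to both orientations. Given an arbitrary vector bundle $E \to X$, the splitting principle \rref{th:splitting} yields a morphism $f\colon Y \to X$ such that $f^*\colon \Aa(X; E \oplus E) \to \Aa(Y; E \oplus E)$ is split injective and $f^*E \simeq E_1 \oplus \cdots \oplus E_r$ with each $E_i$ of rank one or two and trivial determinant. The multiplicativity of both $\hypo$ and $\hypo'$ reduces the verification to checking $\hypo'_{E_i} = \hypo_{E_i}$ for each $i$: when $\rank E_i = 2$ this is the rank-two case, and when $\rank E_i = 1$ we have $E_i \simeq 1$ (since $\det E_i = E_i \simeq 1$), so both sides equal $\Sigma^{4,2}1$ by the normalisation axiom and isomorphism invariance. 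Functoriality of $\hypo$ and $\hypo'$ together with the injectivity of $f^*$ then give $\hypo'_E = \hypo_E$, completing the proof.
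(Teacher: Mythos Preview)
Your proposal is correct and follows the same strategy as the paper: build $\hypo_E$ via \rref{p:def_hypo}, verify the axioms of \rref{def:hyp_orientation}, and obtain uniqueness from the splitting principle \rref{th:splitting} together with \rref{lemm:o_t}.

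One point needs sharpening. In the multiplicativity axiom, the case ``$E$ odd, $F$ even'' is \emph{not} an analogous application of \rref{lemm:E_F_1}: that lemma is stated only for $F$ of odd rank, and there is no symmetric statement available at this stage. The paper handles this case differently (its Case~3): it applies the already-proved ``$E$ even, $F$ odd'' case to the pair $(F,E)$, yielding $\hypo_{F\oplus E}=\sw_{F,E}^*(\hypo_F\cup\hypo_E)$, and then uses an explicit exchange argument (the isomorphisms $\alpha\colon E\oplus F\xrightarrow{\sim} F\oplus E$ and $\rho$ swapping the two pairs of summands) together with the isomorphism-invariance axiom \dref{def:hyp_orientation}{def:hyp_orientation:isom}, already verified, to transport this into $\hypo_{E\oplus F}=\sw_{E,F}^*(\hypo_E\cup\hypo_F)$. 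Once this case is in hand, your treatment of the both-odd case goes through exactly as you describe (and as in the paper, which uses Case~3 on $(E,F\oplus 1)$ and Case~2 on $(E\oplus F,1)$, then \rref{lemm:E_F_1}). So the only fix required is to replace ``analogous applications of \rref{lemm:E_F_1}'' for the odd--even case by this symmetry reduction to the even--odd case.
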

\begin{proof}
Let us assume that $A$ carries a weak hyperbolic orientation. We will show below that the assignment $E \mapsto \hypo_E$ defined in \rref{p:def_hypo} is a hyperbolic orientation of $A$. Once this is done, it will follow from \rref{lemm:o_t} that this hyperbolic orientation induces the original weak hyperbolic orientation. The uniqueness of a hyperbolic orientation having this property is a consequence of the splitting principle \rref{th:splitting}, and the proof of the theorem will thus be complete.

We now proceed with the proof that $E \mapsto \hypo_E$ is a hyperbolic orientation. The axioms \dref{def:hyp_orientation}{def:hyp_orientation:central}, \dref{def:hyp_orientation}{def:hyp_orientation:funct} and \dref{def:hyp_orientation}{def:hyp_orientation:isom} are easily verified. The axiom \dref{def:hyp_orientation}{def:hyp_orientation:norm} follows from the computation
\[
\hypo_1 \overset{\eqref{eq:def_hypo_odd}}{=}  (\Sigma^{4,2})^{-1}\circ (\sw_{1,1}^*)^{-1}(\hypo_{1^{\oplus 2}}) \overset{\rref{lemm:o_t}}{=} (\Sigma^{4,2})^{-1}\circ (\sw_{1,1}^*)^{-1}(\hypt_{1^{\oplus 2}}) \overset{\dref{def:hyp_thom}{def:hyp_thom:norm}}{=} \Sigma^{4,2}1.
\]

It remains to show that the axiom \dref{def:hyp_orientation}{def:hyp_orientation:mult} holds. So we consider vector bundles $E,F$ over $X \in \Sm_S$, and prove that
\begin{equation}
\label{eq:hypo_prod}
\hypo_{E\oplus F} = \sw_{E,F}^*(\hypo_E \cup \hypo_F) \in A^{0,0}(X;E\oplus F \oplus E \oplus F).
\end{equation}
We may assume that $E,F$ have constant respective ranks $r_E,r_F$, and we distinguish four cases according to the parities of $r_E,r_F$.\\

\emph{Case 1: $r_E$ is even and $r_F$ is even.} The equality \eqref{eq:hypo_prod} has been proved in \rref{prop:mult_o}.\\

\emph{Case 2: $r_E$ is even and $r_F$ is odd.} We have
\begin{align*}
\hypo_{E \oplus F}&=(\Sigma^{4,2})^{-1} \circ (\sw_{E \oplus F,1}^*)^{-1}(\hypo_{E \oplus F \oplus 1}) && \text{by \eqref{eq:def_hypo_odd}}\\
&=(\Sigma^{4,2})^{-1} \circ (\sw_{E \oplus F,1}^*)^{-1} \circ \sw_{E,F\oplus 1}^*(\hypo_E\cup  \hypo_{F \oplus 1})  && \text{by \rref{prop:mult_o}}\\ 
&= \sw_{E,F}^*(\hypo_E \cup \hypo_F) && \text{by \rref{lemm:E_F_1}},
\end{align*}
proving \eqref{eq:hypo_prod} in this case.\\

\emph{Case 3: $r_E$ is odd and $r_F$ is even.} Consider the commutative square in $\SH(S)$
\[ \xymatrix{
\Th_X(E \oplus F \oplus E \oplus F) \ar[rr]^-{\sw_{E,F}} \ar[d]_{\alpha} && \Th_X(E\oplus E \oplus F \oplus F) \ar[d]^\rho \\ 
\Th_X(F \oplus E \oplus E \oplus F) \ar[rr]^-{\sw_{F,E}} && \Th_X(F \oplus F \oplus E \oplus E)
}\]
where $\alpha$, resp.\ $\rho$, is given by
\[
(e,f,e',f') \mapsto (f,e,f',e'), \text{ resp.\ } (e,e',f,f') \mapsto (f,f',e,e').
\]
Then, using Case 2 treated above (with the roles of $E$ and $F$ exchanged), we have
\[
\sw_{E,F}^*(\hypo_E \cup \hypo_F) = \sw_{E,F}^* \circ \rho^*(\hypo_F \cup \hypo_E) = \alpha^* \circ \sw_{F,E}^*(\hypo_F \cup \hypo_E) =  \alpha^*(\hypo_{F \oplus E}),
\]
which equals $\hypo_{E\oplus F}$ since $\alpha$ is induced by the morphism $E \oplus F \to F \oplus E$ given by $(e,f) \mapsto (f,e)$, and the axiom \dref{def:hyp_orientation}{def:hyp_orientation:isom} holds. This proves \eqref{eq:hypo_prod} in this case.\\

\emph{Case 4: $r_E$ is odd and $r_F$ is odd.} Using Case 3 for the pair $(E,F \oplus 1)$ and Case 2 for the pair $(E \oplus F,1)$, we have
\[
\sw_{E \oplus F,1}^* \circ \Sigma^{4,2} \circ \sw_{E,F}^*(\hypo_E \cup \hypo_F) \overset{\rref{lemm:E_F_1}}{=} \sw_{E,F\oplus 1}^*(\hypo_E \cup \hypo_{F\oplus 1}) = \hypo_{E \oplus F \oplus 1} = \sw_{E \oplus F,1}^*(\hypo_{E \oplus F} \cup \hypo_1).
\]
Applying $(\sw_{E \oplus F,1}^*)^{-1}$, we deduce that (recall that the axiom \dref{def:hyp_orientation}{def:hyp_orientation:norm} has already been established above)
\[
\Sigma^{4,2} \circ \sw_{E,F}^*(\hypo_E \cup \hypo_F) = \hypo_{E \oplus F} \cup \hypo_1 \overset{\dref{def:hyp_orientation}{def:hyp_orientation:norm}}{=} \hypo_{E \oplus F} \cup \Sigma^{4,2}1 \overset{\eqref{eq:Sigma_cup}}{=} \Sigma^{4,2} \hypo_{E \oplus F},
\]
from which \eqref{eq:hypo_prod} follows upon applying $(\Sigma^{4,2})^{-1}$.
\end{proof}

\section{Cohomology of higher Grassmannians}

\subsection{Algebraic interlude}
In this section, we introduce the notation required to express the cohomology of higher Grassmannians in \rref{prop:A_Gr_n_s} below, and prove certain purely algebraic results that will be required.

\begin{para}
\label{def:s}
Let $r\in \Nn$. Let $B$ be a ring (unital and associative as usual), and consider the polynomial ring in $r$ variables $B[p_1,\dots,p_r]$ (the variables $p_i$ are central). We define 
\[
s_j=s_j(p_1,\dots,p_r) \in B[p_1,\dots,p_r] \quad \text{ for $j\in \Zz$}
\]
by the formula
\[
(1+tp_1+\dots+t^rp_r)^{-1} = \sum_{j\in\Zz} s_j t^j \in (B[p_1,\dots,p_r])[[t]].
\]
Thus $s_0=1$ and $s_j=0$ for $j<0$, and we have the inductive relation
\begin{equation}
\label{eq:s_inductive}
s_j = - p_1 s_{j-1} - \dots - p_r s_{j-r} \quad \text{ for $j\in \Zz \smallsetminus \{0\}$}.
\end{equation}
Note that the elements $s_j$ are central in the ring $B[p_1,\dots,p_r]$.
\end{para}

\begin{para}
\label{p:def_J}
Let $d \in \Zz$. We denote by $J_{d,r}$, or simply $J_d$, the (two-sided) ideal of $B[p_1,\dots,p_r]$ generated by $s_j$ for $j \geq d$. In particular $J_d=B[p_1,\dots,p_r]$ when $d \leq 0$.
\end{para}

\begin{para}
When $d \geq 0$, the relation \eqref{eq:s_inductive} shows that the ideal $J_n$ is generated by the elements $s_d,\dots,s_{d+r-1}$.
\end{para}

\begin{lemma}
\label{lemm:J_r_r-1}
Assume that $r \geq 1$. For any $d\in \Zz$, we have an exact sequence of $B$-bimodules
\[
B[p_1,\dots,p_r]/J_{d-1,r} \xrightarrow{p_r \cdot} B[p_1,\dots,p_r]/J_{d,r} \xrightarrow{p_r \mapsto 0} B[p_1,\dots,p_{r-1}]/J_{d,r-1} \to 0,
\]
where the first morphism is induced by the multiplication by $p_r$, and the second by the morphism of $B[p_1,\dots,p_{r-1}]$-algebras $g \colon B[p_1,\dots,p_r] \to B[p_1,\dots,p_{r-1}]$ given by $p_r \mapsto 0$.
\end{lemma}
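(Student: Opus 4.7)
The plan is to verify each of the ingredients in the claim in order: well-definedness of the two maps, vanishing of the composition, surjectivity of the second map, and exactness in the middle, which is the only piece requiring a non-trivial computation.

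First I would check that multiplication by $p_r$ descends to a map from $B[p_1,\dots,p_r]/J_{d-1,r}$ to $B[p_1,\dots,p_r]/J_{d,r}$. Since $J_{d-1,r}$ is generated by the $s_j$ with $j \geq d-1$, and since $s_j \in J_{d,r}$ already when $j \geq d$, the only thing to check is that $p_r s_{d-1} \in J_{d,r}$. If $d + r - 1 \leq 0$ then $J_{d,r}$ is the whole ring, so we may assume $d+r-1 > 0$, and then the inductive relation \eqref{eq:s_inductive} at $j = d+r-1$ reads
\[
s_{d+r-1} = - p_1 s_{d+r-2} - \dots - p_{r-1} s_d - p_r s_{d-1},
\]
which expresses $p_r s_{d-1}$ as an element of $J_{d,r}$. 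For the second map, I note that setting $p_r = 0$ in the defining generating series transforms $(1+tp_1+\dots+t^r p_r)^{-1}$ into $(1+tp_1+\dots+t^{r-1}p_{r-1})^{-1}$, so $g\bigl(s_j(p_1,\dots,p_r)\bigr) = s_j(p_1,\dots,p_{r-1})$, and thus $g(J_{d,r}) = J_{d,r-1}$, giving the well-definedness and the surjectivity of the quotient map. The vanishing of the composition is immediate since $g(p_r) = 0$.

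The core of the proof is the exactness in the middle. I would handle it by relating $s_j$ in $r$ and $r-1$ variables via the formal identity coming from the generating series: setting $\sigma(t) = \sum \sigma_j t^j$ and $\tau(t) = \sum \tau_j t^j$ for the two inverse series, the relation $\sigma(t)^{-1} - \tau(t)^{-1} = t^r p_r$ gives, after multiplying through by $\sigma(t)$ and $\tau(t)$ and using the centrality of $p_r$,
\[
\tau(t) - \sigma(t) = t^r p_r \, \sigma(t) \tau(t).
\]
Comparing coefficients of $t^j$ yields $\tau_j = \sigma_j + p_r \alpha_j$ for some $\alpha_j \in B[p_1,\dots,p_r]$. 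In particular, every generator $\tau_j$ ($j \geq d$) of $J_{d,r-1}$ (viewed inside $B[p_1,\dots,p_r]$) lies in $J_{d,r} + p_r B[p_1,\dots,p_r]$.

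Now suppose $x \in B[p_1,\dots,p_r]$ satisfies $g(x) \in J_{d,r-1}$. Writing $x = x_0 + p_r x'$ with $x_0 \in B[p_1,\dots,p_{r-1}]$, we have $g(x) = x_0$, so $x_0$ is a $B[p_1,\dots,p_{r-1}]$-bilinear combination of the $\tau_{j}$ with $j \geq d$. Substituting $\tau_j = \sigma_j + p_r \alpha_j$ and using the two-sided bilinearity, we conclude that $x_0 \in J_{d,r} + p_r B[p_1,\dots,p_r]$, hence $x \in J_{d,r} + p_r B[p_1,\dots,p_r]$, as required.

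The only mildly delicate point is the generating-series manipulation together with bookkeeping of the two-sided ideal structure (since the $p_i$ are central the latter is painless). I do not expect any genuine obstacle.
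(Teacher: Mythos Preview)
Your proof is correct and follows essentially the same approach as the paper: both arguments hinge on the two facts $p_r J_{d-1,r}\subset J_{d,r}$ (via the recursion at $j=d+r-1$) and $g(J_{d,r})=J_{d,r-1}$, from which exactness in the middle follows because $\ker g=(p_r)$. The only cosmetic difference is that the paper obtains $g(s_j)=s_j$ by induction on the recursion and then invokes the ``descent to quotients'' of the exact sequence $B[p_1,\dots,p_r]\xrightarrow{p_r\cdot}B[p_1,\dots,p_r]\xrightarrow{g}B[p_1,\dots,p_{r-1}]\to 0$, whereas you use the generating-series identity $\tau(t)-\sigma(t)=t^r p_r\,\sigma(t)\tau(t)$ to write down explicitly the lift $\sigma_j$ of $\tau_j$; this is the same content unpacked.
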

\begin{proof}
We may assume that $d > 0$. It follows by induction from \eqref{eq:s_inductive} that $g(s_j) = s_j$ for any $j$, and thus $g(J_{d,r}) = J_{d,r-1}$. Thus the exact sequence
\[
B[p_1,\dots,p_r] \xrightarrow{p_r\cdot} B[p_1,\dots,p_r] \xrightarrow{g} B[p_1,\dots,p_{r-1}] \to 0
\]
descends to an exact sequence
\[
B[p_1,\dots,p_r] \xrightarrow{p_r \cdot} B[p_1,\dots,p_r]/J_{d,r} \to B[p_1,\dots,p_{r-1}]/J_{d,r-1} \to 0.
\]
As $d-1+r >0$, it follows from \eqref{eq:s_inductive} that
\[
p_r s_{d-1} = -p_{r-1} s_d -\dots - p_1 s_{d-2+r} - s_{d-1+r} \in J_{d,r},
\]
hence $p_rJ_{d-1,r} \subset J_{d,r}$, concluding the proof.
\end{proof}

\begin{para}
We will denote by $L$ be the (two-sided) ideal generated by $p_1,\dots,p_r$ in $B[p_1,\dots,p_r]$.
\end{para}

\begin{lemma}
\label{lemm:J_L}
For any $k \in \Nn$, we have $J_{rk} \subset L^k$ in $B[p_1,\dots,p_r]$.
\end{lemma}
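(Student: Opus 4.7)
The plan is to proceed by induction on $k$. The base case $k=0$ is immediate: since $s_0 = 1$ lies among the generators of $J_0$, we have $J_0 = B[p_1,\dots,p_r] = L^0$, using the convention that $L^0$ is the whole ring.

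For the inductive step, assume $J_{r(k-1)} \subset L^{k-1}$ with $k \geq 1$. Since $J_{rk}$ is an ideal generated by the elements $s_j$ with $j \geq rk$, and since $L^k$ is a (two-sided) ideal, it suffices to show that $s_j \in L^k$ whenever $j \geq rk$. Fix such a $j$; in particular $j \geq r \geq 1$, so the inductive formula \eqref{eq:s_inductive} applies and yields
\[
s_j = -\sum_{i=1}^r p_i s_{j-i}.
\]
For each $i \in \{1,\dots,r\}$, we have $j-i \geq rk - r = r(k-1)$, so $s_{j-i}$ is one of the generators of $J_{r(k-1)}$, hence $s_{j-i} \in J_{r(k-1)} \subset L^{k-1}$ by the inductive hypothesis. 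Therefore $p_i s_{j-i} \in L \cdot L^{k-1} \subset L^k$. Summing over $i$, we conclude $s_j \in L^k$, as required.

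I do not anticipate any genuine obstacle here; the only mild points to verify are that the recursion \eqref{eq:s_inductive} is available (which is why we need $j \geq r$, automatic from $j \geq rk \geq r$ since $k \geq 1$), and that the inductive hypothesis is applied at $r(k-1)$, which is $\geq 0$ and so falls within the range already handled.
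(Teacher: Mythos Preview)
Your proof is correct and follows essentially the same approach as the paper: both proceed by induction on $k$, using the recursion \eqref{eq:s_inductive} to show that the generators $s_j$ of $J_{rk}$ lie in $L\cdot J_{r(k-1)}$, and then invoking the inductive hypothesis. The paper compresses this into the single line ``\eqref{eq:s_inductive} implies $J_{rk}\subset LJ_{rk-r}$'', while you spell out the computation on generators explicitly.
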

\begin{proof}
We can assume that $k \geq 1$. The relation \eqref{eq:s_inductive} implies that $J_{rk} \subset LJ_{rk-r}$, from which the statement follows by induction on $k$.
\end{proof}

\begin{lemma}
\label{lemm:L_J_J}
For each $d \in \Zz$ there exists $s \in \Nn$ such that $L^sJ_d \subset J_{d+1}$ in $B[p_1,\dots,p_r]$.
\end{lemma}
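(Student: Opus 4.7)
The plan is to prove the slightly stronger claim that, for each $d \in \Nn$ and each $r \geq 0$, there exists $s \in \Nn$ with $L_r^s \subset J_{d,r}$ in $R_r = B[p_1,\dots,p_r]$. This implies the lemma at once, because $L_r^s \cdot J_{d,r} \subset L_r^s \cdot R_r = L_r^s \subset J_{d+1,r}$ upon choosing $s$ large enough that $L_r^s \subset J_{d+1,r}$.

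The computational heart of the argument is the inclusion $p_r \cdot J_{d,r} \subset J_{d+1,r}$, valid for every $d$. It suffices to check $p_r s_j \in J_{d+1,r}$ for each $j \geq d$, and applying \eqref{eq:s_inductive} at index $j+r$ gives
\[
p_r s_j = -s_{j+r} - p_1 s_{j+r-1} - \dots - p_{r-1} s_{j+1},
\]
each term of which lies in $J_{j+1,r} \subset J_{d+1,r}$.

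I would prove the strong claim by double induction: outer on $r$ (the case $r=0$ is immediate since $L_0=0$), and, for fixed $r \geq 1$, inner on $d$ (the case $d \leq 0$ is trivial since $J_{d,r}=R_r$). For the inductive step $d-1 \to d$ with $d \geq 1$, the inner hypothesis provides $t \in \Nn$ with $L_r^t \subset J_{d-1,r}$, while the outer hypothesis yields $u \in \Nn$ with $L_{r-1}^u \subset J_{d,r-1}$ in $R_{r-1}$. Via the projection $g \colon R_r \to R_{r-1}$ of \rref{lemm:J_r_r-1}, whose kernel is $p_r R_r$ and which satisfies $g^{-1}(J_{d,r-1}) = J_{d,r}+p_r R_r$, this lifts to $L_{r-1}^u \subset J_{d,r}+p_r R_r$ inside $R_r$. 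Since $L_r = L_{r-1}+p_r R_r$ and $p_r$ is central, any product of $u$ elements of $L_r$ either uses only factors from $L_{r-1}$ or contains a factor of $p_r$, so $L_r^u \subset L_{r-1}^u + p_r R_r \subset J_{d,r}+p_r R_r$.

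To conclude that $L_r^{u+t} \subset J_{d,r}$, I would decompose an arbitrary element $x \in L_r^{u+t}$ as $\sum_i c_i y_i$ with $c_i \in L_r^u$ and $y_i \in L_r^t$, then write each $c_i = \alpha_i + p_r \beta_i$ with $\alpha_i \in J_{d,r}$ and $\beta_i \in R_r$. Using centrality of $p_r$,
\[
c_i y_i = \alpha_i y_i + \beta_i(p_r y_i),
\]
and both summands lie in $J_{d,r}$: the first because $J_{d,r}$ is a two-sided ideal, and the second because the inner hypothesis gives $y_i \in L_r^t \subset J_{d-1,r}$, so the key computation yields $p_r y_i \in p_r J_{d-1,r} \subset J_{d,r}$. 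The main delicate point is the bookkeeping of this double induction and the correct use of centrality to combine the outer and inner hypotheses; once these are aligned, the rest is formal.
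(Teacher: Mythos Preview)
Your proof is correct, and it takes a genuinely different route from the paper's. You prove directly the stronger statement $L^s \subset J_d$ (which is precisely the paper's Lemma~\ref{lemm:L_J}) by a double induction on the pair $(r,d)$, using the projection $g\colon R_r \to R_{r-1}$ together with the key inclusion $p_r J_{d-1,r} \subset J_{d,r}$; the target lemma then follows trivially. The paper proceeds in the opposite order: it keeps $r$ fixed throughout, introduces the auxiliary ideals $L_i$ generated by $p_j$ for $j\geq i$, and proves $L^t L_i J_d \subset J_{d+1}$ by descending induction on $i$, using a relation of the form $p_i s_d \in J_{d+1} + L_{i+1}J_{d+i-r}$ together with the induction on $d$; only afterwards does it deduce Lemma~\ref{lemm:L_J} from Lemma~\ref{lemm:L_J_J}. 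Your approach has the pleasant feature of establishing both lemmas in one stroke and arguably uses a more natural induction (on the number of variables), at the cost of invoking the map $g$ and the identification $g^{-1}(J_{d,r-1}) = J_{d,r} + p_r R_r$; the paper's approach stays entirely inside $B[p_1,\dots,p_r]$ but requires tracking the extra index $i$.
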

\begin{proof}
We proceed by induction on $d$, the statement being clear when $d <0$. Let us fix an integer $d \geq 0$. For $i\in \Nn\smallsetminus \{0\}$, let us denote by $L_i \subset B[p_1,\dots,p_r]$ the ideal generated by the variables $p_j$ for $j \in \{i,\dots,r\}$. We will show that for each $i \in \Nn\smallsetminus \{0\}$ there exists an integer $t$ (depending on $i$) such that
\begin{equation}
\label{eq:Lt_Li_Jd}
L^t L_i J_d \subset J_{d+1}.
\end{equation}
Since $L=L_1$, this will complete the proof of the lemma.

We prove \eqref{eq:Lt_Li_Jd} by descending induction on $i$, the case $i > r$ being clear (as $L_i=0$). Assume that $i \leq r$. From the relation \eqref{eq:s_inductive}, we deduce that
\[
p_i s_d = (-s_{d+i} -\dots -p_{i-1} s_{d+1}) + (-p_{i+1}s_{d-1}-\dots - p_r s_{d+i-r}) \in J_{d+1} + L_{i+1} J_{d+i-r}.
\]
Since the ideal $L_i J_d$ is generated by $p_is_d$ and $J_{d+1} + L_{i+1}J_d$, it follows that
\begin{equation}
\label{eq:Li_Jd}
L_i J_d \subset J_{d+1} + L_{i+1} J_{d+i-r}.
\end{equation}
By induction on $d$ (that is, using the lemma where $d$ is replaced by $d-1,\dots,d+i-r$), we find $u \in \Nn$ such that $L^u J_{d+i-r} \subset J_d$. Together with \eqref{eq:Li_Jd}, this implies that
\begin{equation}
\label{eq:Lu_Li_Jd}
L^u L_i J_d \subset J_{d+1} +  L_{i+1} J_d
\end{equation}
By induction on $i$ (that is, using \eqref{eq:Lt_Li_Jd} where $i$ is replaced by $i+1$), we find $v \in \Nn$ such that $L^v  L_{i+1} J_d \subset J_{d+1}$. Combining with \eqref{eq:Lu_Li_Jd}, we see that \eqref{eq:Lt_Li_Jd}
holds true if we take $t=u+v$.
\end{proof}

\begin{lemma}
\label{lemm:L_J}
For each $n \in \Zz$, there exists $k \in \Nn$ such that $L^k \subset J_n$ in $B[p_1,\dots,p_r]$.
\end{lemma}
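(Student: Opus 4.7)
The plan is to proceed by induction on $n$, leveraging \rref{lemm:L_J_J} which provides exactly the inductive step that is needed. For $n \leq 0$ the ideal $J_n$ equals all of $B[p_1,\dots,p_r]$ by the convention adopted in \rref{p:def_J}, so we may take $k = 0$ trivially.

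For the inductive step from $n-1$ to $n$ (with $n \geq 1$), assume we already have $k' \in \Nn$ with $L^{k'} \subset J_{n-1}$. Applying \rref{lemm:L_J_J} to the integer $d = n-1$, we obtain $m \in \Nn$ such that $L^m J_{n-1} \subset J_n$. Then
\[
L^{k'+m} = L^m \cdot L^{k'} \subset L^m \cdot J_{n-1} \subset J_n,
\]
so $k := k' + m$ works, completing the induction.

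There is essentially no obstacle here: the substantive content is carried by \rref{lemm:L_J_J}, which was evidently designed so that the present statement follows as an immediate corollary by telescoping. I expect this lemma to be used shortly to identify the $L$-adic topology on $B[p_1,\dots,p_r]$ with the topology defined by the filtration $\{J_n\}_{n \geq 0}$ (the reverse inclusion $J_{rk} \subset L^k$ having already been established in \rref{lemm:J_L}), so that the $L$-adic completion and the homogeneous power series ring $B[[p_1,\dots,p_r]]_h$ of \rref{p:homogeneous_power_series} agree with the inverse limit of the quotients $B[p_1,\dots,p_r]/J_n$ that will presumably arise from the cohomology of higher Grassmannians.
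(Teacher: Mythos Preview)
Your proof is correct and follows essentially the same approach as the paper: induction on $n$ with the base case $n\leq 0$ being trivial, and the inductive step obtained by combining the hypothesis $L^{k'}\subset J_{n-1}$ with \rref{lemm:L_J_J} applied at $d=n-1$. Your anticipation of how the lemma is used (to identify the $J$-filtration with the $L$-adic one in \rref{prop:lim_A_J}) is also accurate.
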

\begin{proof}
We proceed by induction on $n$, the case $n \leq 0$ being clear. Let $n \geq 1$. By induction, we find an integer $t$ such that $L^t \subset J_{n-1}$. By \rref{lemm:L_J_J} we find an integer $s$ such that $L^s J_{n-1} \subset J_n$. The statement follows by taking $k = s+t$.
\end{proof}

\begin{proposition}
\label{prop:lim_A_J}
Let $M$ be an abelian group, and $m \in M$. Assume that the ring $B$ is $M$-graded. Letting each $p_i$ have degree $mi$, we have (see \rref{p:homogeneous_power_series})
\[
\lim_d B[p_1,\dots,p_r]/J_d= B[[p_1,\dots,p_r]]_h,
\]
where the limit is computed in the category of $M$-graded rings.
\end{proposition}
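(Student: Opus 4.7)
The plan is to show that the two decreasing filtrations $\{J_d\}_{d \geq 0}$ and $\{L^n\}_{n \geq 0}$ of $B[p_1,\dots,p_r]$ are cofinal, and therefore define the same inverse limit in the category of $M$-graded rings. Cofinality is already almost done: Lemma~\ref{lemm:J_L} provides, for every $k \in \Nn$, the inclusion $J_{rk} \subset L^k$, while Lemma~\ref{lemm:L_J} provides, for every $n \in \Zz$, an integer $k$ with $L^k \subset J_n$. This will immediately yield canonical mutually inverse maps between the two inverse systems, and hence between their limits.

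Before invoking cofinality, the first step is to check that each ideal $J_d$ is \emph{homogeneous} for the $M$-grading on $B[p_1,\dots,p_r]$ where $p_i$ has degree $mi$. From the defining identity $(1+tp_1+\dots+t^rp_r)^{-1} = \sum_{j} s_j t^j$, or equivalently from the inductive formula \eqref{eq:s_inductive}, an easy induction on $j$ shows that $s_j$ is a $\Zz$-linear combination of monomials $p_{i_1}\cdots p_{i_\ell}$ with $i_1+\dots+i_\ell=j$; hence $s_j$ is homogeneous of degree $mj$. Consequently $J_d$ is homogeneous, the ideal $L^n$ is obviously homogeneous, and each quotient $B[p_1,\dots,p_r]/J_d$ and $B[p_1,\dots,p_r]/L^n$ inherits a natural $M$-grading.

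Next, the plan is to interpret the limits degree by degree. Recall from \rref{p:homogeneous_power_series} that the limit in $M$-graded rings is formed componentwise: for each $m' \in M$, the $m'$-homogeneous part of the limit is the limit of the $m'$-homogeneous parts of the terms, taken in abelian groups. Cofinality of the filtrations now becomes a cofinality of two inverse systems of $m'$-homogeneous components, so the induced homomorphism of $M$-graded rings
\[
\lim_{d} B[p_1,\dots,p_r]/J_d \longrightarrow \lim_{n} B[p_1,\dots,p_r]/L^n = B[[p_1,\dots,p_r]]_h
\]
is an isomorphism.

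I do not expect a serious obstacle: the two cofinality statements are exactly what the preceding lemmas furnish, and the only subtlety is to keep track of the grading convention so that the limit in the category of $M$-graded rings is taken componentwise rather than as the abelian-group limit of the underlying rings. The mildly delicate point is verifying that each $s_j$ is homogeneous of degree $mj$, but this is a routine induction from \eqref{eq:s_inductive}.
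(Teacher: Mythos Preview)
Your proposal is correct and matches the paper's own proof essentially line for line: the paper also first observes via \eqref{eq:s_inductive} that each $s_j$ is homogeneous of degree $mj$ (so the $J_d$ are homogeneous), and then invokes the cofinality of the filtrations $\{J_d\}$ and $\{L^n\}$ provided by Lemmas~\ref{lemm:J_L} and~\ref{lemm:L_J} to identify the two limits.
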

\begin{proof}
Using \eqref{eq:s_inductive}, we see that the element $s_i$ is homogeneous of degree $mi$. Therefore the ideals $J_d$ are homogeneous. Since by definition
\[
\lim_d B[p_1,\dots,p_r]/L^d = B[[p_1,\dots,p_r]]_h,
\]
the statement follows from \rref{lemm:J_L} and \rref{lemm:L_J}.
\end{proof}

In the next statement, we denote by $J_d' \subset B[p_1',\dots,p_r']$ denotes the image of $J_d$ under $p_i \mapsto p_i'$. Also, when $I \subset B[p_1,\dots,p_r]$ and $I' \subset B[p_1',\dots,p_r']$ are two-sided ideals, we denote by $I+I' \subset B[p_1,\dots,p_r,p'_1,\dots,p'_r]$ the two-sided ideal generated by $I \cup I'$.

\begin{proposition}
\label{prop:J_J'}
Let $M$ be an abelian group, and $m \in M$. Assume that the ring $B$ is $M$-graded. Letting each $p_i$ and $p_i'$ have degree $mi$, we have (see \rref{p:homogeneous_power_series})
\[
\lim_d B[p_1,\dots,p_r,p_1',\dots,p_r']/(J_d+J_d') = B[[p_1,\dots,p_r,p_1',\dots,p_r']]_h,
\]
where the limit is computed in the category of $M$-graded rings.
\end{proposition}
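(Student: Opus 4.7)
The plan is to mimic the proof of Proposition~\ref{prop:lim_A_J}: write $C=B[p_1,\dots,p_r,p_1',\dots,p_r']$ and let $\mathfrak{L}\subset C$ denote the homogeneous (two-sided) ideal generated by the variables $p_1,\dots,p_r,p_1',\dots,p_r'$. By definition \rref{p:homogeneous_power_series} we have $\lim_k C/\mathfrak{L}^k = B[[p_1,\dots,p_r,p_1',\dots,p_r']]_h$ in the category of $M$-graded rings, so it suffices to show that the decreasing filtrations $(\mathfrak{L}^k)_k$ and $(J_d+J_d')_d$ of $C$ by homogeneous two-sided ideals are cofinal. (Homogeneity of $J_d+J_d'$ is observed exactly as in \rref{prop:lim_A_J}, using \eqref{eq:s_inductive}.)

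For the first cofinality, given $d\in \Zz$ I would choose, via Lemma~\rref{lemm:L_J} applied separately to $B[p_1,\dots,p_r]$ and $B[p_1',\dots,p_r']$, integers $k_1,k_2$ with $L^{k_1}\subset J_d$ and $L'^{k_2}\subset J_d'$. Since the variables of $C$ are central (see \rref{p:algebra}), any monomial of degree $k_1+k_2$ in the generators of $\mathfrak{L}$ has either at least $k_1$ factors among the $p_i$ or at least $k_2$ factors among the $p_i'$; rearranging by centrality shows it lies in $L^{k_1}\cdot C \subset J_d$ in the first case and in $C\cdot L'^{k_2}\subset J_d'$ in the second. Hence $\mathfrak{L}^{k_1+k_2}\subset J_d+J_d'$.

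For the second cofinality, given $k\in \Nn$, Lemma~\rref{lemm:J_L} yields $J_{rk}\subset L^k\subset \mathfrak{L}^k$ and $J'_{rk}\subset L'^k\subset \mathfrak{L}^k$, so $J_{rk}+J'_{rk}\subset \mathfrak{L}^k$.

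Combining these two cofinality statements, the natural morphism of inverse systems induces an isomorphism of $M$-graded rings on the limits, as claimed. The only mildly delicate point is the pigeonhole combined with centrality in Step~2, but this is entirely formal.
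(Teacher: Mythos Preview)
Your proof is correct and follows essentially the same route as the paper: both arguments reduce to showing that the filtrations $\big((L+L')^k\big)_k$ and $\big(J_d+J_d'\big)_d$ are cofinal, invoking Lemma~\ref{lemm:J_L} for one inclusion and Lemma~\ref{lemm:L_J} for the other, with a pigeonhole step to pass from $(L+L')^{2k}$ (or your $\mathfrak{L}^{k_1+k_2}$) to $L^{k_1}+L'^{k_2}$. The only cosmetic difference is that the paper takes $k_1=k_2=k$ and writes $(L+L')^{2k}\subset L^k+L'^k$ without spelling out the pigeonhole.
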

\begin{proof}

Again, it follows from \eqref{eq:s_inductive} that the ideals $J_d+J_d'$ are homogeneous. Let $L' \subset B[p_1',\dots,p_r']$ be the ideal generated by $p_1',\dots,p_r'$. Observe that by \rref{lemm:J_L}
\[
J_{rk} + J'_{rk} \subset L^k +L'^k \subset (L+L')^k.
\]
On the other hand by \rref{lemm:L_J}, we find for each $n \in \Nn$ an integer $k$ such that
\[
(L+L')^{2k} \subset L^k +L'^k \subset J_n + J'_n,
\]
and we conclude using the fact that, by definition,
\[
\lim_d B[p_1,\dots,p_r,p_1',\dots,p_r']/(L+L')^d = B[[p_1,\dots,p_r,p_1',\dots,p_r']]_h.\qedhere
\]
\end{proof}

\subsection{Higher grassmannians}
We recall from \rref{p:g_h_s} that $\Gr(n,s)$ denotes the grassmannian of $n$-planes in $s$-space over $S$. It is equipped with a universal subbundle $\Uni_n \subset 1^{\oplus s}$ of rank $n$, and a quotient bundle $\Quo_n = 1^{\oplus s}/\Uni_n$ of rank $s-n$.

\begin{proposition}
\label{prop:A_Gr_n_s_U_0}
Let $A \in \SH(S)$ be an $\eta$-periodic ring spectrum with a weak hyperbolic orientation (see \rref{def:hyp_thom}). Let $n \in \Nn$ be odd and $s \in \Nn$ be even. Then
\[
\Aa(\Gr(n,s);\Uni_n^\vee)=0.
\]
\end{proposition}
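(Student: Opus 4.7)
I plan to argue by induction on $s$ (even), with two base cases. For $s=0$, $\Gr(n,0)=\varnothing$ for $n\geq 1$ and the statement is vacuous. For $n=1$ and arbitrary even $s$, the result follows from \dref{prop:PE}{prop:PE:even}: since $1^{\oplus s}$ has even rank, $\Th_{\Pp^{s-1}}(\Oc(1))=0$ in $\SH(S)[\eta^{-1}]$, whence $\Aa(\Pp^{s-1};\Uni_1^\vee)=0$. For the inductive step ($n\geq 3$ odd, $s\geq n+1$ even), I would first invoke \rref{th:Thom_orientation} to promote the weak hyperbolic orientation to a full one, so that \rref{lemm:hyp_isom} becomes available for arbitrary-rank bundles.

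The key tool is the closed immersion $i\colon \Gr(n,s-2)\to \Gr(n,s)$ induced by the inclusion of $1^{\oplus(s-2)}$ as the first $s-2$ coordinates of $1^{\oplus s}$; by \rref{p:E_F} (applied with quotient $1^{\oplus 2}$), its normal bundle is $\Uni_n^\vee\oplus \Uni_n^\vee$. The localisation sequence of \rref{p:localisation} with $V=\Uni_n^\vee$ yields
\[
\Aa(\Gr(n,s-2);\Uni_n^\vee\oplus \Uni_n^\vee\oplus \Uni_n^\vee) \to \Aa(\Gr(n,s);\Uni_n^\vee) \to \Aa(U;\Uni_n^\vee),
\]
where $U\subset \Gr(n,s)$ is the open complement of $i$. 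By \rref{lemm:hyp_isom}, the leftmost term is isomorphic (via cup product with $\hypo_{\Uni_n^\vee}$) to $\Aa(\Gr(n,s-2);\Uni_n^\vee)$, which vanishes by the inductive hypothesis.

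The remaining task is to show $\Aa(U;\Uni_n^\vee)=0$. The scheme $U$ parametrises $n$-planes $V\subset 1^{\oplus s}$ whose image in the quotient $1^{\oplus 2}$ is nonzero, and I would stratify it as $U=U_1\sqcup U_2$ according to whether this image has rank $1$ or $2$. The open stratum $U_2$ is an affine bundle over $\Gr(n-2,s-2)$ on which $\Uni_n$ splits as $\Uni_{n-2}\oplus 1^{\oplus 2}$, so by $\Ab^1$-invariance and \eqref{eq:twist_shift} the cohomology $\Aa(U_2;\Uni_n^\vee)$ is a degree shift of $\Aa(\Gr(n-2,s-2);\Uni_{n-2}^\vee)$, which vanishes by induction ($n-2$ is odd and $s-2$ even). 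The closed stratum $U_1$ is an affine bundle over $\Gr(n-1,s-2)\times \Pp^1$ on which $\Uni_n$ splits as $\Uni_{n-1}\oplus L$ with $L$ the pullback of $\Oc_{\Pp^1}(-1)$; the Thom space $\Th_{U_1}(\Uni_n^\vee)$ then factors as an external smash product involving $\Th_{\Pp^1}(\Oc(1))$, which is $0$ in $\SH(S)[\eta^{-1}]$ by \dref{prop:PE}{prop:PE:even}. Combining via the localisation sequence for $U_1\hookrightarrow U\supset U_2$ gives $\Aa(U;\Uni_n^\vee)=0$. The main obstacle will be to verify carefully the affine bundle structures on the strata $U_1, U_2$ together with the claimed splittings of $\Uni_n|_{U_j}$, and to make rigorous the Künneth-type identification of $\Th_{U_1}(\Uni_n^\vee|_{U_1})$ as a smash product so as to propagate the vanishing of $\Th_{\Pp^1}(\Oc(1))$.
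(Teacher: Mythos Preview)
Your strategy is genuinely different from the paper's. The paper inducts on $n$ using the flag variety $Y=\{P\subset Q\subset 1^{\oplus s}: \rank P=n-2,\ \rank Q=n\}$, which is simultaneously a $\Gr(2,-)$-bundle over $\Gr(n-2,s)$ and over $\Gr(n,s)$; the projective bundle theorem \dref{cor:splitting_twisted}{cor:splitting_twisted:1} and the Euler-class isomorphism \dref{cor:splitting_twisted}{cor:splitting_twisted:2} then transfer the vanishing from $\Aa(\Gr(n-2,s);\Uni_{n-2}^\vee)$ to $\Aa(\Gr(n,s);\Uni_n^\vee)$ without any stratification.

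There is a genuine gap in your treatment of $U_1$. In the localisation triangle for the closed immersion $U_1\hookrightarrow U$, the term supported on $U_1$ is $\Aa(U_1;\,N\oplus \Uni_n^\vee|_{U_1})$, where $N$ is the normal bundle of $U_1$ in $U$, not $\Aa(U_1;\Uni_n^\vee|_{U_1})$. A direct computation (on the zero-section $\Gr(n-1,s-2)\times\Pp^1\hookrightarrow U_1$) gives
\[
N \;\simeq\; \Uni_{n-1}^\vee \boxtimes \Oc_{\Pp^1}(1).
\]
Hence the bundle entering the localisation term is
\[
N\oplus \Uni_n^\vee \;\simeq\; \big(\Uni_{n-1}^\vee\boxtimes\Oc(1)\big)\ \oplus\ \big(\Uni_{n-1}^\vee\boxtimes 1\big)\ \oplus\ \big(1\boxtimes\Oc(1)\big),
\]
which no longer decomposes as an external direct sum, so the K\"unneth identification $\Th_{G\times\Pp^1}(-)\simeq \Th_G(-)\wedge\Th_{\Pp^1}(\Oc(1))$ that you invoke does not apply. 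After stripping off $\Uni_{n-1}^\vee$ via \rref{lemm:hyp_isom} (working over $G=\Gr(n-1,s-2)$), one is left with proving $\Th_{\Pp^1}\big((\Uni_{n-1}^\vee\oplus 1)\otimes\Oc(1)\big)=0$ in $\SH(G)[\eta^{-1}]$, which is not an immediate consequence of \dref{prop:PE}{prop:PE:even} and would require further $\eta$-periodic input. Unless you can supply this extra vanishing, the argument does not close. The paper's flag-variety approach sidesteps this difficulty entirely, since no stratification---and hence no non-trivial normal bundle---is needed.
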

\begin{proof}
We proceed by induction on $n$. When $n=1$, this follows from \dref{prop:PE}{prop:PE:even}, in view of \rref{eq:Gr_1_P}. Assume that $n\geq 3$. Let us denote by $Y$ the $S$-scheme classifying the vector bundle inclusions $P \subset Q \subset 1^{\oplus s}$ with $P,Q$ of respective ranks $n-2,n$. We have natural morphisms
\[
\Gr(n-2,s) \xleftarrow{p} Y \xrightarrow{q} \Gr(n,s),
\]
where $p$, resp.\ $q$, maps a flag $P \subset Q \subset 1^{\oplus s}$ to $P\subset 1^{\oplus s}$, resp.\ to $Q\subset 1^{\oplus s}$. We have a natural inclusion $p^*\Uni_{n-2} \subset q^*\Uni_n$ of vector bundles over $Y$. Let us denote by $\Ec = q^*\Uni_n / p^*\Uni_{n-2}$ the quotient. The morphism $p \colon Y \to \Gr(n-2,s)$ is the Grassmann bundle $\Gr(2,\Quo_{n-2})$, with universal rank two subbundle $\Ec \subset p^*\Quo_{n-2}$. The morphism $q \colon Y \to \Gr(n,s)$ is the Grassmann bundle $\Gr(n-2,\Uni_n)$, which may be identified with $\Gr(2,\Uni_n^\vee)$.

Since $\rank \Quo_{n-2}=s-n+2$ is odd, applying \dref{cor:splitting_twisted}{cor:splitting_twisted:2} to the Grassmann bundle $p$ yields an isomorphism
\begin{equation}
\label{eq:isom_Euler}
(-) \cup e(\Ec^\vee) \colon \Aa(Y;p^*\Uni_{n-2}^\vee) \xrightarrow{\sim} \Aa(Y;p^*\Uni_{n-2}^\vee \oplus \Ec^\vee) \simeq \Aa(Y;q^*\Uni_n^\vee),
\end{equation}
where the last isomorphism is induced by the exact sequence (see \rref{p:Thom_ses})
\[
0 \to \Ec^\vee \to q^*\Uni_n^\vee \to p^*\Uni_{n-2}^\vee \to 0.
\]
As $\Aa(\Gr(n-2,s);\Uni_{n-2}^\vee)=0$ by induction, it follows from \dref{cor:splitting_twisted}{cor:splitting_twisted:1} (applied to the bundle $p$) that $\Aa(Y;p^*\Uni_{n-2}^\vee)=0$. Using \eqref{eq:isom_Euler} we deduce that $\Aa(Y;q^*\Uni_n^\vee)=0$. Applying \dref{cor:splitting_twisted}{cor:splitting_twisted:1} to the Grassmann bundle $q$ then shows that $\Aa(\Gr(n,s);\Uni_n^\vee)=0$, completing the proof.
\end{proof}

\begin{proposition}
\label{prop:A_Gr_n_s}
Let $A \in \SH(S)$ be an $\eta$-periodic hyperbolically oriented ring spectrum. Let $d \geq r \in \Nn$, and $s\in \{2d,2d+1\}$ and $n \in \{2r,2r+1\}$. If $n$ is odd, assume that $s$ is also odd. Then mapping $p_j$ to the $j$-th Pontryagin class $p_j(\Uni_n^\vee)$ yields an isomorphism of $\Aa(S)$-algebras (see \rref{p:algebra})
\[
\Aa(S)[p_1,\dots,p_r]/J_{d-r+1} \xrightarrow{\sim} \Aa(\Gr(n,s)),
\]
where $J_{d-r+1}$ is the ideal defined in \rref{p:def_J}.
\end{proposition}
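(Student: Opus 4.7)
The plan is to induct on $r$. First, the map $\varphi_{n,s}\colon \Aa(S)[p_1,\dots,p_r]/J_{d-r+1} \to \Aa(\Gr(n,s))$ sending $p_i$ to $p_i(\Uni_n^\vee)$ is well-defined: applying the Whitney sum formula \rref{prop:Pontryagin_sum} to the split exact sequence $0 \to \Uni_n \to 1^{\oplus s} \to \Quo_n \to 0$, together with \rref{rem:pontryagin_trivial}, gives $p(\Uni_n^\vee) p(\Quo_n^\vee)=1$; in the notation of \rref{def:s} this translates to $s_j(p_1(\Uni_n^\vee),\dots,p_r(\Uni_n^\vee)) = p_j(\Quo_n^\vee)$. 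The parity hypotheses force $\rank \Quo_n \leq 2(d-r)+1$, so $p_j(\Quo_n^\vee)=0$ for $j \geq d-r+1$ and $J_{d-r+1}$ lies in the kernel. The base case $r=1$ is Proposition \rref{prop:A_Gr_2}.

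For the inductive step, the case $(n,s)=(2r+1,2d+1)$ reduces to $(2r,2d)$: the long exact sequence of \rref{p:Gr_n-1_n_n} with $E=1^{\oplus 2d}$ and $V=0$ has outer terms $\Aa(\Gr(2r+1,2d);\Uni_{2r+1}^\vee)$ and its shift, which vanish by Proposition \rref{prop:A_Gr_n_s_U_0}; the resulting isomorphism $h_E^*\colon \Aa(\Gr(2r+1,2d+1)) \xrightarrow{\sim} \Aa(\Gr(2r,2d))$ carries $p_i(\Uni_{2r+1}^\vee)$ to $p_i(\Uni_{2r}^\vee)$ in view of \eqref{eq:h_pb}. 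It remains to handle $n=2r$.

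For $n=2r$, we compute $\Aa(Y)$ in two ways for the flag variety $Y=\Gr(2,\Uni_n)$ over $\Gr(n,s)$. The projection $q\colon Y \to \Gr(n,s)$ is the Grassmann bundle of $2$-planes in $\Uni_n$, so by \dref{cor:splitting_twisted}{cor:splitting_twisted:1} the module $\Aa(Y)$ is free over $\Aa(\Gr(n,s))$ of rank $r$ with basis $1,\pi(\Uni_2^\vee),\dots,\pi(\Uni_2^\vee)^{r-1}$. On the other hand, the projection $Y \to \Gr(2,s)$ sending $(W \subset V) \mapsto W$ realizes $Y$ as the Grassmann bundle $\Gr(n-2,\Quo_2')$, where $\Quo_2'$ is the universal rank-$(s-2)$ quotient on $\Gr(2,s)$. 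Applying the inductive hypothesis over the base $\Gr(2,s)$ (which is reduced to the trivial case using Jouanolou's trick \rref{p:Jouanolou} and the naturality of Pontryagin classes) presents $\Aa(Y)$ as $\Aa(\Gr(2,s))[P_1,\dots,P_{r-1}]/J_{d-r+1,r-1}$ with $P_i = p_i(\Uni_{n-2}^\vee)$. The exact sequence $0 \to \Uni_2 \to q^*\Uni_n \to \Uni_{n-2} \to 0$ on $Y$ combined with the Whitney sum formula yields $p_i(\Uni_n^\vee) = P_i + \pi(\Uni_2^\vee) P_{i-1}$ in $\Aa(Y)$.

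The main obstacle is the concluding algebraic step of matching these two presentations to extract $\Aa(\Gr(n,s))$. Inverting the Whitney relation gives $P_i = \sum_{j=0}^i (-\pi(\Uni_2^\vee))^j p_{i-j}(\Uni_n^\vee)$, while the top-degree identity $p_r(\Uni_n^\vee) = \pi(\Uni_2^\vee) P_{r-1}$ in $\Aa(Y)$ realizes (after substitution) the minimal polynomial of multiplication by $\pi(\Uni_2^\vee)$ on the free $\Aa(\Gr(n,s))$-module $\Aa(Y)$. The filtration structure of \rref{lemm:J_r_r-1}, which relates $\Aa(S)[p_1,\dots,p_r]/J_d$ to $\Aa(S)[p_1,\dots,p_{r-1}]/J_d$ via the two-sided ideal generated by $p_r$, provides the algebraic bookkeeping to match the image of $q^*$ in $\Aa(Y)$ with $\Aa(S)[p_1,\dots,p_r]/J_{d-r+1}$, thereby completing the inductive step.
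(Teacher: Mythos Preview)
There is a genuine gap in your inductive step. You claim that the inductive hypothesis, transported to the base $\Gr(2,s)$, yields
\[
\Aa(Y) \cong \Aa(\Gr(2,s))[P_1,\dots,P_{r-1}]/J_{d-r+1,r-1},
\]
but this presentation is incorrect. The statement being proved concerns Grassmannians of \emph{trivial} bundles; for the Grassmann bundle $\Gr(n-2,\Quo_2')$ of the non-trivial bundle $\Quo_2'$, the defining relations involve the Pontryagin classes $p_k((\Quo_2')^\vee)$, which are non-zero in $\Aa(\Gr(2,s))$. Even your base case illustrates this: \dref{cor:splitting_twisted}{cor:splitting_twisted:1} gives only a free-module statement, while the ring relation from \eqref{eq:def_Pontryagin} is $\sum_k (-1)^k p_{d'-k}((\Quo_2')^\vee)\,P_1^k=0$, not $P_1^{d'}=0$. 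Jouanolou's trick \rref{p:Jouanolou} splits exact sequences but does not trivialise bundles, so it cannot repair this. Concretely, for $n=4$, $s=6$ one computes $p_1((\Quo_2')^\vee)=-u$ and $p_2((\Quo_2')^\vee)=u^2$, so the relation in $\Aa(Y)$ is $P_1^2+uP_1+u^2=0$, not $P_1^2=0$; the element $s_{d-1}(p_1(\Uni_n^\vee),p_2(\Uni_n^\vee))$ then vanishes in $\Aa(Y)$ only because of this twisted relation, which your presentation does not encode.

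A second issue is that even granting some correct presentation of $\Aa(Y)$, the final paragraph does not explain how to recover $\Aa(\Gr(n,s))$: knowing that $\Aa(Y)$ is free of rank $r$ over an unknown ring with basis $1,u,\dots,u^{r-1}$, together with an explicit description of $\Aa(Y)$, does not by itself determine that ring; the appeal to \rref{lemm:J_r_r-1} is not made precise. By contrast, the paper inducts on $n+s$ and uses the localisation sequences of \rref{p:Gr_n-1_n_n} and \rref{p:Gr_n_n_n-1} (passing through $\Gr(n,s-1)$, $\Gr(n-1,s-1)$, and $\Gr(n,s-2)$) together with the vanishing \rref{prop:A_Gr_n_s_U_0}, so that \rref{lemm:J_r_r-1} applies directly via a five-lemma argument and no relative Grassmann-bundle formula is needed.
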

\begin{proof}
By \rref{rem:pontryagin_trivial} and the Whitney sum formula \rref{prop:Pontryagin_sum}, we have for any $k \in \Nn\smallsetminus\{0\}$
\[
p_k(\Quo_n^\vee) = - p_1(\Uni_n^\vee) p_{k-1}(\Quo_n^\vee) -\dots - p_{k-1}(\Uni_n^\vee) p_1(\Quo_n^\vee) - p_k(\Uni_n^\vee) \in A^{8k,4k}(\Gr(n,s)).
\]
Recall that $p_j(\Uni_n^\vee)$ vanishes when $2j>\rank \Uni_n^\vee=n$, and therefore when $j >r$. We deduce by induction on $k$ from \rref{eq:s_inductive} that for any $k \in \Zz$ (the case $k\leq 0$ being clear)
\begin{equation}
\label{eq:p_k_s_k}
p_k(\Quo_n^\vee) = s_k(p_1(\Uni_n^\vee),\dots,p_r(\Uni_n^\vee)) \in A^{8k,4k}(\Gr(n,s)).
\end{equation}
Now the vector bundle $\Quo_n^\vee$ has rank $s-n \leq 2(d-r)+1$, so that $p_k(\Quo_n^\vee)$ vanishes when $k \geq d-r+1$. In view of \eqref{eq:p_k_s_k}, it follows that the ideal $J_{d-r+1}$ is mapped to zero in $\Aa(\Gr(n,s))$, so that the morphism of the statement is well-defined.

To prove that it is an isomorphism, we proceed by induction on $n+s$. The cases $n=0$ or $s=0$ are clear, so we assume that $n\geq 1$ and $s \geq 1$. Let us consider the closed immersions $g = g_{1^{\oplus s-1}}\colon \Gr(n,s-1) \to \Gr(n,s)$ and $h=h_{1^{\oplus s-1}} \colon \Gr(n-1,s-1) \to \Gr(n,s)$ described in \rref{p:g_h}.\\

\emph{Case $n$ odd and $s$ odd:} By \rref{p:Gr_n-1_n_n}, we have a long exact sequence
\[
\cdots \to \Aa(\Gr(n,s-1);\Uni_n^\vee) \xrightarrow{g_*} \Aa(\Gr(n,s)) \xrightarrow{h^*} \Aa(\Gr(n-1,s-1)) \to \cdots
\]
Since the term on the left vanishes by \rref{prop:A_Gr_n_s_U_0}, it follows that $h^*$ is bijective. Since $h^*$ is a morphism of $\Aa(S)$-algebras mapping $p_j(\Uni_n^\vee)$ to $p_j(\Uni_n^\vee)$, we deduce the statement by induction.\\

\emph{Case $n$ even and $s$ even:} By \rref{p:Gr_n-1_n_n}, we have a long exact sequence
\[
\cdots \to \Aa(\Gr(n,s-1);\Uni_n^\vee) \xrightarrow{g_*} \Aa(\Gr(n,s)) \xrightarrow{h^*} \Aa(\Gr(n-1,s-1)) \to \cdots
\]
Note that $h^*$ is surjective, since by induction the $\Aa(S)$-algebra $\Aa(\Gr(n-1,s-1))$ is generated by the Pontryagin classes $p_j(\Uni_n^\vee)$, which lie in the image of $h^*$. Thus $g_*$ is injective. By \rref{p:Gr_n-1_n_n}, letting $g'=g_{1^{\oplus s-2}}$, we have an infinite long exact sequence
\[
\Aa(\Gr(n,s-2);\Uni_n^\vee \oplus \Uni_n^\vee) \xrightarrow{g'_*} \Aa(\Gr(n,s-1);\Uni_n^\vee) \to \Aa(\Gr(n-1,s-2);\Uni_n^\vee)
\]
By \rref{prop:A_Gr_n_s_U_0}, the term on the right vanishes, hence $g'_*$ above is an isomorphism. By \eqref{lemm:hyp_isom}, we obtain an exact sequence
\[
0 \to \Aa(\Gr(n,s-2)) \xrightarrow{u} \Aa(\Gr(n,s)) \xrightarrow{h^*} \Aa(\Gr(n-1,s-1)) \to 0,
\]
where $u$ is given by $x \mapsto (g\circ g')_*(x \cup \hypo_{\Uni_n^\vee})$. By \dref{lemm:e_pi_push_pull}{lemm:e_pi_push_pull:1}, the composite
\[
\Aa(\Gr(n,s)) \xrightarrow{(g\circ g')^*} \Aa(\Gr(n,s-2)) \xrightarrow{u} \Aa(\Gr(n,s))
\]
is multiplication by the element $\pi(\Uni_n^\vee)$, which coincides with $p_r(\Uni_n^\vee)$ by \rref{lemm:Pontryagin_pi}. In particular $h^*p_r(\Uni_n^\vee)= h^* \circ u(1)=0$. We thus have a commutative diagram
\[ \xymatrix{
& B_r/J_{d-r,r} \ar[r]^{p_r\cdot} \ar[d] & B_r/J_{d-r+1,r} \ar[r]^{p_r \mapsto 0} \ar[d] & B_{r-1}/J_{d-r+1,r} \ar[r] \ar[d] & 0 \\ 
0 \ar[r]& \Aa(\Gr(n,s-2)) \ar[r]^-u &  \Aa(\Gr(n,s)) \ar[r]^-{h^*} & \Aa(\Gr(n-1,s-1)) \ar[r] &0
}\]
Here for $m\in \{r,r-1\}$ we have written $B_m = \Aa(S)[p_1,\dots,p_m]$, and the upper row is the exact sequence of \rref{lemm:J_r_r-1}. We have seen that the lower row is also exact. The left and right vertical morphism are isomorphisms by induction, hence so is the middle one by a diagram chase, proving the statement in this case.\\

\emph{Case $n$ even and $s$ odd:} By \rref{p:Gr_n_n_n-1}, we have a long exact sequence
\[
\dots\to \Aa(\Gr(n-1,s-1);\Quo_{n-1}) \xrightarrow{h_*} \Aa(\Gr(n,s)) \xrightarrow{g^*} \Aa(\Gr(n,s-1))\to\cdots
\]
Set $G= \Gr(n-1,s-1)$. Then 
\[
\Aa(G;\Quo_{n-1}) \overset{\eqref{lemm:hyp_isom}}{\simeq} \Aa(G;\Quo_{n-1} \oplus \Uni_{n-1} \oplus \Uni_{n-1}) \overset{\rref{p:Thom_ses}}\simeq \Aa(G;1^{\oplus s-1} \oplus \Uni_{n-1}),
\]
which vanishes by \rref{p:Thom_dual_vb} and \rref{prop:A_Gr_n_s_U_0}. Therefore the morphism $g^*$ in the above exact sequence is an isomorphism. Since $g^*$ is a morphism of $\Aa(S)$-algebras mapping $p_j(\Uni_n^\vee)$ to $p_j(\Uni_n^\vee)$, we deduce the statement by induction, which concludes the proof of the proposition. Let us nonetheless make one additional observation. Recall from \rref{p:Gr_n-1_n_n} that we have an infinite long exact sequence
\[
\Aa(\Gr(n,s-1);\Uni_n^\vee \oplus \Uni_n^\vee) \xrightarrow{g_*} \Aa(\Gr(n,s);\Uni_n^\vee) \xrightarrow{h^*} \Aa(\Gr(n-1,s-1);\Uni_{n-1}^\vee \oplus 1)
\]
The term on the right vanishes by \rref{prop:A_Gr_n_s_U_0}, hence $g_*$ above is an isomorphism. By \eqref{lemm:hyp_isom}, the composite
\begin{align*}
\Aa(\Gr(n,s)) &\xrightarrow{g^*} \Aa(\Gr(n,s-1)) \xrightarrow{\cup \hypo_{\Uni_n^\vee}}\\
& \Aa(\Gr(n,s-1);\Uni_n^\vee \oplus \Uni_n^\vee) \xrightarrow{g_*} \Aa(\Gr(n,s);\Uni_n^\vee)
\end{align*}
is thus bijective, and it coincides with the (left or right) cup product with the Euler class $e(\Uni_n^\vee)$ by \dref{lemm:e_pi_push_pull}{lemm:e_pi_push_pull:1}, which proves \rref{prop:A_Gr_n_s_U_Euler} below.
\end{proof}

We record the following statement, obtained in the course of the proof of \rref{prop:A_Gr_n_s}:
\begin{proposition}
\label{prop:A_Gr_n_s_U_Euler}
Let $A \in \SH(S)$ be an $\eta$-periodic hyperbolically oriented ring spectrum. Let $n \in \Nn$ be even and $s \in \Nn$ be odd. Then the (left or right) $\Aa(\Gr(n,s))$-module $\Aa(\Gr(n,s);\Uni_n^\vee)$ is freely generated by the Euler class $e(\Uni_n^\vee)$.
\end{proposition}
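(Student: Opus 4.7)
The plan is to extract the argument from the last paragraph of the proof of \rref{prop:A_Gr_n_s} (case $n$ even, $s$ odd), which already produces exactly this statement as a by-product; I would simply repackage it as a standalone argument referring to the relevant ingredients without re-running the induction.

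Concretely, I would start by applying \rref{p:Gr_n-1_n_n} to the closed immersion $g = g_{1^{\oplus s-1}} \colon \Gr(n,s-1) \to \Gr(n,s)$ with $V = \Uni_n^\vee$. Using the identifications $g^*\Uni_n = \Uni_n$ and $h^*\Uni_n = \Uni_{n-1}\oplus 1$ from \eqref{eq:g_pb} and \eqref{eq:h_pb}, this yields a long exact sequence
\[
\cdots \to \Aa(\Gr(n,s-1);\Uni_n^\vee \oplus \Uni_n^\vee) \xrightarrow{g_*} \Aa(\Gr(n,s);\Uni_n^\vee) \xrightarrow{h^*} \Aa(\Gr(n-1,s-1);\Uni_{n-1}^\vee \oplus 1) \to \cdots
\]
Since $n-1$ is odd and $s-1$ is even, the Thom twist $\oplus 1$ only shifts the bidegree, so by \rref{prop:A_Gr_n_s_U_0} the target of $h^*$ vanishes; the same reasoning applied to the next term shows that $g_*$ is an isomorphism.

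Next I invoke \rref{lemm:hyp_isom} applied to $E = \Uni_n^\vee$ (of even rank, since $n$ is even) to conclude that the map
\[
\Aa(\Gr(n,s-1)) \xrightarrow{\,\cup\, \hypo_{\Uni_n^\vee}} \Aa(\Gr(n,s-1);\Uni_n^\vee \oplus \Uni_n^\vee)
\]
is bijective. The pullback $g^* \colon \Aa(\Gr(n,s)) \to \Aa(\Gr(n,s-1))$ is already known to be an isomorphism from the case $n$ even, $s$ odd of \rref{prop:A_Gr_n_s}. Composing the three bijections gives an isomorphism
\[
\Aa(\Gr(n,s)) \xrightarrow{g^*} \Aa(\Gr(n,s-1)) \xrightarrow{\cup \hypo_{\Uni_n^\vee}} \Aa(\Gr(n,s-1);\Uni_n^\vee \oplus \Uni_n^\vee) \xrightarrow{g_*} \Aa(\Gr(n,s);\Uni_n^\vee).
\]

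Finally, I would observe that by \rref{p:Gr_n-1_n_n} the closed immersion $g$ is the zero locus of a section of $\Uni_n^\vee$ transverse to the zero-section; hence \dref{lemm:e_pi_push_pull}{lemm:e_pi_push_pull:1} identifies the composite above with (left or right) multiplication by the Euler class $e(\Uni_n^\vee)$. This exhibits the module map $x \mapsto x \cup e(\Uni_n^\vee)$ as a bijection from $\Aa(\Gr(n,s))$ onto $\Aa(\Gr(n,s);\Uni_n^\vee)$, which is the desired free generation statement. There is no real obstacle here beyond properly assembling the pieces: the only subtlety is checking that the Thom-twist by the trivial line bundle in $\Uni_{n-1}^\vee \oplus 1$ does not interfere with the vanishing provided by \rref{prop:A_Gr_n_s_U_0}, which is immediate since $\oplus 1$ just induces an invertible bidegree shift \eqref{eq:twist_shift}.
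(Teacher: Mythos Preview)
Your proposal is correct and follows essentially the same argument as the paper's own proof, which is embedded as the ``additional observation'' at the end of the case $n$ even, $s$ odd in the proof of \rref{prop:A_Gr_n_s}. The ingredients and their order (the long exact sequence from \rref{p:Gr_n-1_n_n}, the vanishing from \rref{prop:A_Gr_n_s_U_0}, the Thom isomorphism \rref{lemm:hyp_isom}, the bijectivity of $g^*$ from \rref{prop:A_Gr_n_s}, and the identification via \dref{lemm:e_pi_push_pull}{lemm:e_pi_push_pull:1}) match exactly.
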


\begin{remark}
The parity assumption in \rref{prop:A_Gr_n_s} is necessary, as the case $n=1,s=2$ shows.
\end{remark}

\subsection{Cohomology of \texorpdfstring{$\BGL$}{BGL}}
\label{sect:A_BGL}

\begin{para}
\label{p:Milnor_seq}
(See e.g.\ \cite[Lemma~2.1.3]{PMR}.) Consider a sequence of pointed motivic spaces $E_t \to E_{t+1}$ for $t \in \Nn \smallsetminus \{0\}$, and denote by $E \in \Spcp(S)$ its homotopy colimit. Then for any $A \in \SH(S)$ and $p,q\in\Zz$, we have the \emph{Milnor exact sequence}
\[
0 \to {\lim_t}^1 A^{p-1,q}(E_t) \to A^{p,q}(E) \to \lim_t A^{p,q}(E_t) \to 0.
\]
By cofinality and compatibility of homotopy colimits with the smash product, we also have an exact sequence
\[
0 \to {\lim_t}^1 A^{p-1,q}(E_t \wedge E_t) \to A^{p,q}(E \wedge E) \to \lim_t A^{p,q}(E_t \wedge E_t) \to 0.
\]
\end{para}

\begin{para}
Recall that the \'etale classifying space $\BGL_n$ is obtained as the (homotopy) colimit in $\Spc(S)$ of the grassmannians $\Gr(n,nt)$ over $t \in \Nn \smallsetminus \{0\}$ (see e.g.\ \cite[(5.1.4)]{eta} with $p=n$). Here the transition morphisms
\begin{equation}
\label{eq:Gr_transition}
\Gr(n,nt) \to \Gr(n,n(t+1))
\end{equation}
are the closed immersions induced by the inclusions
\[
1^{\oplus nt} = (1^{\oplus t})^{\oplus n} \subset (1^{\oplus t+1})^{\oplus n} = 1^{\oplus n(t+1)}
\]
where the inclusion $1^{\oplus t} \subset 1^{\oplus t+1}$ is given by the vanishing of the last coordinate. Since the vector bundle $\Uni_n^\vee$ pulls back to $\Uni_n^\vee$ along \eqref{eq:Gr_transition}, there are induced maps of pointed motivic spaces $\Th_{\Gr(n,nt)}(\Uni_n^\vee) \to \Th_{\Gr(n,n(t+1))}(\Uni_n^\vee)$, and we define
\[
\Th_{\BGL_n}(\Uni_n^\vee) = \colim_t \Th_{\Gr(n,nt)}(\Uni_n^\vee) \in \Spcp(S),
\]
and if $A \in \SH(S)$ is a ring spectrum, we set
\[
\Aa(\BGL_n;\Uni_n^\vee) = A^{*+2n,*+n}(\Th_{\BGL_n}(\Uni_n^\vee)).
\]
\end{para}

The next result generalises a computation of Levine \cite[Theorem~4.1]{Levine-motivic_Euler} (see also \cite[(6.3.7)]{eta}):
\begin{theorem}
\label{th:A_BGL}
Let $A \in \SH(S)$ be an $\eta$-periodic hyperbolically oriented ring spectrum. Let $r \in \Nn$, and $n \in \{2r,2r+1\}$. Letting $p_i$ have degree $(8i,4i)$, there exists an isomorphism of $\Zz^2$-graded $\Aa(S)$-algebras (see \rref{p:homogeneous_power_series}), 
\[
\Aa(\BGL_n) \simeq \Aa(S)[[p_1,\dots,p_r]]_h.
\]
In addition the (left of right) $\Aa(\BGL_n)$-module $\Aa(\BGL_n;\Uni_n^\vee)$ vanishes if $n$ is odd, and is free of rank $1$ if $n$ is even.
\end{theorem}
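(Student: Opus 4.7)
The strategy is to apply the Milnor exact sequences of \rref{p:Milnor_seq} to $\BGL_n$, which we view (by cofinality) as a filtered colimit of Grassmannians with freely chosen parity of $s$. Since both $\{\Gr(n, nt)\}_{t \geq 1}$ (the excerpt's definition) and any subsystem cofinal in $\{\Gr(n, s)\}_{s \geq n}$ yield the same homotopy colimit in $\Spc(S)$, one may select for each computation a cofinal subsystem adapted to the parity constraints of \rref{prop:A_Gr_n_s}, \rref{prop:A_Gr_n_s_U_0}, and \rref{prop:A_Gr_n_s_U_Euler}.

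For the algebra isomorphism, choose the cofinal subsystem $\{\Gr(n, s_t)\}_t$ with $s_t$ of the parity required by \rref{prop:A_Gr_n_s}: $s_t$ even for $n = 2r$ (take any $s_t = nt$) and $s_t$ odd for $n = 2r+1$ (take $t$ odd in $s_t = nt$). Each $\Aa(\Gr(n, s_t))$ is then identified with $\Aa(S)[p_1, \dots, p_r]/J_{d_t - r + 1}$ via $p_j \mapsto p_j(\Uni_n^\vee)$, where $s_t \in \{2d_t, 2d_t + 1\}$. By functoriality of Pontryagin classes, the inclusion $\Gr(n, s_t) \hookrightarrow \Gr(n, s_{t+1})$ induces the evident surjective quotient map, so the tower is Mittag--Leffler and $\lim^1 = 0$. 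The limit is identified with $\Aa(S)[[p_1, \dots, p_r]]_h$ by \rref{prop:lim_A_J} applied with $M = \Zz^2$ and $m = (8, 4)$. To promote this additive isomorphism to a ring isomorphism one applies the second Milnor sequence of \rref{p:Milnor_seq} to $E_t \wedge E_t = (\Sup \Gr(n, s_t))^{\wedge 2}$, using \rref{prop:J_J'} to identify the limit and the same surjectivity argument to kill $\lim^1$.

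For the twisted module, proceed analogously. When $n$ is odd, choose $s_t$ even (take $t$ even within $s_t = nt$); then \rref{prop:A_Gr_n_s_U_0} forces every $\Aa(\Gr(n, s_t); \Uni_n^\vee)$ to vanish, so $\lim$ and $\lim^1$ both vanish and $\Aa(\BGL_n; \Uni_n^\vee) = 0$. When $n$ is even, choose $s_t$ odd in the enlarged cofinal system $\{\Gr(n, s)\}_{s \geq n}$; then \rref{prop:A_Gr_n_s_U_Euler} identifies each $\Aa(\Gr(n, s_t); \Uni_n^\vee)$ with $\Aa(\Gr(n, s_t)) \cdot e(\Uni_n^\vee)$, and by \dref{prop:Euler}{prop:Euler:pb} the Euler class pulls back to the Euler class, so the tower is an iso (over the untwisted one) of towers with surjective transitions. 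Hence $\lim^1$ vanishes, and passage to the limit yields $\Aa(\BGL_n; \Uni_n^\vee) \simeq \Aa(\BGL_n) \cdot e(\Uni_n^\vee)$, a free $\Aa(\BGL_n)$-module of rank one.

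The main technical nuance lies in the cofinality argument needed in the $n$ even, twisted case: the excerpt's tower $\{\Gr(n, nt)\}$ contains only Grassmannians with $s$ even, whereas \rref{prop:A_Gr_n_s_U_Euler} requires $s$ odd. One must first observe that $\BGL_n = \colim_{s \geq n} \Gr(n, s)$ and only then select a cofinal subsystem with $s$ odd; beyond this bookkeeping, the proof is a formal consequence of the finite-dimensional computations already at our disposal together with the purely algebraic statements of \rref{prop:lim_A_J} and \rref{prop:J_J'}.
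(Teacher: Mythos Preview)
Your proposal is correct and follows essentially the same approach as the paper: pass to a cofinal subsystem of Grassmannians with the parity required by \rref{prop:A_Gr_n_s}, \rref{prop:A_Gr_n_s_U_0}, or \rref{prop:A_Gr_n_s_U_Euler}, verify Mittag--Leffler via surjectivity of the transition maps, and conclude by the Milnor sequence together with \rref{prop:lim_A_J}. One minor remark: your invocation of \rref{prop:J_J'} and the second Milnor sequence to upgrade the additive isomorphism to a ring isomorphism is unnecessary here, since the natural map $\Aa(\BGL_n) \to \lim_t \Aa(\Gr(n,s_t))$ is already a ring homomorphism (being a limit of pullbacks), and once it is bijective it is automatically a ring isomorphism; the paper reserves \rref{prop:J_J'} for the later computation of $\Aa((\BGL_n)_+ \wedge (\BGL_n)_+)$.
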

\begin{proof}
By \rref{prop:A_Gr_n_s}, the morphism $\Aa(\Gr(n,n(t+2))) \to \Aa(\Gr(n,nt))$ is surjective when $t$ is odd, so that the system $\Aa(\Gr(n,nt))$ for $t \in \Nn \smallsetminus \{0\}$ satisfies the Mittag-Leffler condition, and thus $\lim^1_t \Aa(\Gr(n,nt))=0$. In addition, by \rref{prop:A_Gr_n_s} and \rref{prop:lim_A_J} (and a cofinality argument) we have an isomorphism
\[
\lim_t \Aa(\Gr(n,nt)) \simeq \Aa(S)[[p_1,\dots,p_r]]_h.
\]
The first statement thus follows from the Milnor sequence \rref{p:Milnor_seq}.

Next, assume that $n$ is odd. Then by \rref{prop:A_Gr_n_s_U_0} we have $\Aa(\Gr(n,nt);\Uni_n^\vee)=0$ when $t$ is even, hence 
\[
{\lim_t}^1 \Aa(\Gr(n,nt);\Uni_n^\vee)=0 \quad \text{and} \quad \lim_t \Aa(\Gr(n,nt);\Uni_n^\vee)=0,
\]
so that $\Aa(\BGL_n;\Uni_n^\vee)=0$ by the Milnor sequence \rref{p:Milnor_seq}.

Finally let us assume that $n$ is even and prove the remaining statement. The case $n=0$ being clear, we assume that $n \geq 2$. A cofinality argument shows that $\BGL_n$, resp.\ $\Th_{\BGL_n}(\Uni_n^\vee)$, is the colimit of $\Gr(n,nt+1)$, resp.\ $\Th_{\Gr(n,nt+1)}(\Uni_n^\vee)$, the transition morphisms being induced by the inclusions
\[
1^{\oplus nt +1} = 1^{\oplus t+1} \oplus (1^{\oplus t})^{\oplus n-1} \subset 1^{\oplus t+2} \oplus (1^{\oplus t+1})^{\oplus n-1} = 1^{\oplus n(t+1) +1},
\]
where the inclusions $1^{\oplus t+1} \subset 1^{\oplus t+2}$ and $1^{\oplus t} \subset 1^{\oplus t+1}$ are given by the vanishing of the last coordinates. By \rref{prop:A_Gr_n_s_U_Euler} we have isomorphisms of $\Aa(\Gr(n,nt+1))$-modules $\Aa(\Gr(n,nt+1)) \xrightarrow{\sim} \Aa(\Gr(n,nt+1);\Uni_n^\vee)$ which are compatible with the transition morphisms as $t$ varies. This yields isomorphisms of $\Aa(\BGL_n)$-modules
\[
0={\lim_t}^1 \Aa(\Gr(n,nt+1)) \xrightarrow{\sim} {\lim_t}^1 \Aa(\Gr(n,nt+1);\Uni_n^\vee) 
\]
\[
\lim_t \Aa(\Gr(n,nt+1)) \xrightarrow{\sim} \lim_t \Aa(\Gr(n,nt+1);\Uni_n^\vee).
\]
We conclude using again the Milnor sequence \rref{p:Milnor_seq}.
\end{proof}

\begin{corollary}
Let $A \in \SH(S)$ be an $\eta$-periodic hyperbolically oriented ring spectrum. Then there exist isomorphisms of $\Zz^2$-graded $\Aa(S)$-algebras, where $\deg p_i=(8i,4i)$,
\[
\Aa(\BSL_{2r+1}) \simeq \Aa(\BSL^c_{2r+1}) \simeq \Aa(S)[[p_1,\dots,p_r]]_h.
\]
\end{corollary}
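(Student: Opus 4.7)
Set $n=2r+1$. The natural projections $\BSL_n \to \BGL_n$ and $\BSL^c_n \to \BGL_n$ are $\Gm$-torsors classified respectively by the line bundles $\det \Uni_n$ and $(\det \Uni_n)^{\otimes 2}$, realised at finite level by $\Gm$-torsors $p\colon T \to \Gr(n,s)$ whose colimits over $s$ yield $\BSL_n$ and $\BSL^c_n$. In view of \rref{th:A_BGL}, it suffices to prove that both projections induce isomorphisms on $\Aa(-)$.

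For a $\Gm$-torsor $p \colon T \to X$ associated to a line bundle $L$, the identification $T = L \smallsetminus X$ together with the $\Ab^1$-equivalence $L \to X$ yields a cofiber sequence $T_+ \to X_+ \xrightarrow{z_L} \Th_X(L)$ in $\Hop(S)$, hence a long exact sequence
\[
\cdots \to \Aa(X;L) \xrightarrow{z_L^*} \Aa(X) \xrightarrow{p^*} \Aa(T) \to \Sigma^{1,0}\Aa(X;L) \to \cdots
\]
The plan is to establish the vanishing $\Aa(\Gr(n,s); L) = 0$ in a cofinal system of indices $s$, for each of the two line bundles $L$. This forces $p^*$ to be an isomorphism at finite level, and passing to the colimit over $s$ via the Milnor exact sequence \rref{p:Milnor_seq} (whose $\lim^1$ vanishes by the same Mittag--Leffler argument used in the proof of \rref{th:A_BGL}) yields the desired isomorphisms $\Aa(\BGL_n) \xrightarrow{\sim} \Aa(\BSL_n)$ and $\Aa(\BGL_n) \xrightarrow{\sim} \Aa(\BSL^c_n)$.

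The main obstacle is the vanishing $\Aa(\Gr(n,s); L) = 0$. I would prove it by induction on $r$, paralleling the proof of \rref{prop:A_Gr_n_s_U_0}. The inductive step uses the partial flag variety $Y$ of triples $(P \subset Q \subset 1^{\oplus s})$ with $\rank P = n-2$ and $\rank Q = n$, realised both as the rank-two Grassmann bundle $\Gr(2, \Uni_n^\vee) \to \Gr(n,s)$ and as $\Gr(2, \Quo_{n-2}) \to \Gr(n-2,s)$, with universal rank-two subbundle $\Ec$ in the latter description. A determinant computation gives $\det \Uni_n|_Y \simeq \det \Uni_{n-2}|_Y \otimes \det \Ec^\vee$, so that the twist by $L$ on $\Gr(n,s)$ can be analysed by applying \rref{cor:splitting_twisted} to both Grassmann bundle structures simultaneously. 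The base case $r = 0$ is $\Aa(\Pp^{s-1}; \Oc(-1)^{\otimes k}) = 0$ for $k \in \{1,2\}$ and appropriate even $s$: for $k = 1$ this follows from \dref{prop:PE}{prop:PE:even} together with the Thom duality isomorphism of \rref{p:Thom_dual_vb}, while the squared case $k = 2$ --- and more generally the case $L = (\det \Uni_n)^{\otimes 2}$, which interacts less cleanly with the hyperbolic orientation than direct sums do --- is the principal source of extra technical work and may require either an adapted Grassmann bundle computation or a two-step argument factoring $\BSL_n \to \BSL^c_n \to \BGL_n$ and treating the intermediate $\mu_2$-torsor $\BSL_n \to \BSL^c_n$ separately.
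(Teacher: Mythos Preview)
The paper's proof is a one-line citation: it combines the computation of $\Aa(\BGL_{2r+1})$ in \rref{th:A_BGL} with \cite[(6.2.1), (6.3.3)]{eta}, which establish directly (for any $\eta$-periodic $A$, without invoking the hyperbolic orientation) that the projections $\BSL_{2r+1}\to\BGL_{2r+1}$ and $\BSL^c_{2r+1}\to\BGL_{2r+1}$ induce isomorphisms on $\Aa$. Your route --- reducing to a finite-level vanishing $\Aa(\Gr(n,s);L)=0$ via the cofibre sequence of the associated $\Gm$-torsor --- is in the right spirit, but amounts to reproving those external inputs from scratch using the hyperbolic orientation.

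There are genuine gaps. For $\BSL^c_n$ you yourself flag the argument as incomplete. For $\BSL_n$, the inductive step does not close either: from the exact sequence $0\to p^*\Uni_{n-2}\to q^*\Uni_n\to\Ec\to 0$ on $Y$ one gets $q^*\det\Uni_n \simeq p^*\det\Uni_{n-2}\otimes\det\Ec$ (not $\det\Ec^\vee$ --- your determinant formula has the exponent reversed), and this is a \emph{tensor} twist by a line bundle living on $Y$, neither a pullback from the base nor a direct-sum twist by $\Uni_2^\vee$. Corollary \rref{cor:splitting_twisted} only controls $\Aa(Y;p^*V)$ and $\Aa(Y;p^*V\oplus\Uni_2^\vee)$, so it gives no handle on $\Aa(Y;p^*V\otimes\det\Ec)$, and the reduction to $\Aa(\Gr(n-2,s);\det\Uni_{n-2})$ does not follow as stated. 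The results from \cite{eta} that the paper invokes bypass this entirely, via arguments about line bundles and odd-rank phenomena in $\SH(S)[\eta^{-1}]$ that do not pass through the hyperbolic orientation; reproducing them would require importing more from \cite{eta} than you currently do.
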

\begin{proof}
This follows by combining \rref{th:A_BGL} with \cite[(6.2.1), (6.3.3)]{eta}.
\end{proof}

\begin{remark}
Ananyevskiy computed in \cite[Theorem~10]{Ananyevskiy-SL_PB} the ring  $\Aa(\BSL_n)$ when $A \in \SH(S)$ is an $\eta$-periodic $\SL$-oriented commutative ring spectrum and $2$ is invertible in $S$ (see \cite[(6.2.2)]{eta} for more details). When $n$ is even, the result involves Euler classes of $\SL$-oriented vector bundles (with values in untwisted cohomology), whose existence seems to require that $A$ be $\SL$-oriented.
\end{remark}

We will need the following complement to \rref{th:A_BGL} in the next section:
\begin{proposition}
\label{prop:A_BGL_BGL}
Let $A \in \SH(S)$ be an $\eta$-periodic hyperbolically oriented ring spectrum. Let $r \in \Nn$, and $n \in \{2r,2r+1\}$. Then, letting $p_i,p_i'$ have degree $(8i,4i)$, there exists an isomorphism of $\Zz^2$-graded $\Aa(S)$-algebras (see \rref{p:homogeneous_power_series})
\[
\Aa((\BGL_n)_+ \wedge (\BGL_n)_+) \simeq \Aa(S)[[p_1,\dots,p_r,p_1',\dots,p_r']]_h.
\]
\end{proposition}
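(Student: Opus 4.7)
\begin{para}
The plan is to mirror the strategy of \rref{th:A_BGL}, replacing the algebraic input \rref{prop:lim_A_J} by its two-variable counterpart \rref{prop:J_J'}. First I would express $(\BGL_n)_+ \wedge (\BGL_n)_+$ as the homotopy colimit in $\Spcp(S)$ of the system of pointed motivic spaces $(\Gr(n,nt) \times_S \Gr(n,nt))_+$ for $t \in \Nn \smallsetminus \{0\}$, using compatibility of homotopy colimits with the smash product (as in the second part of \rref{p:Milnor_seq}). Throughout I restrict to the cofinal subsystem of odd $t$ when $n$ is odd, so that each factor falls into the range where \rref{prop:A_Gr_n_s} applies.
\end{para}

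\begin{para}
The key computational step is to identify $\Aa(\Gr(n,nt) \times_S \Gr(n,nt))$ with a truncated polynomial ring. Write $X_t = \Gr(n,nt)$ with structural morphism $f_t \colon X_t \to S$. The fiber product $X_t \times_S X_t$ is naturally the Grassmannian $\Gr(n, 1_{X_t}^{\oplus nt})$ over $X_t$, with universal subbundle $\Uni_n'$ classifying the second factor. Since the pullback $f_t^*A \in \SH(X_t)$ remains an $\eta$-periodic hyperbolically oriented ring spectrum, \rref{prop:A_Gr_n_s} (applied over the base $X_t$, which is legitimate as all the arguments in its proof work over any base) yields an isomorphism of $\Aa(X_t)$-algebras
\[
\Aa(X_t \times_S X_t) \xrightarrow{\sim} \Aa(X_t)[p_1',\dots,p_r']/J'_{d_t-r+1}, \quad p_j' \mapsto p_j(\Uni_n'^\vee),
\]
where $d_t = \lfloor nt/2 \rfloor$. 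Combining this with the isomorphism $\Aa(X_t) \simeq \Aa(S)[p_1,\dots,p_r]/J_{d_t-r+1}$ given by \rref{prop:A_Gr_n_s} itself, I obtain
\[
\Aa(X_t \times_S X_t) \simeq \Aa(S)[p_1,\dots,p_r,p_1',\dots,p_r']/(J_{d_t-r+1} + J'_{d_t-r+1}),
\]
where $p_j$ corresponds to the $j$-th Pontryagin class of the universal bundle dual on the first factor.
\end{para}

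\begin{para}
Next I check that these isomorphisms are compatible with the transition maps (which in both sets of variables are the obvious projections sending the higher-degree $s_j$'s to zero): this is immediate from the naturality of Pontryagin classes \dref{def:Pontryagin_str}{def:Pontryagin_str:pb}. Consequently the transition morphisms in the inverse system $\{\Aa(X_t \times_S X_t)\}_t$ are surjective (through the cofinal subsystem), the Mittag-Leffler condition holds, and the $\lim^1$ term in the Milnor sequence of \rref{p:Milnor_seq} vanishes. Passing to the limit and applying \rref{prop:J_J'} with $M = \Zz^2$ and $m = (8,4)$ then identifies $\lim_t \Aa(X_t \times_S X_t)$ with $\Aa(S)[[p_1,\dots,p_r,p_1',\dots,p_r']]_h$, yielding the desired isomorphism of $\Zz^2$-graded $\Aa(S)$-algebras.
\end{para}

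\begin{para}
The main technical point requiring care is the relative version of \rref{prop:A_Gr_n_s} in Step~2; but since every statement used in its proof (the long exact sequences of \rref{p:Gr_n_n_n-1} and \rref{p:Gr_n-1_n_n}, the hyperbolic orientation and its consequences, the Whitney sum formula) is stable under pullback along an arbitrary base change $f_t \colon X_t \to S$, this amounts only to an observation analogous to \rref{rem:X_S}. With this in hand, the remaining arguments are formal parallels of those in \rref{th:A_BGL}.
\end{para}
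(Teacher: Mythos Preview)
Your proof is correct and follows essentially the same approach as the paper's: apply \rref{prop:A_Gr_n_s} relatively over $X_t = \Gr(n,nt)$ to compute $\Aa(X_t \times_S X_t)$ as a double truncated polynomial ring, verify Mittag-Leffler via surjectivity of the transition maps on a cofinal subsystem (the paper uses odd $t$ for all $n$, you use odd $t$ only when $n$ is odd, which is slightly sharper but equivalent), and then pass to the limit using \rref{prop:J_J'} and the Milnor sequence. The only difference is presentational: you are more explicit about the relative application of \rref{prop:A_Gr_n_s} and about why the cofinality restriction is needed.
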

\begin{proof}
Let us write $R_t = \Aa(\Gr(n,nt) \times_S \Gr(n,nt))$, for $t\in \Nn \smallsetminus \{0\}$. Then the morphisms $\Gr(n,nt) \to \Gr(n,n(t+1))$ described at the beginning of \S\ref{sect:A_BGL} induce transition morphisms $R_{t+1} \to R_t$. Applying \rref{prop:A_Gr_n_s} over the base $\Gr(n,nt)$, and then over $S$, we obtain for $t$ odd 
\[
R_t \simeq \Aa(S)[p_1,\dots,p_r,p_1',\dots,p_r']/(J_{d-r+1} + J'_{d-r+1}),
\]
where $d$ is the integer such that $nt \in \{2d,2d+1\}$ (we use the notation described just above \rref{prop:J_J'}). We deduce that the morphisms $R_{t+2} \to R_t$ are surjective when $t$ is odd, so that the system $R_t$ for $t \in \Nn \smallsetminus \{0\}$ satisfies the Mittag-Leffler condition, and thus $\lim^1_t R_t =0$. By \rref{prop:J_J'} (and a cofinality argument) we have
\[
\lim_t R_t \simeq \Aa(S)[[p_1,\dots,p_r,p_1',\dots,p_r']]_h,
\]
and the statement follows from the Milnor sequence \rref{p:Milnor_seq}.
\end{proof}

\begin{para}
\label{p:BGL_BGL}
For later reference, let us note the following facts, established in the course of the proofs of \rref{th:A_BGL} and \rref{prop:A_BGL_BGL}:
\begin{enumerate}[(i)]
\item
\label{p:BGL_BGL:lim1}
 We have
\[
{\lim_t}^1\Aa(\Gr(n,nt)) =0 \quad\text{and} \quad {\lim_t}^1 \Aa(\Gr(n,nt)\times_S \Gr(n,nt))=0,
\]

\item 
\label{p:BGL_BGL:lim}
The following natural morphisms are bijective:
\[
\Aa(\BGL_n) \xrightarrow{\sim} \lim_t \Aa(\Gr(n,nt)),
\]
\[
\Aa((\BGL_n)_+ \wedge (\BGL_n)_+) \xrightarrow{\sim} \lim_t \Aa(\Gr(n,nt)\times_S \Gr(n,nt)).
\]

\item 
\label{p:BGL_BGL:p}
Under the identification of \rref{th:A_BGL}, resp.\ \rref{prop:A_BGL_BGL}, the elements $p_i$, resp.\ $p_i$ and $p_i'$, are mapped, under the morphism induced by \rref{p:BGL_BGL:lim}, to $p_i(\Uni_n) \in \Aa(\Gr(n,nt))$, resp.\ $p_i(\Uni_n \times_S \Gr(n,nt))$ and $p_i(\Gr(n,nt) \times_S \Uni_n)$.
\end{enumerate}
\end{para}

\section{The universal theory}
\label{sect:MH}
\numberwithin{theorem}{subsection}
\numberwithin{lemma}{subsection}
\numberwithin{proposition}{subsection}
\numberwithin{corollary}{subsection}
\numberwithin{example}{subsection}
\numberwithin{notation}{subsection}
\numberwithin{definition}{subsection}
\numberwithin{remark}{subsection}

Until the end of the paper, we will assume that the scheme $S$ is regular separated (and noetherian of finite dimension). 

\subsection{Hyperbolic preorientations}
In this section, we introduce a notion destined to facilitate some proofs in the next sections.

\begin{definition}
\label{def:hyp_preorientation}
Let $A \in \SH(S)$ and $n \in \Nn$. A \emph{hyperbolic $n$-preorientation} of $A$ is the datum of a class $\phyp_E \in A^{0,0}(X;E \oplus E)$ for each rank $n$ vector bundle $E \to X$ with $X \in \Sm_S$, subject to the following conditions:
\begin{enumerate}[(i)]
\item \label{def:hyp_preorientation:funct}
if $f \colon Y \to X$ is a morphism in $\Sm_S$ and $E \to X$ a vector bundle, then $f^*\phyp_E = \phyp_{f^*E}$,

\item \label{def:hyp_preorientation:isom}
if $E \xrightarrow{\sim} F$ is an isomorphism of vector bundles over $X \in \Sm_S$, then the induced isomorphism $\Aa(X;F \oplus F) \xrightarrow{\sim} \Aa(X;E \oplus E)$ maps $\phyp_F$ to $\phyp_E$.
\end{enumerate}
\end{definition}

\begin{para}
A weak hyperbolic orientation (see \rref{def:hyp_thom}) is in particular a hyperbolic $2$-preorientation.
\end{para}

\begin{para}
A hyperbolic orientation (see \rref{def:hyp_orientation}) induces a hyperbolic $n$-preorientation for every $n \in \Nn$. The hyperbolic orientation is determined by the collection of its induced hyperbolic $n$-preorientations, for $n\in \Nn$.
\end{para}

\begin{para}
\label{p:push_preorientation}
Let $\psi \colon A \to B$ be a morphism in $\SH(S)$. If $E \mapsto \phyp_E$ is a hyperbolic $n$-preorientation of $A$, then $E \mapsto \psi_*(\phyp_E)$ defines a hyperbolic $n$-preorientation of $B$.
\end{para}

\begin{para}[Jouanolou's trick]
\label{p:Jouanolou_affine}
If $X \in \Sm_S$, then $X$ is a regular separated noetherian scheme, hence by \cite[II, 2.2.7.1]{sga6} it admits an ample family of line bundles. Therefore by \cite[Proposition~4.4]{Weibel-KH} there exists an affine bundle $\tilde{X} \to X$ in $\Sm_S$ with $\tilde{X}$ affine.
\end{para}

\begin{proposition}
\label{prop:preorientations}
A hyperbolic $n$-preorientation of a ring spectrum $A \in \SH(S)$ is determined by the elements $\phyp_{\Uni_n} \in A^{0,0}(\Gr(n,nt);\Uni_n \oplus \Uni_n)$ for $t \in \Nn \smallsetminus \{0\}$.
\end{proposition}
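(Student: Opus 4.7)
The plan is to prove uniqueness: given two hyperbolic $n$-preorientations $E \mapsto \phyp_E$ and $E \mapsto \phyp'_E$ on $A$ that agree on the universal bundles $\Uni_n \to \Gr(n,nt)$ for all $t \in \Nn \smallsetminus \{0\}$, I will show $\phyp_E = \phyp'_E$ for every rank $n$ vector bundle $E \to X$ with $X \in \Sm_S$. The strategy combines Jouanolou's trick with the classical classification of vector bundles on affine schemes by morphisms to Grassmannians.

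First, applying \rref{p:Jouanolou_affine} yields an affine bundle $f \colon \tilde X \to X$ with $\tilde X$ smooth and affine over $S$. As a torsor under a vector bundle, $f$ is an $\Ab^1$-weak equivalence, and this gives an isomorphism $\Th_X(E \oplus E) \simeq \Th_{\tilde X}(f^*E \oplus f^*E)$ in $\Hop(S)$ (reducing via the functors $f_\sharp$ of \rref{p:f_sharp} and $\Sigma^V$ of \rref{p:sigma_V} to the untwisted statement $\tilde X_+ \simeq X_+$). Consequently the pullback $f^* \colon A^{0,0}(X;E\oplus E) \to A^{0,0}(\tilde X;f^*E\oplus f^*E)$ is bijective; by \dref{def:hyp_preorientation}{def:hyp_preorientation:funct} it sends $\phyp_E$ (respectively $\phyp'_E$) to $\phyp_{f^*E}$ (respectively $\phyp'_{f^*E}$), so it suffices to prove that these two coincide on $\tilde X$.

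Since $\tilde X$ is affine and Noetherian, the dual bundle $(f^*E)^\vee$ is generated by finitely many global sections, yielding a surjection $1^{\oplus nt} \twoheadrightarrow (f^*E)^\vee$ for some $t \in \Nn \smallsetminus \{0\}$. Dualising exhibits $f^*E$ as a rank $n$ subbundle of $1^{\oplus nt}$, classified by a morphism $g \colon \tilde X \to \Gr(n,nt)$ together with an isomorphism $g^*\Uni_n \simeq f^*E$. Axioms \dref{def:hyp_preorientation}{def:hyp_preorientation:funct} and \dref{def:hyp_preorientation}{def:hyp_preorientation:isom} then express both $\phyp_{f^*E}$ and $\phyp'_{f^*E}$ as the image of the common class $g^*\phyp_{\Uni_n} = g^*\phyp'_{\Uni_n}$ under the induced isomorphism on twisted cohomology, so they agree.

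The main point requiring care is the affine-bundle invariance of the twisted cohomology rather than merely of the untwisted one; however, this follows formally from the $\Ab^1$-invariance of $\Hop(S)$, using that the Thom construction is compatible with the smooth base-change functor $f_\sharp$ and the shift functor $\Sigma^V$.
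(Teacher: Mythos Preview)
Your proof is correct and follows essentially the same approach as the paper: apply Jouanolou's trick to reduce to the affine case, then classify the bundle by a morphism to a Grassmannian and invoke the two axioms. The paper's version is terser (it simply writes $\phyp_E = (p^{-1})^* \circ (\Th(\alpha \oplus \alpha))^* \circ f^*(\phyp_{\Uni_n})$ without elaborating on the bijectivity of $(p^{-1})^*$), but the content is the same.
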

\begin{proof}
Let $E \to X$ be a rank $n$ vector bundle with $X \in \Sm_S$. Pick an affine bundle $p \colon \widetilde{X} \to X$ such that the scheme $\widetilde{X}$ is affine, using Jouanolou's trick \rref{p:Jouanolou_affine}. Then we may find an inclusion $E \subset 1^{\oplus nt}$ for some $t \in \Nn \smallsetminus \{0\}$, giving a morphism $f\colon \widetilde{X} \to \Gr(n,nt)$ and an isomorphism $\alpha \colon p^*E\xrightarrow{\sim} f^*\Uni_n $. Consider the induced isomorphism $\Th(\alpha \oplus \alpha) \colon  \Th_{\widetilde{X}}(p^*E \oplus p^*E)\xrightarrow{\sim} \Th_{\widetilde{X}}(f^*\Uni_n \oplus f^*\Uni_n) $. Then
\[
\phyp_E  = (p^{-1})^* \circ (\Th(\alpha \oplus \alpha))^* \circ f^*(\phyp_{\Uni_n}).\qedhere
\]
\end{proof}

\subsection{The spaces \texorpdfstring{$\MH_n$}{MHn}}

\begin{para}
\label{p:def_MHn}
For $n,s \in \Nn$, let us consider the pointed motivic spaces
\[
\T(n,s)= \Th_{\Gr(n,s)}(\Uni_n \oplus \Uni_n) \in \Spcp(S).
\]
For $t\in \Nn \smallsetminus \{0\}$, the closed immersion $\Gr(n,nt) \to \Gr(n,n(t+1)$ described in \eqref{eq:Gr_transition}, along which the vector bundle $\Uni_n$ pulls back to $\Uni_n$, yields a map in $\Spcp(S)$
\begin{equation}
\label{eq:T_transition}
\T(n,nt) \to \T(n,n(t+1)).
\end{equation}
Taking the colimit over $t\in \Nn \smallsetminus \{0\}$, we obtain pointed motivic spaces, for $n \in \Nn$
\[
\MH_n = \colim_t \T(n,nt),
\]
together with canonical maps in $\Spcp(S)$, for $n\in \Nn$ and $t\in \Nn \smallsetminus \{0\}$
\begin{equation}
\label{eq:T_MH}
\lambda_{n,t} \colon \T(n,nt) \to \MH_n.
\end{equation}
\end{para}

\begin{lemma}
\label{lemm:MHn_lim}
Let $A \in \SH(S)$ be an $\eta$-periodic hyperbolically oriented ring spectrum. Then the morphisms induced by \eqref{eq:T_MH}
\[
\Aa(\MH_n) \to \lim_t \Aa(\T(n,nt))
\]
\[
\Aa(\MH_n \wedge \MH_n) \to \lim_t \Aa(\T(n,nt) \wedge \T(n,nt)) 
\]
are bijective.
\end{lemma}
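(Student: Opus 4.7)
The plan is to apply the Milnor exact sequence of \rref{p:Milnor_seq} to the presentation $\MH_n = \colim_t \T(n,nt)$. This reduces both statements to the vanishing
\[
{\lim_t}^1 \Aa(\T(n,nt)) = 0 \quad \text{and} \quad {\lim_t}^1 \Aa(\T(n,nt) \wedge \T(n,nt)) = 0,
\]
because the Milnor sequence will then automatically identify $\Aa(\MH_n)$ with $\lim_t \Aa(\T(n,nt))$, and similarly for the smash product.

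For the first vanishing, I would invoke the hyperbolic orientation to reduce to the cohomology of bare Grassmannians. By \rref{lemm:hyp_isom} applied to $\Uni_n \to \Gr(n,nt)$, cup product with $\hypo_{\Uni_n}$ gives a bijection $\Aa(\Gr(n,nt)) \xrightarrow{\sim} \Aa(\Gr(n,nt); \Uni_n \oplus \Uni_n)$, the target being $\Aa(\T(n,nt))$ up to a uniform shift of bidegrees. The functoriality axiom \dref{def:hyp_orientation}{def:hyp_orientation:funct}, combined with the fact that the transition map \eqref{eq:Gr_transition} pulls back $\Uni_n$ to $\Uni_n$, ensures compatibility with the transition morphisms as $t$ varies. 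Consequently, the pro-system $\{\Aa(\T(n,nt))\}_t$ is isomorphic, up to bidegree shift, to $\{\Aa(\Gr(n,nt))\}_t$, for which ${\lim}^1 = 0$ by \rref{p:BGL_BGL}\eqref{p:BGL_BGL:lim1}.

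For the smash-product statement, I would proceed along exactly the same lines starting from the canonical identification
\[
\T(n,nt) \wedge \T(n,nt) \simeq \Th_{\Gr(n,nt) \times_S \Gr(n,nt)}\bigl(p_1^*(\Uni_n \oplus \Uni_n) \oplus p_2^*(\Uni_n \oplus \Uni_n)\bigr),
\]
and a second application of \rref{lemm:hyp_isom}, now to the rank $2n$ bundle $p_1^*\Uni_n \oplus p_2^*\Uni_n$ on $\Gr(n,nt) \times_S \Gr(n,nt)$. This identifies $\{\Aa(\T(n,nt) \wedge \T(n,nt))\}_t$ with $\{\Aa(\Gr(n,nt) \times_S \Gr(n,nt))\}_t$ up to shift, reducing the second vanishing to the other half of \rref{p:BGL_BGL}\eqref{p:BGL_BGL:lim1}. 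The only real bookkeeping is verifying that the Thom isomorphisms above commute with the transition maps, but this is immediate from \dref{def:hyp_orientation}{def:hyp_orientation:funct}, since $\hypo_E$ is stable under pullback. I anticipate no serious obstacle beyond this routine check.
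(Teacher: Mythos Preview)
Your proposal is correct and follows essentially the same route as the paper: use the Milnor sequence to reduce to $\lim^1$-vanishing, then invoke the Thom isomorphism given by cupping with $\hypo_{\Uni_n}$ (and its square for the smash product) to identify the pro-systems with $\{\Aa(\Gr(n,nt)^{\times_S r})\}_t$, where $\lim^1=0$ by \dref{p:BGL_BGL}{p:BGL_BGL:lim1}. The only cosmetic difference is that the paper writes the isomorphism uniformly as $(-)\cup(\hypo_{\Uni_n})^{\cup r}$ for $r\in\{1,2\}$, whereas you phrase the $r=2$ case via $\hypo_{p_1^*\Uni_n\oplus p_2^*\Uni_n}$; either works.
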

\begin{proof}
The isomorphisms for $p,q \in \Zz$ and $r \in \{1,2\}$ (see \rref{lemm:hyp_isom})
\[
(-) \cup (\hypo_{\Uni_n})^{\cup r} \colon A^{p-4rn,q-2rn}(\Gr(n,nt)^{\times_S r}) \xrightarrow{\sim} A^{p,q}(\T(n,nt)^{\wedge r}),
\]
are compatible with the transition maps when $t$ varies. Thus by \dref{p:BGL_BGL}{p:BGL_BGL:lim1} we have $\lim_t^1\Aa(\T(n,nt)^{\wedge r})=0$, whence the statements by the Milnor sequence \rref{p:Milnor_seq}.
\end{proof}

\begin{lemma}
\label{lemm:map_MHn}
Let $n\in \Nn$. There exists a unique way to associate to each rank $n$ vector bundle $E \to X$ with $X \in \Sm_S$ a map $\theta_E \colon \Th_X(E\oplus E) \to \MH_n$ in $\Hop(S)$ such that:
\begin{enumerate}[(i)]
\item 
\label{lemm:map_MHn:1}
if $f \colon Y \to X$ is a morphism in $\Sm_S$, then the following diagram commutes:
\[ \xymatrix{
\Th_Y(f^*E \oplus f^*E)\ar[rr]^-{\theta_{f^*E}} \ar[d]_f && \MH_n \\ 
\Th_X(E\oplus E) \ar[rru]_-{\theta_E} && 
}\]

\item 
\label{lemm:map_MHn:2}
if $\alpha \colon E \xrightarrow{\sim} F$ is an isomorphism of vector bundles over $X \in \Sm_S$, then the following diagram commutes
\[ \xymatrix{
\Th_X(E \oplus E)\ar[rr]^-{\theta_E} \ar[d]_{\Th_X(\alpha \oplus \alpha)} && \MH_n \\ 
\Th_X(F \oplus F) \ar[rru]_-{\theta_F} && 
}\]

\item 
\label{lemm:map_MHn:3}
for every $t \in \Nn \smallsetminus \{0\}$, the map $\lambda_{n,t} \colon \T(n,nt) \to \MH_n$ of \eqref{eq:T_MH} is $\theta_{\Uni_n}$.
\end{enumerate}
\end{lemma}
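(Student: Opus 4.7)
The plan is to construct $\theta_E$ by pulling $E$ back along an affine-bundle cover, classifying the pullback via an embedding into a trivial bundle, and composing with one of the canonical maps $\lambda_{n,t}$; the substance of the argument is to show this is independent of all the choices involved.

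\textbf{Uniqueness.} Suppose two assignments $\theta,\theta'$ satisfy \eqref{lemm:map_MHn:1}--\eqref{lemm:map_MHn:3}. For any rank $n$ vector bundle $E\to X$, apply Jouanolou's trick \rref{p:Jouanolou_affine} to obtain an affine bundle $p\colon \widetilde X\to X$ with $\widetilde X$ affine. Since $\widetilde X$ is affine, $p^*E$ is a direct summand of some free module $1^{\oplus nt}$; this furnishes a classifying morphism $f\colon \widetilde X\to \Gr(n,nt)$ together with an isomorphism $\alpha\colon p^*E\xrightarrow{\sim} f^*\Uni_n$. Applying \eqref{lemm:map_MHn:1} to $p$ and $f$, \eqref{lemm:map_MHn:2} to $\alpha$, and \eqref{lemm:map_MHn:3} to $\Uni_n$ on $\Gr(n,nt)$, one sees that $\theta_E\circ p=\theta'_E\circ p$; because $p$ is an isomorphism in $\Hop(S)$, this forces $\theta_E=\theta'_E$.

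\textbf{Existence.} Define $\theta_E$ by exactly the same recipe:
\[
\theta_E\colon \Th_X(E\oplus E)\xleftarrow[\simeq]{p}\Th_{\widetilde X}(p^*E\oplus p^*E)\xrightarrow{\Th(\alpha\oplus\alpha)}\Th_{\widetilde X}(f^*\Uni_n\oplus f^*\Uni_n)\xrightarrow{f}\T(n,nt)\xrightarrow{\lambda_{n,t}}\MH_n.
\]
Well-definedness amounts to three independence checks. Increasing $t$ by appending zero coordinates corresponds to composing with the transition map \eqref{eq:T_transition}, and is absorbed by the colimit defining $\MH_n$. Independence of $p$ reduces, via the common refinement $\widetilde X\times_X\widetilde X'$ (itself an affine-bundle cover of $X$), to independence of the embedding for a fixed $p$. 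For the latter, given two embeddings $\iota_0,\iota_1\colon p^*E\hookrightarrow 1^{\oplus nt}$, consider the family $\iota_s=(\iota_0,s\iota_1)\colon p^*E\hookrightarrow 1^{\oplus 2nt}$ for $s\in\Ab^1$; its image remains a rank $n$ subbundle because $\iota_0$ is already an embedding, so this is an $\Ab^1$-homotopy between $\iota_0\oplus 0$ and $(\iota_0,\iota_1)$, and symmetrically $0\oplus\iota_1\simeq(\iota_0,\iota_1)$. Thus the two stabilised classifying maps to $\Gr(n,2nt)$ differ only by the block-swap automorphism $\sigma$ of $1^{\oplus 2nt}$; after a further stabilisation, the matrix of $\sigma$ acquires determinant one and becomes a product of elementary transvections over $\Zz$, which act as the identity on $\SH(S)$ by \cite[Lemma~1]{Ana-Pushforwards}. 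Combined with the compatibility of the $\lambda_{n,\cdot}$ in $t$, this yields the desired independence.

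Properties \eqref{lemm:map_MHn:1} and \eqref{lemm:map_MHn:2} then hold by construction, since a morphism $Y\to X$ or an isomorphism of vector bundles can be accommodated by base-change of the affine cover and composition with $\alpha$; no new choices are introduced. For \eqref{lemm:map_MHn:3}, applied to $\Uni_n$ on $\Gr(n,nt)$, choose any Jouanolou cover $p'\colon \widetilde{\Gr(n,nt)}\to \Gr(n,nt)$ and the pullback of the tautological embedding $\Uni_n\subset 1^{\oplus nt}$; the recipe directly yields $\theta_{\Uni_n}=\lambda_{n,t}\circ p'=\lambda_{n,t}$ in $\Hop(S)$. The technical heart of the argument is the embedding-independence step, especially handling the block-swap when $nt$ is odd, where a preliminary extra stabilisation is needed so that the swap matrix acquires determinant one before the transvection decomposition applies; everything else is essentially formal bookkeeping.
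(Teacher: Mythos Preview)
Your overall strategy matches the paper's: Jouanolou's trick reduces to affine $X$, one classifies via an embedding $E\subset 1^{\oplus nt}$, and the work is in showing independence of choices. The divergence is in the embedding-independence step. You argue $\iota_0\oplus 0\simeq(\iota_0,\iota_1)\simeq 0\oplus\iota_1$ via $\Ab^1$-homotopies and then invoke the block-swap $\sigma$ to pass from $0\oplus\iota_1$ to $\iota_1\oplus 0$. The paper instead continues the same chain two more steps: $0\oplus\iota_1\simeq(\iota_1,\iota_1)\simeq\iota_1\oplus 0$ (this is Lemma~\ref{lemm:inclusion_indep}). This avoids $\sigma$ entirely and sidesteps the determinant issue you flag.

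Your block-swap argument, as written, has two gaps. First, \cite[Lemma~1]{Ana-Pushforwards} concerns an automorphism of a bundle $V$ acting on $\Th_X(V)$; here $\sigma$ acts on the ambient $1^{\oplus 2nt}$ and hence on the \emph{scheme} $\Gr(n,2nt)$, which is a different setting (the analogous $\Ab^1$-contractibility of transvections acting on Grassmannians is true and easy, but it is not that lemma). Second, stabilising $\sigma$ to $\sigma\oplus\id$ does not change its determinant, so ``the swap matrix acquires determinant one after stabilisation'' is not literally correct; you would need something like first enlarging $t$ so that $nt$ becomes even. There is also a bookkeeping point you elide: the zero-padding inclusion $1^{\oplus nt}\subset 1^{\oplus 2nt}$ is \emph{not} the transition map \eqref{eq:T_transition} of the colimit defining $\MH_n$ (which interleaves the summands). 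The paper handles this by exhibiting a permutation $c$ of $\T(n,2nt)$ with $c\circ k$ equal to the transition map, so that $\lambda_{n,t}$ factors through the zero-padding $k$; your appeal to ``compatibility of the $\lambda_{n,\cdot}$'' does not supply this.
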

\begin{proof}
Let $E \to X$ be a rank $n$ vector bundle, with $X\in \Sm_S$. Assume first that $X$ is affine. Then there exists an integer $t \in \Nn \smallsetminus \{0\}$ and a vector bundle inclusion $i \colon E \to 1^{\oplus nt}$. This gives a morphism $X \to \Gr(n,nt)$ along which $\Uni_n$ pulls back to $E$, and thus a map $g_i \colon \Th_X(E \oplus E) \to \T(n,nt)$ in $\Spcp(S)$. We claim that the composite in $\Hop(S)$
\[
\theta_E = \theta_E(i) \colon \Th_X(E \oplus E) \xrightarrow{g_i}  \T(n,nt) \xrightarrow{\lambda_{n,t}} \MH_n
\]
does not depend on $t$ or $i$. Indeed, let $j\colon E \to 1^{\oplus nt'}$ be an inclusion, with $t'\in \Nn \smallsetminus \{0\}$. While proving that $\theta_E(i) = \theta_E(j)$, we may assume that $t \geq t'$. Composing $j$ with the inclusion $1^{\oplus nt'} = (1^{\oplus t'})^{\oplus n} \subset (1^{\oplus t})^{\oplus n} = 1^{\oplus nt}$ induced by the inclusion $1^{\oplus t'} \subset 1^{\oplus t}$ given by the vanishing of the last $t-t'$ coordinates, we are reduced to assuming that $t'=t$. Let $k \colon \T(n,nt) \to \T(n,2nt)$ be the map given by the vanishing of the last $nt$ coordinates. By \rref{lemm:inclusion_indep} below, we have
\begin{equation}
\label{eq:k_g}
k \circ g_i = k \circ g_j \colon \Th_X(E \oplus E) \to \Th(n,2nt) \quad \text{in $\Hop(S)$}.
\end{equation}
Now the morphism
\[
(1^{\oplus t})^{\oplus n} \oplus (1^{\oplus t})^{\oplus n} \xrightarrow{\sim} (1^{\oplus t} \oplus 1^{\oplus t})^{\oplus n}, \quad ((x_1,\dots,x_n),(y_1,\dots,y_n)) \mapsto ((x_1,y_1),\dots,(x_n,y_n))
\]
induces a map $c \colon \Th(n,2nt) \to \Th(n,2nt)$, such that $c \circ k \colon \Th(n,nt) \to \Th(n,2nt)$ is the composite of the transition maps \eqref{eq:T_transition}. In particular the map $\lambda_{n,t} \colon \Th(n,t) \to \MH_n$ factors through $k$, hence \eqref{eq:k_g} implies that $\theta_E(i) = \theta_E(j)$ in $\Hop(S)$, proving the claim. It is then easy to verify the conditions \eqref{lemm:map_MHn:1} and \eqref{lemm:map_MHn:2}, under the additional assumption that $X$ and $Y$ are affine.

When $X$ is not necessarily affine, by Jouanolou's trick \rref{p:Jouanolou_affine} we find an affine bundle $p \colon \tilde{X} \to X$ with $\tilde{X} \in \Sm_S$ affine. We claim that the composite in $\Hop(S)$
\[
\theta_E \colon \Th_X(E \oplus E) \xrightarrow{p^{-1}} \Th_{\tilde{X}}(p^*E \oplus p^*E) \xrightarrow{\theta_{p^*E}} \MH_n
\]
does not depend on $\tilde{X}$ and $p$. Indeed, let $p' \colon \tilde{X'} \to X$ be an affine bundle. Let $Y= \tilde{X} \times_S \tilde{X'}$, and denote by $q \colon Y \to \tilde{X}$ and $q'\colon Y \to \tilde{X}'$ the induced morphisms. Then we have an isomorphism $\beta \colon q^*p^*E \xrightarrow{\sim} q'^*p'^*E$ yielding a commutative diagram in $\Hop(S)$
\[
\xymatrix{
\Th_X(E \oplus E) & \Th_{\tilde{X}}(p^*E \oplus p^*E)\ar[l]_-p & \Th_Y(q^*p^*E \oplus q^*p^*E) \ar[l]_-{q}\ar[d]_{\Th_Y(\beta \oplus \beta)} \ar[rr]^-{\theta_{q^*p^*E}}&&\MH_n\\ 
& \Th_{\tilde{X'}}(p'^*E \oplus p'^*E)\ar[lu]^{p'} &  \Th_Y(q'^*p'^*E \oplus q'^*p'^*E)\ar[l]_{q'} \ar[rru]_-{\theta_{q'^*p'^*E}}&&
}
\]
(here we have used the validity of the condition \eqref{lemm:map_MHn:2} over the scheme $Y$, which is affine as $S$ is separated). Since $\theta_{p^*E} \circ q = \theta_{q^*p^*E}$ and $\theta_{p'^*E} \circ q' = \theta_{q'^*p'^*E}$ by the condition \eqref{lemm:map_MHn:2} in the affine case, we deduce that $\theta_{p^*E} \circ p^{-1} = \theta_{p'^*E} \circ p'^{-1}$, proving the claim. 
It is now easy to verify the conditions \eqref{lemm:map_MHn:1}, \eqref{lemm:map_MHn:2}, \eqref{lemm:map_MHn:3}, as well as the uniqueness part of the statement.
\end{proof}

\begin{lemma}
\label{lemm:inclusion_indep}
Let $V \to S$ be a vector bundle, and $r\in \Nn$. Let $f \colon X \to S$ in $\Sm_S$, and $E \to X$ a rank $n$ vector bundle. If $k\colon E \subset f^*V \oplus f^*V$ is a vector bundle inclusion, corresponding to a morphism $X \to \Gr(n,V \oplus V)$ in $\Sm_S$, let us denote by $\varphi_k \colon \Th_X(E^{\oplus r}) \to \Th_{\Gr(n,V \oplus V)}(\Uni_n^{\oplus r})$ the induced map in $\Spcp(S)$. Then the image of $\varphi_{(i,0)}$ in $\Hop(S)$ does not depend on the inclusion $i\colon E \subset f^*V$.
\end{lemma}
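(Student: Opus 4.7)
The plan is to realize the equality $\varphi_{(i,0)}=\varphi_{(j,0)}$ in $\Hop(S)$ through a chain of $\Ab^1$-homotopies of bundle inclusions. The main input is an $\Ab^1$-homotopy invariance principle: given a rank-$n$ subbundle $\widetilde H \hookrightarrow (f^*V \oplus f^*V)\times \Ab^1$ over $X\times\Ab^1$ together with a canonical isomorphism $p_X^*E \xrightarrow{\sim}\widetilde H$, $(e,t)\mapsto \widetilde H_t(e)$, where $p_X\colon X\times\Ab^1\to X$ is the projection, one gets a classifying map $X\times\Ab^1 \to \Gr(n,V\oplus V)$ pulling $\Uni_n$ back to $\widetilde H$, and hence a Thom space map whose restrictions at $t=0$ and $t=1$ are $\varphi_{\widetilde H_0}$ and $\varphi_{\widetilde H_1}$ respectively. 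Because $p_X$ is an $\Ab^1$-weak equivalence and both sections $X\to X\times\Ab^1$ at $t=0,1$ are one-sided inverses of $p_X$, they coincide in $\Hop(S)$, so $\varphi_{\widetilde H_0}=\varphi_{\widetilde H_1}$ there.

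Applying this principle, I would connect $(i,0)$ to $(j,0)$ by the chain $(i,0) \sim (i,j) \sim (0,j) \sim (j,0)$ using the three families
\[
\widetilde H^{(1)}_t = (i, tj),\quad \widetilde H^{(2)}_t = ((1-t)i, j),\quad \widetilde H^{(3)}_t = (tj, (1-t)j).
\]
For each family one checks that the map $E \to f^*V\oplus f^*V$ is injective for every $t\in \Ab^1$: in $\widetilde H^{(1)}$ and $\widetilde H^{(2)}$ one of the two coordinates is always the full inclusion $i$ or $j$; in $\widetilde H^{(3)}$, a vanishing $(tj(e),(1-t)j(e))$ with $j(e)\neq 0$ would force $t=1-t=0$, which is impossible. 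Constant rank $n$ of the resulting bundle map $E\times\Ab^1 \to (f^*V\oplus f^*V)\times\Ab^1$ then ensures that its image is a subbundle with locally free quotient.

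The most delicate point is exhibiting the subbundle property for $\widetilde H^{(3)}$, whose image genuinely varies with $t$ (neither projection to a $V$-factor is injective at every $t$, so one cannot reduce to a graph argument). To handle this cleanly, I would introduce the matrix
\[
\Psi_t = \begin{pmatrix} t & -1 \\ 1-t & 1 \end{pmatrix} \in \SL_2(\Zz[t]),
\]
and observe that $\Psi_t(v,0)=(tv,(1-t)v)$, so that $\widetilde H^{(3)}_t(E) = \Psi_t(j(E)\oplus 0)$. This exhibits $\widetilde H^{(3)}$ as the image of the manifestly split subbundle $(j(E)\oplus 0)\times\Ab^1$ under the globally defined automorphism $\Psi$ of $(f^*V\oplus f^*V)\times\Ab^1$ over $X\times\Ab^1$; this is the analogue of the observation used earlier in the paper that integer matrices of determinant one act trivially on Thom spaces. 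Assembling the three homotopies then yields $\varphi_{(i,0)} = \varphi_{(i,j)} = \varphi_{(0,j)} = \varphi_{(j,0)}$ in $\Hop(S)$. The remaining bookkeeping—tracking the canonical identifications $\widetilde H^{(k)}_0 \cong E \cong \widetilde H^{(k)}_1$ compatibly with the computation of $\varphi$—is routine once the homotopy-invariance principle above is formulated correctly.
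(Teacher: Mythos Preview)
Your proposal is correct and follows the same overall strategy as the paper: connect $\varphi_{(i,0)}$ to $\varphi_{(j,0)}$ by a chain of $\Ab^1$-homotopies of vector bundle inclusions $E \hookrightarrow f^*V \oplus f^*V$. Your first two homotopies $(i,tj)$ and $((1-t)i,j)$ are exactly those in the paper, and the subbundle property for these follows immediately since one component is always a fixed inclusion (so projecting to that factor gives a retraction).

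The only difference is in the final leg. You go directly from $(0,j)$ to $(j,0)$ via $(tj,(1-t)j)$; since neither component is a subbundle inclusion for all $t$, you invoke an $\SL_2(\Zz[t])$-matrix to exhibit this family as an automorphism applied to the constant inclusion $(j,0)$. The paper instead inserts one extra node $(j,j)$ and uses the chain $(0,j)\sim(j,j)\sim(j,0)$, so that every homotopy in the chain has one component held fixed at a full inclusion and the subbundle property is automatic throughout. Both arguments work; the paper's route is marginally more elementary (no matrix trick needed), while yours is one step shorter and reuses an idea that already appears elsewhere in the paper.
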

\begin{proof}
Let $i,j \colon E \subset f^*V$ be vector bundle inclusions. Let $p\colon \Ab^1 \times_S X \to X$ be the projection. Consider the vector bundle inclusion $(p^*i,tp^*j) \colon p^*E \subset p^*f^*V \oplus p^*f^*V$, where $t$ is the tautological section over $\Ab^1$. Then the induced morphism $\Th_{\Ab^1 \times_S X}(p^*E^{\oplus r}) \to \Th_{\Gr(n,V \oplus V)}(\Uni_n^{\oplus r})$ restricts to $\varphi_{(i,0)}$ at $0\in \Ab^1$ and to $\varphi_{(i,j)}$ at $1 \in \Ab^1$. This yields $\varphi_{(i,0)} = \varphi_{(i,j)}$ in $\Hop(S)$. Similarly $\varphi_{(0,j)} = \varphi_{(i,j)}$ in $\Hop(S)$. Therefore, in $\Hop(S)$ we have
\[
\varphi_{(i,0)} = \varphi_{(i,j)} = \varphi_{(0,j)} = \varphi_{(j,j)} = \varphi_{(j,0)}.\qedhere
\]
\end{proof}

\begin{para}
\label{p:MH_n_pre}
Let $A \in \SH(S)$ be a ring spectrum equipped with a hyperbolic $n$-preorientation. Then the elements $\hypt_{\Uni_n} \in A^{4n,2n}(\T(n,nt))$ are compatible with the transition maps \eqref{eq:T_transition} as $t$ varies in $\Nn \smallsetminus \{0\}$, hence by \rref{lemm:MHn_lim} are the images of a unique element of $\hypt \in A^{4n,2n}(\MH_n)$.
\end{para}

\begin{para}
\label{p:MH_pre_n}
Conversely, let $A \in \SH(S)$ be a ring spectrum and $n \in \Nn$. Assume given an element $\hypt \in A^{4n,2n}(\MH_n)$. Applying \rref{lemm:map_MHn}, we associate to each rank $n$ vector bundle $E \to X$ with $X \in \Sm_S$ an element $\hypt_E=\theta_E^*(\hypt) \in A^{0,0}(X;E \oplus E)$. 
\end{para}

\begin{proposition}
\label{prop:MH_n_pre}
Let $A \in \SH(S)$ be a ring spectrum. The procedures described in \rref{p:MH_n_pre} and \rref{p:MH_pre_n} yield mutually inverse bijections between the set of hyperbolic $n$-preorientations of $A$ and the elements of $A^{4n,2n}(\MH_n)$.
\end{proposition}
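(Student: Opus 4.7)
The plan is to verify separately that each of the two composites is the identity, using the universal property of $\theta_E$ encoded in \rref{lemm:map_MHn} together with the Milnor-type identification of \rref{lemm:MHn_lim}.

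First, observe that the procedure of \rref{p:MH_pre_n} does produce a hyperbolic $n$-preorientation: axiom \dref{def:hyp_preorientation}{def:hyp_preorientation:funct} is immediate from \dref{lemm:map_MHn}{lemm:map_MHn:1}, and axiom \dref{def:hyp_preorientation}{def:hyp_preorientation:isom} from \dref{lemm:map_MHn}{lemm:map_MHn:2}. For the composite starting with an element $\phyp \in A^{4n,2n}(\MH_n)$, the induced preorientation satisfies $\phyp_{\Uni_n} = \theta_{\Uni_n}^*(\phyp) = \lambda_{n,t}^*(\phyp)$ for every $t$, by \dref{lemm:map_MHn}{lemm:map_MHn:3}. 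Hence the element reconstructed from this preorientation via \rref{p:MH_n_pre} and the original $\phyp$ have the same image in $\lim_t \Aa(\T(n,nt))$, and by the injectivity part of \rref{lemm:MHn_lim} they coincide.

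For the composite in the other direction, let $\{\phyp_E\}$ be a preorientation, let $\phyp \in A^{4n,2n}(\MH_n)$ be the associated element, and let $E \to X$ be a rank $n$ vector bundle; the task is to show $\theta_E^*(\phyp) = \phyp_E$. Apply Jouanolou's trick \rref{p:Jouanolou_affine} to obtain an affine bundle $p\colon \widetilde{X} \to X$ with $\widetilde{X}$ affine, and pick an inclusion $p^*E \subset 1^{\oplus nt}$ giving a morphism $f\colon \widetilde{X} \to \Gr(n,nt)$ together with an isomorphism $\alpha\colon p^*E \xrightarrow{\sim} f^*\Uni_n$. Combining \dref{lemm:map_MHn}{lemm:map_MHn:1}, \dref{lemm:map_MHn}{lemm:map_MHn:2}, and \dref{lemm:map_MHn}{lemm:map_MHn:3} yields
\[
\theta_E \circ p = \theta_{p^*E} = \theta_{f^*\Uni_n} \circ \Th_{\widetilde{X}}(\alpha \oplus \alpha) = \lambda_{n,t} \circ f \circ \Th_{\widetilde{X}}(\alpha \oplus \alpha),
\]
so that $p^*\theta_E^*(\phyp) = \Th_{\widetilde{X}}(\alpha \oplus \alpha)^* f^*\phyp_{\Uni_n}$. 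On the other hand, axioms \dref{def:hyp_preorientation}{def:hyp_preorientation:funct} and \dref{def:hyp_preorientation}{def:hyp_preorientation:isom} give $p^*\phyp_E = \phyp_{p^*E} = \Th_{\widetilde{X}}(\alpha \oplus \alpha)^* \phyp_{f^*\Uni_n} = \Th_{\widetilde{X}}(\alpha \oplus \alpha)^* f^*\phyp_{\Uni_n}$. Since $p^*$ is bijective by $\Ab^1$-invariance, we conclude $\theta_E^*(\phyp) = \phyp_E$.

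The main obstacle is disentangling, in this last step, the diagrammatic interplay between the Jouanolou descent $p$, the inclusion $f$ and isomorphism $\alpha$ it entails, and the compatibility between the preorientation axioms and the characterising properties of $\theta_E$. Once this bookkeeping is in place, both composites are manifestly the identity, and the proposition follows.
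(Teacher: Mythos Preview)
Your proof is correct and follows essentially the same approach as the paper. The paper's one-line argument simply cites \rref{prop:preorientations} and \dref{lemm:map_MHn}{lemm:map_MHn:3}; your second paragraph is precisely the latter plus the injectivity from \rref{lemm:MHn_lim} (already built into the procedure \rref{p:MH_n_pre}), and your third paragraph unfolds the proof of \rref{prop:preorientations} inline rather than citing it.
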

\begin{proof}
This follows from \rref{prop:preorientations} and \dref{lemm:map_MHn}{lemm:map_MHn:3}.
\end{proof}

\begin{corollary}
\label{prop:MH_2_weak}
Let $A \in \SH(S)$ be a commutative ring spectrum. The procedures described in \rref{p:MH_n_pre} and \rref{p:MH_pre_n} yield mutually inverse bijections between the set of weak hyperbolic orientations of $A$ (see \rref{def:hyp_thom}) and the elements of $A^{8,4}(\MH_2)$ whose restriction  along $\theta_{1^{\oplus 2}} \colon \Th_S(1^{\oplus 4}) \to \MH_2$ is $\sw_{1,1}^*(\Sigma^{8,4}1) \in A^{0,0}(S;1^{\oplus 4}) = A^{8,4}(\Th_S(1^{\oplus 4}))$.
\end{corollary}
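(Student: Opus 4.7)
The plan is to deduce this corollary directly from \rref{prop:MH_n_pre} applied with $n=2$. That proposition already provides a bijection between hyperbolic $2$-preorientations of $A$ and elements of $A^{8,4}(\MH_2)$, via the procedures of \rref{p:MH_n_pre} and \rref{p:MH_pre_n}. It therefore suffices to identify weak hyperbolic orientations, among all hyperbolic $2$-preorientations, with those elements of $A^{8,4}(\MH_2)$ satisfying the stated normalisation.

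First I would compare the axioms. Conditions \dref{def:hyp_thom}{def:hyp_thom:funct} and \dref{def:hyp_thom}{def:hyp_thom:isom} of a weak hyperbolic orientation coincide verbatim with conditions \dref{def:hyp_preorientation}{def:hyp_preorientation:funct} and \dref{def:hyp_preorientation}{def:hyp_preorientation:isom} of a hyperbolic $2$-preorientation. The centrality axiom \dref{def:hyp_thom}{def:hyp_thom:central} is automatically satisfied since $A$ is commutative, by \rref{p:com_whyp}. Thus a weak hyperbolic orientation on a commutative ring spectrum is precisely a hyperbolic $2$-preorientation satisfying the normalisation \dref{def:hyp_thom}{def:hyp_thom:norm}, namely $\hypt_{1^{\oplus 2}} = \sw_{1,1}^*(\Sigma^{8,4}1)$.

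Next, I would verify that the normalisation condition translates correctly across the bijection. Given a hyperbolic $2$-preorientation $E \mapsto \hypt_E$ with associated class $\hypt \in A^{8,4}(\MH_2)$ (see \rref{p:MH_n_pre}), the procedure of \rref{p:MH_pre_n} recovers $\hypt_E = \theta_E^*(\hypt)$ for each rank two vector bundle $E$. Applied to the trivial rank two bundle on $S$, this yields
\[
\hypt_{1^{\oplus 2}} = \theta_{1^{\oplus 2}}^*(\hypt) \in A^{0,0}(S;1^{\oplus 4}).
\]
Hence axiom \dref{def:hyp_thom}{def:hyp_thom:norm} holds for the preorientation if and only if $\theta_{1^{\oplus 2}}^*(\hypt) = \sw_{1,1}^*(\Sigma^{8,4}1)$, which is the condition in the statement.

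There is no genuine obstacle here; the work was already done in \rref{prop:MH_n_pre} and in the commutativity observation \rref{p:com_whyp}. The only point requiring mild care is checking that the map $\theta_{1^{\oplus 2}}$ produced by \rref{lemm:map_MHn} for the trivial rank two bundle on $S$ is indeed the one used to formulate the restriction, which follows from property \dref{lemm:map_MHn}{lemm:map_MHn:1} applied to the structural morphism $S \to S$ together with \dref{lemm:map_MHn}{lemm:map_MHn:3}.
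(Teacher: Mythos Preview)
Your proposal is correct and follows exactly the approach of the paper: the paper's proof consists of the single sentence ``This follows from \rref{p:com_whyp},'' relying implicitly on \rref{prop:MH_n_pre} (of which the statement is a corollary) and leaving the translation of the normalisation condition as evident. Your write-up simply makes these implicit steps explicit.
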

\begin{proof}
This follows from \rref{p:com_whyp}.
\end{proof}

\subsection{The ring spectrum \texorpdfstring{$\MH$}{MH}}
In this section, we assemble the pointed motivic spaces $\MH_n$ defined in the previous section into a commutative ring spectrum $\MH \in \SH(S)$. We follow very closely the strategy employed in \cite[\S2.1]{PPR-MGL} to construct the spectrum $\MGL$, the main difference being that we naturally obtain a $T^{\wedge 2}$-spectrum instead of a $T$-spectrum, as was the case for the spectrum $\MSp$ constructed by Panin--Walter \cite[\S6]{PW-MSL-Msp}. Since the model categories of symmetric $T$- and $T^{\wedge 2}$-spectra have equivalent homotopy categories (with their symmetric monoidal structures) by \cite[Theorem~3.2]{PW-MSL-Msp}, we still obtain a commutative ring spectrum in $\SH(S)$.

\begin{para}
\label{p:Th_product_sw}
Let us first describe a construction which will be useful in this section. Let $E_i \to X_i$ be vector bundles with $X_i \in \Sm_S$, for $i \in \{1,2\}$. Set $P=X_1 \times_S X_2$ and let $V_i \to P$ be the pullback of $E_i \to X_i$ along the $i$-th projection $P \to X_i$, for $i \in \{1,2\}$. Assume given a morphism $f \colon P \to Z$ in $\Sm_S$, and $W \to Z$ a vector bundle together with an isomorphism $f^*W \simeq V_1 \oplus V_2$. Then there is an induced map in $\Spcp(S)$
\begin{align*}
\Th_{X_1}(E_1 \oplus E_1) &\wedge \Th_{X_2}(E_2 \oplus E_2) = \Th_P(V_1 \oplus V_1 \oplus V_2 \oplus V_2)\\
&\xrightarrow{(\sw_{V_1,V_2})^{-1}} \Th_P(V_1 \oplus V_2 \oplus V_1 \oplus V_2) \xrightarrow{f} \Th_Z(W \oplus W).
\end{align*}
\end{para}

\begin{para}
For $n \in \Nn$ and $t\in \Nn \smallsetminus \{0\}$, consider the closed immersions
\begin{equation}
\label{eq:Gr_nt_(n+1)t}
\gamma_{n,t} \colon \Gr(n,nt) \to \Gr(n+1,(n+1)t))
\end{equation}
given by mapping a subbundle $E \subset 1^{\oplus nt}$ to $E \oplus 1 \subset 1^{\oplus nt} \oplus 1^{\oplus t} = 1^{\oplus (n+1)t}$, where the inclusion $1 \subset 1^{\oplus t}$ is induced by the vanishing of the $t-1$ last coordinates. Along the morphism \eqref{eq:Gr_nt_(n+1)t}, the vector bundle $\Uni_{n+1}$ pulls back to $\Uni_n \oplus 1$. Using the procedure described in \rref{p:Th_product_sw}, we thus obtain maps in $\Spcp(S)$, for $n \in \Nn$ and $t \in \Nn\smallsetminus \{0\}$ (recall that $T=\Th_S(1)$)
\begin{equation}
\label{eq:tr_T}
\tau_{n,t} \colon \T(n,nt) \wedge T^{\wedge 2} \to \T(n+1,(n+1)t),
\end{equation}
which are compatible with the transition maps \eqref{eq:T_transition} as $t$ varies. Taking the colimit over $t\in \Nn \smallsetminus \{0\}$, we obtain maps in $\Spcp(S)$, for $n\in \Nn$
\begin{equation}
\label{eq:tr_MH}
\MH_n \wedge T^{\wedge 2} \to \MH_{n+1}.
\end{equation}
\end{para}

\begin{para}
\label{p:MH_maps}
Let $n \in \Nn$ and $t\in \Nn \smallsetminus\{0\}$. The natural action of the symmetric group $\Sy_n$ (where $\Sy_0=1$) on $(1^{\oplus t})^{\oplus n}$ induces an action on $\Gr(n,nt)$, for which the vector bundle $\Uni_n$ is $\Sy_n$-equivariant. This yields a $\Sy_n$-action on $\T(n,nt)=\Th_{\Gr(n,nt)}(\Uni_n \oplus \Uni_n)$. The transition maps $\Gr(n,nt) \to \Gr(n,n(t+1))$ of \rref{eq:Gr_transition} and $\T(n,nt) \to \T(n,n(t+1))$ of \rref{eq:T_transition} are $\Sy_n$-equivariant. For $m \in \Nn$, consider the $(\Sy_n \times \Sy_m)$-equivariant morphism in $\Sm_S$
\begin{equation}
\label{eq:mu_n_m_s}
\mu_{n,m,t} \colon \Gr(n,nt) \times_S \Gr(m,mt) \to \Gr(n+m,(n+m)t)
\end{equation}
given by
\[
(U \subset 1^{\oplus nt},V \subset 1^{\oplus mt}) \mapsto U\oplus V \subset 1^{\oplus nt} \oplus 1^{\oplus mt} = 1^{\oplus (n+m)t},
\]
under which the vector bundle $\Uni_{n+m}$ pulls back to $p_1^*\Uni_n \oplus p_2^*\Uni_m$, where $p_1 \colon \Gr(n,nt) \times_S \Gr(m,mt) \to \Gr(n,nt)$ and $p_2 \colon \Gr(n,nt) \times_S \Gr(m,mt) \to \Gr(m,mt)$ are the two projections. By the procedure described in \rref{p:Th_product_sw}, we obtain a $(\Sy_n \times \Sy_m)$-equivariant map in $\Spcp(S)$
\begin{equation}
\label{eq:T_n_m_s}
\T(n,nt) \wedge \T(m,mt) \to \T(n+m,(n+m)t).
\end{equation}
These morphisms are compatible with the transition maps \eqref{eq:Gr_transition} and \eqref{eq:T_transition} as $t$ varies in $\Nn \smallsetminus \{0\}$. Taking the colimit over $t$, we obtain a $\Sy_n$-action on each $\MH_n$, and a $(\Sy_n \times \Sy_m)$-equivariant map in $\Spcp(S)$, for $n,m \in \Nn$
\[
\mu_{n,m} \colon \MH_n \wedge \MH_m \to \MH_{n+m}.
\]
We also have in $\Spcp(S)$ a canonical isomorphism and a map (see \eqref{eq:T_MH})
\begin{equation}
\label{eq:def_e1}
e_0 \colon S_+ \xrightarrow{\sim} \MH_0 \quad \text{and} \quad e_1 \colon T^{\wedge 2} = \Th(1,1) \xrightarrow{\lambda_{1,1}} \MH_1.
\end{equation}
\end{para}

\begin{definition}
\label{def:MH}
A straightforward verification shows that the data described in \rref{p:MH_maps} define a commutative $T^{\wedge 2}$-monoid in $\Spcp(S)$ in the sense of \cite[Definition~3.3]{PW-MSL-Msp}, and thus by \cite[Theorem~3.4]{PW-MSL-Msp} a commutative monoid in the category of symmetric $T^{\wedge 2}$-spectra, that we denote by $\MH$. Using the natural equivalence between the homotopy categories of symmetric $T^{\wedge 2}$-spectra and of $T$-spectra (see \cite[Theorem~3.2]{PW-MSL-Msp} and \cite[Theorem~4.31]{Jardine-Spt}), we may view $\MH$ as a commutative ring spectrum in $\SH(S)$.
\end{definition}

\begin{para}
By definition, the bonding maps of the $T^{\wedge 2}$-spectrum $\MH$ are the composites
\begin{equation}
\label{eq:tr_MH:2}
\MH_n \wedge T^{\wedge 2} \xrightarrow{\id \wedge e_1} \MH_n \wedge \MH_1 \xrightarrow{\mu_{n,1}} \MH_{n+1},
\end{equation}
and it follows form the construction that they coincide with the maps \eqref{eq:tr_MH}.
\end{para}

\begin{para}
Let $n\in \Nn$. The functor mapping a $T^{\wedge 2}$-spectrum $E$ to its level $n$ component $E_n$ admits a left adjoint, which maps a pointed motivic space $Y$ to the spectrum $\Su_{T^{\wedge 2}} Y(-n)$ given by $(*,\dots,*,Y,T^{\wedge 2} \wedge Y, \dots)$, whose image in $\SH(S)$ is naturally isomorphic to $\Sigma^{-4n,-2n}\Su Y$. Thus the identity of $\MH_n$ yields, by adjunction and application of the functor $\Sigma^{4n,2n}$, a canonical map in $\SH(S)$
\begin{equation}
\label{eq:MHn_MH}
\rho_n \colon \Su \MH_n \to \Sigma^{4n,2n} \MH.
\end{equation}
Note that, under the above adjunction for $n=1$, the unit $1_{\MH} \colon \Un_S \to \MH$ of the ring spectrum $\MH$ corresponds to the map $e_1$ of \eqref{eq:def_e1}, hence factors in $\SH(S)$ as
\begin{equation}
\label{eq:unit_MH}
1_{\MH} \colon \Un_S = \Sigma^{-4,-2} \Su T^{\wedge 2} \xrightarrow{\Sigma^{-4,-2} \Su e_1} \Sigma^{-4,-2} \Su \MH_1 \xrightarrow{\Sigma^{-4,-2}\rho_1} \MH.
\end{equation}
Moreover, it follows from the construction of the product $\mu \colon \MH \wedge \MH \to \MH$ that the following diagram commutes in $\SH(S)$ (using the identification \rref{p:sigma_exchange} below)
\begin{equation}
\label{diag:mu}
\begin{gathered}
\xymatrix{
\Su \MH_n \wedge \Su \MH_m\ar[rrr]^-{\Su \mu_{n,m}} \ar[d]_{\rho_n \wedge \rho_m} &&& \Su \MH_{n+m} \ar[d]^{\rho_{n+m}} \\ 
\Sigma^{4n,2n}\MH \wedge \Sigma^{4m,2m}\MH \ar[rrr]^-{ \Sigma^{4(n+m),2(n+m)}\mu} &&& \Sigma^{4(n+m),2(n+m)}\MH
}
\end{gathered}
\end{equation}
\end{para}

\begin{para}
\label{p:sigma_exchange}
Let $A,B \in \SH(S)$ and $a,b \in \Nn$. The isomorphism $\tau \colon T^{\wedge a} \wedge B \to B \wedge T^{\wedge a}$ exchanging the factors $B$ and $T^{\wedge a}$ induces an identification
\[
\Sigma^{2a,a}A \wedge \Sigma^{2b,b}B = A \wedge T^{\wedge a} \wedge B \wedge T^{\wedge b} \simeq A \wedge B \wedge T^{\wedge a} \wedge T^{\wedge b} = \Sigma^{2(a+b),a+b}(A \wedge B),
\]
where the middle isomorphism is $\id_A \wedge \tau \wedge \id_{T^{\wedge b}}$.
\end{para}

\begin{proposition}
\label{prop:hyp_MH}
One may define a hyperbolic orientation on the ring spectrum $\MH$ using the elements $\theta_E$ of \rref{lemm:map_MHn}, by setting
\[
\hypo_E = \rho_n \circ \Su \theta_E \in \MH^{4n,2n}(\Th_X(E \oplus E))= \MH^{0,0}(X;E \oplus E)
\]
for every vector bundle $E\to X$ of rank $n \in \Nn$, with $X \in \Sm_S$ (and extending this definition to arbitrary vector bundles in an obvious way).
\end{proposition}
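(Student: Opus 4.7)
The plan is to verify the five axioms of \rref{def:hyp_orientation} for the assignment $E \mapsto \hypo_E$. Axiom \dref{def:hyp_orientation}{def:hyp_orientation:central} (centrality) is automatic since $\MH$ is commutative (see \rref{p:com_hyp}). Axioms \dref{def:hyp_orientation}{def:hyp_orientation:funct} and \dref{def:hyp_orientation}{def:hyp_orientation:isom} follow immediately by applying $\rho_n \circ \Su(-)$ to the corresponding compatibilities \dref{lemm:map_MHn}{lemm:map_MHn:1} and \dref{lemm:map_MHn}{lemm:map_MHn:2} for the maps $\theta_E$.

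For the normalisation axiom \dref{def:hyp_orientation}{def:hyp_orientation:norm}, we must compute $\hypo_1 \in \MH^{0,0}(S;1\oplus 1) = \MH^{4,2}(T^{\wedge 2})$. Since $\Uni_1$ on $\Gr(1,1) = S$ is the trivial line bundle, \dref{lemm:map_MHn}{lemm:map_MHn:3} gives $\theta_1 = \lambda_{1,1} = e_1$, hence $\hypo_1 = \rho_1 \circ \Su e_1$, which by \eqref{eq:unit_MH} is precisely $\Sigma^{4,2}(1_\MH) = \Sigma^{4,2}1$.

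The main obstacle is the multiplicativity axiom \dref{def:hyp_orientation}{def:hyp_orientation:mult}. For $E, F \to X$ of ranks $n, m$, unfolding the definition of the cup product from \rref{p:cup_product} and applying diagram \eqref{diag:mu} shows that $\sw_{E,F}^*(\hypo_E \cup \hypo_F)$ equals $\rho_{n+m}$ composed with the map in $\Hop(S)$
\[
\Th_X(E \oplus F \oplus E \oplus F) \xrightarrow{\sw_{E,F}} \Th_X(E \oplus E \oplus F \oplus F) \xrightarrow{\Delta_X} \Th_X(E\oplus E) \wedge \Th_X(F\oplus F) \xrightarrow{\theta_E \wedge \theta_F} \MH_n \wedge \MH_m \xrightarrow{\mu_{n,m}} \MH_{n+m}.
\]
Since $\rho_{n+m} \circ \Su \theta_{E\oplus F} = \hypo_{E\oplus F}$ by definition, it suffices to prove that the above composite coincides with $\theta_{E\oplus F}$ in $\Hop(S)$.

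To do so, I would mimic the strategy of \rref{lemm:map_MHn}: by Jouanolou's trick \rref{p:Jouanolou_affine} reduce to $X$ affine, choose vector bundle inclusions $E \subset 1^{\oplus nt}$ and $F \subset 1^{\oplus mt}$ for a common $t$ (enlarging $t$ if necessary), giving classifying maps $X \to \Gr(n,nt)$ and $X \to \Gr(m,mt)$. The product $X \to \Gr(n,nt) \times_S \Gr(m,mt)$ followed by $\mu_{n,m,t}$ from \eqref{eq:mu_n_m_s} classifies the inclusion $E \oplus F \subset 1^{\oplus(n+m)t}$ and therefore, after passing to Thom spaces and composing with $\lambda_{n+m,t}$, recovers $\theta_{E\oplus F}$. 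On the other hand, the map $\mu_{n,m}$ is obtained, via the procedure of \rref{p:Th_product_sw}, from precisely the same $\mu_{n,m,t}$, with the identification $\mu_{n,m,t}^*\Uni_{n+m} \simeq p_1^*\Uni_n \oplus p_2^*\Uni_m$ and a swap rearranging $V_1 \oplus V_1 \oplus V_2 \oplus V_2$ into $V_1 \oplus V_2 \oplus V_1 \oplus V_2$. Matching these two descriptions and using compatibility with the stabilisation maps \eqref{eq:Gr_transition}, \eqref{eq:T_transition} completes the argument. The most delicate bookkeeping is checking that the swap embedded in \rref{p:Th_product_sw} is literally the $\sw_{E,F}$ appearing in \dref{def:hyp_orientation}{def:hyp_orientation:mult}, but this is built into both sides by construction.
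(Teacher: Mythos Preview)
Your proposal is correct and follows essentially the same approach as the paper: both verify axioms \dref{def:hyp_orientation}{def:hyp_orientation:central}--\dref{def:hyp_orientation}{def:hyp_orientation:norm} by the same direct arguments, and for multiplicativity both reduce via \eqref{diag:mu} to showing $\theta_{E\oplus F}$ equals the composite $\mu_{n,m}\circ(\theta_E\wedge\theta_F)\circ\Delta_X\circ\sw_{E,F}$, then use Jouanolou's trick and explicit classifying maps through $\mu_{n,m,t}$ to match the two. The paper spells out the final diagram chase more explicitly, but your outline captures the same content.
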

\begin{proof}
We verify the axioms of \rref{def:hyp_orientation}. Centrality \dref{def:hyp_orientation}{def:hyp_orientation:central} is automatic because the ring spectrum $\MH$ is commutative (see \rref{p:com_hyp}). Compatibility with pullbacks \dref{def:hyp_orientation}{def:hyp_orientation:funct} and isomorphisms \dref{def:hyp_orientation}{def:hyp_orientation:isom} follow from \dref{lemm:map_MHn}{lemm:map_MHn:1} and \dref{lemm:map_MHn}{lemm:map_MHn:2}. Applying \dref{lemm:map_MHn}{lemm:map_MHn:3} with $n=t=1$ shows that $\theta_1$ is the map $\lambda_{1,1} =e_1$ of \eqref{eq:def_e1}. Thus it follows from the factorisation \eqref{eq:unit_MH} that $\hypo_1 = \Sigma^{4,2} 1_{\MH}$, where $1_{\MH} \colon \Un_S \to \MH$ is the unit of the ring spectrum $\MH$, proving the normalisation axiom \dref{def:hyp_orientation}{def:hyp_orientation:norm}.

Finally let us prove the multiplicativity axiom \dref{def:hyp_orientation}{def:hyp_orientation:mult}. Let $E,F$ be vector bundles over $X \in \Sm_S$, of respective ranks $n,m \in \Nn$. In view of the diagram \eqref{diag:mu}, it will suffice to prove that the map $\theta_{E\oplus F}$ in $\Hop(S)$ factors as (see \rref{p:sw} and \rref{p:cup_product})
\begin{align*}
\Th_X(E \oplus F \oplus E \oplus F) \xrightarrow{\sw_{E,F}}& \Th_X(E \oplus E \oplus F \oplus F) \xrightarrow{\Delta_X} \Th_X(E\oplus E) \wedge \Th_X(F \oplus F) \\
&\xrightarrow{\theta_E \wedge \theta_F} \MH_n \wedge \MH_m \xrightarrow{\mu_{n,m}} \MH_{n+m}.
\end{align*}
While doing so, we may assume that $X$ is affine by Jouanolou's trick \rref{p:Jouanolou_affine}, and thus assume given inclusions $E \subset 1^{\oplus nt}$ and $F \subset 1^{\oplus mt}$ for some $t \in \Nn \smallsetminus \{0\}$, corresponding to morphisms $e \colon X \to \Gr(n,nt)$ and $f \colon X \to \Gr(m,mt)$. Then we have a commutative diagram in $\Hop(S)$
\[
\resizebox{\linewidth}{!}{\xymatrix{
\Th_X(E \oplus E) \wedge \Th_X(F \oplus F) \ar[rd]_{\theta_E \wedge \theta_F} \ar[r]^-{e \wedge f} & \T(n,nt) \wedge \T(m,mt) \ar[rr]^-{\eqref{eq:T_n_m_s}} \ar[d]^{\lambda_{n,nt} \wedge \lambda_{m,mt}}&&\Th(n+m,(n+m)t) \ar[d]_{\lambda_{n+m,(n+m)t}}\\ 
 & \MH_n \wedge \MH_m \ar[rr]^-{\mu_{n,m}}  && \MH_{n+m}
}
}\]
Consider the morphism $h \colon X \to \Gr(n+m,(n+m)t)$ in $\Sm_S$ corresponding to the inclusion $E \oplus F \subset 1^{\oplus nt} \oplus 1^{\oplus mt}=1^{\oplus (n+m)t}$. Then $\theta_{E\oplus F}$ factors as
\[
\Th_X(E \oplus F \oplus E \oplus F) \xrightarrow{h}  \T(n+m,(n+m)t) \xrightarrow{\lambda_{n+m,(n+m)t}} \MH_{n+m}.
\]
Therefore it will suffice to prove that the composite
\begin{align*}
\Th_X(E \oplus F \oplus E \oplus F) \xrightarrow{\sw_{E,F}}& \Th_X(E \oplus E \oplus F \oplus F) \xrightarrow{\Delta_X} \Th_X(E\oplus E) \wedge \Th_X(F \oplus F) \\
&\xrightarrow{e \wedge f} \T(n,nt) \wedge \T(m,mt) \xrightarrow{\eqref{eq:T_n_m_s}} \T(n+m,(n+m)t)
\end{align*}
is the morphism induced by $h$. Set $P=\Gr(n,nt) \times_S \Gr(m,mt)$. Let $U \to P$ be the pullback of $\Uni_n$ under the first projection, and $V \to P$ the pullback of $\Uni_m$ under the second projection. The consideration of the commutative diagram in $\Hop(S)$
\[ \xymatrix{
\Th_X(E \oplus F \oplus E \oplus F) \ar[d]_{\sw_{E,F}} \ar[r]_{(e,f)} \ar@/^2.0pc/[rr]^h &  \Th_P(U \oplus V \oplus U \oplus V) \ar[d]^{\sw_{U,V}}  \ar[r]_{\mu_{n,m,t}}& \T(n+m,(n+m)t)\\ 
\Th_X(E \oplus E \oplus F \oplus F) \ar[r]_{(e,f)}\ar[d]_-{\Delta_X}& \Th_P(U \oplus U \oplus V \oplus V) & \T(n,nt) \wedge \T(m,mt) \ar@{=}[l] \ar[u]_{\eqref{eq:T_n_m_s}}\\
\Th_X(E \oplus E) \wedge \Th_X(F \oplus F)\ar@/_1.0pc/[rru]_{e\wedge f}&&
}\]
concludes the proof.
\end{proof}

\begin{para}
\label{p:MH_univ}
Let us denote by $E \mapsto \hypo_E^{\MH}$ the hyperbolic orientation of $\MH$ described in \rref{prop:hyp_MH}. Let $A \in \SH(S)$ be a commutative ring spectrum. If $\psi \colon \MH \to A$ is a morphism of ring spectra in $\SH(S)$, then $E \mapsto \psi_*(\hypo_E^{\MH})$ defines a hyperbolic orientation of $A$.
\end{para}

\begin{lemma}
\label{lemm:MH_lim}
Let $A \in \SH(S)$ be an $\eta$-periodic hyperbolically oriented ring spectrum. Then for any $p,q\in \Zz$, the morphisms
\[
A^{p,q}(\MH) \to \lim_n A^{p+4n,q+2n}(\MH_n) \; \text{ and } \; A^{p,q}(\MH \wedge \MH) \to \lim_n A^{p+8n,q+4n}(\MH_n \wedge \MH_n) 
\]
induced by \rref{eq:MHn_MH} are bijective.
\end{lemma}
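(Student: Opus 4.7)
The plan is to deduce both bijections from a single Milnor short exact sequence and reduce the problem to the Mittag--Leffler condition on a system of cohomology groups of classical classifying spaces.

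Since the symmetric $T^{\wedge 2}$-spectrum $\MH$ underlies, in $\SH(S)$, the homotopy colimit of the sequence $\Sigma^{-4n,-2n}\Su\MH_n$ with transition maps induced by the bonding maps \eqref{eq:tr_MH:2}, the Milnor sequence (\rref{p:Milnor_seq}), applied in its stable form, yields
\[
0 \to {\lim_n}^1 A^{p-1+4n,q+2n}(\MH_n) \to A^{p,q}(\MH) \to \lim_n A^{p+4n,q+2n}(\MH_n) \to 0,
\]
where the right-hand morphism is the one of the statement. Since $- \wedge -$ commutes with homotopy colimits in each variable and the diagonal is cofinal in $\Nn \times \Nn$, one obtains analogously
\[
0 \to {\lim_n}^1 A^{p-1+8n,q+4n}(\MH_n \wedge \MH_n) \to A^{p,q}(\MH \wedge \MH) \to \lim_n A^{p+8n,q+4n}(\MH_n \wedge \MH_n) \to 0.
\]
It therefore suffices to prove that the $\lim^1$-terms vanish.

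Next, I translate the transition maps via the Thom isomorphism supplied by the hyperbolic orientation of $A$. Combining \rref{lemm:MHn_lim} with the Thom isomorphism \rref{lemm:hyp_isom} (using the class $\hypo_{\Uni_n}$ of $A$), one identifies $A^{p+4n,q+2n}(\MH_n)$ with $\lim_t A^{p,q}(\Gr(n,nt)) = A^{p,q}(\BGL_n)$ (see \dref{p:BGL_BGL}{p:BGL_BGL:lim}); similarly $A^{p+8n,q+4n}(\MH_n \wedge \MH_n) \simeq A^{p,q}((\BGL_n)_+ \wedge (\BGL_n)_+)$. Under this identification, using the multiplicativity and normalisation axioms \dref{def:hyp_orientation}{def:hyp_orientation:mult} and \dref{def:hyp_orientation}{def:hyp_orientation:norm} of the orientation of $A$ together with the construction of $\tau_{n,t}$ in \eqref{eq:tr_T}, the transition map $\tau_{n,t}^*$ corresponds to the pullback along the closed immersion $\gamma_{n,t}\colon \Gr(n,nt) \to \Gr(n+1,(n+1)t)$ of \eqref{eq:Gr_nt_(n+1)t} (and similarly for the smash square).

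By \rref{th:A_BGL} and \dref{p:BGL_BGL}{p:BGL_BGL:p}, the induced map $A^{p,q}(\BGL_{n+1}) \to A^{p,q}(\BGL_n)$ sends $p_i(\Uni_{n+1})$ to $p_i(\gamma_{n,t}^*\Uni_{n+1}) = p_i(\Uni_n \oplus 1) = p_i(\Uni_n)$ by \rref{lemm:pontryagin_no_line}. For $n = 2r$ this is the identity of $A^{p,q}(S)[[p_1, \dots, p_r]]_h$, while for $n = 2r+1$ it is the surjection $A^{p,q}(S)[[p_1, \dots, p_{r+1}]]_h \twoheadrightarrow A^{p,q}(S)[[p_1, \dots, p_r]]_h$ given by $p_{r+1} \mapsto 0$. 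The same analysis, using \rref{prop:A_BGL_BGL} and \dref{p:BGL_BGL}{p:BGL_BGL:lim1} in place of \rref{th:A_BGL}, handles the smash product system. In all cases the transitions are surjective, so the Mittag--Leffler condition holds and both $\lim^1$-terms vanish.

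The main obstacle is the identification carried out in the second step: one must carefully check that the bonding map $\tau_{n,t}$, constructed via the swap isomorphism of \rref{p:Th_product_sw} and the pullback of $\Uni_{n+1}$ along $\gamma_{n,t}$, really corresponds under the $A$-Thom isomorphism to the bare pullback $\gamma_{n,t}^*$ on $A^{*,*}(\Gr(n,nt))$. This is exactly what the multiplicativity axiom \dref{def:hyp_orientation}{def:hyp_orientation:mult} (producing $\hypo_{\Uni_n \oplus 1} = \sw^*(\hypo_{\Uni_n} \cup \hypo_1)$) combined with the normalisation $\hypo_1 = \Sigma^{4,2}1$ is designed to ensure, but the verification requires tracking the various diagonals and swaps through the construction of $\tau_{n,t}$.
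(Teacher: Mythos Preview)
Your argument is correct and follows essentially the same route as the paper: a Milnor exact sequence for the spectrum $\MH^{\wedge r}$ (the paper cites \cite[Corollaries~2.1.4, 2.1.5]{PMR} and \cite[Theorem~5.6]{PW-MSL-Msp} for this), followed by the Thom isomorphism $(-)\cup\hypo_{\Uni_n}^{\cup r}$ to identify the transition maps with the pullbacks $\Aa((\BGL_{n+1})_+^{\wedge r})\to\Aa((\BGL_n)_+^{\wedge r})$, and then surjectivity from the explicit descriptions in \rref{th:A_BGL} and \rref{prop:A_BGL_BGL}. The only point where you are slightly less precise than the paper is the source of the stable Milnor sequence, but the content is the same.
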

\begin{proof}
Let  $r\in \{1,2\}$. By \cite[Corollaries~2.1.4, 2.1.5]{PMR} (see also \cite[Theorem~5.6]{PW-MSL-Msp}) we have for each $p,q \in \Zz$ a short exact sequence of abelian groups
\[
0 \to {\lim_n}^1 A^{p+4rn-1,q+2rn}(\MH_n^{\wedge r}) \to A^{p,q}(\MH^{\wedge r}) \to \lim_n A^{p+4rn,q+2rn}(\MH_n^{\wedge r}) \to 0.
\]
The isomorphisms, for $n \in \Nn$ and $t \in \Nn \smallsetminus \{0\}$ (see \rref{lemm:hyp_isom})
\[
(-) \cup (\hypo_{\Uni_n})^{\cup r} \colon \Aa(\Gr(n,nt)^{\times_S r}) \xrightarrow{\sim} \Aa(\T(n,nt)^{\wedge r})
\]
(which are graded of degree $(4rn,2rn)$) are compatible with the morphisms \eqref{eq:Gr_nt_(n+1)t} and \eqref{eq:tr_T}, which in view of \dref{p:BGL_BGL}{p:BGL_BGL:lim} and \rref{lemm:MHn_lim} yields a commutative diagram
\[ 
\xymatrix{
\Aa((\BGL_{n+1})_+^{\wedge r})\ar[r] \ar[d]_{\sim} & \Aa((\BGL_n)_+^{\wedge r}) \ar[d]^{\sim} \\ 
\Aa(\MH_{n+1}^{\wedge r}) \ar[r] & \Aa(\MH_n^{\wedge r})
}
\]
Under the identifications given in \rref{th:A_BGL} and \rref{prop:A_BGL_BGL},  by \dref{p:BGL_BGL}{p:BGL_BGL:p} the upper horizontal arrow is a morphism of $\Aa(S)$-algebras mapping $p_j$ to $p_j$, as well as $p_j'$ to $p_j'$ when $r=2$, hence is surjective. Since the vertical arrows are bijective, it follows that the lower horizontal arrow is surjective. This implies that the $\lim^1$-term vanishes in the above exact sequence, concluding the proof.
\end{proof}

\begin{theorem}
\label{th:MH_univ}
Let $A \in \SH(S)$ be an $\eta$-periodic commutative ring spectrum. The procedure described in \rref{p:MH_univ} yields a bijection between the set of morphisms of ring spectra $\MH \to A$ in $\SH(S)$ and the set of hyperbolic orientations of $A$.
\end{theorem}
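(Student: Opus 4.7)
The plan is to construct an inverse to the map $\psi \mapsto (E \mapsto \psi_*(\hypo_E^{\MH}))$ of \rref{p:MH_univ}, and then verify the two assignments are mutually inverse. Injectivity I would handle first: by \rref{lemm:MH_lim} a morphism $\psi \colon \MH \to A$ in $\SH(S)$ is determined by the family of composites $\psi \circ \rho_n \in A^{4n,2n}(\MH_n)$ for $n \in \Nn$. Via the bijection of \rref{prop:MH_n_pre}, each such composite corresponds to the hyperbolic $n$-preorientation obtained by restricting $E \mapsto \psi_*(\hypo_E^{\MH})$ to rank $n$ bundles, and this collection of $n$-preorientations recovers the whole hyperbolic orientation; hence $\psi$ is determined by the induced orientation.

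For surjectivity, starting from a hyperbolic orientation $E \mapsto \hypo_E$ on $A$, I would extract for each $n$ the underlying hyperbolic $n$-preorientation and, via \rref{prop:MH_n_pre}, associate the class $h_n \in A^{4n,2n}(\MH_n)$. To assemble the $h_n$ into a single element of $A^{0,0}(\MH)$ using \rref{lemm:MH_lim}, I need to check that the bonding maps \eqref{eq:tr_MH} pull $h_{n+1}$ back to (the suspension of) $h_n$. Because these bonding maps arise from the closed immersions \eqref{eq:Gr_nt_(n+1)t} along which $\Uni_{n+1}$ restricts to $\Uni_n \oplus 1$, the required identity follows at the level of universal Thom classes from $\hypo_{\Uni_n \oplus 1} = \sw_{\Uni_n,1}^*(\hypo_{\Uni_n} \cup \hypo_1)$ together with the normalisation $\hypo_1 = \Sigma^{4,2}1$, i.e.\ axioms \dref{def:hyp_orientation}{def:hyp_orientation:mult} and \dref{def:hyp_orientation}{def:hyp_orientation:norm}. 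This produces a morphism $\psi \colon \MH \to A$ in $\SH(S)$, and unwinding \rref{lemm:map_MHn} and \rref{prop:MH_n_pre} at the level of $\Uni_n$ shows $\psi_*(\hypo_E^{\MH}) = \hypo_E$ for all vector bundles $E$, so $\psi$ does induce the given orientation.

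It then remains to verify that $\psi$ is a morphism of \emph{ring} spectra. The unit axiom $\psi \circ 1_{\MH} = 1_A$ follows from the factorisation \eqref{eq:unit_MH} of $1_{\MH}$ through $e_1 = \theta_1$, combined with $\hypo_1 = \Sigma^{4,2}1$. I expect multiplicativity to be the main obstacle. The idea is to apply the second bijection of \rref{lemm:MH_lim}, which reduces the equality $\psi \circ \mu^{\MH} = \mu^A \circ (\psi \wedge \psi)$ to checking, for all $n,m \in \Nn$, that the two induced maps $\Su \MH_n \wedge \Su \MH_m \to \Sigma^{4(n+m),2(n+m)}A$ agree. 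Via the commutativity of \eqref{diag:mu} and the construction of $\mu_{n,m}$ from the grassmannian sums \eqref{eq:mu_n_m_s} through the swap procedure of \rref{p:Th_product_sw}, the check propagates to the identity $\mu_{n,m,t}^*(\hypo_{\Uni_{n+m}}) = \sw_{\Uni_n,\Uni_m}^*(\hypo_{\Uni_n} \cup \hypo_{\Uni_m})$ on $\Gr(n,nt) \times_S \Gr(m,mt)$, which is precisely axiom \dref{def:hyp_orientation}{def:hyp_orientation:mult} applied to the universal rank $n$ and rank $m$ bundles. The computation mirrors the one already performed in the proof of \rref{prop:hyp_MH}; the real work is to keep track of the swap and diagonal maps introduced by \rref{p:Th_product_sw} so that the two sides of the multiplicativity diagram can be compared on the nose in $\Hop(S)$, after which passage to $\SH(S)$ and the colimit in $t$ completes the argument.
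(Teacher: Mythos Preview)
Your proposal is correct and follows essentially the same approach as the paper's proof: construct $\psi$ from the compatible system of classes $h_n$ via \rref{lemm:MHn_lim} and \rref{lemm:MH_lim}, verify the unit via \eqref{eq:unit_MH} and normalisation, and reduce multiplicativity to the universal identity furnished by axiom \dref{def:hyp_orientation}{def:hyp_orientation:mult} on the grassmannians. One small imprecision: the second bijection in \rref{lemm:MH_lim} is stated only for the diagonal system $\MH_n \wedge \MH_n$, so the reduction of multiplicativity should be to the case $m=n$ (which suffices by cofinality), not to arbitrary pairs $(n,m)$.
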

\begin{proof}
Assume that $A$ carries a hyperbolic orientation $E \mapsto \hypo_E^A$. Then for each $n$, the family $\hypo^A_{\Uni_n} \in A^{4n,2n}(\T(n,nt))$ where $t$ runs over $\Nn \smallsetminus \{0\}$, lifts to a unique element $\sigma_n \in A^{4n,2n}(\MH_n)$ by \rref{lemm:MHn_lim}. Since the vector bundle $\Uni_{n+1}$ restricts to $\Uni_n \oplus 1$ along the morphism $\gamma_{n,t}\colon \Gr(n,nt) \to \Gr(n+1,n(t+1))$ of \eqref{eq:Gr_nt_(n+1)t}, it follows that the morphism  $\tau_{n,t}$ of \eqref{eq:tr_T} verifies
\[
\tau_{n,t}^*(\hypo^A_{\Uni_{n+1}}) \overset{\rref{p:Th_product_sw}}{=} (\sw_{\Uni_n,1}^*)^{-1} \circ \gamma_{n,t}^*(\hypo^A_{\Uni_{n+1}}) \overset{\dref{def:hyp_orientation}{def:hyp_orientation:funct}}{=} (\sw_{\Uni_n,1}^*)^{-1}(\hypo^A_{\Uni_n \oplus 1}) \overset{\dref{def:hyp_orientation}{def:hyp_orientation:mult}}{=} \hypo^A_{\Uni_n} \cup \hypo^A_1,
\]
which equals $\Sigma^{4,2}\hypo^A_{\Uni_n}$ by \dref{def:hyp_orientation}{def:hyp_orientation:norm} and \rref{eq:Sigma_cup}. Taking the limit over $t$, we deduce that $\sigma_{n+1} \in A^{4(n+1),2(n+1)}(\MH_{n+1})$ maps to $\sigma_n \in A^{4n,2n}(\MH_n)$ under the pullback along \eqref{eq:tr_MH}. Therefore by \rref{lemm:MH_lim}, we obtain a well-defined element $\varphi \in A^{0,0}(\MH)$, in other words a morphism $\varphi \colon \MH \to A$ in $\SH(S)$.  To verify that $\varphi$ is indeed a morphism of ring spectra, we investigate the commutativity of the diagrams in $\SH(S)$
\begin{equation}
\label{squ:tr_squ}
\begin{gathered}
\xymatrix{
\Un_S\ar[r]^-{1_{\MH}} \ar[rd]_{1_A} & \MH \ar[d]^{\varphi} &&\MH \wedge \MH \ar[d]_{\varphi \wedge \varphi} \ar[r]^-{\mu_{\MH}} & \MH \ar[d]^{\varphi}\\ 
 &A&& A \wedge A \ar[r]^-{\mu_A} & A
}
\end{gathered}
\end{equation}

Observe that, by construction of $\varphi$, for $n\in \Nn$ the element  $\hypo^A_{\Uni_n}$ is the composite
\begin{equation}
\label{eq:lambda_rho_varphi}
\Th(n,nt) \xrightarrow{\lambda_{n,t}} \MH_n \xrightarrow{\rho_n} \MH \xrightarrow{\varphi} A.
\end{equation}
On the other hand, by definition of the hyperbolic orientation of $\MH$ (in \rref{prop:hyp_MH}) and in view of \dref{lemm:map_MHn}{lemm:map_MHn:3}, the composite \eqref{eq:lambda_rho_varphi} is $\varphi_*(\hypo^{\MH}_{\Uni_n})$. In view of \rref{prop:preorientations} and \rref{p:push_preorientation}, it follows that
\begin{equation}
\label{eq:varphi_hypo}
\varphi_*(\hypo_E^{\MH}) = \hypo_E^A \;\,  \text{for any vector bundle $E \to X$ with $X \in \Sm_S$.}
\end{equation}
Taking $E=1$ and $X=S$, and using the normalisation axiom \dref{def:hyp_orientation}{def:hyp_orientation:norm} for $\MH$ and $A$, we deduce the commutativity of the triangle in \eqref{squ:tr_squ}.

Applying \rref{lemm:MH_lim} and \rref{lemm:MHn_lim}, and in view of the commutative diagram \eqref{diag:mu}, the commutativity of the square in \eqref{squ:tr_squ} boils down to the commutativity in $\SH(S)$ of the following square (using the identification \rref{p:sigma_exchange}), for each $n \in \Nn$ and $t \in \Nn \smallsetminus\{0\}$,
\begin{equation}
\label{eq:squ:mu}
\begin{gathered}
\xymatrix{
\Su \T(n,nt) \wedge \Su \T(n,nt) \ar[d]_{\hypo^A_{\Uni_n} \wedge \hypo^A_{\Uni_n}} \ar[rr]^-{\Su\eqref{eq:T_n_m_s}} && \Su \T(2n,2nt) \ar[d]^{\hypo^A_{\Uni_{2n}}}\\ 
\Sigma^{4n,2n}A \wedge \Sigma^{4n,2n}A \ar[rr]^{\Sigma^{8n,4n}\mu_A} && \Sigma^{8n,4n}A
}
\end{gathered}
\end{equation}
The composites in the square \eqref{eq:squ:mu} may be viewed as elements of 
\[
A^{8n,4n}(\T(n,nt) \wedge \T(n,nt)) = A^{0,0}(P;p_1^*\Uni_n \oplus p_1^*\Uni_n \oplus p_2^*\Uni_n\oplus p_2^*\Uni_n),
\]
where $p_1,p_2 \colon P=\Gr(n,nt) \times_S \Gr(n,nt) \to \Gr(n,nt)$ are the projections. Then in \eqref{eq:squ:mu}, the top horizontal map followed by the right vertical one is (see \eqref{eq:mu_n_m_s})
\[
(\sw_{p_1^*\Uni_n,p_2^*\Uni_n}^*)^{-1} \circ \mu_{n,n,t}^*(\hypo^A_{\Uni_{2n}}) \overset{\rref{p:Th_product_sw}}{=} (\sw_{p_1^*\Uni_n,p_2^*\Uni_n}^*)^{-1}(\hypo^A_{p_1^*\Uni_n \oplus p_2^*\Uni_n}) \overset{\text{\dref{def:hyp_orientation}{def:hyp_orientation:mult}}}{=} p_1^*\hypo^A_{\Uni_n} \cup p_2^*\hypo^A_{\Uni_n},
\]
which coincides with, writing $Y=\Th(n,nt)$ and using the notation of \rref{p:cup_product},
\[
Y \wedge Y \xrightarrow{\Delta_P} Y \wedge Y \wedge Y \wedge Y \xrightarrow{p_1 \wedge p_2} Y \wedge Y \xrightarrow{\hypo^A_{\Uni_n} \wedge \hypo^A_{\Uni_n}} \Sigma^{4n,2n}A \wedge \Sigma^{4n,2n}A \xrightarrow{\Sigma^{8n,4n}\mu_A} \Sigma^{8n,4n}A.
\]
Since $(p_1 \wedge p_2) \circ \Delta_P=\id_{Y \wedge Y}$, the composite just above coincides with the left vertical map followed by the lower horizontal map in the square \eqref{eq:squ:mu}. We have proved that the diagrams \eqref{squ:tr_squ} commute, so that $\varphi$ is a morphism of ring spectra.

It remains to verify that the above construction is the inverse of the one given in \rref{p:MH_univ}. This follows from \rref{eq:varphi_hypo}, and from the fact that a morphism $\psi \colon \MH \to A$ in $\SH(S)$ is determined by the composites $\T(n,nt) \xrightarrow{\lambda_{n,t}} \MH_n \xrightarrow{\rho_n} \MH \xrightarrow{\psi} A$, for $n\in \Nn$ and $t\in \Nn \smallsetminus \{0\}$, which is a consequence of \rref{lemm:MHn_lim} and \rref{lemm:MH_lim}.
\end{proof}

\end{document}